\definecolor{mygreen}{rgb}{0.0,0.55, 0.0}
\renewcommand{\bibnamedash}{\leavevmode\raise3pt\hbox to3em{\hrulefill}\space}
\date{Octobre 2021}
\title{Groupes de surface dans les r\'eseaux des groupes de Lie semi-simples}
\author{Fanny KASSEL}
\address{CNRS et Laboratoire Alexander Grothendieck\\
  Institut des Hautes \'Etudes Scientifiques\\
  Universit\'e Paris-Saclay\\
  35 route de Chartres, 91440 Bures-sur-Yvette, France}
\email{kassel@ihes.fr}
\newcommand{\note}[1]{}
\newcommand*{\longhookrightarrow}{\ensuremath{\lhook\joinrel\relbar\joinrel\rightarrow}}
\def\namedlabel#1#2{\begingroup
    #2%
    \def\@currentlabel{#2}%
    \phantomsection\label{#1}\endgroup
}
\newtheorem{theoreme}{Th\'eor\`eme}[section]
\newtheorem*{theo-princ}{Th\'eor\`eme principal}
\newtheorem{proposition}[theoreme]{Proposition}
\newtheorem{corollaire}[theoreme]{Corollaire}
\newtheorem{lemme}[theoreme]{Lemme}
\newtheorem{fact}[theoreme]{Fait}
\theoremstyle{remark}
\newtheorem{definition}[theoreme]{D\'efinition}
\newtheorem{exemple}[theoreme]{Exemple}
\newtheorem{remarque}[theoreme]{Remarque}
\newtheorem{remarques}[theoreme]{Remarques}
\newtheorem{question}[theoreme]{Question}
\newtheorem{cadre}[theoreme]{Cadre}
\numberwithin{equation}{section}
\newcommand{\N}{\mathbb{N}}
\newcommand{\Z}{\mathbb{Z}}
\newcommand{\Q}{\mathbb{Q}}
\newcommand{\R}{\mathbb{R}}
\newcommand{\C}{\mathbb{C}}
\newcommand{\HH}{\mathbb{H}}
\newcommand{\PP}{\mathbb{P}}
\newcommand{\SL}{\mathrm{SL}}
\newcommand{\GL}{\mathrm{GL}}
\newcommand{\SO}{\mathrm{SO}}
\newcommand{\OO}{\mathrm{O}}
\newcommand{\PO}{\mathrm{PO}}
\newcommand{\PSO}{\mathrm{PSO}}
\newcommand{\PSL}{\mathrm{PSL}}
\newcommand{\PGL}{\mathrm{PGL}}
\newcommand{\Sp}{\mathrm{Sp}}
\newcommand{\SU}{\mathrm{SU}}
\newcommand{\U}{\mathrm{U}}
\newcommand{\PU}{\mathrm{PU}}
\newcommand{\g}{\mathfrak{g}}
\newcommand{\ssl}{\mathfrak{sl}}
\newcommand{\psl}{\mathfrak{psl}}
\newcommand{\calG}{\mathcal{G}}
\newcommand{\ad}{\operatorname{ad}}
\newcommand{\id}{\mathrm{id}}
\newcommand{\dd}{\mathrm{d}}
\newcommand{\Hom}{\mathrm{Hom}}
\newcommand{\inv}{\mathrm{inv}}
\newcommand{\rot}{\mathrm{rot}}
\newcommand{\refl}{\mathrm{refl}}
\newcommand{\sym}{\mathrm{sym}}
\newcommand{\Geom}{\mathrm{Geom}}
\newcommand{\Triconn}{\mathrm{Triconn}}
\newcommand{\tildeTriconn}{\widetilde{\mathrm{Tri}}\mathrm{conn}}
\newcommand{\Biconn}{\mathrm{Biconn}}
\newcommand{\cf}{cf.\ }
\newcommand{\ie}{\textit{i.e.}\ }
\newcommand{\resp}{resp.\ }
\newcommand{\attract}{\scriptscriptstyle\text{\rm\textcircled{$+$}}}
\newcommand{\repuls}{\scriptscriptstyle\text{\rm\textcircled{$-$}}}
\newcommand{\bs}[1]{\boldsymbol{#1}}
\begin{document}

\maketitle
\setcounter{tocdepth}{1}
\tableofcontents


Un \emph{r\'eseau} d'un groupe de Lie~$G$ est un sous-groupe discret $\Gamma$ tel que le quotient $G/\Gamma$ soit de volume fini pour la mesure de Haar ; on dit que $\Gamma$ est cocompact si $G/\Gamma$ est compact.

Tout réseau cocompact sans torsion $\Gamma$ de $\PSL(2,\R)$ est un groupe \emph{de surface}, c'est-à-dire isomorphe au groupe fondamental d'une surface compacte $S$ de genre au moins deux.
En effet, on peut prendre pour $S$ le quotient du plan hyperbolique $\HH^2$ par~$\Gamma$.

Le but de cet expos\'e est de pr\'esenter le r\'esultat suivant.

\begin{theo-princ}[Kahn--Markovi\'c, Hamenst\"adt, Kahn--Labourie--Mozes]
Soit $G$ un groupe de Lie simple complexe\footnote{Par exemple $\SL(n,\C)$, $\Sp(2n,\C)$ ou $\SO(n,\C)$ ; \cf \textcite[Ch.\,X, \S\,2]{hel01} pour une description explicite de tous les groupes de Lie simples classiques.}, ou l'un des groupes $\SO(2p-1,1)$, $\SU(p,q)$ ou $\Sp(p,q)$ pour $p>q\geq 1$.
Tout r\'eseau cocompact de~$G$ contient un sous-groupe de surface.
\end{theo-princ}

Le cas $G=\SL(2,\C)$ est d\^u \`a \textcite{km12}, les cas $G=\SO(2p-1,1)$, $\SU(p,1)$ et $\Sp(p,1)$ \`a \textcite{ham15}, et le cas g\'en\'eral \`a \textcite{klm18}.
R\'ecemment, \textcite{ham20} a annonc\'e une nouvelle démonstration du cas g\'en\'eral, qui \'etend les m\'ethodes de son article de 2015.

Dans la partie~\ref{sec:motiv} nous pr\'esentons diverses motivations du th\'eor\`eme, puis dans la partie~\ref{sec:enonces-precis} nous en \'enon\c{c}ons des versions plus pr\'ecises, pour une classe plus g\'en\'erale de groupes de Lie semi-simples~$G$.
La strat\'egie de la preuve est expliqu\'ee dans la partie~\ref{sec:strategie}.
Elle comporte trois \'etapes : g\'eom\'etrique (partie~\ref{sec:geom-KLM}), dynamique (partie~\ref{sec:dyn}) et combinatoire (partie~\ref{sec:conclusion}).

\subsection*{Remerciements}

Je remercie chaleureusement Jonas Beyrer, Pierre-Louis Blayac, Jean-Philippe\linebreak Burelle, Le\'on Carvajales, Balthazar Fl\'echelles, Olivier Glorieux, Jeremy Kahn, Fran\c{c}ois Labourie, Daniel Monclair, Alan Reid, Ilia Smilga, Katie Vokes, ainsi que Nicolas Bourbaki, pour leur aide dans la préparation de cet exposé.
Je suis particulièrement reconnaissante à Jonas Beyrer, Pierre-Louis Blayac, Olivier Glorieux et Fran\c{c}ois Labourie pour de nombreuses discussions autour des articles présentés ici.

\section{Motivations} \label{sec:motiv}

Soit $G$ un groupe de Lie r\'eel semi-simple lin\'eaire non compact, par exemple $\PSL(n,\mathbb{K})$ pour $\mathbb{K}=\R$ ou~$\C$.

\subsection{Comprendre les sous-groupes des r\'eseaux} \label{subsec:intro-ss-gpe-reseau}

Un point de vue fécond en théorie des groupes est de chercher à comprendre certains groupes en étudiant quels types de sous-groupes ils admettent.
Il résulte des travaux de \textcite{tit72} que tout réseau de~$G$ contient un groupe libre non abélien à deux générateurs.
On peut voir les groupes de surface comme les groupes de type fini (non r\'esolubles à indice fini près) les plus \og simples\fg\ apr\`es les groupes libres\footnote{Par exemple en considérant la dimension cohomologique : les groupes sans torsion de dimension cohomologique~$1$ sont les groupes libres \parencite{sta68} ; les groupes de surface sont de dimension cohomologique~$2$.}.
La question suivante est alors naturelle dans le cadre de l'\'etude des r\'eseaux des groupes de Lie semi-simples.

\begin{question} \label{qu:ss-gpe-reseau}
Soit $G$ un groupe de Lie r\'eel semi-simple lin\'eaire non compact.
Tout r\'eseau de~$G$ admet-il des sous-groupes de surface ?
\end{question}

Le th\'eor\`eme principal r\'epond affirmativement \`a cette question pour les r\'eseaux cocompacts des groupes de Lie simples complexes et des groupes $\SO(2p-1,1)$, $\SU(p,q)$, $\Sp(p,q)$ pour $p>q\geq 1$.

Lorsque $G$ est de rang r\'eel un (c'est-à-dire localement isomorphe \`a $\SO(n,1)$, $\SU(n,1)$, $\Sp(n,1)$ ou \`a la forme réelle de rang un du groupe exceptionnel~$F_4$), les r\'eseaux cocompacts de~$G$ sont des groupes hyperboliques au sens de Gromov.
La question~\ref{qu:ss-gpe-reseau} pour ces réseaux est alors un cas particulier d'une question de Gromov (\cf \cite[Q.\,1.6]{bes}) : tout groupe hyperbolique \`a un bout admet-il un sous-groupe de surface ?
Voir par exemple \textcite{glr04} pour une r\'eponse affirmative dans le cadre des groupes de Coxeter, et \textcite{cal08} pour des conditions homologiques suffisantes.
Notons qu'un groupe hyperbolique ne peut contenir qu'un nombre fini de classes de conjugaison de sous-groupes de surface correspondant \`a des surfaces de genre donn\'e, comme suggéré par \textcite{gro87} et démontré par \textcite{del95} ; voir \textcite{thu97} pour les groupes de $3$-variétés hyperboliques.

En rang réel sup\'erieur, il est facile de construire, de mani\`ere arithm\'etique, des exemples de r\'eseaux contenant des groupes de surface.

\begin{exemple} \label{ex:sgs-arithm}
(\cf \cite[\S\,2.1, exemples 5 et~8]{ben09})
Soit $n\geq 3$.
L'automorphisme $\sigma$ d'ordre deux de $\Q[\sqrt{2}]$ d\'efinit un plongement $\iota : g\mapsto (g,g^{\sigma})$ de $\SL(n,\R)$ dans $G:=\SL(n,\R)\times\SL(n,\R)$, et $\Gamma:=\iota(\SL(n,\Z[\sqrt{2}]))$ est un r\'eseau non cocompact de~$G$.
Il contient $\Lambda:=\iota(H\cap\SL(3,\Z[\sqrt{2}]))$, o\`u $H\simeq\SO(2,1)_0\simeq\PSL(2,\R)$ est la composante neutre du groupe orthogonal associ\'e \`a la forme quadratique $x^2 + y^2 - \sqrt{2}\,z^2$ sur~$\R^3$.
Le groupe $\Gamma_0 := H\cap\SL(3,\Z[\sqrt{2}])$ est un r\'eseau cocompact de~$H$.
Ainsi, $\Lambda = \iota(\Gamma_0)$ est un sous-groupe de surface de~$\Gamma$.
\end{exemple}

Afin d'\'etablir que \emph{tout} r\'eseau contient des groupes de surface, nous verrons que la démonstration du th\'eor\`eme principal repose, non pas sur des consid\'erations arithm\'etiques, mais sur des arguments g\'eom\'etriques et dynamiques.

\subsection{Sous-groupes de surface \og bien positionn\'es dans~$G$\fg} \label{subsec:intro-qf}

La mani\`ere peut-\^etre la plus simple d'obtenir des sous-groupes discrets isomorphes \`a des groupes de surface dans des groupes de Lie semi-simples~$G$ est de consid\'erer des r\'eseaux cocompacts $\Gamma_0$ de $\PSL(2,\R)$ et de les voir comme des sous-groupes discrets de~$G$ via un plongement $\tau$ de $\PSL(2,\R)$ dans~$G$.
Autrement dit, on part d'une surface compacte $S$ de genre au moins deux ; on la munit gr\^ace au th\'eor\`eme d'uniformisation d'une structure hyperbolique, ce qui d\'efinit une repr\'esentation injective de $\pi_1(S)$ dans $\PSL(2,\R)$, d'image un r\'eseau cocompact $\Gamma_0$ de $\PSL(2,\R)$ ; puis on applique le plongement~$\tau$ ou l'un de ses conjugu\'es.
Nous appellerons ces sous-groupes \emph{$\tau$-fuchsiens}, par analogie avec la terminologie classique pour $G=\PSL(2,\C)$.

\begin{definition} \label{def:tau-fuchsien}
Soient $G$ un groupe de Lie semi-simple et $\tau : \PSL(2,\R)\hookrightarrow G$ un plongement.
Un sous-groupe de~$G$ est \emph{$\tau$-fuchsien} s'il est l'image d'une \emph{repr\'esentation $\tau$-fuchsienne} d'un groupe de surface $\pi_1(S)$, c'est-\`a-dire d'une repr\'esentation de la forme
$$\rho_0 : \pi_1(S) \overset{\varrho}{\longhookrightarrow} \PSL(2,\R) \overset{\tau}{\longhookrightarrow} G \overset{\mathrm{conj}}{\longrightarrow} G,$$
o\`u $\varrho$ est injective d'image discr\`ete et $\mathrm{conj}$ est la conjugaison par un \'el\'ement de~$G$.
\end{definition}

Il se peut qu'un r\'eseau $\Gamma$ de~$G$ contienne des sous-groupes de surface $\tau$-fuchsiens pour un certain plongement $\tau : \PSL(2,\R)\hookrightarrow G$ : c'est le cas dans l'exemple~\ref{ex:sgs-arithm} pour $\tau : \PSL(2,\R)\simeq H\overset{\iota}{\longhookrightarrow} G$.
En g\'en\'eral, \'etant donn\'e un r\'eseau~$\Gamma$, on pourrait esp\'erer qu'\`a d\'efaut de sous-groupes $\tau$-fuchsiens, il contienne au moins des \emph{d\'eformations} de sous-groupes $\tau$-fuchsiens.
De telles petites d\'eformations sont encore des groupes de surface par la proposition suivante.

\begin{proposition} \label{prop:deform-fuchsien}
Soient $G$ un groupe de Lie semi-simple, $\tau : \PSL(2,\R)\hookrightarrow G$ un plongement et $\rho_0 : \pi_1(S)\to G$ une repr\'esentation $\tau$-fuchsienne d'un groupe de surface $\pi_1(S)$ (d\'efinition~\ref{def:tau-fuchsien}).
Il existe un voisinage ouvert $\mathcal{U}$ de~$\rho_0$ dans $\Hom(\pi_1(S),G)$ form\'e enti\`erement de repr\'esentations injectives d'image discr\`ete.
\end{proposition}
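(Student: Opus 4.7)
Le plan est de montrer que $\rho_0$ est une repr\'esentation d'Anosov relativement au parabolique minimal de~$G$, puis d'invoquer l'ouverture de cette condition dans $\Hom(\pi_1(S),G)$, qui entra\^ine automatiquement l'injectivit\'e et la discr\'etude de l'image.

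Dans un premier temps, je fixerai un sous-groupe compact maximal $K\subset G$ et consid\'ererai l'espace sym\'etrique $X:=G/K$, de courbure sectionnelle n\'egative ou nulle. Le plongement $\tau : \PSL(2,\R)\hookrightarrow G$ induit un plongement $\tau$-\'equivariant totalement g\'eod\'esique $f:\HH^2\hookrightarrow X$ (isom\'etrique \`a renormalisation pr\`es, donc en particulier quasi-isom\'etrique). Comme $\varrho(\pi_1(S))$ est un r\'eseau cocompact de $\PSL(2,\R)\simeq\Isom(\HH^2)^0$, le lemme de \v{S}varc--Milnor fournira une quasi-isom\'etrie entre $\pi_1(S)$ (pour une distance des mots) et $\HH^2$ via l'application orbitale de~$\varrho$. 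En composant avec~$f$, j'obtiendrai que l'application orbitale associ\'ee \`a $\rho_0$ est un plongement quasi-isom\'etrique $\pi_1(S)\hookrightarrow X$.

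Dans un second temps, combin\'ee \`a l'hyperbolicit\'e de $\pi_1(S)$ et \`a l'extension \'equivariante de $f$ aux bords visuels (qui fournit une application $\partial_\infty\pi_1(S)\to G/P$ continue, \'equivariante et transverse), cette propri\'et\'e de quasi-isom\'etrie impliquera que $\rho_0$ est $P$-Anosov pour $P$ parabolique minimal de~$G$ (en rang un, on retrouve la notion de repr\'esentation convexe-cocompacte). La conclusion suivra alors du th\'eor\`eme d'ouverture de Labourie et Guichard--Wienhard : l'ensemble des repr\'esentations Anosov est ouvert dans $\Hom(\pi_1(S),G)$, et toute repr\'esentation Anosov est injective d'image discr\`ete (puisque $\pi_1(S)$ est sans torsion et non \'el\'ementaire).

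Le point d\'elicat sera la v\'erification effective de la propri\'et\'e d'Anosov, notamment le contr\^ole lin\'eaire des \'ecarts entre valeurs singuli\`eres extr\^emes des $\rho_0(\gamma)$ en fonction de la longueur des mots de~$\gamma$ ; l'ouverture proprement dite repose ensuite sur un argument de point fixe contractant sur un espace d'applications \'equivariantes au bord. Une voie alternative, plus \'el\'ementaire et suffisante pour cet \'enonc\'e, consisterait \`a combiner le lemme de Margulis avec la compacit\'e d'un domaine fondamental de $\varrho(\pi_1(S))$ dans $\HH^2$, de mani\`ere \`a contr\^oler simultan\'ement l'injectivit\'e (par exclusion des petits \'el\'ements \`a la Zassenhaus) et la discr\'etude de toute d\'eformation suffisamment proche de~$\rho_0$ ; cette approche \'evite le formalisme des repr\'esentations d'Anosov mais repose sur la m\^eme g\'eom\'etrie sous-jacente, \`a savoir la stabilit\'e du plongement quasi-isom\'etrique $\pi_1(S)\hookrightarrow X$ sous petite perturbation.
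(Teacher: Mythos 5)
Votre strat\'egie principale est celle du texte : \'etablir que $\rho_0$ est anosovienne, puis invoquer le fait que la condition anosovienne est ouverte dans $\Hom(\pi_1(S),G)$ et entra\^ine l'injectivit\'e et la discr\'etude (\cf remarque~\ref{rem:fuchsien->ano} et la phrase qui la suit ; le r\'esultat remonte \`a \textcite{gui04}). Deux r\'eserves toutefois. La premi\`ere est substantielle : le parabolique pertinent n'est pas en g\'en\'eral le parabolique \emph{minimal} de~$G$, mais le sous-groupe $P_{\tau}$ associ\'e \`a~$\tau$ (fin du paragraphe~\ref{subsec:intro-Anosov}), dont l'alg\`ebre de Lie est la somme des sous-espaces propres de $\ad(\mathsf{h})$ de valeurs propres positives ou nulles, o\`u $\mathsf{h}=\dd\tau\big(\begin{smallmatrix}1&0\\0&-1\end{smallmatrix}\big)$. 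La proposition ne suppose aucune r\'egularit\'e de~$\mathsf{h}$ ; lorsque $\mathsf{h}$ n'est pas r\'egulier (\cf exemple~\ref{ex:SO(4,2)}), $P_{\tau}$ n'est pas minimal, les \'el\'ements $\rho_0(\gamma)$ ne sont pas loxodromiques, et $\rho_0$ n'est \emph{pas} anosovienne relativement au parabolique minimal : le contr\^ole lin\'eaire que vous annoncez \'echoue pour les racines simples qui s'annulent sur~$\mathsf{h}$. Votre application de bord, obtenue en \'etendant $f$ au bord visuel, aboutit d'ailleurs naturellement dans $G/P_{\tau}$ (le stabilisateur du point \`a l'infini du rayon de direction~$\mathsf{h}$ est pr\'ecis\'ement~$P_{\tau}$) et non dans $G/P_{\min}$. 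L'argument est correct une fois ce remplacement effectu\'e.

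La seconde r\'eserve concerne la \og voie alternative\fg. En rang r\'eel sup\'erieur, \^etre un plongement quasi-isom\'etrique n'est pas une condition ouverte dans $\Hom(\pi_1(S),G)$, et la conjonction \og injective et discr\`ete\fg\ ne l'est pas non plus en g\'en\'eral ; la \og stabilit\'e du plongement quasi-isom\'etrique sous petite perturbation\fg\ que vous invoquez en conclusion est pr\'ecis\'ement ce que la propri\'et\'e anosovienne (r\'egularit\'e uniforme et \'ecarts uniformes) sert \`a garantir, et non un fait \'el\'ementaire dont on pourrait la d\'eduire. Le lemme de Margulis ne s'applique pas directement non plus, $\rho_0(\pi_1(S))$ n'\'etant pas un r\'eseau de~$G$ d\`es que $G$ n'est pas localement isomorphe \`a $\PSL(2,\R)$ modulo facteurs compacts. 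Cette voie n'est donc ni plus \'el\'ementaire ni ind\'ependante de la premi\`ere.
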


On note ici $\Hom(\pi_1(S),G)$ l'espace des repr\'esentations de $\pi_1(S)$ dans~$G$, muni de sa topologie naturelle (topologie de la convergence sur une partie g\'en\'eratrice finie de~$\pi_1(S)$).
La proposition~\ref{prop:deform-fuchsien} est initialement due \`a \textcite{gui04} ; c'est d\'esormais une cons\'equence facile de la th\'eorie des représentations anosoviennes, \cf paragraphe~\ref{subsec:intro-Anosov}.

Pour un ouvert $\mathcal{U}$ comme ci-dessus, l'image de toute repr\'esentation $\rho\in\mathcal{U}$ est un sous-groupe de surface discret de~$G$ ; par analogie avec le cas classique $G=\PSL(2,\C)$, on dira qu'il est \emph{$\tau$-quasi-fuchsien} d\`es que $\mathcal{U}$ est connexe.

\begin{definition} \label{def:tau-qf}
Soient $G$ un groupe de Lie semi-simple et $\tau : \PSL(2,\R)\hookrightarrow G$ un plongement.
Un sous-groupe de~$G$ est \emph{$\tau$-quasi-fuchsien} s'il est de la forme $\rho(\pi_1(S))$ pour un groupe de surface $\pi_1(S)$ et une repr\'esentation $\rho : \pi_1(S)\to G$ appartenant \`a un ouvert connexe $\mathcal{U}$ de $\Hom(\pi_1(S),G)$ comme dans la proposition~\ref{prop:deform-fuchsien}.
\end{definition}

\subsubsection{Ouverts de d\'eformations de groupes $\tau$-fuchsiens}

Des ouverts $\mathcal{U}$ comme dans la proposition~\ref{prop:deform-fuchsien} ont \'et\'e beaucoup \'etudi\'es dans plusieurs cas.

\begin{exemple} \label{ex:qf-classique}
Soient $G = \PSL(2,\C)$ et $\tau : \PSL(2,\R)\hookrightarrow G$ le plongement standard.
Toute repr\'esentation $\tau$-fuchsienne $\rho_0 : \pi_1(S)\to G$ d'un groupe de surface $\pi_1(S)$ est contenue dans l'ouvert $\mathcal{U}$ des \emph{repr\'esentations quasi-fuchsiennes} de $\pi_1(S)$, c'est-\`a-dire des repr\'esentations injectives de $\pi_1(S)$ dans~$G$ dont l'image est un sous-groupe discret dans lequel tous les éléments non triviaux sont hyperboliques (c'est-à-dire diagonalisables sur~$\C$ et dont les valeurs propres sont de modules différents de~$1$).
Cet ouvert $\mathcal{U}$ joue un r\^ole important dans la th\'eorie des groupes kleiniens.
Il est connexe (Bers a montr\'e que, modulo conjugaison par $G$ au but, il est naturellement paramétré par le produit de deux copies de l'espace de Teichm\"uller de~$S$) et dense dans l'ensemble des repr\'esentations injectives d'image discr\`ete de $\pi_1(S)$ dans~$G$.
\end{exemple}

\begin{exemple} \label{ex:Hitchin}
Soient $G = \PSL(n,\R)$ et $\tau : \PSL(2,\R)\hookrightarrow G$ le plongement irr\'eductible (voir l'exemple~\ref{ex:PSL(n,K)} ci-dessous).
D'apr\`es Choi et Goldman (pour $n=3$), Labourie ($n$~quelconque), Fock et Goncharov ($n$ quelconque), pour toute repr\'esentation $\tau$-fuchsienne $\rho_0 : \pi_1(S)\to G$, la composante connexe de $\rho_0$ dans $\Hom(\pi_1(S),G)$ est enti\`erement form\'ee de repr\'esentations injectives et discr\`etes.
D'apr\`es Hitchin, cette composante connexe est, modulo conjugaison par~$G$ au but, hom\'eomorphe \`a une boule de dimension $(n^2-1)(2\mathtt{g}-2)$, o\`u $\mathtt{g}\geq 2$ est le genre de la surface~$S$.
D\'esormais appelée \emph{composante de Hitchin}, elle joue un r\^ole important en \emph{th\'eorie de Teichm\"uller--Thurston de rang sup\'erieur} (\cf \cite{poz20}).
\end{exemple}

Il est remarquable qu'il existe ainsi des sous-groupes discrets de~$G$ (sous-groupes de surface $\tau$-fuchsiens) avec de gros espaces de d\'eformations continues.
Par contraste, les r\'eseaux de~$G$ ont souvent de fortes propriétés de rigidité, qui ont donné lieu à des travaux célèbres.
Par exemple, pour $G$ non localement isomorphe \`a $\PSL(2,\R)$ (\resp non localement isomorphe à $\PSL(2,\R)$ ni à $\PSL(2,\C)$) et sans facteur compact, les réseaux cocompacts (\resp non cocompacts) irr\'eductibles de~$G$ sont localement rigides, d'après Selberg, Calabi, Weil, Garland et Raghunathan.
Pour $G$ sans facteur localement isomorphe à $\PSL(2,\R)$ et sans facteur compact, la rigidité de Mostow implique que toute représentation injective et discrète d'un réseau irréductible de~$G$ à valeurs dans~$G$ est la restriction d'un automorphisme de~$G$.
Pour $G$ de rang réel supérieur et sans facteur compact, Margulis a montré que les r\'eseaux irr\'eductibles de~$G$ ont de surcro\^it une propri\'et\'e plus forte de super-rigidit\'e, qui implique que ce sont des groupes arithm\'etiques.
Voir \textcite{pan95} pour plus de détails.

\subsubsection{Retour aux sous-groupes des r\'eseaux}

Voici une version plus précise de la question~\ref{qu:ss-gpe-reseau}.

\begin{question} \label{qu:ss-gpe-qf-reseau}
Soient $G$ un groupe de Lie r\'eel semi-simple et $\tau : \PSL(2,\R)\hookrightarrow\nolinebreak G$ un plongement.
Tout r\'eseau de~$G$ admet-il des sous-groupes de surface $\tau$-quasi-fuchsiens (d\'efinition~\ref{def:tau-qf}) ?
\end{question}

Dans une série de papiers, \textcite{lrt11}, \textcite{lr13,lr16}, \textcite{lt18,lt20} montrent que, pour $G=\PSL(n,\R)$ et $\tau$ le plongement irr\'eductible, c'est le cas de certains réseaux de~$G$, à savoir tous les réseaux non cocompacts pour $n=3$, une famille infinie de réseaux cocompacts pour $n=3$, et le réseau non cocompact $\PSL(n,\Z)$ pour $n=4$ et $n\geq 5$ impair.
Pour cela, ils considèrent des groupes discrets $\Delta$ d'isométries de~$\HH^2$ engendrés par les réflexions orthogonales dans les côtés de certains triangles de~$\HH^2$~; ces groupes admettent des sous-groupes d'indice fini sans torsion, qui sont alors des groupes de surface.
L'analogue pour $\Delta$ de la composante de Hitchin de l'exemple~\ref{ex:Hitchin} (\cf \cite{als21}) est une composante connexe de $\Hom(\Delta,\PGL(n,\R))$ formée entièrement de représentations injectives et discrètes, dont les auteurs montrent que certaines prennent leurs valeurs dans des sous-groupes arithmétiques de $\PGL(n,\R)$.
Ceci fournit, pour certains réseaux donnés de~$G$, une infinité de classes de conjugaison de sous-groupes de surface correspondant \`a des surfaces de m\^eme genre, par contraste avec la situation de rang un mentionnée au paragraphe~\ref{subsec:intro-ss-gpe-reseau}.

En allant encore plus loin, on peut poser la question suivante.

\begin{question} \label{qu:ss-gpe-qf-proche-f}
Soient $G$ un groupe de Lie r\'eel semi-simple et $\tau : \PSL(2,\R)\hookrightarrow\nolinebreak G$ un plongement.
Tout r\'eseau de~$G$ admet-il des sous-groupes de surface $\tau$-quasi-fuchsiens qui soient \og arbitrairement proches\fg\ de groupes $\tau$-fuchsiens, dans un sens \`a spécifier ?
\end{question}

Les théorèmes \ref{thm:quantitatif-PSL(n,C)} et~\ref{thm:quantitatif} ci-dessous sugg\`erent une réponse affirmative à cette question dans le cas des r\'eseaux cocompacts, pour certains couples $(G,\tau)$ qui couvrent tous les groupes $G$ du théorème principal.

Les constructions de Long--Reid--Thistlethwaite, Hamenst\"adt et Kahn--Labourie--Mozes permettent, pour nombre de r\'eseaux arithm\'etiques de~$G$, d'obtenir des sous-groupes de surface qui sont Zariski-denses dans~$G$ : ce sont alors des sous-groupes \emph{fins} (\emph{thin} en anglais) de~$G$, \`a savoir des sous-groupes d'indice infini de groupes arithm\'etiques qui sont encore Zariski-denses dans~$G$.
Voir \textcite{kllr19} pour plus de d\'etails sur les groupes fins et leur importance.

\subsection{Motivations sp\'ecifiques en basse dimension} \label{subsec:intro-motiv-cpt}

Les questions \ref{qu:ss-gpe-reseau}, \ref{qu:ss-gpe-qf-reseau} et~\ref{qu:ss-gpe-qf-proche-f} ont donn\'e lieu \`a une riche litt\'erature pour $G=\PSL(2,\C)$ et $G=\PSL(2,\R)\times\PSL(2,\R)$, motiv\'ee par les consid\'erations g\'eom\'etriques suivantes.

\subsubsection{Vari\'et\'es hyperboliques compactes de dimension trois}

Soient $G=\PSL(2,\C)$ et $\tau : \PSL(2,\R)\hookrightarrow G$ le plongement standard.
Tout sous-groupe discret sans torsion $\Gamma$ de~$G$ définit une variété hyperbolique de dimension trois, \`a savoir le quotient de l'espace hyperbolique $\HH^3$ par~$\Gamma$.
La question~\ref{qu:ss-gpe-reseau} pour les r\'eseaux cocompacts de~$G$ appartient \`a une s\'erie de grandes conjectures de Thurston sur les vari\'et\'es hyperboliques compactes de dimension trois (\cf \cite{ber13,ber14}).
Une r\'eponse affirmative à cette question a \'et\'e donn\'ee par \textcite{lac10} dans le cas des r\'eseaux arithm\'etiques de~$G$, puis par \textcite{km12} en g\'en\'eral.
Plus précisément, Kahn et Markovi\'c ont montr\'e que pour tout r\'eseau cocompact de~$G$, la variété hyperbolique correspondante contient des surfaces immergées \emph{$\pi_1$-injectives} (c'est-\`a-dire telles que l'inclusion induise une injection au niveau des groupes fondamentaux) ; les sous-groupes de surface correspondants peuvent être pris \og arbitrairement proches\fg\ de groupes fuchsiens, dans un sens quantitatif pr\'ecis, r\'epondant affirmativement aux questions \ref{qu:ss-gpe-qf-reseau} et~\ref{qu:ss-gpe-qf-proche-f}.
\textcite{ago13} a ensuite utilis\'e ce résultat et les travaux de Wise et ses collaborateurs (\cf \cite{wis21}) pour d\'emontrer la \emph{conjecture de Haken virtuelle}, qui affirme que toute vari\'et\'e hyperbolique compacte orientable de dimension trois poss\`ede un rev\^etement fini qui est de Haken, c'est-à-dire qui contient une surface \emph{plong\'ee} $\pi_1$-injective.
Grâce aux travaux de Perelman, ceci résout une conjecture de \textcite{wal68} affirmant que toute variété compacte, connexe, orientable, irréductible de dimension trois poss\`ede un rev\^etement fini qui est de Haken.

\subsubsection{Conjecture d'Ehrenpreis} \label{subsubsec:Ehrenpreis}

La question~\ref{qu:ss-gpe-qf-proche-f} dans le cas de $G=\PSL(2,\R)\times\PSL(2,\R)$ et du plongement diagonal $\tau : \PSL(2,\R)\hookrightarrow G$, pour les r\'eseaux cocompacts de~$G$ de la forme $\Gamma_1\times\Gamma_2$ o\`u $\Gamma_i \subset \PSL(2,\R)$, est motiv\'ee par une c\'el\`ebre conjecture d'\textcite{ehr70} : pour tout r\'eel $k>1$ et toute paire de surfaces de Riemann compactes de genre au moins deux, on peut trouver des rev\^etements finis des deux surfaces qui sont $k$-quasi-conformes.
\textcite{km15} ont d\'emontr\'e cette conjecture en construisant, pour tout r\'eseau cocompact $\Gamma_i$ de $\PSL(2,\R)$, des sous-groupes d'indice fini \og arbitrairement proches\fg\ de r\'eseaux d'une forme particuli\`ere (dits \emph{$R$-parfaits}), \cf paragraphe~\ref{subsec:strategie-KM}.

\subsubsection{Vari\'et\'es hyperboliques de volume fini de dimension trois} \label{subsubsec:intro-var-hyp-non-cptes}

Revenons \`a $G=\PSL(2,\C)$ et au plongement standard~$\tau$, et consid\'erons \`a pr\'esent des r\'eseaux \emph{non cocompacts} $\Gamma$ de~$G$.
\textcite{clr97} ont r\'epondu affirmativement \`a la question~\ref{qu:ss-gpe-reseau} dans ce cas en montrant que la vari\'et\'e hyperbolique de volume fini $M = \Gamma\backslash\HH^3$ contient une surface compacte immerg\'ee $\pi_1$-injective essentielle de genre au moins deux, qui de plus se rel\`eve en une surface \emph{plong\'ee} non s\'eparante dans un rev\^etement fini de~$M$.
Les sous-groupes de surface de~$\Gamma$ ainsi obtenus contiennent des \'el\'ements paraboliques (c'est-\`a-dire non diagonalisable sur~$\C$), dits accidentels.
D'autres sous-groupes de surface, quasi-fuchsiens au sens de l'exemple~\ref{ex:qf-classique}, ont ensuite \'et\'e construits par \textcite{mz08,mz09} et \textcite{bc15}.

Rappelons que l'ensemble limite d'un sous-groupe discret $\Lambda$ de $G=\PSL(2,\C)$ est l'ensemble des points d'accumulation, dans le bord \`a l'infini $\partial\HH^3 \simeq \PP^1(\C)$ de~$\HH^3$, des $\Lambda$-orbites de~$\HH^3$.
Lorsque $\Lambda$ est fuchsien (contenu dans un conjugu\'e de $\PSL(2,\R)$), son ensemble limite est un cercle (bord \`a l'infini d'une copie de $\HH^2$ dans~$\HH^3$).
Lorsque $\Lambda$ est quasi-fuchsien, son ensemble limite est un \emph{quasi-cercle} (c'est-à-dire l'image d'un cercle par une application quasi-conforme), dont la géométrie permet de mesurer combien $\Lambda$ est proche d'\^etre fuchsien.
Dans ce cas l'enveloppe convexe dans~$\HH^3$ de l'ensemble limite de~$\Lambda$ est un convexe ferm\'e de~$\HH^3$ sur lequel $\Lambda$ agit avec quotient compact : on dit que $\Lambda$ est \emph{convexe cocompact} dans~$\HH^3$.

En s'appuyant sur les travaux de \textcite{km12}, \textcite{cf19} ont r\'ecemment r\'epondu affirmativement \`a la question~\ref{qu:ss-gpe-qf-proche-f} en montrant que pour tout r\'eseau non cocompact $\Gamma$ de~$G$, les sous-groupes de surface quasi-fuchsiens de~$\Gamma$ v\'erifient la propri\'et\'e d'\og ubiquit\'e\fg\ suivante : pour toute paire de cercles disjoints dans $\partial\HH^3$, on peut trouver un sous-groupe de surface quasi-fuchsien de~$\Gamma$ dont l'ensemble limite est contenu dans la région de $\partial\HH^3$ bordée par les deux cercles.
\textcite{kw21} ont obtenu une version plus forte de ce r\'esultat : pour tout $k>1$ on peut choisir les sous-groupes de surface~$\Lambda$ de telle sorte que leur action sur $\partial\HH^3$ soit conjugu\'ee de mani\`ere $k$-quasi-conforme \`a l'action d'un groupe fuchsien.
Ces r\'esultats r\'epondent \`a une question d'Agol \parencite[Q.\,3.5]{dhm15}.
Ils ont r\'ecemment \'et\'e utilis\'es par \textcite{cf19,gm21} pour donner de nouvelles démonstrations de r\'esultats de \textcite{wis21} affirmant que $\Gamma$ agit librement et cocompactement sur un complexe cubique CAT(0) et que le quotient de ce complexe par un certain sous-groupe d'indice fini de~$\Gamma$ est \emph{sp\'ecial}.

\subsubsection{Groupe modulaire}

Rappelons que le groupe modulaire $\mathrm{Mod}(S)$ d'une surface compacte~$S$ de genre $\mathtt{g}\geq 2$ est le groupe des classes d'isotopie de diff\'eomorphismes de~$S$~; il s'identifie au groupe des automorphismes ext\'erieurs de $\pi_1(S)$ par un r\'esultat classique de Dehn, Nielsen et Baer.
Il agit proprement par isom\'etries sur un complexe simplicial hyperbolique au sens de Gromov (bien que localement infini), le \emph{complexe des courbes} $C(S)$ de~$S$, et Ivanov a montr\'e qu'en genre $\mathtt{g}>2$ il s'identifie au groupe tout entier des isom\'etries simpliciales de $C(S)$.
Comme au paragraphe~\ref{subsec:intro-ss-gpe-reseau}, on cherche \`a comprendre $\mathrm{Mod}(S)$ en \'etudiant quels types de sous-groupes il admet.

Un point de vue f\'econd est d'\'etudier $\mathrm{Mod}(S)$ via son action sur $C(S)$ par analogie avec les r\'eseaux non cocompacts de $G=\PSL(2,\C)$ agissant sur~$\HH^3$, \cf \textcite{rei06}.
Il est facile de construire des sous-groupes de $\mathrm{Mod}(S)$ qui sont des groupes libres non ab\'eliens, en faisant \og jouer au ping pong\fg\ des \'el\'ements, dits \emph{pseudo-Anosov}, qui admettent une dynamique analogue \`a celle des \'el\'ements hyperboliques de $G=\PSL(2,\C)$.
Se pose alors la question de l'existence de sous-groupes de $\mathrm{Mod}(S)$ qui soient isomorphes \`a des groupes de surface.
Cette question a \'et\'e r\'esolue affirmativement par \textcite{gh99} pour $\mathtt{g}\geq 4$ et par \textcite{lr06} pour $\mathtt{g}\geq 2$.
Comme pour $G=\PSL(2,\C)$ (\cf paragraphe~\ref{subsubsec:intro-var-hyp-non-cptes}), il existe une notion de sous-groupe \emph{convexe cocompact} de $\mathrm{Mod}(S)$, dont tous les \'el\'ements d'ordre infini sont pseudo-Anosov : \cf \textcite{fm02,ham05,kl08}.
La question de trouver des sous-groupes de surface de $\mathrm{Mod}(S)$ qui soient convexes cocompacts reste ouverte \`a ce jour.
Elle constitue l'une des motivations du travail r\'ecent de \textcite{kw21} sur les r\'eseaux non cocompacts de $G=\PSL(2,\C)$.

\subsection{Lien avec les repr\'esentations anosoviennes} \label{subsec:intro-Anosov}

Soient $G$ un groupe de Lie r\'eel semi-simple lin\'eaire et $\tau : \PSL(2,\R)\hookrightarrow G$ un plongement.
Les repr\'esentations $\tau$-fuchsiennes $\rho_0 : \pi_1(S)\to G$ de la d\'efinition~\ref{def:tau-fuchsien} sont des exemples de \emph{repr\'esentations anosoviennes} de $\pi_1(S)$ dans~$G$.
Ces derni\`eres, introduites par \textcite{lab06}, sont des repr\'esentations injectives et discr\`etes avec de fortes propri\'et\'es dynamiques.
Elles ont \'et\'e beaucoup \'etudi\'ees ces derni\`eres ann\'ees et jouent un r\^ole important en th\'eorie de Teichm\"uller--Thurston de rang sup\'erieur (\cf \cite{poz20}) et dans des d\'eveloppements r\'ecents sur les sous-groupes discrets des groupes de~Lie.

Leur d\'efinition d\'epend du choix\footnote{Il n'y a qu'un nombre fini de tels choix possibles, \`a conjugaison pr\`es.}, \`a conjugaison pr\`es, d'un sous-groupe parabolique de~$G$, c'est-à-dire (disons si $G$ est algébrique) d'un sous-groupe algébrique $P$ de~$G$ tel que l'espace homogène $G/P$ soit compact.
On peut penser à $G/P$ comme à un \og bord\fg\ de~$G$ ou de son espace riemannien symétrique $G/K$, où $K$ est un sous-groupe compact maximal de~$G$.
Pour simplifier, on supposera $P$ symétrique (c'est-à-dire conjugué à ses opposés), une condition technique qui est satisfaite dans l'exemple important suivant.

\begin{exemple} \label{ex:PSL(n,K)-drapeaux}
Soit $G = \PSL(n,\mathbb{K})$ où $\mathbb{K}=\R$ ou~$\C$.
Le groupe des matrices triangulaires supérieures est un sous-groupe parabolique $P$ de~$G$.
L'espace homogène compact $G/P$ correspondant est l'espace des drapeaux complets $(V_1\subset\dots\subset V_{n-1})$ de~$\mathbb{K}^n$.
Pour $n=2$, l'espace $G/P$ est la droite projective $\PP^1(\mathbb{K})$ ; il s'identifie au bord à l'infini de l'espace riemannien symétrique $G/K$ de~$G$, qui est le plan hyperbolique $\HH^2$ si $\mathbb{K}=\R$, et l'espace hyperbolique de dimension trois $\HH^3$ si $\mathbb{K}=\C$.
\end{exemple}

Les repr\'esentations $P$-anosoviennes sont d\'efinies par l'existence de bonnes \og applications de bord\fg, au sens suivant.
(Rappelons que l'holonomie de toute structure hyperbolique sur une surface~$S$ d\'efinit une action du groupe fondamental $\pi_1(S)$ sur le bord \`a l'infini $\PP^1(\R)$ du plan hyperbolique~$\HH^2$.)

\begin{definition} \label{def:applic-bord}
Soient $\pi_1(S)$ un groupe de surface et $\rho : \pi_1(S)\to G$ une repr\'esentation.
Une application $\xi : \PP^1(\R)\to G/P$ est une \emph{application de bord} pour~$\rho$ si elle est \'equivariante relativement \`a l'action de $\pi_1(S)$ sur $\PP^1(\R)$ donn\'ee par une certaine structure hyperbolique sur~$S$ et l'action de $\pi_1(S)$ sur $G/P$ via~$\rho$ : pour tous $\gamma\in\pi_1(S)$ et $x\in\PP^1(\R)$ on a $\xi(\gamma\cdot x)=\rho(\gamma)\cdot\xi(x)$.
\end{definition}

Par d\'efinition, une repr\'esentation $\rho : \pi_1(S)\to G$ est \emph{$P$-anosovienne} si elle admet une application de bord continue $\xi : \PP^1(\R)\to G/P$ qui :
\begin{itemize}
  \item est injective et même \emph{transverse} : toute paire de points distincts de $\PP^1(\R)$ est envoy\'ee sur une paire de points de $G/P$ en position g\'en\'erique ;
  \item pr\'eserve la dynamique : l'image par~$\xi$ du point fixe attractif dans $\PP^1(\R)$ d'un \'el\'ement $\gamma\in\pi_1(S)$ est un point fixe attractif dans $G/P$ de $\rho(\gamma)$ ;
  \item satisfait une condition de contraction uniforme pour le relev\'e, \`a un certain fibr\'e d\'efini par~$\rho$, du flot g\'eod\'esique du fibr\'e unitaire tangent de~$S$.
\end{itemize}
Cette condition de contraction est li\'ee \`a la condition d\'efinissant les \emph{flots d'Anosov} en dynamique, d'o\`u la terminologie.
Elle implique que les repr\'esentations $P$-anosoviennes forment un ouvert de $\Hom(\pi_1(S),G)$.
Nous n'\'enoncerons pas cette condition ici, mais renvoyons \`a \textcite[\S\,4]{kas19} pour plus de d\'etails, et pour diverses caract\'erisations.
La notion de repr\'esentation anosovienne se généralise \`a tous les groupes de type fini qui sont hyperboliques au sens de Gromov, \cf \textcite{gw12}.

Il est facile de voir qu'une repr\'esentation anosovienne $\rho : \pi_1(S)\to G$ est toujours injective.
En effet, si l'image par $\rho$ d'un \'el\'ement $\gamma\in\pi_1(S)$ est triviale dans~$G$, alors $\rho(\gamma)$ agit trivialement sur $\xi(\PP^1(\R))$ ; comme l'application $\rho$-\'equivariante $\xi$ est injective, $\gamma$ agit trivialement sur $\PP^1(\R)$, ce qui implique que $\gamma$ est trivial dans $\pi_1(S)$.
Un raffinement de ce raisonnement (bas\'e sur le fait que l'action de $\pi_1(S)$ sur $\PP^1(\R)$ est une action \emph{de convergence}) montre que l'image d'une repr\'esentation anosovienne est un sous-groupe \emph{discret} de~$G$, dit \emph{anosovien}.

Les sous-groupes anosoviens sont des sous-groupes discrets remarquables de~$G$.
Lorsque $G$ est de rang r\'eel un (par exemple $\PSL(2,\mathbb{K})$), ce sont exactement les sous-groupes convexes cocompacts de~$G$ au sens du paragraphe~\ref{subsubsec:intro-var-hyp-non-cptes}, c'est-\`a-dire les sous-groupes discrets agissant avec quotient compact sur un fermé convexe non vide de l'espace riemannien sym\'etrique $G/K$.
Lorsque $G$ est de rang r\'eel sup\'erieur (par exemple $\PSL(n,\mathbb{K})$ pour $n\geq 3$), les sous-groupes anosoviens sont des sous-groupes discrets de covolume infini avec de bonnes propri\'et\'es g\'eom\'etriques, topologiques et dynamiques, qui en font une bonne g\'en\'eralisation des sous-groupes convexes cocompacts (\cf \cite{gui19}).

La question suivante pr\'ecise la question~\ref{qu:ss-gpe-reseau}.

\begin{question} \label{qu:ss-gpe-Anosov-reseau}
Soit $G$ un groupe de Lie r\'eel semi-simple lin\'eaire non compact.
Tout r\'eseau de~$G$ admet-il des sous-groupes de surface qui soient anosoviens ?
\end{question}

Notons qu'une r\'eponse affirmative \`a la question~\ref{qu:ss-gpe-qf-proche-f} implique une r\'eponse affirmative \`a la question~\ref{qu:ss-gpe-Anosov-reseau}.
En effet, tout plongement $\tau : \PSL(2,\R)\hookrightarrow G$ d\'efinit un sous-groupe parabolique strict de~$G$ qui est sym\'etrique, à savoir le plus petit sous-groupe parabolique $P_{\tau}$ de~$G$ contenant l'image par~$\tau$ du groupe des matrices triangulaires sup\'erieures de $\PSL(2,\R)$ ainsi que le centralisateur dans~$G$ de l'image par~$\tau$ du groupe des matrices diagonales de $\PSL(2,\R)$.
Par construction, l'image par $\tau$ de tout \'el\'ement hyperbolique de $\PSL(2,\R)$ admet un point fixe attractif dans $G/P_{\tau}$.
De plus, $\tau$ induit un plongement \'equivariant de $\PP^1(\R)$ dans $G/P_{\tau}$ (\cf \eqref{eqn:underline-tau} ci-dessous) qui transmet la dynamique de $\PSL(2,\R)$ sur $\PP^1(\R)$ \`a $G/P_{\tau}$.
L'observation suivante en d\'ecoule ais\'ement, \cf \textcite{lab06} et \textcite{gw12}.

\begin{remarque} \label{rem:fuchsien->ano}
Tout sous-groupe $\tau$-fuchsien de~$G$ (définition~\ref{def:tau-fuchsien}) est $P_{\tau}$-anosovien.
\end{remarque}

En particulier, l'inclusion naturelle dans~$G$ de tout sous-groupe $\tau$-fuchsien $\Lambda$ de~$G$ admet un voisinage dans $\Hom(\Lambda,G)$ form\'e enti\`erement de repr\'esentations $P_{\tau}$-anosoviennes, donc injectives et discr\`etes, ce qui prouve la proposition~\ref{prop:deform-fuchsien}.

\section{\'Enonc\'es plus pr\'ecis et quantitatifs} \label{sec:enonces-precis}

Dans cette partie, nous donnons des \'enonc\'es qui r\'epondent affirmativement aux questions \ref{qu:ss-gpe-reseau} et \ref{qu:ss-gpe-Anosov-reseau}, \`a la fois dans le cadre du th\'eor\`eme principal et dans un cadre plus g\'en\'eral (th\'eor\`emes \ref{thm:quantitatif-PSL(n,C)} et~\ref{thm:quantitatif}).

\subsection{\'Enonc\'e pour les groupes $G$ du théorème principal} \label{subsec:quantitatif-PSL(n,C)}

Consid\'erons les exemples suivants de triplets $(G,\tau,P_{\tau})$, o\`u $n\geq 2$.

\begin{exemple} \label{ex:PSL(n,K)}
Soit $G = \PSL(n,\mathbb{K})$ où $\mathbb{K}=\R$ ou~$\C$, et soit $\tau = \tau_n : \PSL(2,\R)\hookrightarrow G$ le plongement irr\'eductible.
Ce plongement, unique \`a conjugaison par $\PGL(n,\mathbb{K})$ pr\`es, est donné concrètement de la manière suivante : identifions $\mathbb{K}^n$ à l'espace vectoriel $\mathbb{K}[X,Y]_{n-1}$ des polynômes à coefficients dans~$\mathbb{K}$ à deux variables $X$ et~$Y$ qui sont homogènes de degré $n-1$.
Le groupe $\SL(2,\mathbb{K})$ agit sur $\mathbb{K}[X,Y]_{n-1}$ par
$$\begin{pmatrix}a & b\\ c & d\end{pmatrix} \cdot P\!\begin{pmatrix}X\\Y\end{pmatrix} = P\!\left(\!\begin{pmatrix}a & b\\ c & d\end{pmatrix}^{-1} \! \begin{pmatrix}X\\Y\end{pmatrix}\!\right).$$
Ceci définit une repr\'esentation irr\'eductible $\SL(2,\mathbb{K}) \to \SL(\mathbb{K}[X,Y]_{n-1})\simeq\SL(n,\mathbb{K})$, qui passe au quotient en un plongement $\tau_n : \PSL(2,\mathbb{K})\hookrightarrow G$.
Pour $\mathbb{K}=\C$, on note encore $\tau_n$ sa restriction à $\PSL(2,\R)$.
Le sous-groupe parabolique $P_{\tau_n}$ du paragraphe~\ref{subsec:intro-Anosov} est le sous-groupe de $G \simeq \PSL(\mathbb{K}[X,Y]_{n-1})$ formé des matrices triangulaires supérieures dans la base $(X^{i-1}Y^{n-i})_{1\leq i\leq n}$ de $\mathbb{K}[X,Y]_{n-1}$.
L'espace $G/P_{\tau_n}$ est l'espace des drapeaux complets de $\mathbb{K}^n \simeq \mathbb{K}[X,Y]_{n-1}$.
\end{exemple}

\begin{exemple} \label{ex:SO(n,1)}
Soit $G=\SO(n,1)$ le sous-groupe de $\SL(n+1,\R)$ formé des matrices préservant la forme quadratique $Q(x) = x_1^2 + \dots x_n^2 - x_{n+1}^2$ sur~$\R^{n+1}$.
Il n'y a \`a conjugaison pr\`es qu'un seul plongement $\tau : \PSL(2,\R)\hookrightarrow G$, obtenu en identifiant $\PSL(2,\R)$ \`a $\SO(2,1)_0$ et en voyant $\SO(2,1)$ comme le sous-groupe de $\SO(n,1)$ agissant trivialement sur les $n-2$ premi\`eres coordonnées.
Le sous-groupe parabolique $P_{\tau}$ est le stabilisateur d'une droite $Q$-isotrope de~$\R^{n+1}$.
L'espace $G/P_{\tau}$ est le fermé de l'espace projectif $\PP(\R^{n+1})$ correspondant aux droites $Q$-isotropes de~$\R^{n+1}$ ; il s'identifie au bord à l'infini de l'espace hyperbolique réel~$\HH^n$, qui est l'espace riemannien symétrique $G/K$~de~$G$.
\end{exemple}

L'exemple suivant est bas\'e sur une observation élémentaire : pour $n$ impair, le plongement irr\'eductible $\tau_n : \PSL(2,\R)\hookrightarrow\PSL(n,\C)$ de l'exemple~\ref{ex:PSL(n,K)} se factorise par $\SL(n,\C)$ et préserve une forme hermitienne sur $\C[X,Y]_{n-1}\simeq\C^n$, dont la matrice dans la base $(X^{i-1} Y^{n-i})_{1\leq i\leq n}$ est anti-diagonale à coefficient strictement positifs ; l'image $\tau_n(\PSL(2,\R))$ s'identifie donc à un sous-groupe de $\SU(q+1,q)$ pour $2q+1=n$.

\begin{exemple} \label{ex:SU(p,q)}
Pour $p>q\geq 1$, soit $G=\SU(p,q)$ (\resp $\Sp(p,q)$), vu comme le groupe des transformations inversibles d'un espace vectoriel $V$ de dimension $p+q$ sur $\C$ (\resp sur les quaternions) qui préservent une forme hermitienne de signature $(p,q)$, et sont de déterminant un.
Soit $\tau : \PSL(2,\R)\hookrightarrow G$ le plongement obtenu en composant $\tau_{2q+1}$ avec l'inclusion naturelle de $\SU(q+1,q)$ dans $\SU(p,q)$ (\resp $\Sp(p,q)$).
Le sous-groupe parabolique $P_{\tau}$ est le stabilisateur d'un drapeau partiel $(V_1\subset\dots\subset V_q)$ de~$V$ formé de sous-espaces totalement isotropes $V_i$ de dimension~$i$ pour $1\leq i\leq q$.
L'espace $G/P_{\tau}$ est l'ensemble de tous les drapeaux partiels de~$V$ de cette forme.
\end{exemple}

Voici une version plus précise du théorème principal pour les groupes $G=\PSL(n,\C)$, $\SO(2p-1,1)$, $\SU(p,q)$ et $\Sp(p,q)$.
L'\'enonc\'e fait intervenir les notions d'\emph{application de bord} et de \emph{sous-groupe anosovien} de~$G$ du paragraphe~\ref{subsec:intro-Anosov}, et d'\emph{application sullivannienne} d\'efinie ci-dessous ; il donne un sens quantitatif au fait que l'on peut trouver, \`a l'int\'erieur de tout r\'eseau cocompact de~$G$, des sous-groupes de surface arbitrairement proches de groupes $\tau$-fuchsiens (d\'efinition~\ref{def:tau-fuchsien}).

\begin{theoreme} \label{thm:quantitatif-PSL(n,C)}
Soit $(G,\tau,P_{\tau})$ comme dans l'exemple~\ref{ex:PSL(n,K)}, l'exemple~\ref{ex:SO(n,1)} avec $n$ impair, ou l'exemple~\ref{ex:SU(p,q)}.
Soit $\Gamma$ un r\'eseau cocompact de~$G$.
Pour tout $\delta>0$, on peut trouver un sous-groupe de surface de~$\Gamma$ qui admet une application de bord \emph{$(\delta,\tau)$-sullivannienne} $\xi : \PP^1(\R)\to G/P_{\tau}$ (définition~\ref{def:sullivan}) ; en particulier, un tel sous-groupe est $P_{\tau}$-anosovien pour $\delta>0$ assez petit.
\end{theoreme}

Le cas $G=\PSL(2,\C)$ résulte des travaux de \textcite{km12}, et le cas g\'en\'eral est d\'emontr\'e par \textcite{klm18}.
\textcite{ham15,ham20} construit elle aussi des sous-groupes de surface de~$\Gamma$ proches de groupes $\tau$-fuchsiens dans ce contexte, mais elle ne consid\`ere pas d'application de bord.

Le th\'eor\`eme~\ref{thm:quantitatif-PSL(n,C)} r\'epond affirmativement aux questions \ref{qu:ss-gpe-reseau} et \ref{qu:ss-gpe-Anosov-reseau} pour les r\'eseaux cocompacts de $G=\PSL(n,\C)$, $\SO(2p-1,1)$, $\SU(p,q)$ ou $\Sp(p,q)$.
Pour $G=\PSL(2,\C)$, ceci implique une r\'eponse affirmative aux questions \ref{qu:ss-gpe-qf-reseau} et~\ref{qu:ss-gpe-qf-proche-f}, car dans ce cas les sous-groupes de surface anosoviens de~$G$ sont exactement les sous-groupes quasi-fuchsiens de l'exemple~\ref{ex:qf-classique}.
En g\'en\'eral, il n'est pas clair que les sous-groupes de surface construits par \textcite{klm18} et \textcite{ham15,ham20} soient $\tau$-quasi-fuchsiens (d\'efinition~\ref{def:tau-qf}) : la proposition~\ref{prop:deform-fuchsien} ne s'applique pas car lorsque $\delta$ tend vers z\'ero dans le th\'eor\`eme~\ref{thm:quantitatif-PSL(n,C)}, le genre de la surface a tendance \`a augmenter.
On s'attend toutefois \`a ce que ces constructions donnent une r\'eponse affirmative aux questions \ref{qu:ss-gpe-qf-reseau} et~\ref{qu:ss-gpe-qf-proche-f} au moins dans le cas de $G=\PSL(n,\C)$ et du plongement irr\'eductible~$\tau$.\footnote{Dans ce cas on pourrait esp\'erer utiliser par exemple les coordonn\'ees de \textcite{fg06} pour des sous-surfaces ouvertes.}

Voir le paragraphe~\ref{subsec:cadre-general} pour une version du th\'eor\`eme~\ref{thm:quantitatif-PSL(n,C)} dans un cadre plus général.

\subsection{Cercles et applications sullivanniennes} \label{subsec:sullivan}

On définit à présent la notion d'application sullivannienne.

Soient $G$ un groupe de Lie r\'eel semi-simple lin\'eaire, $\tau : \PSL(2,\R)\hookrightarrow G$ un plongement et $P_{\tau}$ le sous-groupe parabolique de~$G$ associ\'e comme \`a la fin du paragraphe~\ref{subsec:intro-Anosov}.
L'id\'ee de Kahn, Labourie et Mozes est la suivante : toute repr\'esentation $\tau$-fuchsienne $\rho : \pi_1(S)\to G$ pr\'eserve un \emph{$\tau$-cercle} dans $G/P_{\tau}$ (d\'efinition~\ref{def:tau-cercle}) ; pour $\delta>0$, ils consid\`erent une repr\'esentation $\rho : \pi_1(S)\to G$ comme \og$\delta$-proche de $\tau$-fuchsienne\fg\ si elle pr\'eserve une \og $\delta$-approximation de $\tau$-cercle\fg, qu'ils appellent \emph{application $(\delta,\tau)$-sullivannienne} (d\'efinition~\ref{def:sullivan}).

\subsubsection{$\tau$-cercles} \label{subsubsec:cercles}

Le plongement $\tau$ envoie le sous-groupe $B$ des matrices triangulaires sup\'erieures de $\PSL(2,\R)$ dans~$P_{\tau}$, et passe donc au quotient en un plongement $\tau$-\'equivariant
\begin{equation} \label{eqn:underline-tau}
\underline{\tau} : \PP^1(\R) \simeq \PSL(2,\R)/B \longrightarrow G/P_{\tau}.
\end{equation}

\begin{definition} \label{def:tau-cercle}
Un \emph{$\tau$-cercle} est une application de la forme $g\circ\underline{\tau} : \PP^1(\R)\to G/P_{\tau}$ o\`u $g\in G$.
\end{definition}

Si $g\circ\underline{\tau}$ est un $\tau$-cercle, alors toute reparamétrisation $g\circ\underline{\tau}\circ h : \mathbb{P}^1(\R)\to G/P_{\tau}$, où $h\in\PSL(2,\R)$, est encore un $\tau$-cercle, par \'equivariance de~$\underline{\tau}$.

\begin{exemple}
Soient $G=\PSL(2,\C)$ et $\tau : \PSL(2,\R)\hookrightarrow G$ le plongement standard.
Les images des $\tau$-cercles de $G/P_{\tau} = \partial\HH^3$ sont les bords des plans totalement géodésiques de~$\HH^3$ (\cf figure~\ref{fig:cercles}, \`a gauche).
\end{exemple}

\begin{exemple}
Soit $G=\PSL(n,\mathbb{K})$ où $\mathbb{K}=\R$ ou~$\C$, et soit $\tau=\tau_n : \PSL(2,\R)\hookrightarrow G$ le plongement irréductible.
Si l'on identifie $\mathbb{K}^n$ à $\mathbb{K}[X,Y]_{n-1}$ comme dans l'exemple~\ref{ex:PSL(n,K)}, alors $\underline{\tau}$ envoie tout point $[\alpha:\beta]\in\PP^1(\R)$ sur le drapeau $(V_1\subset\dots\subset V_{n-1})$ où $V_i$ est le sous-espace de $\mathbb{K}[X,Y]_{n-1}$ formé des polynômes divisibles par $(-\beta X+\alpha Y)^{n-i}$.
En particulier, la premi\`ere coordonn\'ee de~$\underline{\tau}$ est le plongement $[\alpha:\beta]\mapsto [(-\beta X+\alpha Y)^{n-1}]$ de $\PP^1(\R)$ dans $\PP(\mathbb{K}[X,Y]_{n-1}) \simeq \PP(\mathbb{K}^n)$ (\og plongement de Veronese\fg).
Pour $\mathbb{K}=\R$ et $n=3$, son image est une conique du plan projectif $\PP(\R^3)$ ; l'image de~$\underline{\tau}$ est l'ensemble des drapeaux $(V_1,V_2)$ de~$\R^3$ o\`u le projectivis\'e de~$V_1$ est un point de la conique et le projectivis\'e de~$V_2$ est tangent \`a la conique en ce point (\cf figure~\ref{fig:cercles}, \`a droite).
\end{exemple}

\begin{figure}[h!]
\centering
\labellist
\small\hair 2pt
\pinlabel {$g\cdot\underline{\tau}(\PP^1(\R))$} [u] at -5 340
\pinlabel {$\underline{\tau}(\PP^1(\R))$} [u] at -60 190
\pinlabel {$\underline{\tau}(\PP^1(\R))$} [u] at 570 255
\pinlabel {$g\cdot\underline{\tau}(\PP^1(\R))$} [u] at 900 250
\endlabellist
\hspace{1cm}
\includegraphics[width=5cm]{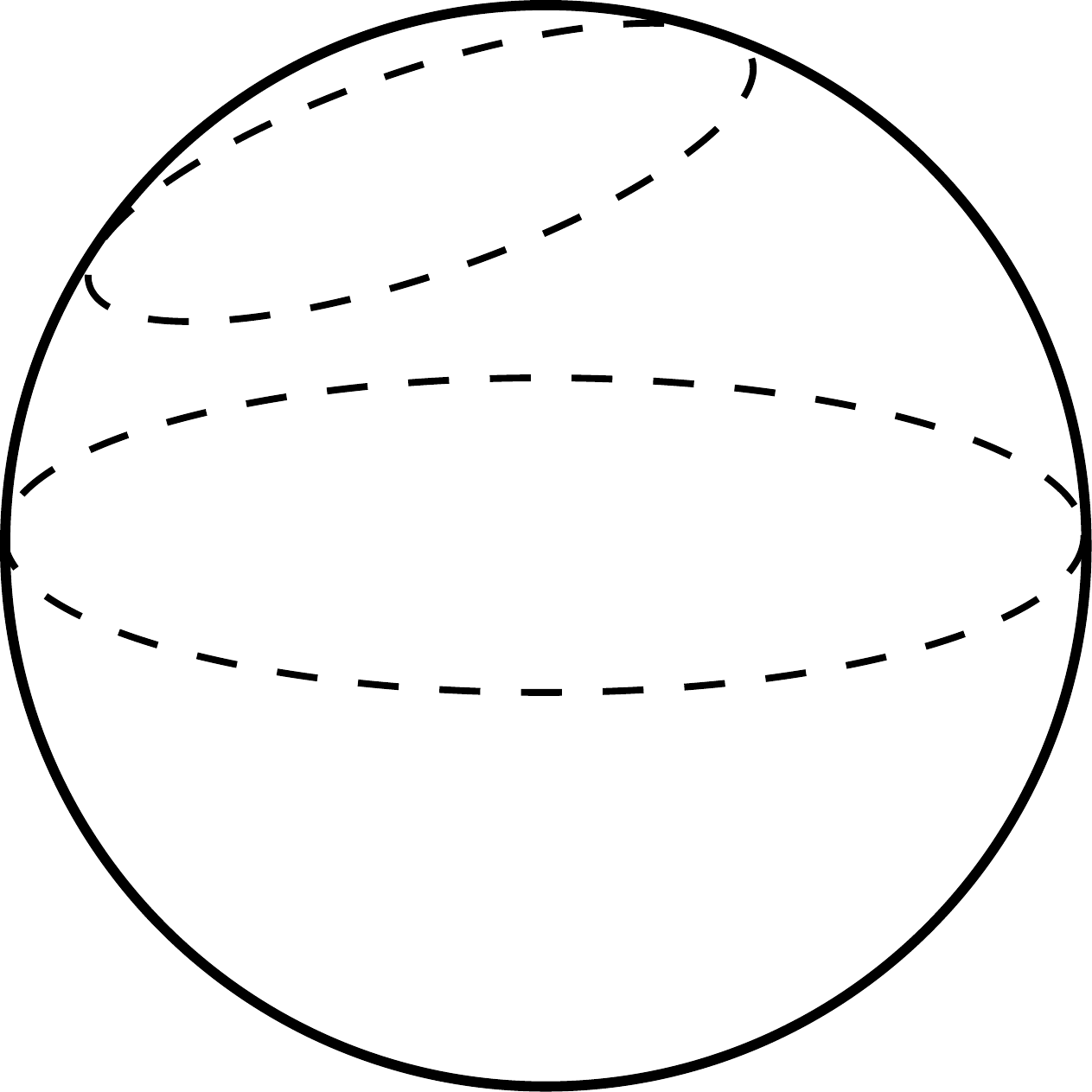}
\hspace{1.5cm}
\includegraphics[width=7cm]{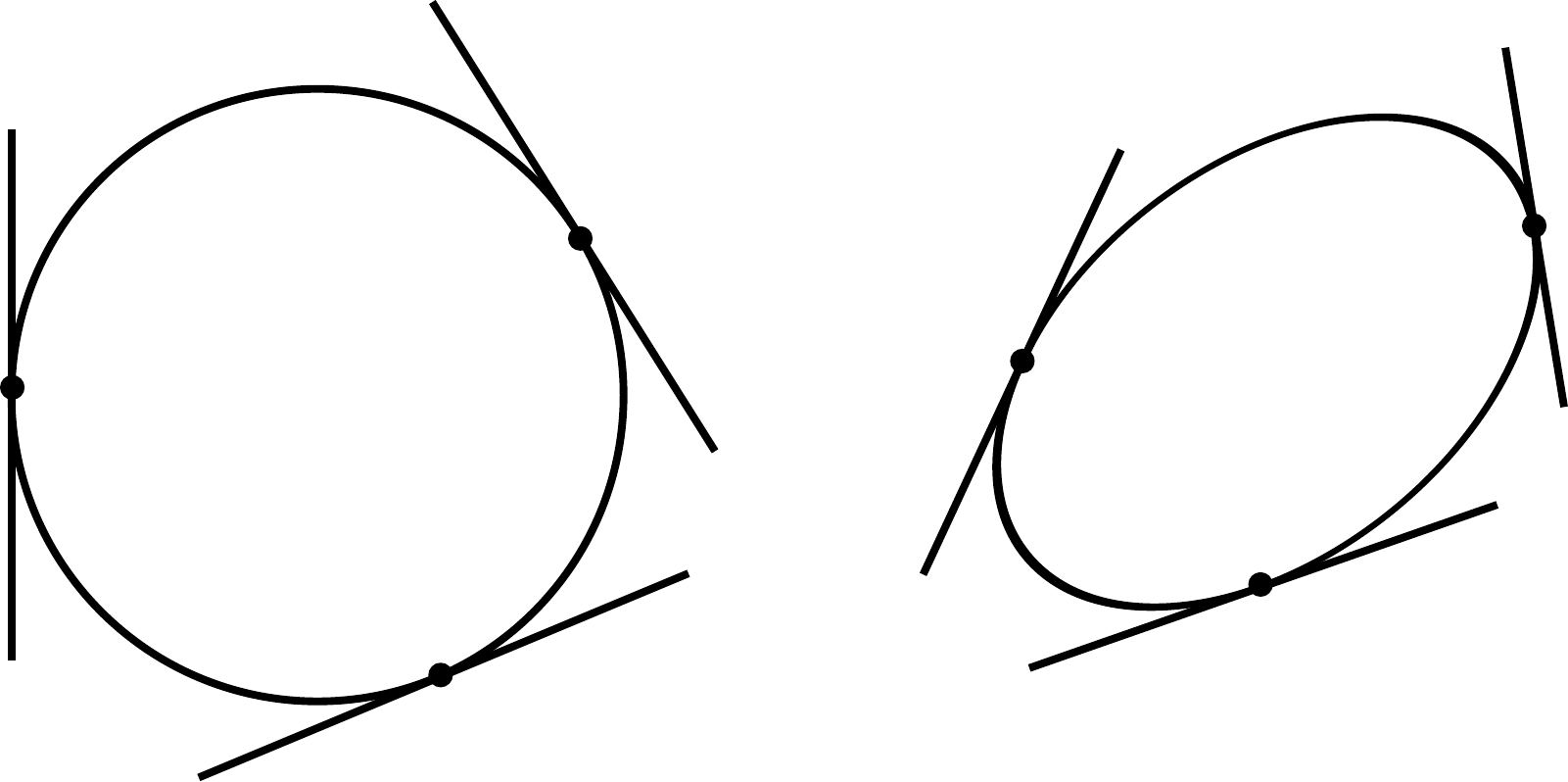}
\caption{Images de $\tau$-cercles dans $G/P_{\tau}$. \`A gauche : $G=\PSL(2,\C)$ et $G/P_{\tau} = \partial\HH^3$. \`A droite : $G=\PSL(3,\R)$ et $G/P_{\tau}$ est l'espace des drapeaux de~$\R^3$, repr\'esent\'es comme des couples $(\text{point}\in\text{droite projective})$ dans $\PP(\R^3)$.}
\label{fig:cercles}
\end{figure}

Une observation importante est que si $\pi_1(S)$ est un groupe de surface et $\rho_0 =\linebreak g(\tau\circ\varrho(\cdot))g^{-1} : \pi_1(S)\to G$ une repr\'esentation $\tau$-fuchsienne (d\'efinition~\ref{def:tau-fuchsien}), o\`u $g\in G$ et $\varrho : \pi_1(S)\to\PSL(2,\R)$, alors $\rho_0$ pr\'eserve le $\tau$-cercle $g\circ\underline{\tau} : \PP^1(\R)\to G/P_{\tau}$ : ce $\tau$-cercle est une application de bord pour~$\rho_0$ (d\'efinition~\ref{def:applic-bord}), faisant de $\rho_0$ une repr\'esentation $P_{\tau}$-anosovienne au sens du paragraphe~\ref{subsec:intro-Anosov}.

On suppose désormais que le centralisateur $Z^{\tau}$ de $\tau(\PSL(2,\R))$ dans~$G$ est compact, ce qui est vérifié dans le cadre du théorème~\ref{thm:quantitatif-PSL(n,C)} (et plus généralement dans le cadre~\ref{cadre} ci-dessous).
Pour $G=\PSL(n,\C)$ et $\tau=\tau_n$ le plongement irréductible, ce centralisateur $Z^{\tau}$ est même trivial.
La remarque suivante affirme que les $\tau$-cercles sont alors essentiellement les applications de bord des repr\'esentations $\tau$-fuchsiennes.

\begin{remarque} \label{rem:fuchsien-cercle}
Soit $\pi_1(S)$ un groupe de surface.
En supposant $Z^{\tau}$ compact, une représentation $\rho : \pi_1(S)\to G$ admet une application de bord de $\PP^1(\R)$ dans $G/P_{\tau}$ qui est un $\tau$-cercle si et seulement si $\rho$ est le produit d'une représentation $\tau$-fuchsienne $g(\tau\circ\varrho(\cdot))g^{-1}$, o\`u $g\in G$ et $\varrho : \pi_1(S)\to\PSL(2,\R)$, et d'une représentation de $\pi_1(S)$ à valeurs dans le groupe compact $g Z^{\tau} g^{-1}$.
\end{remarque}

\subsubsection{Applications $(\delta,\tau)$-sullivanniennes} \label{subsubsec:def-sullivan}

Munissons $\PP^1(\R)$ de sa distance riemannienne standard $d_{\PP^1(\R)}$, invariante par $\PSO(2)$, telle que $d_{\PP^1(\R)}(0,t) = |\!\arctan(t)|$ pour tout $t\in\R$.
Soit $K$ un sous-groupe compact maximal de~$G$ contenant $\tau(\PSO(2))$.
Munissons $G/P_{\tau}$ d'une distance riemannienne $d_{\tau}$, invariante par~$K$, telle que le $\tau$-cercle $\underline{\tau}$ de \eqref{eqn:underline-tau} soit isométrique.
Tout $\tau$-cercle $g\circ\underline{\tau} : \PP^1(\R)\to G/P_{\tau}$, où $g\in G$, est alors isométrique pour la distance riemannienne $d_{\tau}^g := d_{\tau}(g^{-1}\cdot,g^{-1}\cdot)$ sur~$G/P_{\tau}$.

\begin{definition}[\cite{klm18}] \label{def:sullivan}
Soient $G$ un groupe de Lie r\'eel semi-simple lin\'eaire, $\tau : \PSL(2,\R)\hookrightarrow G$ un plongement et $P_{\tau}$ le sous-groupe parabolique de~$G$ associ\'e.
Soit $\delta\geq 0$.
Une application $\xi : \PP^1(\R)\to G/P_{\tau}$ est \emph{$(\delta,\tau)$-sullivannienne} si pour tout $h\in\PSL(2,\R)$, il existe $g\in G$ tel que pour tout $x\in\PP^1(\R)$,
$$d_{\tau}^g\big(\xi(x), g\circ\underline{\tau}\circ h^{-1}(x)\big) \leq \delta.$$
\end{definition}

En d'autres termes, l'application $\xi$ est \og une $\delta$-approximation de $\tau$-cercle vu depuis n'importe quel triplet de points\fg\ : pour tout triplet $T = h\cdot (0,\infty,-1)$ de points deux à deux distincts positivement orientés de $\PP^1(\R)$, o\`u $h\in\PSL(2,\R)$, il existe $g\in G$ tel que le triplet $\xi(T)$ soit $\delta$-proche de $g\circ\underline{\tau}(0,\infty,-1)$ pour la distance $d_{\tau}^g$ et la courbe $\xi$ tout enti\`ere soit point par point $\delta$-proche du $\tau$-cercle $g\circ\underline{\tau}\circ h^{-1}$ pour $d_{\tau}^g$.

Pour $\delta=0$, une application $(\delta,\tau$)-sullivannienne est un $\tau$-cercle.
Plus $\delta>0$ est petit, mieux l'application approche un $\tau$-cercle, globalement comme localement (consid\'erer un triplet $T$ de points de $\PP^1(\R)$ tr\`es proches entre eux comme sur la figure~\ref{fig:Sullivan}).

\begin{figure}[h!]
\centering
\labellist
\small\hair 2pt
\pinlabel {$g\circ\underline{\tau}(\PP^1(\R))$} [u] at -35 285
\pinlabel {\textcolor{orange}{$\xi(\PP^1(\R))$}} [u] at 305 360
\pinlabel {\textcolor{orange}{$\xi(T)$}} [u] at 80 278
\pinlabel {$\underline{\tau}(\PP^1(\R))$} [u] at -45 185
\pinlabel {\textcolor{orange}{$g^{-1}\circ\xi(\PP^1(\R))$}} [u] at 440 190
\pinlabel {$\underline{\tau}(\infty)$} [u] at 185 220
\pinlabel {$\underline{\tau}(-1)$} [u] at 90 161
\pinlabel {$\underline{\tau}(0)$} [u] at 275 161
\endlabellist
\includegraphics[width=6.5cm]{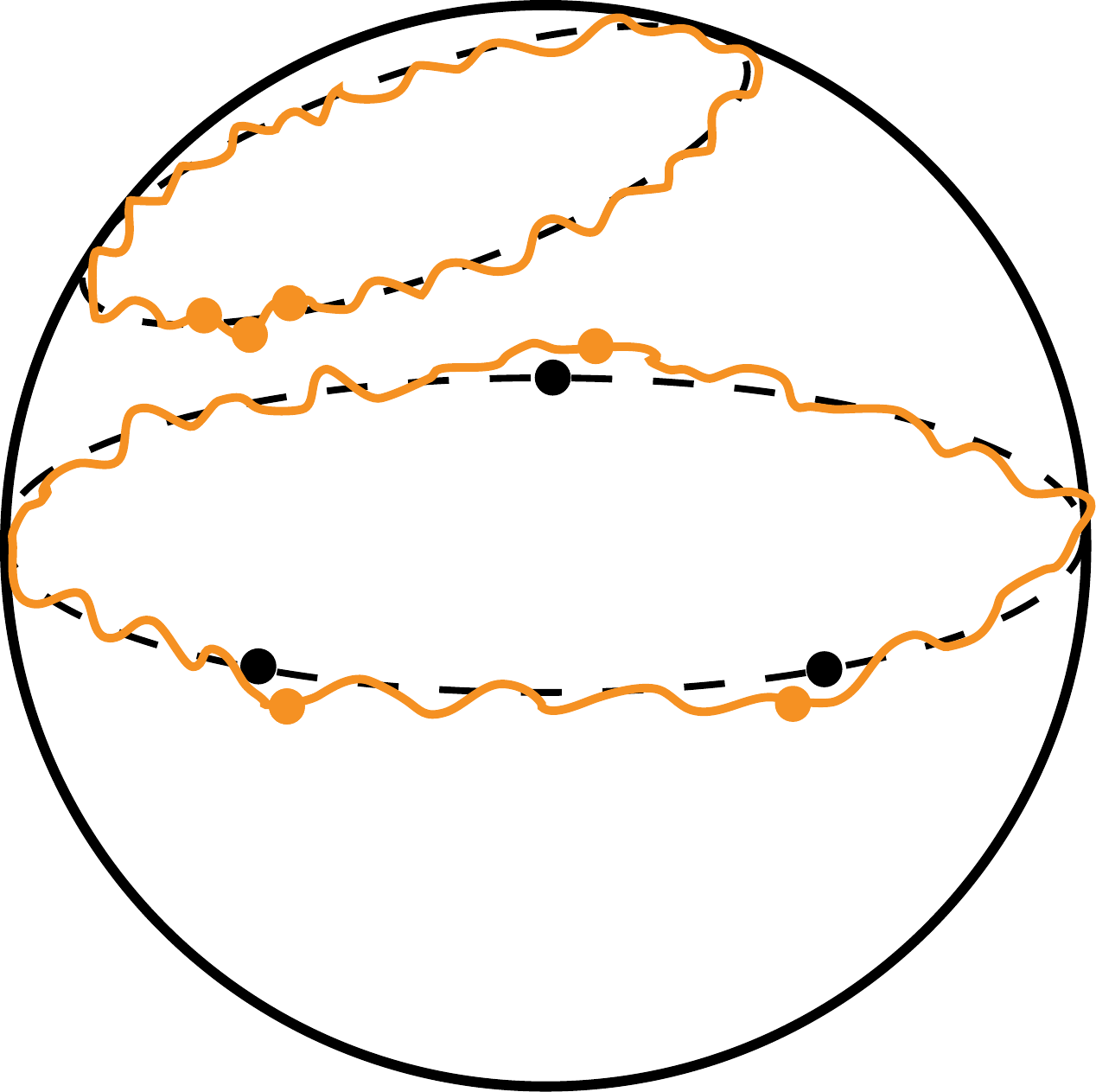}
\vspace{-0.3cm}
\caption{Application $(\delta,\tau)$-sullivannienne $\xi : \PP^1(\R)\to G/P_{\tau}$ pour $G=\PSL(2,\C)$ et $G/P_{\tau} = \partial\HH^3$. Le triplet $g^{-1}\circ\xi(T)$ est $\delta$-proche de $\underline{\tau}(0,\infty,-1)$ pour~$d_{\tau}$ et la courbe $g^{-1}\circ \xi$ tout enti\`ere est $\delta$-proche de $\underline{\tau}\circ h^{-1}$ pour~$d_{\tau}$.}
\label{fig:Sullivan}
\end{figure}

Cette définition est étroitement li\'ee aux birapports.
Par exemple, l'observation suivante est \'el\'ementaire, o\`u $[w:x:y:z]$ est le birapport (\`a valeurs dans $\mathbb{K}\cup\{\infty\}$) de quatre points $w,x,y,z\in\PP^1(\mathbb{K})$ pour $\mathbb{K}=\R$ ou~$\C$.

\begin{remarque}
Soient $G=\PSL(2,\C)$ et $\tau : \PSL(2,\R)\hookrightarrow G$ le plongement standard.
Il existe une fonction $\delta\mapsto k_{\delta}$ de $\R_+^*$ dans $]1,+\infty[$, tendant vers~$1$ en~$0$, telle que pour tout $\delta>0$ assez petit, toute application $(\delta,\tau)$-sullivannienne $\xi : \PP^1(\R)\to\PP^1(\C)=G/P_{\tau}$ soit $k_{\delta}$-quasi-sym\'etrique au sens suivant : pour tous $w,x,y,z\in\PP^1(\R)$ deux à deux distincts vérifiant $[w:\nolinebreak x:\nolinebreak y:\nolinebreak z]=-1$, on a $k_{\delta}^{-1} \leq | [\xi(w): \xi(x): \xi(y): \xi(z)] | \leq k_{\delta}$.
\end{remarque}

En effet, pour tous $w,x,y,z\in\PP^1(\R)$ deux à deux distincts, il existe $h\in\PSL(2,\R)$ envoyant $(w,x,y,z)$ sur $(0,\infty,-1,t)$ où $t := -1/[w:x:y:z] \in \R$.
Comme $\xi$ est $(\delta,\tau)$-sullivannienne, il existe $g\in G$ tel que, dans $G/P_{\tau} = \PP^1(\C)$ muni de sa distance $\mathrm{PSU}(2)$-invariante $d_{\tau}$, le quadruplet $g^{-1}\circ\xi(w,x,y,z)$ soit point par point $\delta$-proche du quadruplet $(0,\infty,-1,t)$.
Pour $\delta$ suffisamment petit et $t=1$, ceci implique que $| [g^{-1}\circ\xi(w):\nolinebreak g^{-1}\circ\nolinebreak\xi(x): g^{-1}\circ\xi(y): g^{-1}\circ\xi(z)] |$ appartient à $[k_{\delta}^{-1}, k_{\delta}]$ pour un certain $k_{\delta}>1$ ne dépendant que de~$\delta$, et tendant vers~$1$ lorsque $\delta$ tend vers~$0$.
On conclut par l'invariance du birapport par $G=\PSL(2,\C)$.

Il est probable que ce lien entre applications sullivanniennes et birapports se g\'en\'eralise \`a des groupes semi-simples $G$ de rang sup\'erieur, pour de bonnes notions de birapports (\cf par exemple \cite{bey21}).

L'observation facile suivante montre que, lorsque le centralisateur $Z^{\tau}$ de $\tau(\PSL(2,\R))$ dans~$G$ est compact, on peut quantifier le fait qu'une représentation soit proche d'être $\tau$-fuchsienne par l'existence d'une application de bord $(\delta,\tau)$-sullivannienne pour $\delta$ petit.

\begin{remarque}
Supposons $Z^{\tau}$ compact.
Pour toute surface hyperbolique compacte~$S$ et toute suite $(\rho_n)_{n\in\N}$ de représentations de $\pi_1(S)$ dans~$G$, si $\rho_n$ admet une application de bord $(\delta_n,\tau)$-sullivannienne pour tout~$n$ et si $\delta_n\to 0$, alors, quitte à conjuguer les~$\rho_n$, elles convergent vers une représentation $P_{\tau}$-anosovienne d'application de bord un $\tau$-cercle, comme à la remarque~\ref{rem:fuchsien-cercle}.
\end{remarque}

\subsubsection{Quelques propriétés des applications sullivanniennes} \label{subsec:prop-sullivan}

Les propriété suivantes seront utiles au paragraphe~\ref{subsec:dem-pi-1-inj-KLM}.

\begin{proposition}[\cite{klm18}] \label{prop:Sullivan}
Soient $G$ un groupe de Lie r\'eel semi-simple lin\'eaire, $\tau : \PSL(2,\R)\hookrightarrow G$ un plongement et $P_{\tau}$ le sous-groupe parabolique de~$G$ associ\'e.
On suppose que le centralisateur $Z^{\tau}$ de $\tau(\PSL(2,\R))$ dans~$G$ est compact.
Alors il existe $\delta_0,C,\alpha>0$ tels que
\begin{enumerate}
  \item\label{item:Sullivan-Holder} toute application $(\delta_0,\tau)$-sullivannienne $\xi : \PP^1(\R)\to G/P_{\tau}$ est $\alpha$-h\"old\'erienne ; plus pr\'ecis\'ement, pour tous $h\in\PSL(2,\R)$ et $g\in G$ comme \`a la d\'efinition~\ref{def:sullivan}, on~a $d_{\tau}^g(\xi(x),\xi(y)) \leq C\,d_{\PP^1(\R)}(h^{-1}(x),h^{-1}(y))^{\alpha}$ pour tous $x,y\in\PP^1(\R)$ ;
  \item pour tout groupe de surface $\pi_1(S)$ et toute repr\'esentation $\rho : \pi_1(S)\to G$, si $\rho$ admet une application de bord $(\delta,\tau)$-sullivannienne pour $\delta<\delta_0$, alors
  \begin{enumerate}
    \item\label{item:Sullivan-Anosov} $\rho$ est $P_{\tau}$-anosovienne (donc en particulier injective) ;
    \item\label{item:Sullivan-Anosov-deform} $\rho$ admet, pour tout $\varepsilon>0$, un voisinage dans $\Hom(\pi_1(S),G)$ form\'e enti\`erement de repr\'esentations $P_{\tau}$-anosoviennes d'application de bord $(\delta+\varepsilon,\tau)$-sullivannienne.
  \end{enumerate}
\end{enumerate}
\end{proposition}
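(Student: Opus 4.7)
Les trois points de l'\'enonc\'e s'\'etablissent en cascade.

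Pour le point~1 (r\'egularit\'e h\"old\'erienne), la condition sullivannienne fournit, pour chaque $h\in\PSL(2,\R)$, un \'el\'ement $g\in G$ tel que $g\circ\underline{\tau}\circ h^{-1}$ soit $\delta_0$-proche de~$\xi$ uniform\'ement pour la distance~$d_\tau^g$ ; par in\'egalit\'e triangulaire, $d_\tau^g(\xi(x),\xi(y))\leq 2\delta_0+d_{\PP^1(\R)}(h^{-1}x,h^{-1}y)$. Pour raffiner cette borne lin\'eaire en une borne h\"old\'erienne aux petites \'echelles, j'exploiterais la libert\'e de choix de~$h$ : pour $x,y$ donn\'es, on peut choisir $h$ dilatant fortement un voisinage de~$\{x,y\}$ (par un sous-groupe hyperbolique \`a un param\`etre de $\PSL(2,\R)$), ce qui ram\`ene $h^{-1}x$ et $h^{-1}y$ \`a distance born\'ee et fournit une borne $O(1)$ sur $d_\tau^{g'}(\xi(x),\xi(y))$ pour le t\'emoin $g'$ associ\'e. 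Il reste \`a comparer $d_\tau^{g'}$ \`a $d_\tau^g$ pour le $g$ attach\'e \`a un autre~$h$ : la comparaison se fait via la d\'erive de $g$ sous l'action du sous-groupe diagonal de $\tau(\PSL(2,\R))$, contr\^ol\'ee par la d\'ecomposition en poids de $\g/\mathfrak{p}_\tau$ sous le $\ssl_2$-triplet associ\'e \`a~$\tau$. Le poids positif minimal de cette action fournit l'exposant de H\"older~$\alpha>0$.

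Pour le point 2(a) (propri\'et\'e anosovienne), je v\'erifierais successivement les conditions d\'efinissant une repr\'esentation $P_\tau$-anosovienne. \emph{Continuit\'e} : cons\'equence imm\'ediate du point~1. \emph{Transversalit\'e} : les $\tau$-cercles sont transversaux, et la proximit\'e ponctuelle de~$\xi$ \`a un $\tau$-cercle en chaque triplet de points transmet la transversalit\'e \`a~$\xi$, pour $\delta_0$ assez petit. \emph{Pr\'eservation de la dynamique} : pour $\gamma\in\pi_1(S)$ de point fixe attractif $\gamma^+\in\PP^1(\R)$, appliquer la condition sullivannienne \`a des triplets s'accumulant sur~$\gamma^+$ montre que $\xi(\gamma^+)$ est un point fixe attractif de $\rho(\gamma)$ dans $G/P_\tau$. \emph{Contraction uniforme} : on conclut en invoquant la caract\'erisation de la propri\'et\'e anosovienne pour les groupes hyperboliques par l'existence d'applications de bord continues, transversales et dynamiques (Guichard--Wienhard, Kapovich--Leeb--Porti, Bochi--Potrie--Sambarino).

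Pour le point 2(b) (stabilit\'e), une fois $\rho$ reconnue $P_\tau$-anosovienne, l'ouverture du lieu anosovien dans $\Hom(\pi_1(S),G)$ et la d\'ependance continue de l'application de bord vis-\`a-vis de la repr\'esentation entra\^inent que toute repr\'esentation~$\rho'$ suffisamment proche de~$\rho$ est encore $P_\tau$-anosovienne, avec application de bord $\xi'$ uniform\'ement proche de~$\xi$ sur~$\PP^1(\R)$. La propri\'et\'e $(\delta+\varepsilon,\tau)$-sullivannienne de~$\xi'$ s'ensuit en r\'eutilisant, pour chaque~$h\in\PSL(2,\R)$, le t\'emoin $g\in G$ associ\'e \`a~$\xi$, l'erreur suppl\'ementaire \'etant major\'ee par $\sup_{x\in\PP^1(\R)} d_\tau^g(\xi'(x),\xi(x))\leq\varepsilon$. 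Le principal obstacle est l'\'etape h\"old\'erienne : propager correctement la condition sullivannienne aux \'echelles arbitrairement petites en contr\^olant la d\'erive des t\'emoins~$g$, et identifier l'exposant~$\alpha$ en termes de la repr\'esentation adjointe $\Ad\circ\tau$ de $\PSL(2,\R)$ sur $\g/\mathfrak{p}_\tau$. Les points 2(a) et 2(b) reposent ensuite sur la th\'eorie d\'esormais classique des repr\'esentations anosoviennes.
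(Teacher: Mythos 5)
Pour le point (2a), votre argument conclusif est une \'etape qui ne passe pas telle quelle : vous invoquez une \og caract\'erisation de la propri\'et\'e anosovienne par l'existence d'applications de bord continues, transversales et dynamiques\fg, mais une telle caract\'erisation n'existe pas. L'existence d'une application de bord continue, \'equivariante, transverse et pr\'eservant la dynamique au sens faible (point fixe attractif envoy\'e sur point fixe attractif) n'implique pas que la repr\'esentation soit anosovienne --- Gu\'eritaud, Guichard, Kassel et Wienhard en donnent un contre-exemple explicite --- et les caract\'erisations que vous citez (Kapovich--Leeb--Porti, Bochi--Potrie--Sambarino) requi\`erent toutes une hypoth\`ese \emph{uniforme} suppl\'ementaire (contraction, imbrication de c\^ones, \'ecarts de valeurs singuli\`eres). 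Cette uniformit\'e ne se lit pas sur les propri\'et\'es qualitatives de $\xi$ : il faut l'extraire de la condition sullivannienne elle-m\^eme, qui fournit pour chaque $h$ une imbrication uniforme de \og lunules\fg\ de $G/P_{\tau}$ autour de l'image de $\xi$ ; c'est cette propri\'et\'e d'imbrication, dans l'esprit des multic\^ones de \textcite{bps19}, qui donne la contraction uniforme chez \textcite{klm18}. Votre v\'erification de la transversalit\'e et de la pr\'eservation de la dynamique est en revanche correcte, mais elle ne constitue qu'une petite partie du travail.

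Pour le point (1), l'\'etape que vous signalez vous-m\^eme comme \og le principal obstacle\fg\ --- comparer $d_{\tau}^{g}$ et $d_{\tau}^{g'}$ pour les t\'emoins associ\'es \`a deux \'el\'ements $h,h'$ --- est effectivement tout le contenu de la preuve, et votre esquisse ne la r\'esout pas : il faut \'etablir que $g^{-1}g'$ reste \`a distance born\'ee de $\tau(h^{-1}h')\,Z^{\tau}$, ce qui ne d\'ecoule pas formellement de la d\'efinition~\ref{def:sullivan}, puisque les deux t\'emoins approchent $\xi$ pour des m\'etriques $d_{\tau}^{g}$ et $d_{\tau}^{g'}$ qui ne sont pas uniform\'ement comparables. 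Ce contr\^ole de coh\'erence des t\'emoins occupe les parties 4 \`a 7 de \textcite{klm18} et repose sur une version pr\'ecise, dans $G/P_{\tau}$, du lemme de Morse de \textcite{klp14,klp-morse}. Votre heuristique pour l'exposant $\alpha$ (le poids positif minimal de $\ad(\mathsf{h})$ sur $\g/\mathfrak{p}_{\tau}$, normalis\'e par celui de $\psl(2,\R)$) est la bonne, et votre traitement du point (2b) --- ouverture du lieu anosovien, continuit\'e de l'application de bord, puis r\'eutilisation des t\'emoins de $\xi$ pour $\xi'$ --- co\"incide avec celui de l'expos\'e, \`a ceci pr\`es qu'il faut utiliser l'\'equivariance et la cocompacit\'e pour ramener la v\'erification \`a un compact de $\PSL(2,\R)$ avant de majorer $\sup_x d_{\tau}^{g}(\xi'(x),\xi(x))$.
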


Le point~\eqref{item:Sullivan-Holder} repose sur une version pr\'ecise, dans $G/P_{\tau}$, du lemme de Morse de \textcite{klp14,klp-morse}, qui requiert un contr\^ole fin de la convergence de parties imbriqu\'ees de $G/P_{\tau}$ (\og lunules\fg) et fait l'objet des parties 4 \`a~7 de \textcite{klm18}.
Le point~\eqref{item:Sullivan-Anosov} repose sur cette m\^eme propri\'et\'e d'imbrication (mais sans n\'ecessiter un contr\^ole aussi fin), dans l'esprit de la caract\'erisation des repr\'esentations anosoviennes en termes de multic\^ones de \textcite{bps19}.
Le point~\eqref{item:Sullivan-Anosov-deform} r\'esulte de~\eqref{item:Sullivan-Anosov} et du fait que l'application de bord d'une repr\'esentation $P_{\tau}$-anosovienne varie contin\^ument avec la repr\'esentation (\cf \cite{gw12}).

\subsection{Un cadre général pour le théorème principal} \label{subsec:cadre-general}

Dans ce paragraphe, nous décrivons les couples $(G,\tau)$, où $G$ est un groupe de Lie r\'eel semi-simple lin\'eaire et $\tau : \PSL(2,\R)\hookrightarrow G$ un plongement, pour lesquels le théorème principal et le théorème~\ref{thm:quantitatif-PSL(n,C)} sont démontrés.
Ces couples sont ceux qui satisfont deux conditions : l'une de \emph{régularité} et l'autre, que nous noterons (R), de \emph{retournement}.
Le lecteur peu familier avec la théorie de Lie pourra ignorer ce paragraphe et se concentrer sur l'exemple~\ref{ex:PSL(n,K)} de $(\PSL(n,\C),\tau_n)$, ou plus généralement sur les exemples du paragraphe~\ref{subsec:quantitatif-PSL(n,C)}, où les deux conditions sont satisfaites.

\subsubsection{La condition de régularité sur~$\mathsf{h}$} \label{subsubsec:cond-reg}

Soient $G$ un groupe de Lie r\'eel semi-simple lin\'eaire, $\tau : \PSL(2,\R)\hookrightarrow G$ un plongement et $\dd\tau : \psl(2,\R)\to\g$ le morphisme d'algèbres de Lie correspondant.
Posons
\begin{equation} \label{eqn:def-h}
\mathsf{h} := \dd\tau\left( \begin{pmatrix} 1 & 0 \\ 0 & -1 \end{pmatrix}\right).
\end{equation}
On demande que $\mathsf{h}$ soit \emph{régulier} au sens classique suivant.

Soit $K$ un sous-groupe compact maximal de~$G$ contenant $\tau(\PSO(2))$ ; c'est l'ensemble des points fixes d'une involution $\theta$ de~$G$, dite de Cartan.
On a la décomposition $\g = \mathfrak{k} + \mathfrak{k}^{\perp}$ où $\mathfrak{k}$ est l'algèbre de Lie de~$K$ et $\mathfrak{k}^{\perp}$ l'ensemble des points fixes de $-\dd\theta$ (l'orthogonal de~$\mathfrak{k}$ pour la forme de Killing).
L'élément $\mathsf{h}$ appartient à~$\mathfrak{k}^{\perp}$.
Soit $\mathfrak{a}$ un sous-espace abélien maximal~de~$\mathfrak{k}^{\perp}$ contenant~$\mathsf{h}$ (\emph{sous-espace de Cartan}).
Le groupe de Weyl $W = N_G(\mathfrak{a})/Z_G(\mathfrak{a})$ agit sur~$\mathfrak{a}$ ; l'ensemble des points qui sont fixés par au moins un élément non trivial de~$W$ est une union d'hyperplans appelés murs.
On dit que $\mathsf{h}$ est \emph{régulier} s'il n'appartient à aucun mur.
(Autrement dit, $\mathsf{h}$ appartient \`a l'\emph{int\'erieur} d'une chambre de Weyl $\mathfrak{a}^+$ de~$\mathfrak{a}$, o\`u une chambre de Weyl est par d\'efinition un c\^one convexe de~$\mathfrak{a}$ qui est l'adh\'erence d'une composante connexe du compl\'ementaire dans~$\mathfrak{a}$ de l'union des murs.)

\begin{exemple} \label{ex:PSL(n,K)-a}
Soient $G=\PSL(n,\mathbb{K})$, o\`u $\mathbb{K}=\R$ ou~$\C$, et $\tau = \tau_n : \PSL(2,\R)\hookrightarrow G$ le plongement irréductible (exemple~\ref{ex:PSL(n,K)}).
Dans la base $(X^{i-1}Y^{n-i})_{1\leq i\leq n}$ de $\mathbb{K}[X,Y]_{n-1} \simeq \mathbb{K}^n$, l'\'el\'ement $\mathsf{h}$ est diagonal de coefficients $(n-1,n-3,\dots,-(n-3),-(n-1))$.
L'unique sous-espace de Cartan $\mathfrak{a}$ de $\psl(n,\mathbb{K})$ contenant~$\mathsf{h}$ est l'espace des matrices diagonales dans cette base, de trace nulle.
Les murs de~$\mathfrak{a}$ sont les hyperplans donnés par l'égalité entre $i$-ième et $j$-ième coefficients diagonaux pour $1\leq i<j\leq n$.
L'élément $\mathsf{h}$ est ici régulier : ses coefficients sont deux à deux distincts.
\end{exemple}

\begin{remarques} \label{rem:h-reg}
\begin{enumerate}
  \item\label{item:h-reg-1} La r\'egularit\'e de~$\mathsf{h}$ implique que le centralisateur $Z^{\tau}$ de $\tau(\PSL(2,\R))$ dans~$G$ est compact.
  La réciproque est fausse : \cf exemple~\ref{ex:SO(4,2)}.
  \item L'\'el\'ement $\mathsf{h}$ est toujours r\'egulier lorsque $G$ est de rang r\'eel un.
  \item Des plongements $\tau$ différents peuvent avoir le m\^eme \'el\'ement $\mathsf{h}$ r\'egulier : \cf exemple~\ref{ex:PU(n,1)}.
\end{enumerate}
\end{remarques}

\subsubsection{La condition (R) sur $(G,\tau)$} \label{subsubsec:cond-R}

Soit $Z_G(\mathsf{h})$ le centralisateur de $\mathsf{h}$ dans~$G$, et soit $\underline{\tau} : \PP^1(\R)\hookrightarrow G/P_{\tau}$ comme en \eqref{eqn:underline-tau}.
Hamenst\"adt et Kahn--Labourie--Mozes consid\`erent la condition suppl\'ementaire suivante, dite de \emph{retournement} (\emph{flip} en anglais) :

\smallskip

\begin{itemize}
  \item[(R)] il existe un élément $j\in G$ d'ordre deux, appartenant à l'intersection du centre de $Z_G(\mathsf{h})$ et de la composante neutre de $Z_G(\mathsf{h})$, tel que $j\cdot\underline{\tau}(0,\infty,-1) = \underline{\tau}(0,\infty,1)$.
\end{itemize}

\smallskip

Cette condition exprime le fait que l'on peut utiliser une famille continue d'éléments de~$G$ pour \og retourner\fg\ $\underline{\tau}(0,\infty,-1)$ en $\underline{\tau}(0,\infty,1)$ en fixant $\underline{\tau}(0)$ et $\underline{\tau}(\infty)$ (\cf figure~\ref{fig:refl-I}, paragraphe~\ref{subsec:utiliser-hyp-retournement}).

\begin{exemple} \label{ex:PSL(n,C)-cond-R}
Soient $G=\PSL(n,\C)$ et $\tau = \tau_n$ le plongement irréductible.
Le centralisateur $Z_G(\mathsf{h})$ de $\mathsf{h}$ dans $G$ est le groupe des matrices diagonales de~$G$, c'est un groupe connexe et ab\'elien.
La condition~(R) est satisfaite en prenant $j = \tau\big(\big(\begin{smallmatrix} i & 0\\ 0 & -i\end{smallmatrix}\big)\big)$.
\end{exemple}

\subsubsection{Le r\'esultat général}

Dans toute la suite, on travaille dans le cadre suivant.

\begin{cadre} \label{cadre}
Soient $G$ un groupe de Lie r\'eel semi-simple lin\'eaire sans facteur compact et $\tau : \PSL(2,\R)\hookrightarrow G$ un plongement.
On suppose $\mathsf{h} := \dd\tau\big(\big(\begin{smallmatrix} 1 & 0\\ 0 & -1\end{smallmatrix}\big)\big) \in \g$ r\'egulier comme au paragraphe~\ref{subsubsec:cond-reg}, ce qui implique que le centralisateur $Z^{\tau}$ de $\dd\tau(\psl(2,\R))$ dans~$G$ est compact.
On note $P_{\tau}$ le sous-groupe parabolique minimal de~$G$ contenant l'image par~$\tau$ du groupe des matrices triangulaires sup\'erieures de $\PSL(2,\R)$ ; son alg\`ebre de Lie est la somme des sous-espaces propres de $\ad(\mathsf{h})$ associ\'es \`a des valeurs propres positives ou nulles.
\end{cadre}

On dit qu'un réseau $\Gamma$ de~$G$ est \emph{irréductible} si les seuls sous-groupes distingu\'es $G'$ de~$G$ tels que la projection de $\Gamma$ sur $G/G'$ soit discr\`ete sont les sous-groupes finis ou d'indice fini de~$G$.
Voici une version générale du théorème~\ref{thm:quantitatif-PSL(n,C)}.

\begin{theoreme} \label{thm:quantitatif}
Dans le cadre~\ref{cadre}, supposons que la condition (R) du paragraphe~\ref{subsubsec:cond-R} est vérifiée.
Soit $\Gamma$ un r\'eseau cocompact irr\'eductible de~$G$.
Pour tout $\delta>0$, on peut trouver un sous-groupe de surface de~$\Gamma$ qui admet une application de bord $(\delta,\tau)$-sullivannienne $\xi : \PP^1(\R)\to G/P_{\tau}$ (définition~\ref{def:sullivan}) ; en particulier, un tel sous-groupe est $P_{\tau}$-anosovien au sens du paragraphe~\ref{subsec:intro-Anosov} pour $\delta$ assez petit.
\end{theoreme}

Comme au paragraphe~\ref{subsec:quantitatif-PSL(n,C)}, le cas $G=\PSL(2,\C)$ résulte des travaux de \textcite{km12}, et le cas g\'en\'eral est trait\'e par \textcite{klm18}.
\textcite{ham15,ham20} construit dans ce contexte des sous-groupes de surface de~$\Gamma$ proches de groupes $\tau$-fuchsiens mais sans consid\'erer d'application de bord.

On s'attend \`a ce que le th\'eor\`eme~\ref{thm:quantitatif} reste vrai sans la condition~(R).
Voir \textcite{km15} pour $G=\PSL(2,\R)\times\PSL(2,\R)$ et le plongement diagonal $\tau : \PSL(2,\R)\hookrightarrow G$, pour des r\'eseaux cocompacts non irr\'eductibles.

\begin{remarque} \label{rem:h-non-reg} 
\textcite{klm18} n'ont en fait pas besoin de supposer que $\mathsf{h}$ est r\'egulier : il suffit que le centralisateur $Z^{\tau}$ de $\tau(\PSL(2,\R))$ dans~$G$ soit compact.
Lorsque $\mathsf{h}$ n'est pas r\'egulier, ils remplacent la condition~(R) par une condition plus forte : au lieu de l'intersection du centre et de la composante neutre de $Z_G(\mathsf{h})$, ils demandent que la composante neutre du centre de $Z_G(\mathsf{h})$ contienne un élément $j$ d'ordre deux tel que $j\cdot\underline{\tau}(0,\infty,-1) = \underline{\tau}(0,\infty,1)$.
Pour $\mathsf{h}$ non régulier, le sous-groupe parabolique $P_{\tau}$ de~$G$ est non minimal : \cf exemple~\ref{ex:SO(4,2)}.
Par souci de simplicit\'e, nous supposerons ici $\mathsf{h}$ r\'egulier.
\end{remarque}

\subsubsection{Exemples relatifs à la condition~(R)} \label{subsubsec:cond-R-ex}

On pourra utiliser les observations suivantes.

\begin{remarques} \label{rem:cond-R-satisf-ou-pas}
\begin{enumerate}
  \item\label{item:cond-R-oui} Si $\tau$ est la restriction \`a $\PSL(2,\R)$ d'un plongement $\tau_{\C} : \PSL(2,\C)\hookrightarrow G$ tel que le tore complexe $\exp\circ\dd\tau_{\C}\big(\C\big(\begin{smallmatrix} 1 & 0\\ 0 & -1\end{smallmatrix}\big)\big)$ soit contenu dans le centre de $Z_G(\mathsf{h})$, alors la condition~(R) est satisfaite pour $j = \dd\tau_{\C}\big(\big(\begin{smallmatrix} i & 0\\ 0 & -i\end{smallmatrix}\big)\big)$.
  \item\label{item:cond-R-non} Soit $Z_K(\mathsf{h})_0$ la composante neutre du centralisateur de $\mathsf{h}$ dans~$K$.
  Si $\mathsf{h}$ est régulier et si le centre de $Z_K(\mathsf{h})_0$ est trivial, ou plus généralement contenu dans le centralisateur $Z^{\tau}$ de $\tau(\PSL(2,\R))$ dans~$G$, alors la condition~(R) n'est pas satisfaite.
\end{enumerate}
\end{remarques}

Le point~\eqref{item:cond-R-oui} est immédiat.
Pour~\eqref{item:cond-R-non}, rappelons que si $\mathsf{h}$ est régulier, alors la composante neutre de $Z_G(\mathsf{h})$ est le produit direct de $A:=\exp(\mathfrak{a})$ et de $Z_K(\mathsf{h})_0$.
Les éléments de~$A$ fixent $\underline{\tau}(0)$ et $\underline{\tau}(\infty)$, et préservent $\underline{\tau}(\R_+^*)$.
Les éléments de $Z^{\tau}$ fixent $\underline{\tau}(\PP^1(\R))$ point par point.

\begin{exemple}
Pour $G=\PSL(2,\R)$ et $\tau$ l'identit\'e, ainsi que pour $G=\PSL(2,\R)\times\PSL(2,\R)$ et $\tau : \PSL(2,\R)\hookrightarrow G$ le plongement diagonal, la condition~(R) n'est pas satisfaite car $Z_K(\mathsf{h})_0$ est trivial (\cf remarque~\ref{rem:cond-R-satisf-ou-pas}.\eqref{item:cond-R-non}).
\end{exemple}

\begin{exemple} \label{ex:SO(n,1)-tau}
Pour $G=\SO(n,1)$, de rang r\'eel un, il n'y a \`a conjugaison pr\`es qu'un seul plongement $\tau : \PSL(2,\R)\hookrightarrow G$, obtenu en identifiant $\PSL(2,\R)$ \`a la composante neutre de $\SO(2,1)$ comme dans l'exemple~\ref{ex:SO(n,1)}.
Le groupe $Z_K(\mathsf{h})_0$ est isomorphe \`a $\SO(n-1)$.
Pour $n$ pair, la condition~(R) n'est pas satisfaite car le centre de $\SO(n-1)$ est trivial (\cf remarque~\ref{rem:cond-R-satisf-ou-pas}.\eqref{item:cond-R-non}).
Pour $n$ impair, la condition~(R) est satisfaite pour $j:=-\id\in\SO(n-\nolinebreak 1)\simeq Z_K(\mathsf{h})_0$.
\end{exemple}

\begin{exemple} \label{ex:PU(n,1)}
Soit $G = \PU(n,1)$ (c'est-à-dire $\U(n,1)$ divisé par son centre), de rang r\'eel un.
Pour $n\geq 2$, il existe \`a conjugaison pr\`es deux plongements $\tau : \PSL(2,\R)\hookrightarrow G$ :
\begin{itemize}
  \item le plongement dit \emph{totalement r\'eel} obtenu en identifiant $\PSL(2,\R)$ avec la composante neutre de $\PO(2,1)$ ;
  \item le plongement dit \emph{complexe} obtenu en identifiant $\PSL(2,\R)$ avec $\PU(1,1)$.
\end{itemize}
Dans les deux cas, le groupe $Z_K(\mathsf{h})_0$ est isomorphe \`a un quotient de $\U(n)$ par un sous-groupe fini, et le centre de $Z_K(\mathsf{h})_0$ est un tore compact de dimension un.
La condition~(R) est satisfaite pour le plongement totalement r\'eel, mais non satisfaite pour le plongement complexe car dans ce cas $Z_K(\mathsf{h})_0$ centralise $\tau(\PSL(2,\R))$ (\cf remarque~\ref{rem:cond-R-satisf-ou-pas}.\eqref{item:cond-R-non}).
\end{exemple}

\begin{exemple} \label{ex:PSL}
Pour $G=\PSL(n,\mathbb{K})$ o\`u $\mathbb{K}=\R$ ou~$\C$, l'hypoth\`ese de régularité de~$\mathsf{h}$ impose que $\tau : \PSL(2,\R)\hookrightarrow G$ soit, \`a conjugaison pr\`es, le plongement irr\'eductible~$\tau_n$ de l'exemple~\ref{ex:PSL(n,K)}.
La condition~(R) est satisfaite pour $\mathbb{K}=\C$ par la remarque~\ref{rem:cond-R-satisf-ou-pas}.\eqref{item:cond-R-oui} (\cf exemple~\ref{ex:PSL(n,C)-cond-R}), et non satisfaite pour $\mathbb{K}=\R$ par la remarque~\ref{rem:cond-R-satisf-ou-pas}.\eqref{item:cond-R-non}, car pour $\mathbb{K}=\R$ le groupe $Z_K(\mathsf{h})_0$ est trivial.
\end{exemple}

On montre de m\^eme que la condition~(R) est satisfaite si $G$ est un groupe de Lie simple complexe de centre trivial et $\dd\tau : \psl(2,\R)\hookrightarrow\g$ le plongement principal, alors qu'elle n'est jamais satisfaite si $G$ est un groupe de Lie simple r\'eel d\'eploy\'e.

\begin{exemple} \label{ex:SU(p,q)-bis}
Soit $G=\SU(p,q)$ ou $\Sp(p,q)$, o\`u $p>q$.
Soit $\tau : \PSL(2,\R)\hookrightarrow G$ le plongement obtenu en composant $\tau_{2q+1}$ avec l'inclusion naturelle de $\SU(q+1,q)$ dans~$G$, comme dans l'exemple~\ref{ex:SU(p,q)}.
L'\'el\'ement $\mathsf{h}$ est r\'egulier.
La condition~(R) est satisfaite par la remarque~\ref{rem:cond-R-satisf-ou-pas}.\eqref{item:cond-R-oui}.
\end{exemple}

\begin{exemple} \label{ex:SO(4,2)}
Soient $G=\SO(4,2)$ et $\tau : \PSL(2,\R)\simeq\SO(2,1)_0\hookrightarrow G$ le plongement obtenu en plongeant $\SO(2,1)$ diagonalement dans $\SO(2,1)\times\SO(2,1)\subset G$.
L'\'el\'ement $\mathsf{h}$ n'est pas r\'egulier, mais le centralisateur $Z^{\tau}$ de $\tau(\PSL(2,\R))$ dans~$G$ est compact.
La composante neutre de $Z_G(\mathsf{h})$ est isomorphe \`a $\GL^+(2,\R)\times\SO(2)$, et l'\'el\'ement $j=-\id$ du facteur $\SO(2)$ échange $\underline{\tau}(1)$ et $\underline{\tau}(-1)$ : la condition~(R) est satisfaite (version forte de la remarque~\ref{rem:h-non-reg}).
Le sous-groupe parabolique $P_{\tau}$ est ici le stabilisateur d'un plan totalement isotrope de~$\R^{4,2}$.
\end{exemple}

\subsubsection{Classification des plongements~$\tau$}

Pour $G$ fixé, il n'y a qu'un nombre fini de plongements $\tau$ possibles à conjugaison près.
On peut les classifier de la manière suivante.
Posons
\begin{equation} \label{eqn:h-e-f-0}
(\mathsf{h}_0,\mathsf{e}_0,\mathsf{f}_0) := \left( \begin{pmatrix} 1 & 0 \\ 0 & -1 \end{pmatrix}, \begin{pmatrix} 0 & 1 \\ 0 & 0 \end{pmatrix}, \begin{pmatrix} 0 & 0 \\ 1 & 0 \end{pmatrix} \right) \in \psl(2,\R)^3.
\end{equation}
Les plongements $\tau$ de $\PSL(2,\R)$ ou $\SL(2,\R)$ dans~$G$ sont en bijection avec les morphismes d'alg\`ebres de Lie $\dd\tau : \psl(2,\R)\to\g$ d'image non nulle, ou encore, via l'évaluation en $(\mathsf{h}_0,\mathsf{e}_0,\mathsf{f}_0)$, avec les \emph{$\ssl(2)$-triplets} de~$\g$, c'est-à-dire les triplets $(\mathsf{h},\mathsf{e},\mathsf{f})\in\g^3$ non nuls tels que
$$[\mathsf{h},\mathsf{e}]=2\mathsf{e}, \quad\quad [\mathsf{h},\mathsf{f}]=-2\mathsf{f} \quad\quad\mathrm{et}\quad\quad [\mathsf{e},\mathsf{f}]=\mathsf{h}.$$
Le th\'eor\`eme de Jacobson--Morozov affirme que tout \'el\'ement nilpotent non nul $\mathsf{e}$ de~$\g$ peut \^etre compl\'et\'e en un $\ssl(2)$-triplet, et un r\'esultat classique de Kostant dit que ceci induit une bijection entre l'ensemble des \'el\'ements nilpotents non nuls de~$\g$ modulo l'action adjointe de~$G$ et l'ensemble des $\ssl(2)$-triplets de~$\g$ modulo l'action adjointe~de~$G$.

\begin{remarque}
\textcite{klm18} expriment la condition~(R) du paragraphe~\ref{subsubsec:cond-R} en termes du $\ssl(2)$-triplet $(\mathsf{h},\mathsf{e},\mathsf{f}) = \dd\tau(\mathsf{h}_0,\mathsf{e}_0,\mathsf{f}_0)$ : il existe un élément $j\in G$ d'ordre deux, appartenant à l'intersection du centre de $Z_G(\mathsf{h})$ et de la composante neutre de $Z_G(\mathsf{h})$,  qui envoie (via l'action adjointe) $\mathsf{e}+\mathsf{f}$ sur $-(\mathsf{e}+\mathsf{f})$.
\end{remarque}

\section{Strat\'egie de d\'emonstration} \label{sec:strategie}

La strat\'egie g\'en\'erale de d\'emonstration du théorème principal et des th\'eor\`emes \ref{thm:quantitatif-PSL(n,C)} et~\ref{thm:quantitatif} remonte \`a \textcite{km12} ; nous la r\'esumons bri\`evement au paragraphe~\ref{subsec:strategie-KM}.
Elle fait intervenir la notion de structure hyperbolique $R$-parfaite, introduite au paragraphe~\ref{subsec:struct-R-parf}, et comporte trois grandes \'etapes, d\'ecrites aux paragraphes \ref{subsec:etape-geom} \`a~\ref{subsec:etape-comb}.

\subsection{Rappels de g\'eom\'etrie hyperbolique} \label{subsec:struct-R-parf}

Soit $S$ une surface compacte connexe orientée de genre $\mathtt{g}\geq 2$.
On peut la d\'ecomposer en une union disjointe de $3\mathtt{g}-3$ courbes fermées simples et de $2\mathtt{g}-2$ \emph{pantalons}, o\`u un pantalon est par d\'efinition une sous-surface ouverte hom\'eomorphe \`a une sph\`ere \`a trois trous.
On appelle \emph{d\'ecomposition en pantalons} l'ensemble $\mathcal{P}$ des $2\mathtt{g}-2$ pantalons ainsi obtenus ; les $3\mathtt{g}-3$ courbes fermées simples sont appelées \emph{courbes de bord}.
La d\'ecomposition est \emph{bipartie} si $\mathcal{P}$ est l'union disjointe de deux sous-ensembles $\mathcal{P}^+$ et~$\mathcal{P}^-$ tels que pour toute paire de pantalons adjacents le long d'une courbe de bord, l'un des pantalons appartient à~$\mathcal{P}^+$ et l'autre à~$\mathcal{P}^-$ ; dans ce cas, aucun pantalon n'est adjacent \`a lui-m\^eme.
Soit $\calG$ un graphe fini sur~$S$ avec~:
\begin{itemize}
  \item $2\mathtt{g}-2$ sommets : un à l'intérieur de chaque pantalon de~$\mathcal{P}$,
  \item $3\mathtt{g}-3$ arêtes : une pour chaque courbe de bord entre deux pantalons de~$\mathcal{P}$,
\end{itemize}
de sorte que $S$ se r\'etracte sur~$\calG$ (\cf figure~\ref{fig:decomp-pantalons}, \`a gauche).
Le graphe~$\calG$ permet d'associer, à toute courbe de bord orientée $a$ d'un pantalon $\Pi\in\mathcal{P}$, un élément privilégié de $\pi_1(\Pi)$ dans la classe de conjugaison d\'efinie par~$a$.
Si $\Pi^+\in\mathcal{P}^+$ et $\Pi^-\in\mathcal{P}^-$ sont adjacents le long de~$a$, on peut orienter $a$ et les autres courbes de bord $b^{\pm},c^{\pm}$ comme sur la figure~\ref{fig:decomp-pantalons}, \`a droite, de sorte que les éléments correspondants de $\pi_1(\Pi^{\pm})$ (notés par les mêmes lettres) vérifient $c^+ b^+ a = 1$ et $c^- b^- a = 1$.

\begin{figure}[h!]
\centering
\labellist
\small\hair 2pt
\pinlabel {$S$} [u] at 140 370
\pinlabel {\textcolor{blue}{$+$}} [u] at 75 355
\pinlabel {\textcolor{mygreen}{$-$}} [u] at 75 290
\pinlabel {\textcolor{mygreen}{$-$}} [u] at 345 365
\pinlabel {\textcolor{blue}{$+$}} [u] at 345 295
\pinlabel {\textcolor{red}{$\mathcal{G}$}} [u] at 300 357
\pinlabel {$b^+$} [u] at 467 380
\pinlabel {$c^+$} [u] at 467 272
\pinlabel {$a$} [u] at 564 330
\pinlabel {$b^-$} [u] at 700 380
\pinlabel {$c^-$} [u] at 695 272
\pinlabel {$\Pi^+$} [u] at 505 325
\pinlabel {$\Pi^-$} [u] at 655 325
\endlabellist
\includegraphics[width=8.5cm]{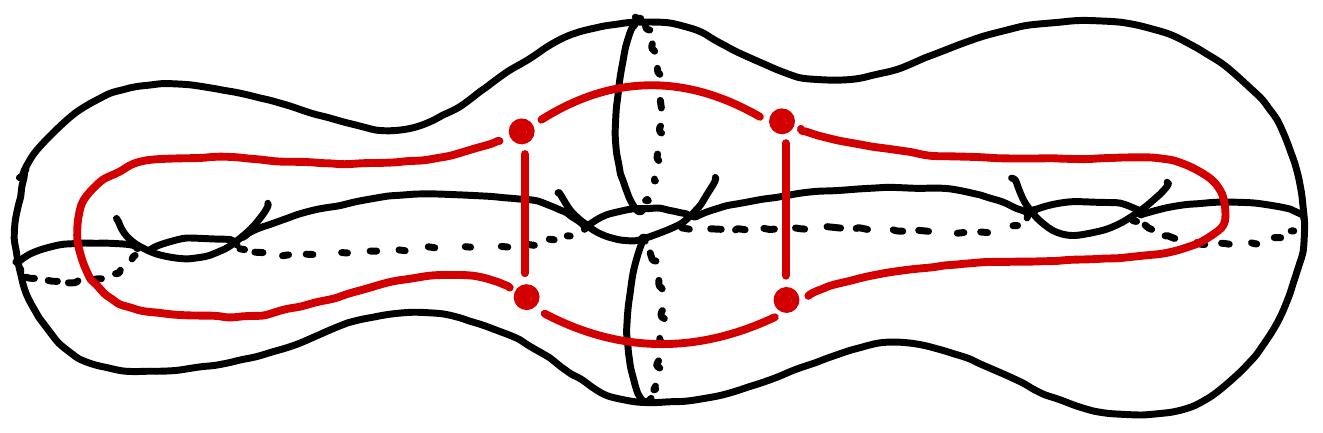}
\hspace{0.5cm}
\includegraphics[width=6cm]{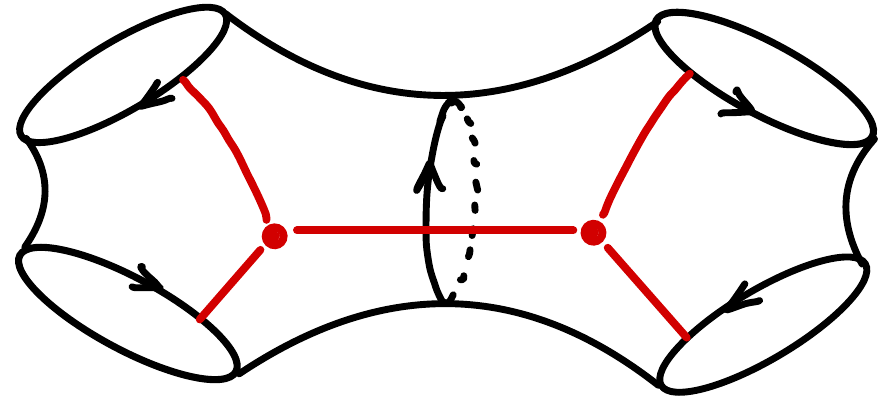}
\caption{\`A gauche : d\'ecomposition en pantalons bipartie de~$S$ avec un graphe associé~$\mathcal{G}$ ; \`a droite : configuration de deux pantalons adjacents}
\label{fig:decomp-pantalons}
\end{figure}

Soient $a_1,\dots,a_{3\mathtt{g}-3}$ les courbes de bord de la d\'ecomposition en pantalons~$\mathcal{P}$.
\`A toute repr\'esentation injective et discr\`ete $\varrho$ de $\pi_1(S)$ dans $\PSL(2,\R)$, on peut associer une \emph{structure hyperbolique marqu\'ee} sur~$S$, ce qui d\'efinit $6\mathtt{g}-6$ invariants (\cf \cite[\S\,7.6]{hub06}), \`a savoir, pour tout $i\in\{1,\dots,3\mathtt{g}-3\}$,
\begin{itemize}
  \item la longueur du repr\'esentant g\'eod\'esique de la courbe de bord~$a_i$ pour la structure hyperbolique sur~$S$ d\'efinie par~$\varrho$, c'est-\`a-dire la longueur de translation dans~$\HH^2$ de $\varrho(\alpha_i)$ o\`u $\alpha_i\in\pi_1(S)$ correspond \`a~$a_i$ ;
  \item un \og param\`etre de d\'ecalage\fg\ au niveau de~$a_i$, qui
  refl\`ete le fait que l'on peut changer la structure hyperbolique marqu\'ee en faisant \og tourner\fg\ l'un des deux pantalons adjacents à~$a_i$ par rapport \`a l'autre.
\end{itemize}
Les $3\mathtt{g}-3$ longueurs peuvent prendre n'importe quelles valeurs strictement positives, et les $3\mathtt{g}-3$ param\`etres de d\'ecalage n'importe quelles valeurs r\'eelles, lorsque $\varrho$ varie parmi les repr\'esentations injectives et discr\`etes de $\pi_1(S)$ dans $\PSL(2,\R)$.
Un r\'esultat classique de Fenchel et Nielsen affirme que ces $6\mathtt{g}-6$ invariants param\`etrent compl\`etement l'espace de Teichm\"uller de~$S$, c'est-\`a-dire l'espace des structures hyperboliques marqu\'ees sur~$S$, ou de mani\`ere \'equivalente l'espace des repr\'esentations injectives et discr\`etes de $\pi_1(S)$ dans $\PSL(2,\R)$ modulo conjugaison au but par $\PGL(2,\R)$.

L'id\'ee de \textcite{km12} est de consid\'erer le cas particulier suivant : pour $R>0$, on dit qu'une repr\'esentation injective et discr\`ete $\varrho : \pi_1(S)\to\PSL(2,\R)$ est \emph{$R$-parfaite} si les $3\mathtt{g}-3$ longueurs des courbes de bord sont toutes \'egales \`a~$2R$ et les $3\mathtt{g}-3$ param\`etres de d\'ecalage sont tous \'egaux \`a~$1$ (voir le paragraphe~\ref{subsubsec:pavage-hex} pour une interpr\'etation en termes de pavages de~$\HH^2$ par des hexagones \`a angles droits).
La valeur pr\'ecise~$1$ n'a pas d'importance particuli\`ere : ce qui compte est de choisir une valeur non nulle, ce qui rend vrai le fait important suivant (\cf \cite[Lem.\,2.7]{km12}).

\begin{fact} \label{fait:diam-borne}
Le diam\`etre de~$S$ munie d'une structure hyperbolique $R$-parfaite (c'est-\`a-dire le diam\`etre de $\varrho_R(\pi_1(S))\backslash\HH^2$) est uniform\'ement born\'e lorsque $R$ tend vers l'infini.
\end{fact}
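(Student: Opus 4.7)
La preuve proc\'ederait en trois \'etapes, exploitant \`a la fois la g\'eom\'etrie explicite des hexagones droits et le fait que le d\'ecalage non nul emp\^eche la systole de~$S$ de d\'eg\'en\'erer.

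On commencerait par observer que, par sym\'etrie, chaque pantalon d'une structure $R$-parfaite se d\'ecompose en deux hexagones droits isom\'etriques, de c\^ot\'es altern\'es $(R,\ell_R,R,\ell_R,R,\ell_R)$, o\`u les demi-courbes de bord ont longueur~$R$ et les \og coutures\fg\ ont longueur~$\ell_R$. La formule hyperbolique des hexagones droits donne $\cosh\ell_R = \cosh R/(\cosh R-1)$, d'o\`u $\ell_R = O(e^{-R/2}) \to 0$ quand $R \to \infty$. Par Gauss--Bonnet, chaque hexagone est d'aire~$\pi$, donc l'aire de~$S$ vaut $4\pi(\mathtt{g}-1)$, ind\'ependamment de~$R$.

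L'\'etape clef serait l'encadrement de la systole. Pour la majorer, \'etant donn\'e un cycle~$C$ de longueur born\'ee dans le graphe dual $\calG$, on construirait une courbe ferm\'ee $\gamma_C$ sur~$S$ suivant~$C$~: elle traverserait une couture (de longueur~$\ell_R$) dans chaque pantalon visit\'e, et \`a chaque courbe de bord travers\'ee, le d\'ecalage~$1$ imposerait un \og pas\fg\ de longueur~$1$ le long de la courbe pour aligner les extr\'emit\'es des coutures des deux pantalons adjacents. Un choix soigneux des pas permettrait de refermer $\gamma_C$ en longueur uniform\'ement born\'ee, et on v\'erifierait qu'elle est non triviale dans $\pi_1(S)$, majorant ainsi la systole. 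R\'eciproquement, toute g\'eod\'esique ferm\'ee non triviale est soit homotope \`a une courbe de bord (de longueur $2R \to \infty$), soit traverse au moins deux courbes de bord, auquel cas le d\'ecalage~$1$ \`a chaque travers\'ee emp\^echerait la longueur de tendre vers z\'ero, minorant la systole uniform\'ement en~$R$.

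On conclurait par compacit\'e~: les surfaces hyperboliques de genre~$\mathtt{g}$ fix\'e dont la systole est minor\'ee par $\varepsilon > 0$ forment un sous-ensemble compact de l'espace des modules (compacit\'e de Mumford), sur lequel le diam\`etre est une fonction continue, donc born\'ee. L'obstacle principal est la minoration pr\'ecise de la systole~: il faudrait contr\^oler, via la param\'etrisation de Fenchel--Nielsen, les longueurs de translation dans~$\HH^2$ des \'el\'ements non triviaux de $\pi_1(S)$, en exploitant que le d\'ecalage~$1$ (plut\^ot que~$0$) interdit effectivement toute d\'eg\'en\'erescence.
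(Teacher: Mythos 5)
L'architecture de votre preuve (aire fix\'ee par Gauss--Bonnet, minoration uniforme de la systole, puis compacit\'e de Mumford et continuit\'e du diam\`etre sur la partie \'epaisse de l'espace des modules) est coh\'erente pour une surface $S$ de genre fix\'e, et vos calculs pr\'eliminaires sont exacts ($\cosh\ell_R=\cosh R/(\cosh R-1)$, aire $\pi$ par hexagone). Deux remarques de cadrage : la majoration de la systole ne joue aucun r\^ole (seule la minoration compte), et l'expos\'e ne d\'emontre pas ce fait mais renvoie \`a \cite[Lem.\,2.7]{km12}, de sorte que la comparaison se fait avec l'argument de Kahn et Markovi\'c.

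Le probl\`eme est que toute la difficult\'e est concentr\'ee dans la minoration de la systole, que vous identifiez vous-m\^eme comme \og l'obstacle principal\fg\ sans la traiter ; or l'heuristique propos\'ee (\og le d\'ecalage $1$ \`a chaque travers\'ee emp\^echerait la longueur de tendre vers z\'ero\fg) ne constitue pas un argument : une courbe ferm\'ee traversant $n$ courbes de bord au voisinage des pieds des coutures ne paie, au vu des seules largeurs de colliers et longueurs de coutures, qu'environ $n\,e^{-R/2}$, quantit\'e qui tend vers z\'ero. Le m\'ecanisme r\'eel est plus fin : deux courbes de bord d'un m\^eme pantalon, reli\'ees par une couture de longueur $\ell_R\approx 2e^{-R/2}$, restent \`a distance $\approx e^{k-R/2}$ l'une de l'autre au param\`etre d'arc $k$ compt\'e depuis les pieds, tandis que le d\'ecalage non nul translate de $1$ les pieds \`a chaque changement de pantalon ; les \og sauts\fg\ successifs d'une courbe de bord \`a la suivante ont donc des co\^uts en progression g\'eom\'etrique, de somme born\'ee ind\'ependamment de~$R$. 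C'est cet argument --- ou, de fa\c{c}on \'equivalente, la convergence g\'eom\'etrique des structures $R$-parfaites vers la surface compacte obtenue en recollant les triangles id\'eaux limites des hexagones avec tous les cisaillements \'egaux \`a $1$, surface sans pointes pr\'ecis\'ement parce que les cisaillements sont non nuls --- qui donne \`a la fois la minoration de la systole et, directement, la borne sur le diam\`etre ; sans lui, votre preuve est incompl\`ete. Notez enfin que la voie par compacit\'e fournit une borne d\'ependant du genre (in\'evitablement : le diam\`etre est minor\'e par $\mathrm{arccosh}(1+\mathrm{Aire}/2\pi)$), alors que l'argument direct fournit en prime les \'enonc\'es locaux uniformes en $\varrho_R$ utilis\'es \`a la remarque~\ref{rem:hH-proche-de-hT}.
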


\subsection{Bref r\'esum\'e de l'approche de Kahn et Markovi\'c} \label{subsec:strategie-KM}

Soient $\Gamma$ un r\'eseau cocompact de $G=\PSL(2,\C)$ et $M=\Gamma\backslash\HH^3$ la vari\'et\'e hyperbolique compacte de dimension trois correspondante.
Afin de r\'epondre affirmativement aux questions \ref{qu:ss-gpe-reseau}, \ref{qu:ss-gpe-qf-reseau} et~\ref{qu:ss-gpe-qf-proche-f}, \textcite{km12} construisent, pour tout $\varepsilon>0$ et tout $R>0$ assez grand par rapport à~$\varepsilon$ :
\begin{itemize}
  \item une surface compacte $S$ de genre au moins deux munie d'une d\'ecomposition en pantalons bipartie et d'un graphe fini comme au paragraphe~\ref{subsec:struct-R-parf} ;
  \item une immersion $f : S\to M$ envoyant chaque pantalon de~$S$ sur une sous-surface de~$M$ dont les courbes de bord sont des g\'eod\'esiques de longueurs complexes $\varepsilon$-proches de $2R$, avec des param\`etres de d\'ecalage $(\varepsilon/R)$-proches de~$1$
\end{itemize}
(ces longueurs complexes et param\`etres de d\'ecalage sont des g\'en\'eralisations naturelles des notions correspondantes du paragraphe~\ref{subsec:struct-R-parf}, \cf \cite[\S\,2]{ber13}).
L'immersion $f$ induit un morphisme de groupes $\rho : \pi_1(S)\to\pi_1(M)\simeq\Gamma$.
Soit $\varrho_R : \pi_1(S)\to\PSL(2,\R)$ une repr\'esentation $R$-parfaite au sens du paragraphe~\ref{subsec:struct-R-parf}.
Pour $\varepsilon>0$ assez petit, Kahn et Markovi\'c montrent, gr\^ace aux conditions sur l'immersion~$f$ (longueurs de bord et param\`etres de d\'ecalage), qu'elle induit une application de bord $(\varrho_R,\rho)$-\'equivariante de $\PP^1(\R)$ dans $\PP^1(\C)$ qui est $(1+C\varepsilon)$-quasi-sym\'etrique, pour une certaine constante $C>0$ ind\'ependante de~$\varepsilon$.
En particulier, pour $\varepsilon$ assez petit et $R$ assez grand par rapport \`a~$\varepsilon$, la repr\'esentation $\rho : \pi_1(S)\to\Gamma$ est injective et son image est \og proche\fg\ d'\^etre~fuchsienne.

La m\^eme strat\'egie est utilis\'ee par \textcite{km15} pour d\'emontrer la conjecture d'\textcite{ehr70}  du paragraphe~\ref{subsubsec:Ehrenpreis}.
En effet, consid\'erons deux surfaces de Riemann compactes de genre au moins deux, vues comme des surfaces hyperboliques $\Gamma_1\backslash\HH^2$ et $\Gamma_2\backslash\HH^2$ via le th\'eor\`eme d'uniformisation.
Appliqu\'ee \`a $M = \Gamma_i\backslash\HH^2$ pour $i\in\nolinebreak\{1,2\}$, la strat\'egie ci-dessus donne, pour tout $\varepsilon>0$ assez petit et tout $R>0$ assez grand par rapport à~$\varepsilon$, une surface compacte $S_i$ de genre au moins deux, une repr\'esentation $\rho_i : \pi_1(S_i)\to\pi_1(M) = \Gamma_i$, une repr\'esentation $R$-parfaite $\varrho_{R,i} : \pi_1(S_i)\to\PSL(2,\R)$ et une application de bord $(\varrho_{R,i},\rho_i)$-\'equivariante de $\PP^1(\R)$ dans lui-m\^eme qui est $(1+C\varepsilon)$-quasi-sym\'etrique, pour une certaine constante $C>0$ ind\'ependante de~$\varepsilon$.
Pour $\varepsilon$ assez petit la repr\'esentation $\rho_i$ est injective, ce qui implique que $\rho_i(\pi_1(S_i))\backslash\HH^2$ est un rev\^etement fini de $\Gamma_i\backslash\HH^2$.
Ce rev\^etement est quasi-conforme \`a une surface hyperbolique compacte $R$-parfaite, avec une constante de quasi-conformit\'e bien contr\^ol\'ee.
La conjecture d'Ehrenpreis s'en d\'eduit.

Notons que l'id\'ee de construire des rev\^etements de surfaces hyperboliques compactes \`a l'aide de pantalons immerg\'es bien recoll\'es remonte \`a \textcite{bow04}.

\subsection{\'Etape g\'eom\'etrique : conditions suffisantes d'injectivit\'e pour les repr\'esentations de groupes de surface} \label{subsec:etape-geom}

Expliquons \`a pr\'esent comment l'approche de Kahn--Markovi\'c a \'et\'e g\'en\'eralis\'ee par Hamenst\"adt et Kahn--Labourie--Mozes pour d\'emontrer le th\'eor\`eme principal et les th\'eor\`emes \ref{thm:quantitatif-PSL(n,C)} et~\ref{thm:quantitatif}.

Soit $S$ une surface compacte connexe orientée munie d'une d\'ecomposition en pantalons bipartie $\mathcal{P} = \mathcal{P}^+\sqcup\mathcal{P}^-$ et d'un graphe fini $\calG$ comme au paragraphe~\ref{subsec:struct-R-parf}.
Soient $G$ un groupe de Lie semi-simple et $\tau : \PSL(2,\R)\hookrightarrow G$ un plongement.

Au paragraphe~\ref{subsec:struct-R-parf} nous avons vu une notion de repr\'esentation $R$-parfaite de $\pi_1(S)$ dans $\PSL(2,\R)$.
Pour une telle repr\'esentation $\varrho_R : \pi_1(S)\to\PSL(2,\R)$ et pour des pantalons $\Pi^+\in\mathcal{P}^+$ et $\Pi^-\in\mathcal{P}^-$, on dira que la restriction de $\tau\circ\varrho_R$ à $\pi_1(\Pi^+)$ est une représentation \emph{$R$-parfaite} à valeurs dans~$G$ et que la restriction de $\tau\circ\varrho_R$ à $\pi_1(\Pi^-)$ est une représentation \emph{$(-R)$-parfaite} à valeurs dans~$G$.\footnote{Cette terminologie refl\`ete les orientations oppos\'ees consid\'er\'ees sur $\Pi^+$ et~$\Pi^-$, \cf figure~\ref{fig:decomp-pantalons},~\`a~droite.}

\'Etendant l'approche de Kahn--Markovi\'c (paragraphe~\ref{subsec:strategie-KM}) \`a des groupes $G$ plus g\'en\'eraux que $\PSL(2,\C)$, Hamenst\"adt et Kahn--Labourie--Mozes ont introduit, pour $\varepsilon,R>0$, les notions suivantes :
\begin{itemize}
  \item pour $\Pi\in\mathcal{P}$, une notion de \emph{représentation $(\varepsilon,\pm R)$-presque parfaite} de $\pi_1(\Pi)$ dans~$G$, à valeurs dans~$\Gamma$ : il s'agit d'une représentation qui est \og proche à $\varepsilon$ près\fg\ d'un conjugué d'une représentation $(\pm R)$-parfaite à valeurs dans~$G$, ce qui se traduit par l'existence d'une bonne \emph{donnée géométrique} associée à la représentation, décrivant une situation proche de celle du paragraphe~\ref{subsec:struct-R-parf} ;
  \item pour des pantalons $\Pi^+\in\mathcal{P}^+$ et $\Pi^-\in\mathcal{P}^-$ adjacents le long d'une courbe de bord~$a$, et pour des représentations $(\varepsilon,\pm R)$-presque parfaites $\rho^{\pm} : \pi_1(\Pi^{\pm})\to\Gamma$ telles que $\rho^+(a) = \rho^-(a)$, une notion pour $\rho^+$ et~$\rho^-$ d'\^etre \emph{$\varepsilon$-bien recollées} le long de~$a$ via leurs données géométriques : cela signifie qu'elles \og diff\`erent à $\varepsilon$ près par un décalage hyperbolique de~$1$\fg\ comme sur la figure~\ref{fig:pavage-hex} (\cf paragraphe~\ref{subsubsec:pavage-hex}), le long d'une copie de~$\HH^2$ induite par~$\tau$.
\end{itemize}
Les données géométriques sont exprim\'ees en termes d'\'el\'ements de $\Gamma\backslash G$, vus comme des \og rep\`eres de direction~$\tau$\fg\ dans l'espace tangent \`a l'espace localement sym\'etrique $\Gamma\backslash G/K$ (pour Hamenst\"adt) ou comme des raffinements de triplets de points sur un m\^eme $\tau$-cercle dans la vari\'et\'e de drapeaux $G/P_{\tau}$, modulo l'action de~$\Gamma$ (pour Kahn, Labourie et Mozes).
Les orbites de ces \'el\'ements par le flot $(\varphi_t)_{t\in\R}$ sur $\Gamma\backslash G$ correspondant \`a la multiplication \`a droite par $\tau\big(\big(\begin{smallmatrix} e^{t/2} & 0\\ 0 & e^{-t/2}\end{smallmatrix}\big)\big)$ doivent \og presque\fg\ se refermer modulo l'application de certaines sym\'etries (inversion ou rotation d'ordre trois) : \cf figure~\ref{fig:Triconn}, paragraphes~\ref{subsec:tau-P1-G/P}--\ref{subsec:pant-presque-parf-KLM}.

Une observation importante est que l'ensemble des classes de conjugaison de repr\'esentations $(\varepsilon,R)$-presque parfaites de $\pi_1(\Pi)$ dans le groupe discret~$\Gamma$ est fini.
Cela résulte du fait (\cf corollaire~\ref{cor:presque-parf->generique}.\eqref{item:nb-fini-presque-parf}) que si $\pi_1(\Pi) = \langle a,b,c \,|\, cba=1\rangle$ où $a,b,c$ correspondent aux courbes de bord de~$\Pi$, alors une telle représentation envoie, \`a conjugaison pr\`es, le triplet $(a,b,c)$ sur un triplet d'\'el\'ements de~$\Gamma$ proche de
$$\tau\left(\begin{pmatrix}e^{R/2} & 0\\ 0 & e^{-R/2}\end{pmatrix}, \begin{pmatrix}e^{-R/2} & e^{-R/2}-e^{R/2}\\ 0 & e^{R/2}\end{pmatrix}, \begin{pmatrix}e^{-R/2} & 0\\ e^{R/2}-e^{-R/2} & e^{R/2}\end{pmatrix}\right) \ \in G^3.$$
Un raisonnement analogue vaut pour les repr\'esentations $(\varepsilon,-R)$-presque parfaites.

La donnée géométrique associée à une représentation $(\varepsilon,\pm R)$-presque parfaite n'est pas unique, mais varie dans un espace continu (compact).

La premi\`ere \'etape de la d\'emonstration des th\'eor\`emes \ref{thm:quantitatif-PSL(n,C)} et~\ref{thm:quantitatif} consiste \`a donner les conditions suffisantes suivantes pour qu'une repr\'esentation $\rho : \pi_1(S)\to\Gamma$ soit injective (et donc que son image soit un sous-groupe de~$\Gamma$ isomorphe \`a $\pi_1(S)$).

\begin{proposition} \label{prop:pi-1-inj}
Dans le cadre du théorème~\ref{thm:quantitatif-PSL(n,C)}, ou plus généralement dans le cadre~\ref{cadre}, soient $\Gamma$ un r\'eseau cocompact irr\'eductible de~$G$ et $S$ une surface compacte munie d'une d\'ecomposition en pantalons bipartie $\mathcal{P} = \mathcal{P}^+\sqcup\mathcal{P}^-$ et d'un graphe fini $\mathcal{G}$ comme au paragraphe~\ref{subsec:struct-R-parf}.
Pour tout $\varepsilon>0$ assez petit et tout $R>0$ assez grand par rapport à~$\varepsilon$, si une représentation $\rho : \pi_1(S)\to\Gamma$ vérifie que
\begin{enumerate}
  \item\label{item:pi-1-inj-1} pour tout $\Pi^{\pm}\in\mathcal{P}^{\pm}$, la restriction de $\rho$ \`a $\pi_1(\Pi^{\pm})$ est $(\varepsilon/R,\pm R)$-presque parfaite,~et
  \item\label{item:pi-1-inj-2} on peut choisir les données géométriques associ\'ees de sorte que pour toute paire $(\Pi^+,\Pi^-)\in\mathcal{P}^+\times\mathcal{P}^-$ de pantalons adjacents, les restrictions de $\rho$ \`a $\pi_1(\Pi^+)$ et \`a $\pi_1(\Pi^-)$ soient $(\varepsilon/R)$-bien recoll\'ees,
\end{enumerate}
alors $\rho$ est injective.
\end{proposition}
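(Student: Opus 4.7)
La strat\'egie naturelle est de r\'eduire l'injectivit\'e de~$\rho$ \`a l'existence d'une application de bord $(\delta,\tau)$-sullivannienne pour $\rho$ avec $\delta<\delta_0$ (la constante de la proposition~\ref{prop:Sullivan}), puis d'invoquer la partie~\eqref{item:Sullivan-Anosov} de cette proposition, qui garantit alors que $\rho$ est $P_{\tau}$-anosovienne, et donc en particulier injective. Tout le travail consistera \`a construire une application $\rho$-\'equivariante $\xi : \PP^1(\R) \to G/P_{\tau}$ au sens de la d\'efinition~\ref{def:applic-bord}, et \`a contr\^oler quantitativement sa d\'eviation par rapport \`a un $\tau$-cercle en fonction de~$\varepsilon$ et~$R$.

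Pour construire~$\xi$, on fixera la repr\'esentation $R$-parfaite $\varrho_R : \pi_1(S) \to \PSL(2,\R)$ associ\'ee \`a~$\mathcal{P}$, qui identifie $\PP^1(\R)$ au bord \`a l'infini du rev\^etement universel $\widetilde{S} \simeq \HH^2$. L'hypoth\`ese~\eqref{item:pi-1-inj-1} fournira, pour chaque pantalon $\Pi \in \mathcal{P}^{\pm}$, une donn\'ee g\'eom\'etrique $g_{\Pi} \in G$ telle que $\rho|_{\pi_1(\Pi)}$ soit $(\varepsilon/R)$-proche de la conjugu\'ee par $g_{\Pi}$ d'une repr\'esentation $(\pm R)$-parfaite \`a valeurs dans~$G$, et donc un $\tau$-cercle local $\xi_{\Pi} := g_{\Pi} \circ \underline{\tau}$ jouant le r\^ole d'application de bord \og presque \'equivariante\fg\ pour $\rho|_{\pi_1(\Pi)}$. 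L'hypoth\`ese de bon recollement~\eqref{item:pi-1-inj-2} assurera que pour deux pantalons $\Pi^+,\Pi^-$ adjacents le long d'une courbe de bord, les $\tau$-cercles $\xi_{\Pi^+}$ et $\xi_{\Pi^-}$ partagent leurs points fixes correspondants et diff\`erent par un d\'ecalage hyperbolique proche de~$1$ le long de la direction~$\tau$, \`a $\varepsilon/R$ pr\`es. On d\'efinira alors $\xi$ sur le sous-ensemble dense de $\PP^1(\R)$ form\'e des points fixes attractifs des \'el\'ements de $\pi_1(S)$, via les points fixes attractifs correspondants de leurs images par~$\rho$ dans $G/P_{\tau}$, avant de l'\'etendre par continuit\'e.

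Pour v\'erifier que $\xi$ est $(\delta,\tau)$-sullivannienne avec $\delta = O(\varepsilon)$, on fixera un triplet $T = h\cdot(0,\infty,-1)$ dans $\PP^1(\R)$ et on consid\'erera le triangle g\'eod\'esique id\'eal associ\'e dans~$\HH^2$. Par le fait~\ref{fait:diam-borne}, ce triangle se projette dans $\varrho_R(\pi_1(S))\backslash\HH^2$ en une r\'egion de diam\`etre uniform\'ement born\'e ind\'ependamment de~$R$ et du genre de~$S$, et ne rencontre donc qu'un nombre born\'e de pantalons sur toute portion compacte de ses trois c\^ot\'es. Sur chaque pantalon travers\'e, $\xi$ sera $(\varepsilon/R)$-proche du $\tau$-cercle local~$\xi_{\Pi}$ ; en r\'ealignant soigneusement le $\tau$-cercle de r\'ef\'erence via un \'el\'ement $g\in G$ bien choisi, on conclura que $\xi$ est globalement $\delta$-proche du $\tau$-cercle $g\circ\underline{\tau}\circ h^{-1}$ pour la distance $d_{\tau}^g$, uniform\'ement en~$x\in\PP^1(\R)$.

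Le point d\'elicat sera d'emp\^echer l'accumulation lin\'eaire des petites erreurs $O(\varepsilon/R)$ le long de la suite (potentiellement infinie) des pantalons travers\'es en s'\'eloignant des trois pointes id\'eales du triangle. Le m\'ecanisme central, comme dans l'approche originale de \textcite{km12} et sa g\'en\'eralisation par \textcite{klm18}, est l'effet contractant du flot g\'eod\'esique de direction~$\mathsf{h}$ : le choix du param\`etre de d\'ecalage \'egal \`a~$1$ (et non nul, \cf paragraphe~\ref{subsec:struct-R-parf}), combin\'e \`a la longueur~$2R$ des courbes de bord, fait d\'ecro\^itre g\'eom\'etriquement la contribution des pantalons distants, de sorte que l'erreur totale reste born\'ee par $O(\varepsilon)$ uniform\'ement en~$h$. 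Cet effet de type \og lemme de Morse\fg\ en rang sup\'erieur, qui repose sur la r\'egularit\'e de~$\mathsf{h}$ dans le cadre~\ref{cadre} et sur les propri\'et\'es d'imbrication des lunules dans $G/P_{\tau}$ \'evoqu\'ees au paragraphe~\ref{subsec:prop-sullivan}, constituera la partie la plus d\'elicate de la d\'emonstration.
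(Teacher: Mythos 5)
Votre strat\'egie g\'en\'erale est bien celle de \textcite{klm18} telle qu'elle est d\'etaill\'ee dans la partie~\ref{sec:geom-KLM} : d\'efinir $\xi$ sur les points fixes attractifs (ce qui est l\'egitime gr\^ace au corollaire~\ref{cor:presque-parf->generique}.\eqref{item:presque-parf->generique}), montrer que $\xi$ est $(\delta,\tau)$-sullivannienne pour $\delta<\delta_0$, puis conclure par la proposition~\ref{prop:Sullivan}.\eqref{item:Sullivan-Anosov}. La r\'eduction de votre premier paragraphe et la construction du deuxi\`eme sont donc conformes \`a la d\'emonstration vis\'ee (l'autre voie possible \'etant celle de Hamenst\"adt, par immersions par morceaux dans $G/K$ et lemme de Morse pour les quasi-g\'eod\'esiques, \cf paragraphe~\ref{subsubsec:inj-H}).

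En revanche, le m\'ecanisme que vous proposez pour \'etablir la propri\'et\'e sullivannienne --- sommer directement les erreurs $O(\varepsilon/R)$ le long des pantalons travers\'es en invoquant une d\'ecroissance g\'eom\'etrique --- cache une circularit\'e que la d\'emonstration contourne par un argument d'une autre nature. L'estim\'ee directe issue de la contraction du flot ne contr\^ole $\xi$ qu'aux points \emph{$\varrho_R$-accessibles} (proposition~\ref{prop:appl-bord-partielle}), c'est-\`a-dire aux limites de suites acceptables d'axes du pavage $\mathrm{Hex}_{\varrho_R}$, lesquels ne forment qu'un sous-ensemble $\vartheta(R)$-dense de $\PP^1(\R)$ (lemme~\ref{lem:acc-theta-dense}). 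Pour propager ce contr\^ole \`a \emph{tous} les points et \`a \emph{tous} les triplets $h\cdot(0,\infty,-1)$, il faut la r\'egularit\'e h\"old\'erienne uniforme de~$\xi$ (proposition~\ref{prop:Sullivan}.\eqref{item:Sullivan-Holder}) --- mais celle-ci suppose d\'ej\`a que $\xi$ est $(\delta_0,\tau)$-sullivannienne. C'est pr\'ecis\'ement pour sortir de ce cercle que Kahn, Labourie et Mozes emploient une m\'ethode de continuit\'e (proposition~\ref{prop:Sullivan-deform}) : un argument d'ouvert-ferm\'e le long d'une famille continue $(\rho_t)_{t\in[0,1]}$ reliant une repr\'esentation $\tau$-fuchsienne \`a~$\rho$, l'ensemble des~$t$ pour lesquels $\xi_t$ est $(\delta,\tau)$-sullivannienne \'etant non vide en $t=0$ ; et comme une telle famille n'existe pas en g\'en\'eral, ils approchent $S$ par des surfaces doubl\'ees $S_n$ dont les demi-surfaces ont des groupes fondamentaux libres, construisent les chemins $(\rho^{(n)}_t)$ dans ce cadre, puis passent \`a la limite par \'equicontinuit\'e. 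Ces deux ingr\'edients sont absents de votre proposition, et le seul effet contractant du flot ne les remplace pas : pour que la contraction s'applique le long de la suite infinie de pantalons, il faut savoir \emph{a priori} que l'on reste dans le r\'egime d'imbrication des lunules, ce que fournit exactement l'hypoth\`ese de r\'ecurrence de l'argument d'ouvert-ferm\'e.
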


Dans la version de \textcite{klm18}, les auteurs montrent de plus que $\rho$ admet une application de bord $(\delta,\tau)$-sullivannienne de paramètre $\delta$ arbitrairement petit : \cf proposition~\ref{prop:pi-1-inj-KLM}.

Dans la version de \textcite{ham15,ham20}, les conditions de $(\varepsilon/R,\pm R)$-presque perfection et de $(\varepsilon/R)$-bon recollement dans la proposition~\ref{prop:pi-1-inj} sont remplacées par des conditions plus simples mais plus fortes de $(e^{-\kappa R},\pm R)$-presque perfection et de $e^{-\kappa R}$-bon recollement, où $\kappa>0$ est une constante indépendante de~$R$.

\subsubsection{Injectivité selon \textcite{ham15,ham20}} \label{subsubsec:inj-H}

Soient $G/K$ l'espace riemannien sym\'etrique de~$G$, o\`u $K$ est un sous-groupe compact maximal de~$G$, et $M = \Gamma\backslash G/K$ l'espace localement sym\'etrique compact associ\'e \`a~$\Gamma$.
Soit $\rho : \pi_1(S)\to\Gamma$ une représentation vérifiant les conditions \eqref{item:pi-1-inj-1} et~\eqref{item:pi-1-inj-2} de la proposition~\ref{prop:pi-1-inj} pour $R>0$ assez grand.

Pour montrer que $\rho$ est injective, Hamenst\"adt suit une approche géométrique : à partir d'une structure hyperbolique $R$-parfaite sur~$S$ (\cf paragraphe~\ref{subsec:struct-R-parf}) et des données géométriques des restrictions de $\rho$ aux $\pi_1(\Pi)$ pour $\Pi\in\mathcal{P}$, elle construit :
\begin{itemize}
  \item une surface $S'$ homéomorphe à~$S$, d\'ecoup\'ee en un nombre fini de \og morceaux\fg\ (triangles, bandes ou anneaux), chaque morceau \'etant muni d'une structure hyperbolique ou euclidienne pour laquelle son bord est géodésique ;
  \item une application continue $f : S'\to M$, de morphisme induit $\rho = f_* : \pi_1(S) = \pi_1(S')\to\pi_1(M)=\Gamma$, qui est une immersion en restriction à chaque morceau ; elle se rel\`eve en une immersion par morceaux continue $\rho$-équivariante $\widetilde{f} : \widetilde{S'}\to G/K$.
\end{itemize}
La surface $S'$ est obtenue à partir de la surface hyperbolique $R$-parfaite $S$ en rajoutant, au niveau de chaque courbe de bord entre pantalons adjacents de~$\mathcal{P}$, un fin anneau euclidien (réduit à un cercle lorsque $G$ est de rang r\'eel un).
Les \og morceaux\fg\ sont ces anneaux et, pour chaque pantalon de~$\mathcal{P}$, deux triangles équilatéraux (dont le diamètre est borné lorsque $R$ tend vers l'infini) et trois bandes hyperboliques (dont la longueur est proche de~$R$) qui partitionnent le pantalon : \cf figure~\ref{fig:pant-immerge}.
La condition~\eqref{item:pi-1-inj-1} de la proposition~\ref{prop:pi-1-inj} permet de construire $f$ de sorte que sa restriction à chaque morceau soit presque isométrique, et la condition~\eqref{item:pi-1-inj-2} de sorte que les angles de recollements soient proches de~$\pi$ et que les décalages entre pantalons soient proches de décalages hyperboliques de longueur~$1$ comme au paragraphe~\ref{subsec:struct-R-parf}.

\begin{figure}[h!]
\centering
\labellist
\small\hair 2pt
\pinlabel {$b$} [u] at 183 452
\pinlabel {$a$} [u] at 278 400
\pinlabel {$c$} [u] at 185 345
\pinlabel {\textcolor{mygreen}{$A$}} [u] at 45 470
\pinlabel {\textcolor{mygreen}{$C$}} [u] at 370 470
\pinlabel {\textcolor{mygreen}{$B$}} [u] at 215 245
\pinlabel {$b$} [u] at 670 570
\pinlabel {$a$} [u] at 450 370
\pinlabel {$c$} [u] at 795 300
\pinlabel {\textcolor{mygreen}{$C$}} [u] at 450 470
\pinlabel {\textcolor{mygreen}{$A$}} [u] at 798 470
\pinlabel {\textcolor{mygreen}{$B$}} [u] at 625 245
\endlabellist
\includegraphics[width=5cm]{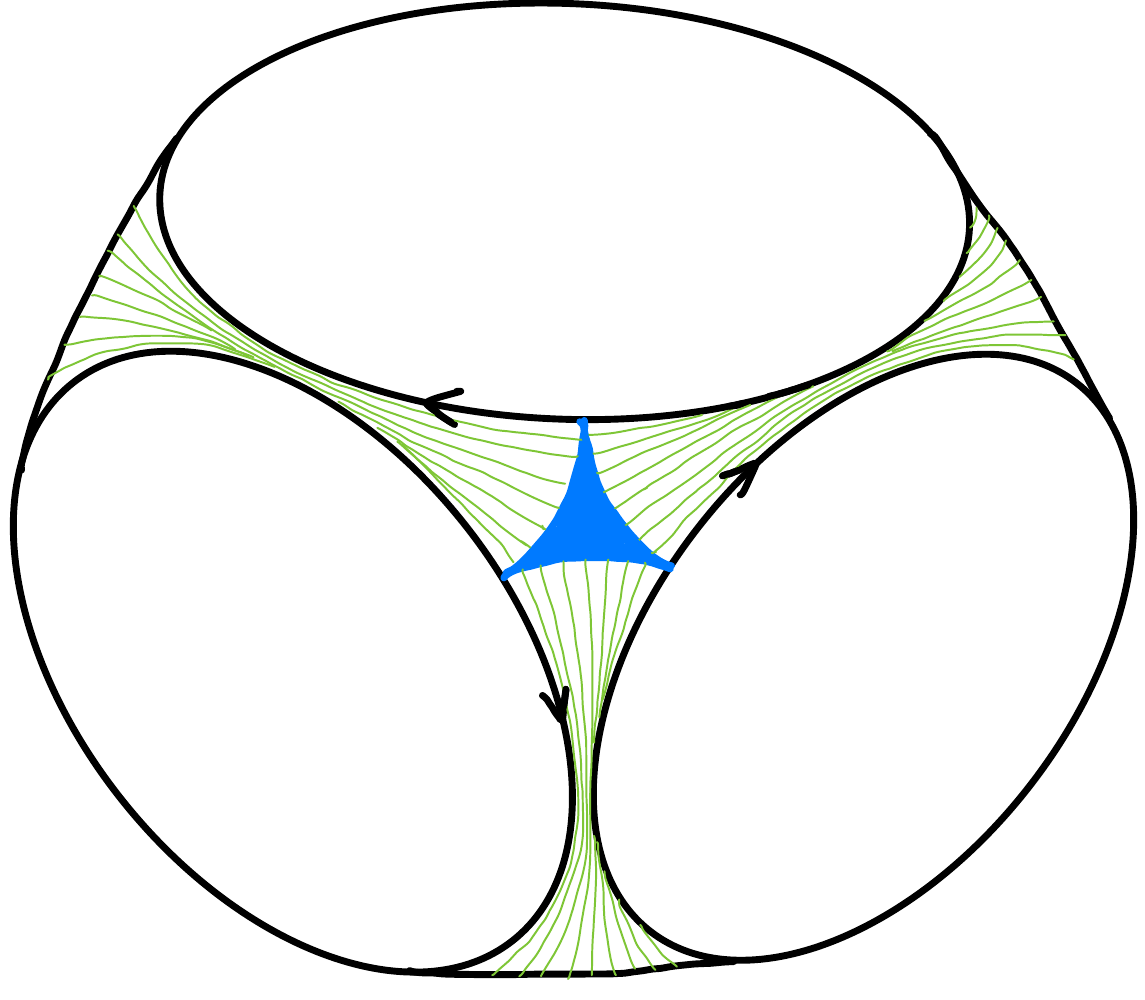}
\hspace{1cm}
\includegraphics[width=5cm]{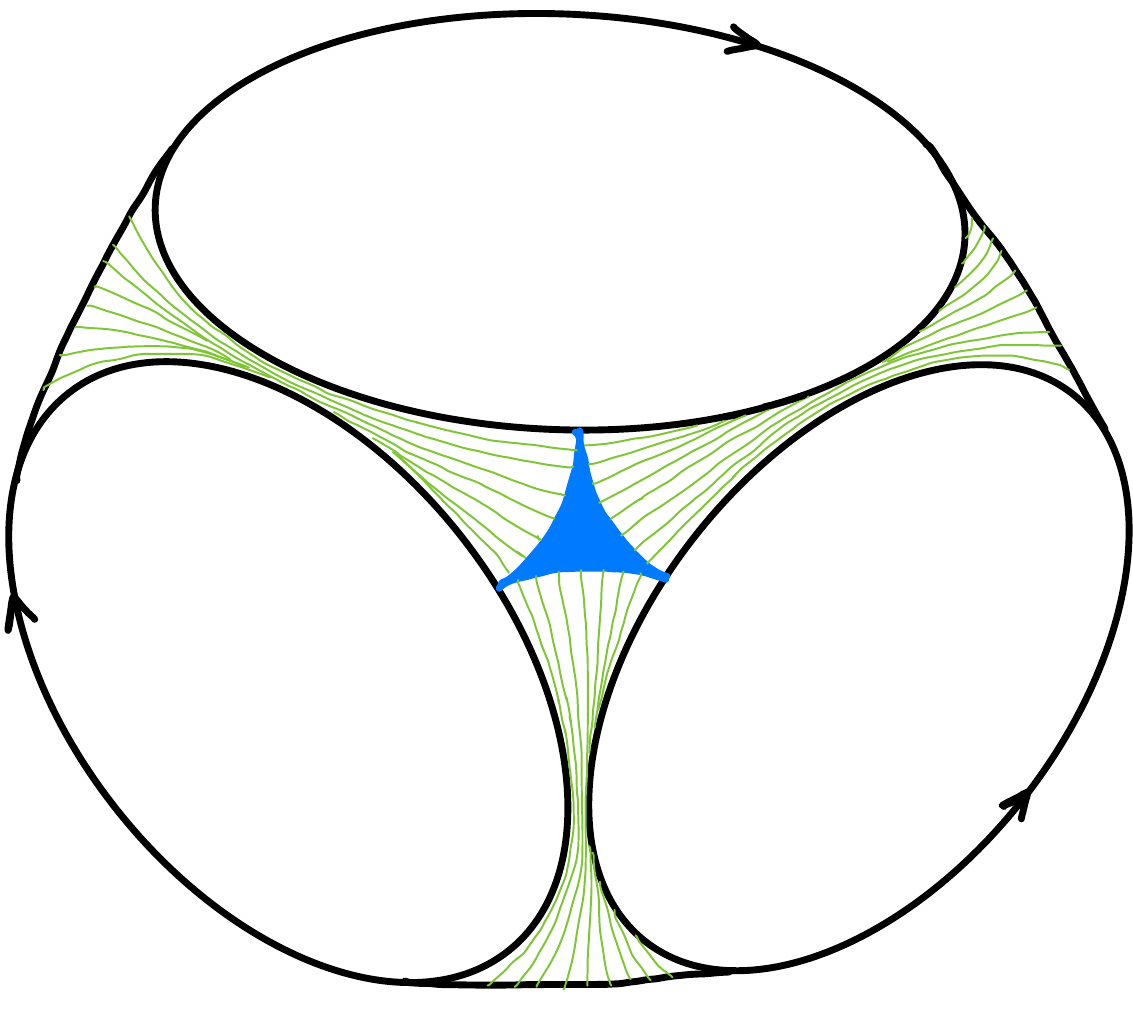}
\vspace{0.2cm}
\caption{D\'ecoupage d'un pantalon hyperbolique en deux triangles \'equilat\'eraux et trois bandes $A,B,C$ (vues de devant et de derri\`ere)}
\label{fig:pant-immerge}
\end{figure}

L'application $\widetilde{f}$ définit une métrique des chemins $\pi_1(S')$-invariante sur~$\widetilde{S'}$ : par définition, la distance entre deux points $x,y$ de~$\widetilde{S'}$ est la borne inférieure des longueurs, dans $G/K$, des images par~$\widetilde{f}$ de chemins de $x$ à $y$ dans~$\widetilde{S'}$.
Cette borne inférieure est en fait un minimum : l'espace métrique $\widetilde{S'}$ est géodésique par le théorème de Hopf--Rinow.
Par un contrôle fin de la géométrie de l'immersion par morceaux, Hamenst\"adt montre, pour $R>0$ suffisamment grand, que l'on peut trouver, pour toute droite g\'eod\'esique $\mathcal{L}$ de~$\widetilde{S'}$, une suite $(x_n)_{n\in\Z}$ de points de~$\mathcal{L}$ tels~que
\begin{itemize}
  \item la distance dans $\widetilde{S'}$ entre $x_n$ et $x_{n+1}$ soit uniformément majorée,
  \item la distance dans $G/K$ entre $\widetilde{f}(x_n)$ et $\widetilde{f}(x_{n+1})$ soit uniformément minorée,
  \item l'angle entre $[\widetilde{f}(x_n),\widetilde{f}(x_{n-1})]$ et $[\widetilde{f}(x_n),\widetilde{f}(x_{n+1})]$ soit suffisamment proche de~$\pi$,
  \item la direction de $[\widetilde{f}(x_n),\widetilde{f}(x_{n+1})]$ soit suffisamment proche de la direction r\'eguli\`ere $\mathsf{h} = \dd\tau\big(\big(\begin{smallmatrix} 1 & 0 \\ 0 & -1 \end{smallmatrix}\big)\big) \in \mathfrak{a}^+$.
\end{itemize}
(On note ici $\mathfrak{a}^+$ une chambre de Weyl d'un sous-espace de Cartan~$\mathfrak{a}$ comme au paragraphe~\ref{subsubsec:cond-reg}.
Rappelons que tout segment orient\'e $[z_1,z_2]$ dans $G/K$ est de la forme $[gK,g\exp(\mathsf{h}')K]$ o\`u $g\in G$ et $\mathsf{h}'\in\mathfrak{a}^+$ ; l'\'el\'ement $\mathsf{h}'$ modulo $\R_+^*$ est la \emph{direction} de $[z_1,z_2]$.)

Un r\'esultat de \textcite[Th.\,7.2 \& Cor.\,7.13]{klp14} (\og lemme de Morse\fg) implique alors que le chemin g\'eod\'esique par morceaux $\bigcup_{n\in\Z} [f(x_n),f(x_{n+1})]$ est une quasi-g\'eod\'esique de $G/K$, à distance bornée d'un plat de $G/K$.
En appliquant ceci \`a une droite géodésique $\mathcal{L}$ invariante par $\gamma\in\pi_1(S)\smallsetminus\{1\}$, on voit que la repr\'esentation $\rho$ est injective.
En fait, le lemme de Morse donne une uniformit\'e sur les quasi-g\'eod\'esiques, ce qui permet de voir que $\rho$ est $P_{\tau}$-anosovienne au sens du paragraphe~\ref{subsec:intro-Anosov}.

\subsubsection{Injectivité selon \textcite{klm18}}

Soit $G/P_{\tau}$ la variété de drapeaux associée à~$\tau$, comme aux paragraphes \ref{subsec:intro-Anosov}, \ref{subsec:quantitatif-PSL(n,C)} et~\ref{subsec:prop-sullivan}.
Pour d\'emontrer la proposition~\ref{prop:pi-1-inj} et son raffinement (proposition~\ref{prop:pi-1-inj-KLM}), Kahn, Labourie et Mozes observent que, grâce à la condition~\eqref{item:pi-1-inj-1}, pour tout $\gamma\in\pi_1(S)$ correspondant à une courbe de bord d'un pantalon de~$\mathcal{P}$, l'élément $\rho(\gamma)\in G$ admet un unique point fixe attractif dans $G/P_{\tau}$ (corollaire~\ref{cor:presque-parf->generique}.\eqref{item:presque-parf->generique}) ; on a donc une application $(\varrho_R,\rho)$-\'equivariante naturelle d'un sous-ensemble dense de $\PP^1(\R)$ vers $G/P_{\tau}$, qui au point fixe attractif dans $\PP^1(\R)$ de $\varrho_R(\gamma)$ associe le point fixe attractif dans $G/P_{\tau}$ de $\rho(\gamma)$.
Ils utilisent alors la condition~\eqref{item:pi-1-inj-2} pour montrer que cette application se prolonge en une application $(\varrho_R,\rho)$-\'equivariante \emph{continue} $\xi : \PP^1(\R)\to G/P_{\tau}$, avec un contrôle suffisant pour établir qu'à $\delta>0$ petit fixé, l'application $\xi$ est $(\delta,\tau)$-sullivannienne (d\'efinition~\ref{def:sullivan}) dès que $\varepsilon>0$ est suffisamment petit.
La proposition~\ref{prop:Sullivan}.\eqref{item:Sullivan-Anosov} assure alors que $\rho$ est $P_{\tau}$-anosovienne au sens du paragraphe~\ref{subsec:intro-Anosov}, donc injective.
Voir la partie~\ref{sec:geom-KLM} pour plus de d\'etails.

\subsection{\'Etape dynamique} \label{subsec:etape-dyn}

La deuxi\`eme \'etape de la d\'emonstration des th\'eor\`emes \ref{thm:quantitatif-PSL(n,C)} et~\ref{thm:quantitatif} con\-siste \`a \'etablir les propri\'et\'es d'existence suivantes, qui font intervenir les notions du paragraphe~\ref{subsec:etape-geom}.

\begin{proposition} \label{prop:pant-dans-reseau-general}
Dans le cadre~\ref{cadre}, soit $\Gamma$ un r\'eseau cocompact irr\'eductible de~$G$, et soient $\Pi^+$ et~$\Pi^-$ deux pantalons adjacents le long d'une courbe de bord~$a$ comme sur la figure~\ref{fig:decomp-pantalons}, \`a droite.
Il existe $C>0$ tel que pour tout $\varepsilon>0$ et tout $R>0$ assez grand par rapport à~$\varepsilon$,
\begin{enumerate}
  \item\label{item:pant-dans-reseau-1} il existe des repr\'esentations $(\varepsilon/R,\pm R)$-presque parfaites de $\pi_1(\Pi^{\pm})$ dans~$\Gamma$ ;
  \item\label{item:pant-dans-reseau-2} pour toute donnée géométrique associée à une repr\'esentation $(\varepsilon/R,R)$-presque parfaite de $\pi_1(\Pi^+)$ dans~$\Gamma$, on peut trouver une donnée géométrique associée à une repr\'esentation $(\varepsilon/R,-R)$-presque parfaite de $\pi_1(\Pi^-)$ dans~$\Gamma$ de sorte que les représentations soient $(C\varepsilon/R)$-bien recollées le long de~$a$ via ces données géométriques.
\end{enumerate}
\end{proposition}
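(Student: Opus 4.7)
La strat\'egie consiste \`a traduire l'\'enonc\'e en termes dynamiques sur $\Gamma\backslash G$, puis \`a exploiter les propri\'et\'es de m\'elange quantitatif du flot $(\varphi_t)_{t\in\R}$ obtenu par multiplication \`a droite par $\tau\big(\big(\begin{smallmatrix} e^{t/2} & 0 \\ 0 & e^{-t/2} \end{smallmatrix}\big)\big)$ sur $\Gamma\backslash G$. L'ingr\'edient dynamique cl\'e est la d\'ecroissance exponentielle des coefficients matriciels de la repr\'esentation r\'eguli\`ere de~$G$ sur~$L^2_0(\Gamma\backslash G)$, cons\'equence du th\'eor\`eme de Howe--Moore combin\'e \`a la lacune spectrale que fournit la cocompacit\'e de~$\Gamma$~; l'irr\'eductibilit\'e assure que le m\'elange reste effectif le long de la direction r\'eguli\`ere port\'ee par $\mathsf{h}$.

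Pour le point~\eqref{item:pant-dans-reseau-1}, on d\'ecrit d'abord un pantalon $R$-parfait mod\`ele dans~$G$ comme un triplet de $\tau$-frames dont les $\varphi_R$-translat\'es se recollent selon la combinatoire rappel\'ee par la figure~\ref{fig:decomp-pantalons}. Produire une repr\'esentation $(\varepsilon/R,R)$-presque parfaite \`a valeurs dans~$\Gamma$ \'equivaut alors, \`a $\varepsilon/R$ pr\`es, \`a trouver un tel triplet dans $\Gamma\backslash G$. On applique le m\'elange exponentiel \`a des fonctions test lisses $f_1,f_2$ sur $\Gamma\backslash G$ localis\'ees dans des voisinages de taille $\varepsilon/R$ autour des configurations cibles~:
\[
\int_{\Gamma\backslash G} f_1\cdot (f_2\circ\varphi_R)\,\dd\mu = \int f_1\,\dd\mu \int f_2\,\dd\mu + O\bigl(e^{-\kappa R}\,\|f_1\|_{C^1}\|f_2\|_{C^1}\bigr).
\]
Les normes $C^1$ sont polynomiales en $R/\varepsilon$ et le terme principal est polynomial en $\varepsilon/R$, tandis que l'erreur d\'ecro\^it exponentiellement en~$R$~; pour $R$ grand devant $\log(R/\varepsilon)$, l'erreur est absorb\'ee et l'on obtient un nombre substantiel de recollements. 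En it\'erant ce m\'elange pour refermer un triangle d'orbites, on construit le pantalon cherch\'e.

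Pour le point~\eqref{item:pant-dans-reseau-2}, la donn\'ee g\'eom\'etrique de $\Pi^+$ fixe l'orbite ferm\'ee de~$\varphi_t$ associ\'ee \`a $\rho^+(a)\in\Gamma$~; le pantalon $\Pi^-$ doit s'articuler autour de cette m\^eme orbite. C'est ici que la condition de retournement~(R) joue un r\^ole central~: l'\'el\'ement d'ordre~$2$ $j\in Z_G(\mathsf{h})$ envoie $\underline{\tau}(0,\infty,-1)$ sur $\underline{\tau}(0,\infty,1)$ et fournit une sym\'etrie des $\tau$-frames qui retourne l'orientation le long de l'orbite tout en la pr\'eservant globalement. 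Par application de $j$ \`a la donn\'ee g\'eom\'etrique de $\Pi^+$, on obtient un pantalon mod\`ele de type $-R$ dans~$G$ partageant la branche $a$ avec $\Pi^+$ et \emph{bien recoll\'e par construction}. Il reste \`a perturber ce mod\`ele pour que ses deux autres branches soient de v\'eritables orbites ferm\'ees de~$\varphi_t$ dans $\Gamma\backslash G$, ce que l'on obtient par une variante conditionnelle du m\'elange~: on compte les orbites ferm\'ees de longueur $\approx R$ passant au voisinage d'un point prescrit de l'orbite de $\rho^+(a)$, la densit\'e uniforme garantie par le m\'elange permettant de satisfaire simultan\'ement les autres contraintes combinatoires.

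Le point le plus d\'elicat est la contrainte impos\'ee au point~\eqref{item:pant-dans-reseau-2}, \`a savoir que $\Pi^-$ doit partager la branche~$a$ avec un $\Pi^+$ \emph{donn\'e \`a l'avance}, avec une pr\'ecision de l'ordre de $\varepsilon/R$. Cela oblige \`a travailler sur un espace raffinant la donn\'ee g\'eom\'etrique --- l'espace des \og triconnexions\fg\ ($\Triconn$ dans \cite{klm18}) --- qui param\`etre les degr\'es de libert\'e internes du recollement, et \`a y transporter le m\'elange exponentiel avec sa pleine force quantitative. La n\'ecessit\'e de l'hypoth\`ese de retournement ressort aussi \`a ce stade, puisque sans elle aucune sym\'etrie naturelle ne permet de passer d'un pantalon $R$-parfait \`a un pantalon $(-R)$-parfait partageant une branche.
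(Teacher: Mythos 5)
Votre traitement du point~\eqref{item:pant-dans-reseau-1} suit pour l'essentiel la d\'emonstration du texte : fonctions test localis\'ees \`a l'\'echelle $\varepsilon/R$, m\'elange appliqu\'e trois fois pour refermer le triangle d'orbites, erreur absorb\'ee pour $R$ grand devant~$\varepsilon$ ; c'est exactement ce que r\'ealisent les fonctions poids $W_{\varepsilon,\pm R}$ et $W^{\mathrm{tri}}_{\varepsilon,\pm R}$ via le lemme~\ref{lem:w-presque-1} (o\`u un m\'elange polynomial suffit d'ailleurs).

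En revanche, votre argument pour le point~\eqref{item:pant-dans-reseau-2} comporte une lacune r\'eelle. D'abord, appliquer $j$ (c'est-\`a-dire l'involution $I=\inv\circ\refl$) \`a la donn\'ee g\'eom\'etrique de $\rho^+$ ne produit pas un pantalon de type $-R$ \og bien recoll\'e par construction\fg\ : cela ne fait que relire la \emph{m\^eme} repr\'esentation $\rho^+$ comme $(\varepsilon/R,-R)$-presque parfaite (proposition~\ref{prop:mesures-I}), et le couple $(g^+,I(g^+))$ viole la condition \eqref{eqn:presque-flip-decalage} d'exactement $\varphi_1\circ\refl$, qui d\'eplace $\Psi_{\alpha}(g^+)$ d'une distance non petite (le d\'ecalage de~$1$). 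Le vrai probl\`eme est de trouver, parmi les donn\'ees g\'eom\'etriques de \emph{v\'eritables} repr\'esentations $(\varepsilon/R,-R)$-presque parfaites \`a valeurs dans~$\Gamma$ de bord prescrit $\alpha=\rho^+(a)$, une donn\'ee dont le \og pied\fg\ tombe \`a distance $C\varepsilon/R$ du point prescrit $\varphi_1\circ\refl\circ\Psi_{\alpha}(g^+)$ de $Z_{\Gamma}(\alpha)\backslash L_{\alpha}$. Votre \og variante conditionnelle du m\'elange\fg\ (comptage d'orbites ferm\'ees pr\`es d'un point prescrit) n'est pas justifi\'ee et ne peut pas l'\^etre telle quelle : le m\'elange ne fournit l'\'equidistribution des pieds le long de la feuille compacte $Z_{\Gamma}(\alpha)\backslash L_{\alpha}$ qu'en un sens moyenn\'e --- d\'eriv\'ee de Radon--Nikodym proche de~$1$ par rapport \`a une mesure invariante par $Z_G(Z_{\Gamma}(\alpha))$ (proposition~\ref{prop:mesures-proches-I}, via les couples biconnect\'es) --- et non point par point. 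Pour passer au contr\^ole point par point exig\'e par l'\'enonc\'e, le texte moyenne par un tore compact $\mathbb{T}_{\alpha}$ contenant l'image de $\refl$ et une approximation $\alpha_1$ de $\varphi_1$ (lemme~\ref{lem:refl-C-alpha} et proposition~\ref{prop:alpha-t-presque-conj}), puis conclut par un argument de distance de L\'evy--Prokhorov (lemme~\ref{lem:LP-tore}). C'est pr\'ecis\'ement l\`a que la condition~(R) intervient dans toute sa force : il ne suffit pas que $j$ soit d'ordre deux et retourne le triangle, il faut qu'il soit central dans $Z_G(\mathsf{h})$ \emph{et} dans la composante neutre, afin d'appartenir \`a un tore compact agissant sur $Z_{\Gamma}(\alpha)\backslash L_{\alpha}$. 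Ce m\'ecanisme --- qui est le c\oe ur de la d\'emonstration --- est absent de votre proposition.
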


On peut quantifier le point~\eqref{item:pant-dans-reseau-2} en utilisant des mesures sur un espace continu $\Geom_{\varepsilon,\pm R}$ qui paramètre, dans chacune des approches de \textcite{ham15,ham20} ou de \textcite{klm18}, les données géométriques des représentations $(\varepsilon/R,\pm R)$-presque parfaites de $\pi_1(\Pi^{\pm})$ dans~$\Gamma$ modulo conjugaison.

\begin{proposition} \label{prop:pant-dans-reseau-mesures}
Dans le cadre de la proposition~\ref{prop:pant-dans-reseau-general},
\begin{enumerate}
  \item[\namedlabel{item:exist-2'}{(2)'}] il existe des mesures $\mu_{\varepsilon,R}$ sur $\Geom_{\varepsilon,R}$ et $\mu_{\varepsilon,-R}$ sur $\Geom_{\varepsilon,-R}$, de même masse totale, telles que pour tout sous-ensemble mesurable $A$ de $\Geom_{\varepsilon,R}$, l'ensemble des \'el\'ements de $\Geom_{\varepsilon,-R}$ qui sont $(C\varepsilon/R)$-bien recoll\'es \`a au moins un \'el\'ement de~$A$ soit de $(\mu_{\varepsilon,-R})$-mesure sup\'erieure ou \'egale \`a $\mu_{\varepsilon,R}(A)$.
\end{enumerate}
\end{proposition}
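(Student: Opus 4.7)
Le plan est de r\'ealiser les espaces $\Geom_{\varepsilon,\pm R}$ comme des ouverts d'un \og espace de rep\`eres\fg\ commun de la forme $(\Gamma\backslash G)\times\mathcal{S}_{\varepsilon}$, o\`u $\mathcal{S}_{\varepsilon}$ est un voisinage compact fix\'e du mod\`ele parfait dans un espace appropri\'e de param\`etres de d\'eformation, et de les munir des restrictions de la mesure produit $\mathrm{d}g\otimes\mathrm{d}s$, o\`u $\mathrm{d}g$ est la mesure de Haar sur $\Gamma\backslash G$ (finie car $\Gamma$ est cocompact) et $\mathrm{d}s$ une mesure lebesguienne fix\'ee sur $\mathcal{S}_{\varepsilon}$. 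La condition de bon recollement se traduira alors par une translation \`a droite sur $\Gamma\backslash G$, qui pr\'eserve automatiquement la mesure de Haar ; ceci donne la borne inf\'erieure voulue.

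Plus pr\'ecis\'ement, suivant l'analyse esquiss\'ee au paragraphe~\ref{subsec:etape-geom}, la donn\'ee d'une repr\'esentation $(\varepsilon/R,\pm R)$-presque parfaite de $\pi_1(\Pi^{\pm})$ dans~$\Gamma$, modulo conjugaison, est enti\`erement d\'etermin\'ee par un rep\`ere de r\'ef\'erence $[g]\in\Gamma\backslash G$ (attach\'e par exemple au milieu d'une courbe de bord distingu\'ee~$a$) ainsi qu'une petite d\'eformation de forme $s\in\mathcal{S}_{\varepsilon}$. Ceci identifie $\Geom_{\varepsilon,\pm R}$ \`a un ouvert $U^{\pm}$ de $(\Gamma\backslash G)\times\mathcal{S}_{\varepsilon}$. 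Je d\'efinirais $\mu_{\varepsilon,\pm R}$ comme la restriction \`a $U^{\pm}$ de $\mathrm{d}g\otimes\mathrm{d}s$. L'involution $j$ de la condition~(R) \'echange les mod\`eles parfaits de type $+R$ et $-R$ et, agissant par translation \`a droite sur le rep\`ere $[g]$, induit un diff\'eomorphisme entre $U^+$ et~$U^-$ pr\'eservant la mesure, ce qui assure que $\mu_{\varepsilon,R}$ et $\mu_{\varepsilon,-R}$ ont m\^eme masse totale.

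Pour la condition de recouvrement, on identifie l'\'el\'ement explicite $J := j\cdot\tau\bigl(\bigl(\begin{smallmatrix} e^{1/2} & 0 \\ 0 & e^{-1/2}\end{smallmatrix}\bigr)\bigr)\in G$, construit \`a partir de~$j$ et du d\'ecalage $\varphi_1$ le long du flot associ\'e \`a~$\tau$. La relation de $(C\varepsilon/R)$-bon recollement entre un pantalon de type $+R$ de rep\`ere $[g^+]$ et son voisin de type $-R$ de rep\`ere $[g^-]$ le long de~$a$ prend alors la forme $[g^-] = [g^+\cdot J]$ avec $s^-$ d\'ependant contin\^ument de~$s^+$, \`a une erreur de taille $(C\varepsilon/R)$ pr\`es. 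Comme la multiplication \`a droite par~$J$ pr\'eserve la mesure de Haar sur $\Gamma\backslash G$, l'application $([g^+],s^+)\mapsto ([g^+\cdot J],s^-(s^+))$ envoie $U^+$ de mani\`ere mesurable dans $U^-$ en pr\'eservant la mesure. Ainsi, pour tout ensemble mesurable $A\subset\Geom_{\varepsilon,R}$, l'ensemble des $y\in\Geom_{\varepsilon,-R}$ qui sont $(C\varepsilon/R)$-bien recoll\'es \`a au moins un $x\in A$ contient l'image de~$A$ par cette application, donc a pour mesure au moins $\mu_{\varepsilon,R}(A)$.

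Le principal obstacle sera de v\'erifier que la relation de bon recollement se r\'eduit bien \`a une application mesur\'ee pr\'eservante de la forme ci-dessus, plut\^ot qu'\`a une correspondance plus complexe \`a valeurs multiples. C'est ici que la condition~(R) joue un r\^ole essentiel : l'\'el\'ement~$j$ effectue le retournement qui inverse l'orientation le long de~$a$ en envoyant $\underline{\tau}(-1)$ sur $\underline{\tau}(1)$ tout en fixant $\underline{\tau}(0)$ et $\underline{\tau}(\infty)$, ce qui rel\`eve l'adjacence combinatoire abstraite en une relation explicite dans le groupe~$G$. Des difficult\'es annexes concernent le caract\`ere approch\'e du recollement, absorb\'e par la continuit\'e uniforme de l'action par translation \`a droite sur l'espace compact des param\`etres de forme, ainsi que la comptabilit\'e des orientations et des choix de courbes de bord, qui est assur\'ee par la structure bipartie de la d\'ecomposition en pantalons.
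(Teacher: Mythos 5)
Votre proposition passe \`a c\^ot\'e de l'ingr\'edient central de la d\'emonstration : le m\'elange du flot sur $\Gamma\backslash G$. Vous supposez que la donn\'ee g\'eom\'etrique d'un pantalon presque parfait se r\'eduit \`a un couple $([g],s)\in(\Gamma\backslash G)\times\mathcal{S}_{\varepsilon}$ muni de la mesure produit, et que le bon recollement correspond \`a une translation \`a droite exacte par un \'el\'ement fixe~$J$, pr\'eservant la mesure de Haar. Mais le recollement le long de la courbe~$a$ impose d'abord l'\'egalit\'e $\rho^+(a^+)=\rho^-(a^-)=\alpha$ dans~$\Gamma$ ; la comparaison pertinente se fait donc, pour chaque classe de conjugaison $[\alpha]$ (il n'y en a qu'un nombre fini), entre les mesures images $m^{\alpha}_{\varepsilon,R}$ et $(\varphi_1\circ\inv)_*\,m^{\alpha}_{\varepsilon,-R}$ sur le quotient compact $Z_{\Gamma}(\alpha)\backslash L_{\alpha}$ obtenu en projetant par l'application \og pied\fg. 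Ces mesures ne sont pas des mesures de Haar : ce sont des pouss\'es en avant de mesures pond\'er\'ees par des fonctions poids qui comptent les fa\c{c}ons de refermer les segments d'orbite du flot modulo~$\Gamma$ (les choix de $\rho(b)$, $\rho(c)$), et rien ne garantit a priori que la masse au-dessus d'un point pied donn\'e soit comparable des deux c\^ot\'es. C'est pr\'ecis\'ement l\`a qu'intervient le m\'elange (fait~\ref{fait:melange}) : il montre (lemme~\ref{lem:w-presque-1}, puis proposition~\ref{prop:mesures-proches-I}) que la densit\'e de $m^{\alpha}_{\varepsilon,R}$ par rapport \`a une mesure $Z_G(Z_{\Gamma}(\alpha))$-invariante est proche de~$1$. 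Votre argument n'invoque le m\'elange nulle part ; sans \'equidistribution, l'ensemble $U^{\pm}$ que vous d\'ecrivez n'a aucune raison d'\^etre uniform\'ement r\'eparti au-dessus de l'espace des pieds, et la structure produit que vous postulez ne refl\`ete pas la fibration (\`a fibres d\'enombrables index\'ees par~$\Gamma$) de l'espace des donn\'ees g\'eom\'etriques.

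Second point : m\^eme une fois cette \'etape acquise, les deux mesures ne co\"incident pas par une translation exacte, elles sont seulement proches, et la conclusion \ref{item:exist-2'} demande exactement un contr\^ole de type L\'evy--Prokhorov en $C\varepsilon/R$. Le passage de \og d\'eriv\'ee de Radon--Nikodym proche de~$1$\fg\ \`a cette estim\'ee utilise la partie fine de la condition~(R) : l'\'el\'ement~$j$ doit appartenir \`a la composante neutre du centre de $Z_G(\mathsf{h})$, de sorte que $\varphi_1\circ\refl$ s'inscrive dans l'action d'un tore compact $\mathbb{T}_{\alpha}$ sur $\Lambda_{\alpha}\backslash L_{\alpha}$ et que l'on puisse moyenner (lemme~\ref{lem:LP-tore}). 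Vous n'utilisez de~(R) que le fait que $j$ r\'ealise le retournement, ce qui vaudrait pour n'importe quelle involution pr\'eservant la mesure et ne suffit pas.
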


Dans la partie~\ref{sec:dyn}, nous d\'etaillons la d\'emonstration des propositions \ref{prop:pant-dans-reseau-general} et~\ref{prop:pant-dans-reseau-mesures} en suivant l'approche d\'evelopp\'ee par \textcite{klm18}.
La d\'emonstration de Hamenst\"adt est analogue, mais avec un formalisme un peu diff\'erent, dû à sa définition différente des données géométriques.

Dans les deux approches, et d\'ej\`a chez \textcite{km12}, la d\'emonstration repose crucialement sur la propri\'et\'e de mélange suivante, appliqu\'ee \`a $\mathsf{h}' = \mathsf{h}$ comme au cadre~\ref{cadre}.
Cette propri\'et\'e, classique, provient de la d\'ecroissance exponentielle des coefficients matriciels des repr\'esentations temp\'er\'ees (\cf la partie~4 de \cite{ber13} ou l'appendice~B de \cite{klm18}).

\begin{fact} \label{fait:melange}
Soient $\mathfrak{a}$ un sous-espace de Cartan de~$\mathfrak{g}$ comme au paragraphe~\ref{subsubsec:cond-reg} et $\Gamma$ un r\'eseau irr\'eductible de~$G$.
Pour tout $\mathsf{h}'\in\mathfrak{a}$, le flot $(\varphi_t)_{t\in\R}$ sur $\Gamma\backslash G$ donné par la multiplication à droite par $\exp(t\mathsf{h}')$ est exponentiellement mélangeant : il existe $k\in\N$ et $C,q>0$ tel que pour toutes fonctions $\psi,\theta\in C^k(\Gamma\backslash G,\R)$ et tout $R>0$,
$$\left| \int_{[g]\in\Gamma\backslash G} \psi([g]) \, (\theta\circ\varphi_R)([g]) \, \dd [g] - \left( \int_{\Gamma\backslash G} \psi \right) \left( \int_{\Gamma\backslash G} \theta \right) \right| \leq C e^{-qR} \, \Vert\psi\Vert_{C^k} \, \Vert\theta\Vert_{C^k}.$$
\end{fact}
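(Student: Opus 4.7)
Le plan est de ramener l'estim\'ee \`a la d\'ecroissance exponentielle des coefficients matriciels de la repr\'esentation r\'eguli\`ere droite $\pi$ de~$G$ sur $L^2(\Gamma\backslash G)$, en exploitant la temp\'erance du compl\'ement orthogonal des constantes. Apr\`es normalisation de la mesure de Haar pour que $\mathrm{vol}(\Gamma\backslash G)=1$, on d\'ecompose $\psi=\bar\psi+\psi_0$ avec $\bar\psi := \int\psi$ et $\psi_0 \in L^2_0(\Gamma\backslash G) := \{f\in L^2(\Gamma\backslash G) : \int f=0\}$, et de m\^eme pour~$\theta$. En d\'eveloppant le produit, le membre de gauche de l'in\'egalit\'e se r\'eduit exactement au coefficient matriciel $\langle\psi_0,\pi(\exp(R\mathsf{h}'))\,\theta_0\rangle_{L^2}$ de $\pi$ restreinte \`a $L^2_0(\Gamma\backslash G)$.

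\textbf{Temp\'erance et d\'ecroissance de $\Xi$.} Il s'agit alors d'invoquer le fait profond mais d\'esormais classique que, pour $\Gamma$ r\'eseau irr\'eductible d'un groupe semi-simple sans facteur compact, la repr\'esentation unitaire $\pi|_{L^2_0(\Gamma\backslash G)}$ est \emph{temp\'er\'ee}, ou au moins fortement $L^p$ pour un $p<\infty$ ne d\'ependant que de~$G$: en rang r\'eel sup\'erieur, ceci d\'ecoule de la propri\'et\'e~(T) combin\'ee au th\'eor\`eme de Cowling~; en rang r\'eel un, de r\'esultats du type Selberg--Burger--Sarnak--Kim. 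On applique ensuite le th\'eor\`eme de Cowling--Haagerup--Howe de d\'ecroissance des coefficients matriciels des repr\'esentations temp\'er\'ees: pour tous vecteurs $K$-finis $u,v$,
\[ |\langle\pi(g)\,u,v\rangle| \;\leq\; (\dim Ku)^{1/2}\,(\dim Kv)^{1/2}\,\Xi(g)\,\Vert u\Vert\,\Vert v\Vert, \]
o\`u $\Xi$ d\'esigne la fonction sph\'erique de Harish-Chandra. De plus, l'estim\'ee classique $\Xi(\exp(t\mathsf{h}')) \leq C'\,e^{-q\,t}$ vaut pour un certain $q=q(\mathsf{h}')>0$ d\`es que $\mathsf{h}'\in\mathfrak{a}\smallsetminus\{0\}$.

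\textbf{Passage aux fonctions $C^k$ via Sobolev.} Les vecteurs $\psi_0,\theta_0$ ne sont en g\'en\'eral pas $K$-finis. On les d\'ecompose selon les types isotypiques de~$K$ agissant par translation \`a droite: par les estim\'ees de Sobolev sur~$K$, la composante de type~$\tau$ est major\'ee en norme $L^2$ par $(1+\lambda_\tau)^{-k/2}\,\Vert\psi_0\Vert_{C^k}$, o\`u $\lambda_\tau$ est la valeur propre du Casimir sur~$\tau$. En combinant avec l'estim\'ee pr\'ec\'edente et en sommant sur les types, la croissance polynomiale en $\dim\tau$ (contr\^ol\'ee par une puissance de~$\lambda_\tau$ via la formule dimensionnelle de Weyl) est compens\'ee pour $k$ assez grand, donnant une s\'erie absolument convergente et la majoration souhait\'ee en $C\,e^{-qR}\,\Vert\psi\Vert_{C^k}\,\Vert\theta\Vert_{C^k}$.

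L'obstacle principal est le gap spectral uniforme de $L^2_0(\Gamma\backslash G)$, qui repose sur des r\'esultats difficiles (propri\'et\'e~(T) et ses cousins, estim\'ees sur les formes automorphes en rang un) mais aujourd'hui bien \'etablis pour les r\'eseaux irr\'eductibles consid\'er\'es. Les autres \'etapes---r\'eduction aux coefficients matriciels, d\'ecroissance de~$\Xi$ le long d'une demi-droite non triviale de~$\mathfrak{a}$, et Sobolev pour passer du $K$-fini au $C^k$---sont essentiellement routini\`eres; on notera toutefois que la constante $q$ et l'exposant~$k$ d\'ependent en principe de~$\mathsf{h}'$ et de la dimension de~$G$.
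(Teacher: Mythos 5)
Votre argument est correct et suit exactement la voie que le texte indique pour ce fait (qu'il ne d\'emontre pas mais attribue \`a la d\'ecroissance exponentielle des coefficients matriciels des repr\'esentations temp\'er\'ees, en renvoyant \`a la partie~4 de Bergeron et \`a l'appendice~B de Kahn--Labourie--Mozes) : r\'eduction au coefficient matriciel sur $L^2_0(\Gamma\backslash G)$, trou spectral et propri\'et\'e fortement $L^p$, majoration de Cowling--Haagerup--Howe par la fonction $\Xi$ de Harish-Chandra, puis passage aux fonctions $C^k$ par Sobolev sur les $K$-types. Seules retouches cosm\'etiques : dans le cas non temp\'er\'e la borne fait intervenir une puissance $\Xi(g)^{1/k}$ plut\^ot que $\Xi(g)$ (ce qui reste exponentiellement d\'ecroissant), et il faut bien s\^ur supposer $\mathsf{h}'\neq 0$.
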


\textcite{km12}, \textcite{ham15}, et \textcite{klm18} n'ont en fait pas besoin du mélange exponentiel (d\'ecroissance en $e^{-qR}$), seulement d'un mélange polynomial (d\'ecroissance en $1/R^{\ell}$ pour un certain $\ell\geq 2$).

\subsection{\'Etape combinatoire} \label{subsec:etape-comb}

Pour conclure la d\'emonstration des th\'eor\`emes \ref{thm:quantitatif-PSL(n,C)} et~\ref{thm:quantitatif}, il s'agit de prendre les repr\'esentations de groupes de pantalons $(\varepsilon/R,\pm R)$-presque parfaites $(C\varepsilon/R)$-bien recollées des propositions \ref{prop:pant-dans-reseau-general} et~\ref{prop:pant-dans-reseau-mesures}, avec les données géométriques appropriées correspondantes, et de montrer qu'on peut les agencer de mani\`ere ad\'equate pour obtenir une repr\'esentation d'une surface \emph{compacte}~$S$, avec une d\'ecomposition en pantalons bipartie et un graphe fini associé, v\'erifiant les hypothèses de la proposition~\ref{prop:pi-1-inj}.
Pour cela, \textcite{km12} et \textcite{klm18} utilisent le lemme classique suivant (\cf la remarque~\ref{rem:lem-mariages-H} pour l'approche de Hamenst\"adt).

\begin{fact}[{Lemme des mariages de \cite{hal35}}] \label{fait:mariages}
Soient $\mathcal{E}^+$ et~$\mathcal{E}^-$ deux ensembles finis de m\^eme cardinal et $\mathcal{M} \subset \mathcal{E}^+\times\mathcal{E}^-$ un sous-ensemble.
Alors il existe une bijection $\psi : \mathcal{E}^+\to\mathcal{E}^-$ telle que $(x,\psi(x))\in\mathcal{M}$ pour tout $x\in\mathcal{E}^+$ d\`es que la condition suivante est v\'erifi\'ee :
\begin{equation} \label{eqn:cond-mariage}
\forall A\subset\mathcal{E}^+, \quad\quad \# \bigcup_{x\in A} \, \{ y\in\mathcal{E}^- ~|~ (x,y)\in\mathcal{M}\} \geq \# A.
\end{equation}
\end{fact}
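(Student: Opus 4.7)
Je proc\'ederais par r\'ecurrence sur l'entier $n := \#\mathcal{E}^+ = \#\mathcal{E}^-$. Le cas $n=1$ est imm\'ediat : la condition~\eqref{eqn:cond-mariage} appliqu\'ee \`a $A=\mathcal{E}^+$ fournit un (donc exactement un) partenaire dans~$\mathcal{E}^-$. Pour l'\'etape de r\'ecurrence, il est commode d'abr\'eger $N(A) := \bigcup_{x\in A}\{y\in\mathcal{E}^- \mid (x,y)\in\mathcal{M}\}$ et de distinguer deux cas selon qu'une partie propre non vide de~$\mathcal{E}^+$ est ou non \emph{critique}, c'est-\`a-dire v\'erifie $\#N(A)=\#A$.

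\emph{Cas non critique.} Supposons que pour toute partie propre non vide $A\subsetneq\mathcal{E}^+$, on ait $\#N(A)\geq\#A+1$. Je choisirais un \'el\'ement quelconque $x_0\in\mathcal{E}^+$ et un partenaire $y_0\in N(\{x_0\})$ arbitraire, puis j'appliquerais l'hypoth\`ese de r\'ecurrence au couple $(\mathcal{E}^+\smallsetminus\{x_0\},\mathcal{E}^-\smallsetminus\{y_0\})$ muni de la restriction de~$\mathcal{M}$ ; la condition~\eqref{eqn:cond-mariage} y est satisfaite car retirer $y_0$ de~$\mathcal{E}^-$ diminue chaque $\#N(A)$ d'au plus~$1$. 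L'adjonction du couple $(x_0,y_0)$ \`a la bijection obtenue conclut ce cas.

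\emph{Cas critique.} Supposons l'existence d'une partie propre non vide $A_0\subsetneq\mathcal{E}^+$ avec $\#N(A_0)=\#A_0$. J'appliquerais alors s\'epar\'ement l'hypoth\`ese de r\'ecurrence aux couples $(A_0,N(A_0))$ et $(\mathcal{E}^+\smallsetminus A_0,\mathcal{E}^-\smallsetminus N(A_0))$, chacun muni de la restriction de~$\mathcal{M}$. La condition~\eqref{eqn:cond-mariage} pour le premier couple est tautologiquement h\'erit\'ee ; pour le second, il faut v\'erifier que $\#\bigl(N(B)\smallsetminus N(A_0)\bigr)\geq\#B$ pour tout $B\subset\mathcal{E}^+\smallsetminus A_0$, ce qui d\'ecoule de l'\'egalit\'e $N(A_0\cup B)=N(A_0)\cup N(B)$ combin\'ee \`a l'in\'egalit\'e $\#N(A_0\cup B)\geq\#A_0+\#B$ fournie par l'hypoth\`ese initiale. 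La r\'eunion des deux bijections partielles donne la bijection $\psi$ recherch\'ee.

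Le principal obstacle, et c{\oe}ur technique de la preuve, est la v\'erification de la condition de Hall pour la paire $(\mathcal{E}^+\smallsetminus A_0,\mathcal{E}^-\smallsetminus N(A_0))$ dans le cas critique : c'est pr\'ecis\'ement parce que $A_0$ sature exactement son voisinage que l'on peut \og absorber\fg\ le retrait de $N(A_0)$ sans violer l'hypoth\`ese de r\'ecurrence. La dichotomie entre cas critique et non critique, qui garantit l'existence d'une marge exploitable dans l'un ou l'autre sens, est la subtilit\'e essentielle du lemme.
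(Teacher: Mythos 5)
Votre d\'emonstration est correcte et compl\`ete : c'est l'argument classique de Halmos--Vaughan par r\'ecurrence, avec la dichotomie entre parties critiques et non critiques, et la v\'erification de la condition de Hall pour la paire $(\mathcal{E}^+\smallsetminus A_0,\mathcal{E}^-\smallsetminus N(A_0))$ via $\#N(A_0\cup B)\geq\#A_0+\#B$ est exactement le point d\'elicat, que vous traitez correctement. Le texte ne donne aucune d\'emonstration de ce fait (il renvoie simplement \`a Hall, 1935), il n'y a donc pas de comparaison \`a faire ; votre preuve est l'une des preuves standard et peut \^etre consid\'er\'ee comme valide telle quelle.
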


On pense \`a $\mathcal{M}$ comme \`a l'ensemble des mariages possibles entre \'el\'ements de $\mathcal{E}^+$ et de~$\mathcal{E}^-$.
La condition \eqref{eqn:cond-mariage} dit que pour tout sous-ensemble $A$ de~$\mathcal{E}^+$, il existe au moins $\# A$ \'el\'ements de~$\mathcal{E}^-$ avec la propriété de pouvoir \^etre mari\'e \`a au moins un \'el\'ement de~$A$.

Dans notre contexte, $\mathcal{E}^{\pm}$ sera un ensemble fini obtenu en prenant certaines données géométriques de représentations $(\varepsilon/R,\pm R)$-parfaites de groupes de pantalons, avec certaines multiplicit\'es bien choisies données par les mesures $\mu_{\varepsilon,\pm R}$ de la proposition~\ref{prop:pant-dans-reseau-mesures}, et $\mathcal{M}$ correspondra \`a l'ensemble des paires $(C\varepsilon/R)$-bien recollées : \cf paragraphe~\ref{subsec:dem-existe-graphe}.
La bijection $\psi$ du lemme des mariages permet de construire un graphe fini biparti trivalent $\mathcal{G}$ de sommets $\mathcal{P} = \mathcal{P}^+ \sqcup \mathcal{P}^-$ de la manière suivante : les sommets $\mathcal{P}^{\pm}$ sont obtenus en prenant le quotient de $\mathcal{E}^{\pm}$ par la transformation d'ordre trois correspondant à la permutation cyclique des courbes de bord (\cf \eqref{eqn:tri}) ; pour tout $x\in\mathcal{E}^+\sqcup\mathcal{E}^-$, on relie la classe de~$x$ et celle de~$\psi(x)$ par une arête.
En épaississant ce graphe, on obtient une surface compacte~$S$ avec une décomposition en pantalons bipartie \'etiquet\'ee par~$\mathcal{P}$ et, pour tout pantalon $\Pi^{\pm}$ correspondant \`a un \'el\'ement de $\mathcal{P}^{\pm}$, une classe de conjugaison de représentations $(\varepsilon/R,\pm R)$-presque parfaites de $\pi_1(\Pi^{\pm})$ dans~$\Gamma$ ; de plus, les classes de deux pantalons adjacents admettent des représentants $(C\varepsilon/R)$-bien recollés.
On en déduit (\cf lemme~\ref{lem:recoller-repr}) une représentation $\rho$ de $\pi_1(S)$ dans~$\Gamma$ à laquelle la proposition~\ref{prop:pi-1-inj} s'applique, et qui est donc injective.

\section{\'Etape g\'eom\'etrique} \label{sec:geom-KLM}

Dans cette partie, nous pr\'esentons les notions de Kahn, Labourie et Mozes de \emph{représentation $(\varepsilon,R)$-presque parfaite} d'un groupe de pantalon (paragraphe~\ref{subsec:pant-presque-parf-KLM}) et de représentations \emph{$\varepsilon$-bien recoll\'ees} (paragraphe~\ref{subsec:recolle-KLM}).
Nous donnons les grandes lignes de leur d\'emonstration de la proposition~\ref{prop:pi-1-inj-KLM} ci-dessous, qui est une version plus précise de la proposition~\ref{prop:pi-1-inj}, faisant intervenir la notion d'application sullivannienne du paragraphe~\ref{subsec:sullivan}.

\smallskip

\emph{Dans toute la partie, on travaille dans le cadre~\ref{cadre} ; la condition~(R) du paragraphe~\ref{subsubsec:cond-R} n'a pas besoin d'être satisfaite.
On fixe un réseau cocompact irréductible $\Gamma$ de~$G$ et un sous-groupe compact maximal $K$ de~$G$ contenant $\tau(\PSO(2))$.}

\subsection{Préliminaires : $\tau$-triangles et tripodes} \label{subsec:tau-P1-G/P}

\subsubsection{$\tau$-triangles}

Rappelons que le groupe $\PSL(2,\R)$ agit simplement transitivement sur (et donc s'identifie \`a) l'ensemble $\mathcal{T}$ des triplets de points deux à deux distincts positivement orientés de $\PP^1(\R)$.
Ainsi, tout élément de $\mathcal{T}$ est de la forme $h\cdot (0,\infty,-1)$ où $h\in\PSL(2,\R)$ est unique.
(L'ensemble $\mathcal{T}$ s'identifie \'egalement \`a l'ensemble des repères de~$\HH^2$ (o\`u un rep\`ere est par d\'efinition une base orthonorm\'ee positivement orient\'ee d'un espace tangent $T_x\HH^2$ o\`u $x\in\HH^2$), ainsi qu'au fibr\'e unitaire tangent $T^1\HH^2$ de~$\HH^2$.)

Comme au paragraphe~\ref{subsubsec:cercles}, le plongement $\tau : \PSL(2,\R)\hookrightarrow G$ induit un plongement $\tau$-\'equivariant $\underline{\tau} : \PP^1(\R)\hookrightarrow G/P_{\tau}$.
On note encore $\underline{\tau}$ le plongement induit de $(\PP^1(\R))^n$ dans $(G/P_{\tau})^n$ pour $n\geq 2$.
La $G$-orbite de $\underline{\tau}(0,\infty)$ (pour l'action diagonale de~$G$) est un ouvert dense de $(G/P_{\tau})^2$ form\'e des couples de points dits \emph{transverses}, ou encore en position g\'en\'erique.

\begin{definition} \label{def:tau-triangle}
Un \emph{$\tau$-triangle de $G/P_{\tau}$} est un triplet de points deux à deux transverses de $G/P_{\tau}$ de la forme $g\cdot\underline{\tau}(0,\infty,-1)$ où $g\in G$.
\end{definition}

Notons que $g\cdot\underline{\tau}(T)$ est un $\tau$-triangle pour tout $g\in G$ et tout $T\in\mathcal{T}$, par \'equivariance de~$\underline{\tau}$.
Un $\tau$-triangle détermine de manière unique un $\tau$-cercle $g\circ\underline{\tau} : \PP^1(\R)\to G/P_{\tau}$ au sens du paragraphe~\ref{subsubsec:cercles}.

Par définition, le groupe $G$ agit transitivement sur l'espace des $\tau$-triangles de $G/P_{\tau}$.
Le stabilisateur de $\underline{\tau}(0,\infty,-1)$ est le centralisateur $Z^{\tau}$ de $\dd\tau(\psl(2,\R))$ dans~$G$ (suppos\'e compact, \cf cadre~\ref{cadre}).
Ainsi, $[g] \mapsto g\cdot\underline{\tau}(0,\infty,-1)$ est une bijection $G$-\'equivariante entre $G/Z^{\tau}$ et l'espace des $\tau$-triangles de $G/P_{\tau}$.

\begin{remarque}
L'espace des $\tau$-triangles de $G/P_{\tau}$ est en g\'en\'eral strictement inclus dans l'espace des triplets ordonn\'es de points deux \`a deux transverses de $G/P_{\tau}$.
Par exemple, pour $G=\PSL(n,\C)$ et $\tau : \PSL(2,\R)\hookrightarrow G$ le plongement irr\'eductible, ces espaces sont de dimensions complexes respectives $(n+1)(n-1)$ et $3n(n-1)/2$.
\end{remarque}

\subsubsection{Tripodes} \label{subsubsec:tripodes}

Afin de d\'efinir les repr\'esentations $(\varepsilon,R)$-presque parfaites,
on a besoin d'objets un peu plus pr\'ecis que les $\tau$-triangles de $G/P_{\tau}$, qui soient param\'etr\'es par $G$ plut\^ot que $G/Z^{\tau}$.
Pour cela, Kahn, Labourie et Mozes introduisent une notion de \emph{tripode}, qui est par d\'efinition un isomorphisme d'une copie abstraite $G_0$ du groupe~$G$ vers~$G$ ou, de mani\`ere \'equivalente, un automorphisme de~$G$.
En r\'ealit\'e, pour d\'emontrer les th\'eor\`emes \ref{thm:quantitatif-PSL(n,C)} et~\ref{thm:quantitatif} il n'est pas n\'ecessaire de consid\'erer tous les automorphismes de~$G$ : les automorphismes int\'erieurs (donn\'es par la conjugaison par un \'el\'ement de~$G$) suffisent.
Dans la suite de cet exposé, nous travaillerons donc directement avec le groupe~$G$ plutôt qu'avec les tripodes de Kahn, Labourie et Mozes.

\subsubsection{Transformations} \label{subsec:rot-inv-phi}

Kahn, Labourie et Mozes considèrent les transformations suivantes de l'espace $G/Z^{\tau}$ des $\tau$-triangles de $G/P_{\tau}$ (\cf figure~\ref{fig:rot-inv-phi-t}) :
\begin{itemize}
  \item l'involution $\inv$ (\og inversion\fg) qui envoie $g\cdot\underline{\tau}(0,\infty,-1)$ sur $g\cdot\underline{\tau}(\infty,0,1)$ ; elle correspond dans $G/Z^{\tau}$ à la multiplication \`a droite par $\tau((\begin{smallmatrix} 0 & 1\\ -1 & 0\end{smallmatrix}))$ ;
  \item le flot $(\varphi_t)_{t\in\R}$ qui envoie $g\cdot\underline{\tau}(0,\infty,-1)$ sur $g\cdot\underline{\tau}(0,\infty,-e^t)$ ; il correspond dans $G/Z^{\tau}$ à la multiplication \`a droite par $\tau\big(\big(\begin{smallmatrix} e^{t/2} & 0\\ 0 & e^{-t/2}\end{smallmatrix}\big)\big)$;
  \item la transformation $\rot$ (\og rotation\fg) d'ordre trois qui envoie $g\cdot\underline{\tau}(0,\infty,-1)$ sur $g\cdot\nolinebreak\underline{\tau}(\infty,-1,0)$ ; elle correspond dans $G/Z^{\tau}$ à la multiplication \`a droite\,par\,$\tau\big(\big(\begin{smallmatrix} 1 & 1\\ -1 & 0\end{smallmatrix})\big)$.
\end{itemize}
Ces transformations se relèvent en des transformations de~$G$, encore notées $\inv$, $\varphi_t$ et~$\rot$, données par la multiplication à droite par les mêmes éléments.
Elles commutent avec l'action de~$G$ par multiplication \`a gauche (que nous noterons parfois avec un point pour \'eviter toute confusion : $g_1\cdot g_2 := g_1g_2$).

\begin{figure}[h!]
\centering
\labellist
\small\hair 2pt
\pinlabel {$\infty$} [u] at 88 538
\pinlabel {$\infty$} [u] at 219 538
\pinlabel {$\infty$} [u] at 352 538
\pinlabel {$-1$} [u] at 31 482
\pinlabel {$-1$} [u] at 162 482
\pinlabel {$-1$} [u] at 292 482
\pinlabel {$1$} [u] at 144 482
\pinlabel {$-e^t$} [u] at 183 528
\pinlabel {$0$} [u] at 88 423
\pinlabel {$0$} [u] at 219 423
\pinlabel {$0$} [u] at 352 423

\pinlabel {\textcolor{red}{$T$}} [u] at 77 491
\pinlabel {\textcolor{orange}{$\inv(T)$}} [u] at 108 471
\pinlabel {\textcolor{red}{$T$}} [u] at 228 488
\pinlabel {\textcolor{orange}{$\varphi_t(T)$}} [u] at 235 519
\pinlabel {\textcolor{red}{$T$}} [u] at 360 488
\pinlabel {\textcolor{orange}{$\rot(T)$}} [u] at 320 500
\pinlabel {\textcolor{orange}{$\rot^2(T)$}} [u] at 324 458
\endlabellist
\includegraphics[height=4.8cm]{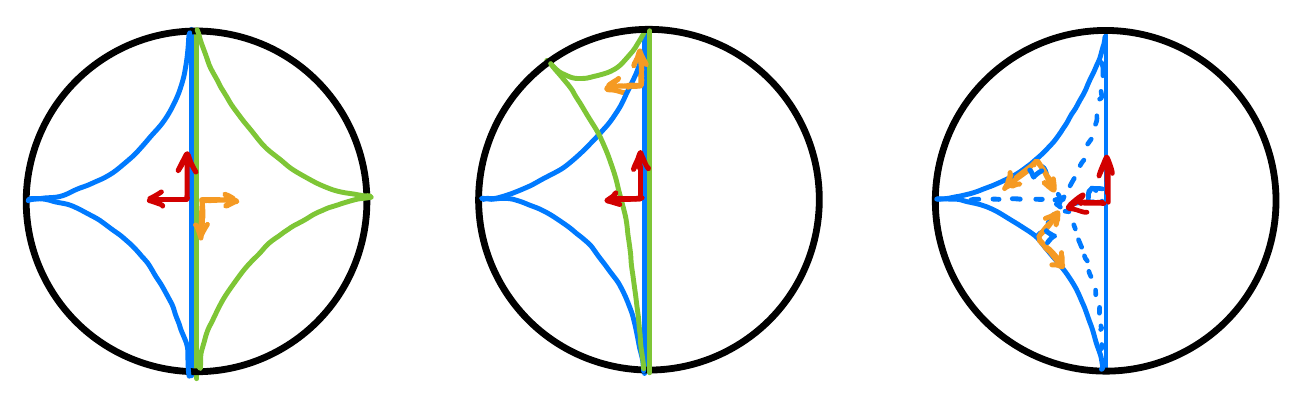}
\vspace{-0.5cm}
\caption{Les transformations $\inv$, $\varphi_t$ et $\rot$  de $G/Z^{\tau}$ proviennent de transformations $\inv$, $\varphi_t$ et $\rot$ de $\mathcal{T} \simeq \{ \text{repères de }\HH^2\}$
représentées ici ; en identifiant $\mathcal{T}$ au fibr\'e unitaire tangent $T^1\HH^2$, le flot $(\varphi_t)_{t\in\R}$ s'identifie au flot g\'eod\'esique}
\label{fig:rot-inv-phi-t}
\end{figure}

\subsubsection{Points de vue \'equivalents}

Le plongement $\tau : \PSL(2,\R)\hookrightarrow G$ induit un plongement $\tau$-\'equivariant $\underline{\underline{\tau}} : \HH^2\hookrightarrow\nolinebreak G/K$.
On appellera \emph{$\tau$-copie de~$\HH^2$} une surface totalement géodésique de $G/K$ de la forme $g\cdot\underline{\underline{\tau}}(\HH^2)$ o\`u $g\in G$.
Il est facile de voir (en utilisant par exemple la condition des \emph{syst\`emes de triplets de Lie}, \cf \cite[Ch.\,IV, \S\,7]{hel01}) que toute surface totalement g\'eod\'esique de $G/K$ est soit contenue dans un plat, soit égale à une $\tau'$-copie de~$\HH^2$ pour un certain plongement $\tau'$ de $\PSL(2,\R)$ (ou $\SL(2,\R)$) dans~$G$.

Les objets suivants s'identifient de mani\`ere $G$-\'equivariante :
\begin{itemize}
  \item les $\tau$-triangles de $G/P_{\tau}$,
  \item les triplets ordonn\'es de points du bord visuel de l'espace sym\'etrique $G/K$, correspondant \`a trois rayons g\'eod\'esiques de $G/K$
  contenus dans une m\^eme $\tau$-copie de~$\HH^2$, incidents en un point, et formant des angles de $2\pi/3$ en ce point,
  \item les triangles id\'eaux de $\tau$-copies de~$\HH^2$ dans $G/K$,
  \item les rep\`eres de $\tau$-copies de~$\HH^2$.
\end{itemize}
Chacune de ces classes d'objets est paramétrée par $G/Z^{\tau}$ où $Z^{\tau}$ est compact.

L\`a o\`u Kahn, Labourie et Mozes travaillent avec les $\tau$-triangles de $G/P_{\tau}$, Hamenst\"adt travaille avec les rep\`eres de $\tau$-copies de~$\HH^2$.
L\`a o\`u Kahn, Labourie et Mozes travaillent avec les tripodes, Hamenst\"adt travaille avec les \og $\tau$-rep\`eres de $G/K$\fg, c'est-\`a-dire avec une $G$-orbite de rep\`eres $(v_1,v_2,\dots,v_r)$ de $G/K$ obtenus en compl\'etant des rep\`eres $(v_1,v_2)$ de $\tau$-copies de~$\HH^2$, telle que le stabilisateur de cette $G$-orbite est trivial.

\subsection{Représentations presque parfaites selon Kahn, Labourie et Mozes} \label{subsec:pant-presque-parf-KLM}

\subsubsection{Intuition géométrique} \label{subsubsec:intuition-geom-KLM}

Pour tout élément hyperbolique $\alpha\in\PSL(2,\R)$, on note $\alpha^{\attract}$ (\resp $\alpha^{\repuls}$) son point fixe attractif (\resp répulsif) dans $\partial\HH^2=\PP^1(\R)$.

Soit $\Pi$ un pantalon de groupe fondamental $\pi_1(\Pi) = \langle a,b,c \,|\, cba = 1\rangle$, où $a,b,c$ correspondent aux courbes de bord de~$\Pi$.
Pour $R>0$, soit $\varrho : \pi_1(\Pi)\to\PSL(2,\R)$ une repr\'esentation \emph{$R$-parfaite} (\resp \emph{$(-R)$-parfaite}) comme au paragraphe~\ref{subsec:struct-R-parf}, c'est-\`a-dire l'holonomie d'une structure hyperbolique sur~$\Pi$ pour laquelle les trois courbes de bord sont de longueur~$2R$ et les points fixes $\varrho(a)^{\repuls},\varrho(a)^{\attract},\varrho(b)^{\repuls},\varrho(b)^{\attract},\varrho(c)^{\repuls},\varrho(c)^{\attract}\in\PP^1(\R)$ sont dans un ordre cyclique positif (\resp n\'egatif).
Considérons les éléments suivants de l'espace $\mathcal{T}$ des triplets de points deux à deux distincts positivement orientés de $\PP^1(\R)$ :
$$\left\{ \begin{array}{ccl}
T & := & (\varrho(a)^{\repuls}, \varrho(b)^{\repuls}, \varrho(c)^{\repuls}),\\
T' & := & (\varrho(a)^{\repuls}, \varrho(b^{-1}a^{-1})^{\repuls}, \varrho(b)^{\repuls})
\end{array}\right.
\ \text{\Big(\resp }
\left\{ \begin{array}{ccl}
T & := & (\varrho(b)^{\repuls}, \varrho(a)^{\repuls}, \varrho(c)^{\repuls}),\\
T' & := & (\varrho(b^{-1}a^{-1})^{\repuls}, \varrho(a)^{\repuls}, \varrho(b)^{\repuls})
\end{array}\right.
\hspace{-0.2cm}\text{\Big)}.
$$
Les éléments $T,T'\in\mathcal{T}$ correspondent à des triangles id\'eaux de~$\HH^2$ qui se projettent en des triangles idéaux de~$\Pi$, d'int\'erieurs disjoints, dont les côtés s'enroulent autour des courbes de bord et qui remplissent tout~$\Pi$, comme sur la figure~\ref{fig:decomp-tri}.
\begin{figure}[h!]
\centering
\includegraphics[width=4cm]{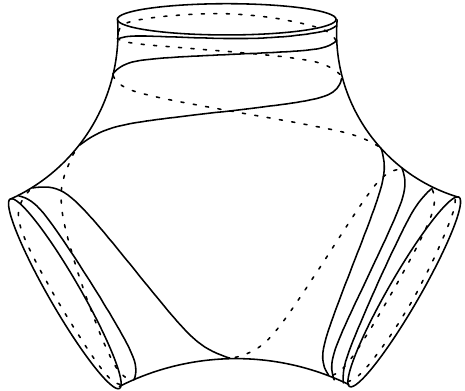}
\caption{Décomposition d'un pantalon hyperbolique en deux triangles idéaux dont les c\^ot\'es s'enroulent autour des courbes de bord}
\label{fig:decomp-tri}
\end{figure}

Adoptons la terminologie suivante, o\`u $\rot$, $\inv$ et $\varphi_t$ sont les transformations introduites au paragraphe~\ref{subsec:rot-inv-phi}, et o\`u les signes $\pm$ sont pris tous \'egaux \`a $+$ si la repr\'esentation est $R$-parfaite, et tous \'egaux \`a $-$ si elle est $(-R)$-parfaite.

\begin{definition} \label{def:triangles-realisants}
Un élément $\alpha\in\PSL(2,\R)$ est \emph{$(\pm R)$-r\'ealisé} par un couple $(T_1,T_2)$ d'éléments de $\mathcal{T}^2$ si $\rot^{\pm 1}\circ\inv\circ\varphi_{\pm R}(T_1) = T_2$ et $\rot^{\pm 1}\circ\inv\circ\varphi_{\pm R}(T_2) = \alpha\cdot T_1$.
\end{definition}

On vérifie alors (\cf figure~\ref{fig:tri-parfait}) que pour $(T,T')$ comme ci-dessus,
\begin{equation} \label{eqn:condRparf-realise}
\left\{ \begin{array}{l}
\varrho(a)\text{ est }(\pm R)\text{-r\'ealisé  par }(T,T'),\\
\varrho(b)\text{ est }(\pm R)\text{-r\'ealisé par }(\rot^{\pm 1}(T),\varrho(b)\cdot (\rot^2)^{\pm 1}(T')),\\
\varrho(c)\text{ est }(\pm R)\text{-r\'ealisé par }((\rot^2)^{\pm 1}(T),\varrho(a)^{-1}\cdot\rot^{\pm 1}(T')).
\end{array}\right.
\end{equation}
Une repr\'esentation $\varrho : \pi_1(\Pi)\to\PSL(2,\R)$ est $(\pm R)$-parfaite si et seulement s'il existe $(T,T')\in\mathcal{T}^2$ v\'erifiant ces conditions.
\begin{figure}[h!]
\centering
\labellist
\small\hair 2pt
\pinlabel {\textcolor{red}{$T$}} [u] at 255 410
\pinlabel {\textcolor{orange}{$T'$}} [u] at 295 460
\pinlabel {\textcolor{red}{$\varrho(a)\cdot T$}} [u] at 320 500
\pinlabel {\textcolor{red}{$\rot(T)$}} [u] at 180 410
\pinlabel {\textcolor{orange}{$\varrho(b)\cdot\rot^2(T')$}} [u] at 78 397
\pinlabel {\textcolor{red}{$\varrho(b)\cdot\rot(T)$}} [u] at 68 350
\pinlabel {\textcolor{red}{$\rot^2(T)$}} [u] at 230 315
\pinlabel {\textcolor{orange}{$\varrho(a)^{-1}\cdot\rot(T)$}} [u] at 220 260
\pinlabel {\textcolor{red}{$\varrho(c)\cdot\rot^2(T)$}} [u] at 320 235
\pinlabel {\textcolor{blue}{$\varrho(a)$}} [u] at 315 410
\pinlabel {\textcolor{blue}{$\varrho(b)$}} [u] at 130 430
\pinlabel {\textcolor{blue}{$\varrho(c)$}} [u] at 190 285
\pinlabel {\textcolor{red}{$T$}} [u] at 490 402
\pinlabel {\textcolor{orange}{$T'$}} [u] at 450 458
\pinlabel {\textcolor{red}{$\varrho(a)\cdot T$}} [u] at 432 478
\pinlabel {\textcolor{red}{$\rot^2(T)$}} [u] at 560 427
\pinlabel {\textcolor{orange}{$\varrho(b)\cdot\rot(T')$}} [u] at 665 405
\pinlabel {\textcolor{red}{$\varrho(b)\cdot\rot^2(T)$}} [u] at 667 350
\pinlabel {\textcolor{red}{$\rot(T)$}} [u] at 530 315
\pinlabel {\textcolor{orange}{$\varrho(a)^{-1}\cdot\rot^2(T)$}} [u] at 550 270
\pinlabel {\textcolor{red}{$\varrho(c)\cdot\rot(T)$}} [u] at 520 250
\pinlabel {\textcolor{mygreen}{$\varrho(a)$}} [u] at 435 410
\pinlabel {\textcolor{mygreen}{$\varrho(b)$}} [u] at 615 430
\pinlabel {\textcolor{mygreen}{$\varrho(c)$}} [u] at 560 290
\endlabellist
\includegraphics[width=7.5cm]{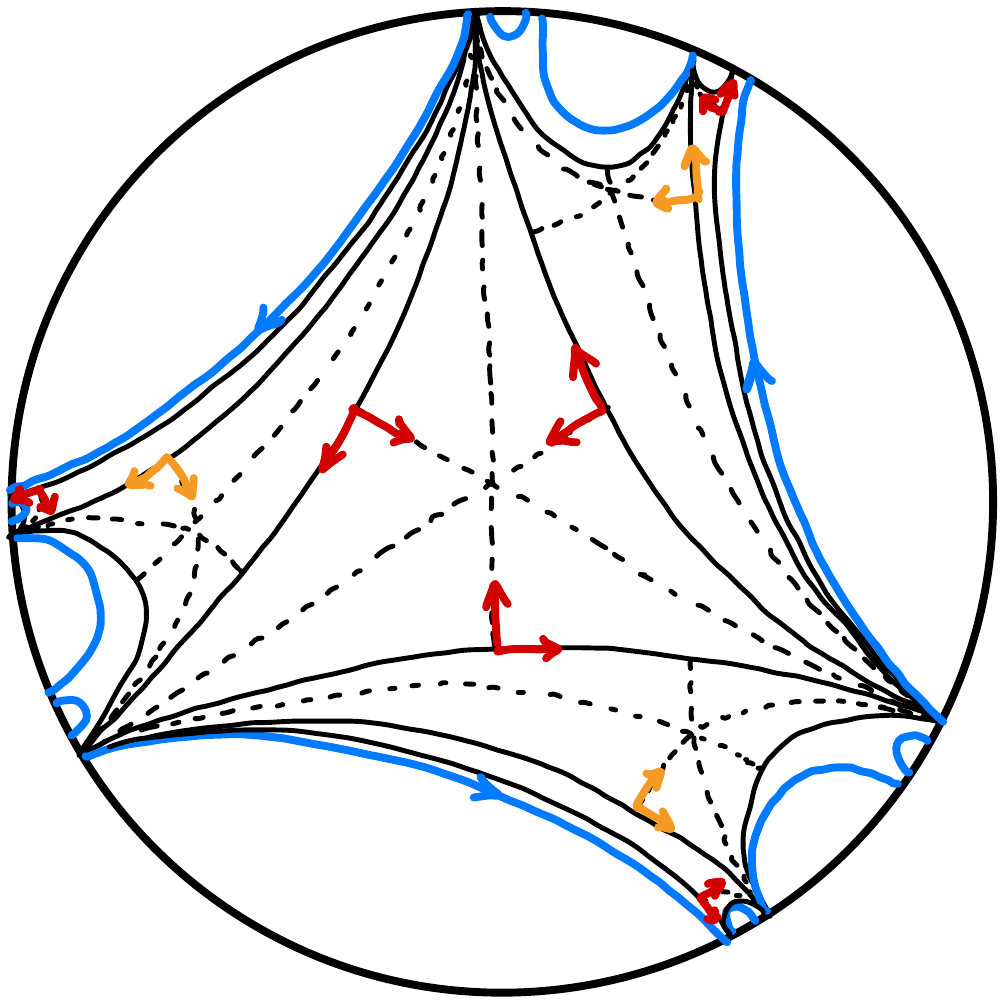}
\hspace{0.3cm}
\scalebox{-1}[1]{\includegraphics[width=7.5cm]{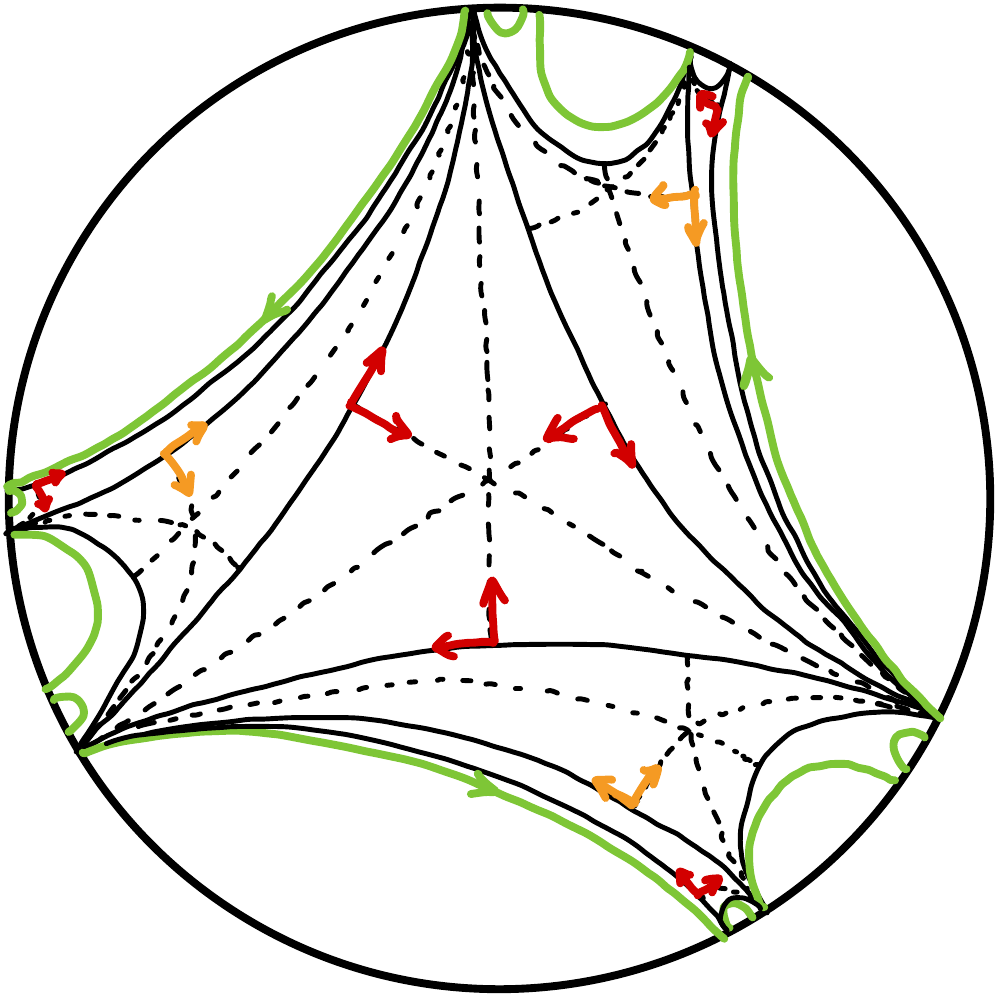}}
\caption{Repr\'esentations $R$-parfaite (\`a gauche) et $(-R)$-parfaite (\`a droite)}
\label{fig:tri-parfait}
\end{figure}

\begin{remarque} \label{rem:rho(a)-hyperbolique}
Si $\alpha$ est $(\pm R)$-r\'ealisé par $(T,T')$ au sens de la définition~\ref{def:triangles-realisants} et si l'on voit $T$ comme un \'el\'ement de $\PSL(2,\R)$, alors $T^{-1}\alpha\,T$ est \'egal \`a $\big(\begin{smallmatrix} e^R & 0\\ 1+e^{-R} & e^{-R}\end{smallmatrix}\big)$ (\resp $\big(\begin{smallmatrix} e^{-R} & 1+e^{-R}\\ 0 & e^{R}\end{smallmatrix}\big)$).
En particulier, $\alpha$ est hyperbolique de longueur de translation $2R$, et ses points fixes répulsif et attractif vérifient que $T^{-1}\cdot\alpha^{\repuls}\in\PP^1(\R)$ est \'egal \`a $0$ (\resp $\infty$), et $T^{-1}\cdot\alpha^{\attract}\in\PP^1(\R)$ est proche de $\infty$ (\resp $0$) pour $R$ grand.
\end{remarque}

Ainsi, lorsque $R$ tend vers~$0$, les points $\varrho(a)^{\attract},\varrho(b)^{\attract},\varrho(c)^{\attract}$ d'une repr\'esentation $(\pm R)$-parfaite $\varrho : \pi_1(\Pi)\to\PSL(2,\R)$ tendent respectivement vers $\varrho(a)^{\repuls},\varrho(b)^{\repuls},\varrho(c)^{\repuls}$; autrement dit, $\varrho(a),\varrho(b),\varrho(c)\in\PSL(2,\R)$ tendent vers des \'el\'ements paraboliques.
Lorsque $R$ tend vers $+\infty$, les points $\varrho(a)^{\attract},\varrho(b)^{\attract},\varrho(c)^{\attract}$ d'une repr\'esentation $(\pm R)$-parfaite $\varrho : \pi_1(\Pi)\to\PSL(2,\R)$ tendent respectivement vers $\varrho(b)^{\repuls},\varrho(c)^{\repuls},\varrho(a)^{\repuls}$ (\cf figure~\ref{fig:tri-parfait}).

\begin{remarque} \label{rem:repr-pmR-parf-PSL(2,R)}
Les repr\'esentations $R$-parfaites de $\pi_1(S)$ dans $\PSL(2,\R)$ sont les images des repr\'esentations $(-R)$-parfaites par la conjugaison par un \'el\'ement de $\PGL(2,\R)$ qui renverse l'orientation de~$\HH^2$.
\end{remarque}

\subsubsection{Représentations presque parfaites à valeurs dans~$G$} \label{subsubsec:pant-presque-parf-KLM-def}

Munissons $G$ d'une m\'etrique riemannienne invariante par multiplication \`a gauche par~$G$ (pour cela, on choisit une forme quadratique d\'efinie positive sur~$\g$ et on la pousse en avant par~$G$).
Quitte \`a la remplacer par sa moyenne par le groupe d'ordre trois engendr\'e par $\rot$, on suppose cette m\'etrique \'egalement invariante par $\rot$.
On note $d_G$ la distance correspondante.

On a une notion naturelle de repr\'esentation \emph{$(\pm R)$-parfaite} de $\pi_1(\Pi)$ dans~$G$, \`a savoir la composition d'une repr\'esentation $(\pm R)$-parfaite \`a valeurs dans $\PSL(2,\R)$ au sens ci-dessus, avec $g\,\tau(\cdot)\,g^{-1} : \PSL(2,\R)\hookrightarrow G$ o\`u $g\in G$.
La caractérisation \eqref{eqn:condRparf-realise} permet d'affaiblir cette notion de la mani\`ere suivante.

\begin{definition} \label{def:realise-presque-alpha}
Soient $\varepsilon,R>0$.
On dit qu'un \'el\'ement $\alpha\in G$ est \emph{$(\varepsilon,\pm R)$-presque réalisé} par un couple $(g,g')\in G^2$ s'il existe $(h,h')\in G^2$ v\'erifiant les quatre conditions suivantes :
\begin{align*}
d_G(g,h)<\varepsilon, \quad & d_G(\rot^{\pm 1}\circ\inv\circ\varphi_{\pm R}(h),g')<\varepsilon,\\
d_G(g',h')<\varepsilon, \quad & d_G(\rot^{\pm 1}\circ\inv\circ\varphi_{\pm R}(h'),\alpha g)<\varepsilon.
\end{align*}
\end{definition}

On considère les conditions suivantes sur un quintuplet $(\alpha,\beta,\gamma,g,g')\in G^5$ :
\begin{equation} \label{eqn:condRpresqueparf-realise}
\left\{ \begin{array}{l}
\alpha\text{ est }(\varepsilon,\pm R)\text{-presque réalisé par }(g,g'),\\
\beta \text{ est }(\varepsilon,\pm R)\text{-presque réalisé par }(\rot^{\pm 1}(g),\beta\cdot (\rot^2)^{\pm 1}(g')),\\
\gamma \text{ est }(\varepsilon,\pm R)\text{-presque réalisé par }((\rot^2)^{\pm 1}(g),\alpha^{-1}\cdot\rot^{\pm 1}(g')).
\end{array}\right.
\end{equation}

\begin{definition} \label{def:pant-presque-parfait-KLM}
Soit $\Pi$ un pantalon de groupe fondamental $\pi_1(\Pi) = \langle a,b,c \,|\, cba = 1\rangle$.
On dit qu'une repr\'esentation $\rho : \pi_1(\Pi)\to G$ est \emph{$(\varepsilon,\pm R)$-presque parfaite} s'il existe $(g,g')\in G^2$ tel que le quintuplet $Q := (\rho(a),\rho(b),\rho(c),g,g')$ vérifie \eqref{eqn:condRpresqueparf-realise}.
Ce quintuplet est une \emph{donnée géométrique} associée à la représentation $(\varepsilon,\pm R)$-presque parfaite~$\rho$.
\end{definition}

\begin{remarque} \label{rem:presque-parf-tri}
Ceci ne dépend pas de l'ordre choisi pour les courbes de bord : si $Q = (\rho(a),\rho(b),\rho(c),g,g')$ vérifie \eqref{eqn:condRpresqueparf-realise}, alors
\begin{align*}
\sym(Q) & := \big(\rho(b),\rho(c),\rho(a),\rot^{\pm 1}(g),\rho(b)\cdot (\rot^2)^{\pm 1}(g')\big)\\
\text{et}\quad \sym^2(Q) & := \big(\rho(c),\rho(a),\rho(b),(\rot^2)^{\pm 1}(g),\rho(a)^{-1}\cdot\rot^{\pm 1}(g')\big)
\end{align*}
aussi.
De plus, la notion de représentation $(\varepsilon,\pm R)$-presque parfaite est invariante par conjugaison au but par~$G$ : si $(\rho(a),\rho(b),\rho(c),g,g')$ vérifie \eqref{eqn:condRpresqueparf-realise}, alors $(x\rho(a)x^{-1}, x\rho(b)x^{-1}, x\rho(c)x^{-1}, xg, xg')$ aussi pour tout $x\in G$.
\end{remarque}

\begin{remarque}
Nous verrons \`a la proposition~\ref{prop:mesures-I}.\eqref{item:repr-pmR-parf} qu'en supposant la condition~(R) du paragraphe~\ref{subsubsec:cond-R} v\'erifi\'ee, les notions de repr\'esentations $(\varepsilon,R)$-presque parfaite et $(\varepsilon,-R)$-presque parfaite co\"incident, mais pour des \'el\'ements $(g,g')\in G^2$ qui diff\`erent par la multiplication \`a droite par un certain \'el\'ement de~$G$ envoyant $\mathsf{h}$ sur $-\mathsf{h}$ (ceci correspond au renversement d'orientation dans la remarque~\ref{rem:repr-pmR-parf-PSL(2,R)}).
\end{remarque}

Voir la figure~\ref{fig:Triconn} pour une illustration dans le cas des repr\'esentations presque parfaites \`a valeurs dans un r\'eseau $\Gamma$ de~$G$.

\begin{figure}[h!]
\centering
\labellist
\small\hair 2pt
\pinlabel {$[g]$} [u] at 60 230
\pinlabel {$[\rot(g)]$} [u] at 50 140
\pinlabel {$[\rot^2(g)]$} [u] at 155 150
\pinlabel {\textcolor{orange}{$I_0$}} [u] at 250 330
\pinlabel {\textcolor{orange}{$I_2$}} [u] at 250 145
\pinlabel {\textcolor{orange}{$I_1$}} [u] at 250 25
\pinlabel {$[\rot^2(g')]$} [u] at 510 230
\pinlabel {$[g']$} [u] at 410 160
\pinlabel {$[\rot(g')]$} [u] at 490 140
\endlabellist
\includegraphics[width=10cm]{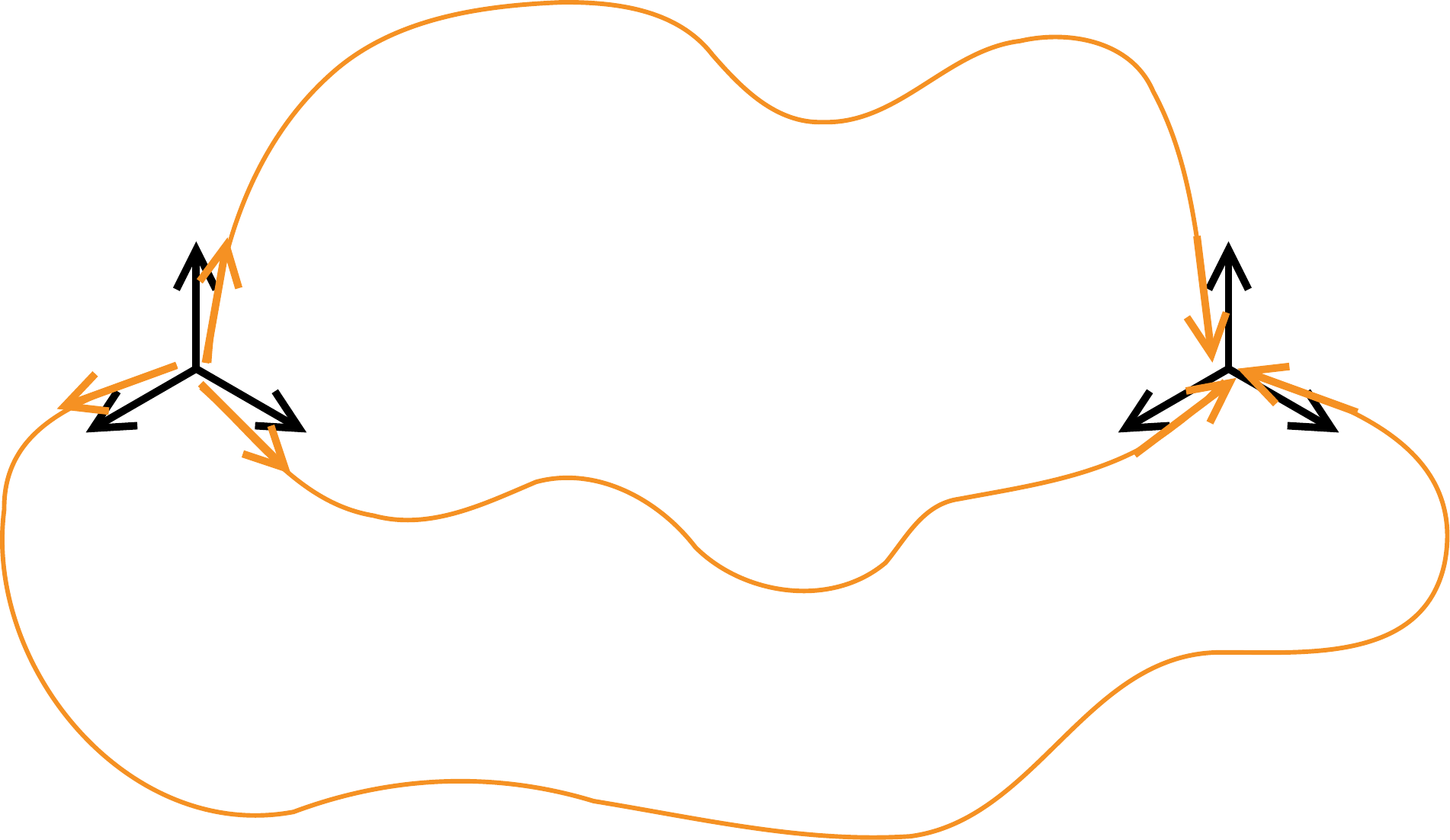}
\vspace{-0.2cm}
\caption{Donn\'ee g\'eom\'etrique $Q = (\rho(a),\rho(b),\rho(c),g,g')$ associ\'ee \`a une repr\'esentation $(\varepsilon,R)$-presque parfaite $\rho : \pi_1(\Pi)\to G$ \`a valeurs dans un r\'eseau $\Gamma$ de~$G$ : il existe dans $\Gamma\backslash G$ un segment $I_0$ (\resp $I_1$, \resp $I_2$) d'orbite du flot $(\varphi_t)_{t\in\R}$, de longueur~$R$, qui relie un \'el\'ement proche de $[g]$ (\resp $[\rot(g)]$, \resp $[\rot^2(g)]$) \`a un \'el\'ement proche de l'inverse $\inv$ de $[\rot^2(g')]$ (\resp $[\rot(g')]$, \resp $[g']$). L'\'el\'ement $\rho(a)$ (\resp $\rho(b)$, \resp $\rho(c)$) est \og presque r\'ealis\'e\fg\ dans~$\Gamma\backslash G$ par $I_0$ (\resp $I_1$, \resp $I_2$) suivi de $I_2$ (\resp $I_0$, \resp $I_1$) parcouru en sens~inverse.}
\label{fig:Triconn}
\end{figure}

\subsection{Rappels : proximalit\'e} \label{subsec:prox}

Soient $\mathfrak{a} \subset \mathfrak{k}^{\perp}$ un sous-espace de Cartan de~$\mathfrak{g}$, et $\mathfrak{a}^+$ une chambre de Weyl de~$\mathfrak{a}$ contenant~$\mathsf{h}$, comme au paragraphe~\ref{subsubsec:cond-reg}.
Soit $Z_K(\mathfrak{a})$ le centralisateur de $\mathfrak{a}$ dans~$K$.
Comme $\mathsf{h}$ est suppos\'e r\'egulier (\ie $\mathsf{h}\in\mathrm{Int}(\mathfrak{a}^+)$), le sous-groupe parabolique $P_{\tau}$ de~$G$ est minimal et pour tout \'el\'ement $g\in G$, les deux notions suivantes sont \'equivalentes :
\begin{itemize}
  \item $g$ est \emph{proximal dans $G/P_{\tau}$}, au sens o\`u il admet un unique point fixe attractif $g^{\attract}$ et un unique point fixe r\'epulsif $g^{\repuls}$ dans $G/P_{\tau}$,
  \item $g$ est \emph{loxodromique}, au sens o\`u il est conjugu\'e \`a un \'el\'ement de la forme $\exp(x)k$ o\`u $x\in\mathrm{Int}(\mathfrak{a}^+)$ et $k\in Z_K(\mathfrak{a})$ ; l'\'el\'ement $x$ est unique et l'on pose $\lambda(g):=x\in\mathrm{Int}(\mathfrak{a}^+)$.
\end{itemize}

Le centralisateur $Z_G(\alpha_0)$ de $\alpha_0:=\exp(\mathsf{h})$ dans~$G$ est le produit de $A:=\exp(\mathfrak{a})$ et de $Z_K(\mathfrak{a})$.
Pour un \'el\'ement $\alpha\in G$ proximal dans $G/P_{\tau}$ quelconque, conjugu\'e \`a $\exp(x)k$ o\`u $x\in\mathrm{Int}(\mathfrak{a}^+)$ et $k\in Z_K(\mathfrak{a})$, le centralisateur $Z_G(\alpha)$ de $\alpha$ dans~$G$ est conjugu\'e au produit de $A$ et du groupe $Z_K(\mathfrak{a})\cap Z_K(k)$, qui peut \^etre plus petit que $Z_K(\mathfrak{a})$.

\begin{exemple} \label{ex:PSLnC-centralisateur}
Soient $G=\PSL(n,\C)$ et $\tau : \PSL(2,\R)\hookrightarrow G$ le plongement irr\'eductible (\cf exemple~\ref{ex:PSL(n,K)-a}).
On a $K=\mathrm{PSU}(n)$.
Le sous-espace $\mathfrak{a}\subset\g$ est constitu\'e des matrices diagonales r\'eelles de trace nulle, et le groupe $A = \exp(\mathfrak{a})$ (\resp $Z_K(\mathfrak{a})$) des matrices diagonales de~$G$ \`a coefficients strictement positifs (\resp complexes de module un).
Comme $Z_K(\mathfrak{a})$ est ab\'elien, le centralisateur $Z_G(\alpha)$ de tout \'el\'ement $\alpha\in G$ proximal dans $G/P_{\tau}$ est conjugu\'e \`a $Z_G(\alpha_0) = A \, Z_K(\mathfrak{a}) \simeq (\C^*)^{n-1}$.
\end{exemple}

\begin{exemple} \label{ex:SO(n,1)-centralisateur}
Soient $G=\SO(n,1)$ et $\tau : \PSL(2,\R)\simeq\SO(2,1)_0\hookrightarrow G$ le plongement standard (\cf exemple~\ref{ex:SO(n,1)-tau}).
On a $K=\mathrm{S}(\OO(n)\times\OO(1))$.
Le groupe $A = \exp(\mathfrak{a})$ est isomorphe \`a $\R_+^*$, et $Z_K(\mathfrak{a})$ \`a $\mathrm{S}(\OO(n-1)\times\OO(1)\times\OO(1))$.
Pour $\alpha = \exp(x)k \in G$ o\`u $x\in\mathrm{Int}(\mathfrak{a}^+)$ et $k\in Z_K(\mathfrak{a})$, le centralisateur $Z_G(\alpha)$ de $\alpha$ dans~$G$ est strictement contenu dans $Z_G(\alpha_0)$ d\`es que $k$ n'est pas central dans $Z_K(\mathfrak{a})$, ce qui peut se produire d\`es que $n\geq 4$.
\end{exemple}

\subsection{Quelques observations utiles sur les pantalons presque parfaits} \label{subsec:presque-parf-prox}

Soit $\Vert\cdot\Vert_{\mathfrak{a}}$ la norme euclidienne sur~$\mathfrak{a}$ induite par la forme de Killing de~$\g$.
Munissons $G/P_{\tau}$ de la distance $d_{\tau}$ du paragraphe~\ref{subsubsec:def-sullivan}, invariante par $K\supset\tau(\PSO(2))$.
On renvoie au paragraphe~\ref{subsec:prox} pour la notion de proximalit\'e dans $G/P_{\tau}$ et la d\'efinition~de~$\lambda$.

\begin{lemme} \label{lem:realise-proximal}
Il existe $C>0$ tel que pour tout $\varepsilon>0$ assez petit, tout $R>0$ assez grand et tout $(g,g',\alpha)\in G^3$, si $\alpha$ est $(\varepsilon,R)$-presque (\resp $(\varepsilon,-R)$-presque) réalisé par $(g,g')$ au sens de la d\'efinition~\ref{def:realise-presque-alpha}, alors
\begin{itemize}
  \item $\alpha$ est proximal dans $G/P_{\tau}$ ;
  \item le point fixe attractif (\resp r\'epulsif) de $g^{-1}\alpha g$ dans $G/P_{\tau}$ est \`a distance $\leq C (\varepsilon+\nolinebreak e^{-R/2})$ de $\underline{\tau}(\infty)$ (\resp $\underline{\tau}(0)$) pour~$d_{\tau}$ ;
  \item $\Vert\lambda(\alpha) - \lambda(\exp(R\,\mathsf{h}))\Vert_{\mathfrak{a}} \leq C (\varepsilon+e^{-R/2})$ ; en particulier, la longueur de translation de $\alpha$ dans $G/K$ appartient \`a $[2R-C (\varepsilon+e^{-R/2}),2R+C (\varepsilon+e^{-R/2})]$ ;
  \item $g^{-1} g'$ appartient à $\mathcal{B}_{\varepsilon} \, \tau\big(\big(\begin{smallmatrix} e^{R/2} & 0\\ e^{-R/2} & e^{-R/2}\end{smallmatrix}\big)\big) \, \mathcal{B}_{\varepsilon}$ (\resp à $\mathcal{B}_{\varepsilon} \, \tau\big(\big(\begin{smallmatrix} e^{-R/2} & e^{-R/2}\\ 0 & e^{R/2}\end{smallmatrix}\big)\big) \, \mathcal{B}_{\varepsilon}$), où $\mathcal{B}_{\varepsilon}$ désigne la boule ferm\'ee de rayon~$\varepsilon$ centr\'ee en l'\'el\'ement neutre dans $(G,d_G)$.
\end{itemize}
\end{lemme}

\begin{proof}
Considérons les éléments $h_R := \big(\begin{smallmatrix} e^{R/2} & 0\\ e^{-R/2} & e^{-R/2}\end{smallmatrix}\big)$ et $h_{-R} :=\nolinebreak \big(\begin{smallmatrix} e^{-R/2} & e^{-R/2}\\ 0 & e^{R/2}\end{smallmatrix}\big)$ de $\PSL(2,\R)$.
G\'en\'eralisant la remarque~\ref{rem:rho(a)-hyperbolique}, on note que pour tous $\varepsilon,R>0$, si $\alpha$ est $(\varepsilon,\pm R)$-presque réalisé par $(g,g')\in G^2$, alors il existe $g_1,g_2,g_3,g_4\in\nolinebreak\mathcal{B}_{\varepsilon}$~tels~que
$$g^{-1} g' = g_1 \, \tau(h_{\pm R}) \, g_2 \quad\quad\mathrm{et}\quad\quad g^{-1}\alpha g = g_1 \, \tau(h_{\pm R}) \, (g_2 g_3) \, \tau(h_{\pm R}) \, g_4.$$
L'\'el\'ement $h_R\in\PSL(2,\R)$ est hyperbolique de point fixe r\'epulsif $0\in\PP^1(\R)$ et de point fixe attractif $e^{R/2}-1\in\PP^1(\R)$ \`a distance $\arctan(1/(e^{R/2}-1))\sim e^{-R/2}$ de $\infty$ pour la distance $\PSO(2)$-invariante de $\PP^1(\R)$.
De m\^eme, $h_{-R}$ est hyperbolique de point fixe attractif $\infty$ et de point fixe r\'epulsif \`a distance $\sim e^{-R/2}$ de~$0$.

Posons $x_0^+ := \underline{\tau}(\infty) \in G/P_{\tau}$ et notons $H_0^-$ l'ensemble des points de $G/P_{\tau}$ non tranverses \`a $\underline{\tau}(0)$.
Il existe $r>0$ tel que $\mathcal{V}_{3r}(x_0^+) \cap \mathcal{V}_{3r}(H_0^-) = \emptyset$, o\`u $\mathcal{V}_{\delta}(\cdot)$ d\'esigne le $\delta$-voisinage uniforme dans $(G/P_{\tau},d_{\tau})$ pour $\delta>0$.
On v\'erifie qu'il existe $C>0$ avec les propri\'et\'es suivantes :
\begin{itemize}
  \item pour tout $\varepsilon>0$ assez petit et tout $g\in\mathcal{B}_{\varepsilon}$, la constante de Lipschitz de $g$ dans $(G/P_{\tau},d_{\tau})$ est $\leq 1+C\varepsilon$ et l'on a $d_{\tau}(x,g\cdot x)\leq C\varepsilon$ pour tout $x\in G/P_{\tau}$ ;
  \item pour tout $R>0$ assez grand, $\tau(h_R)$ envoie le compl\'ementaire de $\mathcal{V}_r(H_0^-)$ dans $\mathcal{V}_{Ce^{-R/2}}(x_0^+)$ avec une constante de Lipschitz $\leq Ce^{-R}$.
\end{itemize}
Ainsi, pour tout $\varepsilon>0$ assez petit, tout $R>0$ assez grand et tous $g_1,g_2,g_3,g_4\in\nolinebreak\mathcal{B}_{\varepsilon}$, l'\'el\'ement $g_1\,\tau(h_R)\,g_2g_3\,\tau(h_R)\,g_4\in G$ envoie le compl\'ementaire de $\mathcal{V}_{2r}(H_0^-)$ dans $\mathcal{V}_{C(\varepsilon+e^{-R/2})}(x_0^+)$ avec une constante de Lipschitz $\leq (1+C\varepsilon)^4\,C^2\,e^{-2R}$, donc de mani\`ere uniform\'ement contractante si $\varepsilon$ est assez petit et $R$ assez grand.
On en d\'eduit que $g_1\,\tau(h_R)\,g_2g_3\,\tau(h_R)\,g_4$ est proximal dans $G/P_{\tau}$, de point fixe attractif \`a distance $\leq C(\varepsilon+e^{-R/2})$ de $x_0^+=\underline{\tau}(\infty)$.
En raisonnant de m\^eme, on voit que son point fixe r\'epulsif est \`a distance $\leq C(\varepsilon+e^{-R/2})$ de $\underline{\tau}(0)$.
La borne sup\'erieure pour $\Vert\lambda(g_1\,\tau(h_R)\,g_2g_3\,\tau(h_R)\,g_4) - \lambda(\exp(R\,\mathsf{h}))\Vert_{\mathfrak{a}}$ est obtenue par des raisonnements analogues, tenant compte de mani\`ere plus pr\'ecise de la constante de Lipschitz, dans divers espaces projectifs associ\'es \`a~$\tau$, comme dans \textcite{ben97}.
Ceci prouve les propri\'et\'es voulues de $g^{-1}\alpha g=g_1\,\tau(h_R)\,g_2g_3\,\tau(h_R)\,g_4$ lorsque $\alpha$ est $(\varepsilon,R)$-presque réalisé par $(g,g')\in G^2$.
Le cas o\`u $\alpha$ est $(\varepsilon,-R)$-presque réalisé par $(g,g')$ est analogue.
\end{proof}

\begin{corollaire} \label{cor:presque-parf->generique}
Si $\varepsilon>0$ est assez petit et $R>0$ assez grand, alors
\begin{enumerate}
  \item\label{item:presque-parf->generique} toute repr\'esentation $(\varepsilon,\pm R)$-presque parfaite $\rho : \pi_1(\Pi)\to G$ est \emph{$\tau$-g\'en\'erique} au sens o\`u $\rho(a)$, $\rho(b)$ et $\rho(c)$ sont proximaux dans $G/P_{\tau}$ de points fixes attractifs $\rho(a)^{\attract}, \rho(b)^{\attract}, \rho(c)^{\attract} \in G/P_{\tau}$ deux \`a deux transverses ;
  \item\label{item:nb-fini-presque-parf} pour tout r\'eseau cocompact $\Gamma$ de~$G$, il n'y a qu'un nombre fini de classes de conjugaison de repr\'esentations $(\varepsilon,\pm R)$-presque parfaites de $\pi_1(\Pi)$ \`a valeurs dans~$\Gamma$.
\end{enumerate}
\end{corollaire}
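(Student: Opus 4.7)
The plan is to derive both assertions directly from Lemma~\ref{lem:realise-proximal}, applied in turn to each of the three lines of \eqref{eqn:condRpresqueparf-realise}. The key observation is that $\rot^{\pm 1}$ and $(\rot^2)^{\pm 1}$ act on $G$ by right multiplication by two fixed elements, so applying Lemma~\ref{lem:realise-proximal} to the second and third lines of \eqref{eqn:condRpresqueparf-realise} simply amounts to replacing $g$ in its statement by $\rot^{\pm 1}(g)$ or $(\rot^2)^{\pm 1}(g)$, with no further work.

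For assertion~\eqref{item:presque-parf->generique}, the first bullet of Lemma~\ref{lem:realise-proximal} immediately gives proximality of $\rho(a),\rho(b),\rho(c)$ in $G/P_{\tau}$. Its second bullet then locates the three attractive fixed points, up to an error $C(\varepsilon+e^{-R/2})$, at the three vertices of the $\tau$-triangle $g\cdot\underline{\tau}(0,\infty,-1)$, in a cyclic order that depends on the sign of $\pm R$; checking this just requires a direct computation of the action of the rotation matrix $\big(\begin{smallmatrix}1 & 1\\ -1 & 0\end{smallmatrix}\big)$ on $\PP^1(\R)$. Since pairwise transversality is a $G$-invariant open condition on $(G/P_{\tau})^2$, and since the vertices of a $\tau$-triangle lie at a distance from the non-transverse locus that is independent of~$g$ by $G$-equivariance, one concludes that $\rho(a)^{\attract}, \rho(b)^{\attract}, \rho(c)^{\attract}$ are pairwise transverse for $\varepsilon$ small enough and $R$ large enough.

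For assertion~\eqref{item:nb-fini-presque-parf}, the last bullet of Lemma~\ref{lem:realise-proximal}, together with the identity $g^{-1}\alpha g=g_1\,\tau(h_{\pm R})\,g_2g_3\,\tau(h_{\pm R})\,g_4$ (with $g_i\in\mathcal{B}_{\varepsilon}$) appearing in its proof, shows that $g^{-1}\rho(a) g$ lies in a bounded subset $\Omega_0\subset G$ depending only on $\varepsilon$, $R$ and~$\tau$. Applying the same to the other two lines of \eqref{eqn:condRpresqueparf-realise} yields that $g^{-1}\rho(b) g$ and $g^{-1}\rho(c) g$ also lie in an enlarged bounded subset $\Omega\subset G$. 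Thus, up to $G$-conjugation, the triple $(\rho(a),\rho(b),\rho(c))$ belongs to the fixed relatively compact set $\Omega^3\subset G^3$. To pass to $\Gamma$-conjugation, one fixes a relatively compact fundamental domain $F\subset G$ for the left action of~$\Gamma$ (which exists by cocompactness) and writes the conjugating element as $g=\gamma_0 k$ with $\gamma_0\in\Gamma$ and $k\in F$; then $\gamma_0^{-1}(\rho(a),\rho(b),\rho(c))\gamma_0$ lies in the relatively compact set $F\,\Omega^3\,F^{-1}$, whose intersection with the discrete subset $\Gamma^3$ is finite. Since $\rho$ is determined by the images of $a$ and $b$ (as $\pi_1(\Pi)$ is freely generated by them), the finiteness follows.

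The whole substance of the argument is concentrated in Lemma~\ref{lem:realise-proximal}, which is already granted; no additional obstacle arises beyond it. The only mildly subtle point is the passage from $G$-conjugation (natural, since the geometric datum carries an auxiliary element $g\in G$) to $\Gamma$-conjugation (which is what the statement counts), handled by the standard cocompactness trick with a fundamental domain.
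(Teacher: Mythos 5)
Your proposal is correct and takes essentially the same route as the paper: both assertions are deduced from Lemma~\ref{lem:realise-proximal} applied to the three lines of \eqref{eqn:condRpresqueparf-realise} (via right multiplication by the rotation element), with openness and $G$-invariance of transversality for~(1), and for~(2) the boundedness of $g^{-1}\rho(a)g$, $g^{-1}\rho(b)g$ combined with the cocompactness trick $g=\gamma_0 k$, $k$ in a compact set, and discreteness of~$\Gamma$. The only nitpick is that $d_{\tau}$ is merely $K$-invariant, so ``distance to the non-transverse locus'' is not literally independent of~$g$; the clean formulation --- which you also give --- is to check transversality near the model triangle $\underline{\tau}(0,\infty,-1)$ and transport it by~$g$, which preserves transversality.
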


\begin{proof}
\eqref{item:presque-parf->generique} Les points $\underline{\tau}(\infty)$, $\underline{\tau}(-1)$ et $\underline{\tau}(0)$ sont transverses et la transversalité est une condition ouverte.
Par conséquent, il existe $\delta>0$ tel que pour tous $x,y\in G/P_{\tau}$, si $x$ est $\delta$-proche de l'un des points $\underline{\tau}(\infty)$, $\underline{\tau}(-1)$ ou $\underline{\tau}(0)$, et si $y$ est $\delta$-proche d'un autre de ces points, alors $x$ et~$y$ sont transverses.
Soit $(\rho(a),\rho(b),\rho(c),g,g')$ une donnée géométrique associée à~$\rho$, comme à la définition~\ref{def:pant-presque-parfait-KLM}.
D'apr\`es le lemme~\ref{lem:realise-proximal}, si $\varepsilon$ est assez petit et $R$ assez grand, alors $g^{-1}\rho(a)g$ (\resp $g^{-1}\rho(b)g$, \resp $g^{-1}\rho(c)g$) est proximal dans $G/P_{\tau}$, de point fixe attractif $\delta$-proche de $\underline{\tau}(\infty)$ (\resp $\underline{\tau}(-1)$, \resp $\underline{\tau}(0)$).
On conclut en remarquant que $(g^{-1}\rho(d)g)^{\attract} = g^{-1}\cdot\rho(d)^{\attract}$ pour tout $d\in\{a,b,c\}$ et que l'action de~$G$ pr\'eserve la transversalit\'e.

\eqref{item:nb-fini-presque-parf} Soit $\Gamma$ un r\'eseau cocompact de~$G$ : il existe une partie compacte $K_1$ de~$G$ telle que $G = \Gamma K_1$.
D'apr\`es le lemme~\ref{lem:realise-proximal}, pour tout $\varepsilon>0$ assez petit et tout $R>0$ assez grand, il existe une partie compacte $K_2$ de~$G$ telle que pour tout $(g,g',\alpha)\in G^3$, si $\alpha$ est $(\varepsilon,\pm R)$-presque réalisé par $(g,g')$ au sens de la d\'efinition~\ref{def:realise-presque-alpha}, alors $g^{-1}\alpha g\in K_2$.
En particulier, si $\rho : \pi_1(\Pi)\to\Gamma$ est $(\varepsilon,\pm R)$-presque parfaite, de donnée géométrique $(\rho(a),\rho(b),\rho(c),g,g')$ comme à la définition~\ref{def:pant-presque-parfait-KLM}, alors $g^{-1}\rho(a)g$ et $g^{-1}\rho(b)g$ appartiennent tous deux au compact $K'_2 := K_2 \cup \tau((\begin{smallmatrix} 1 & 1\\ -1 & 0\end{smallmatrix}))^{\pm 1} K_2 \tau((\begin{smallmatrix} 1 & 1\\ -1 & 0\end{smallmatrix}))^{\mp 1}$.
Si l'on \'ecrit $g = x k_1$ o\`u $x\in\Gamma$ et $k_1\in K_1$, alors $x^{-1}\rho(a)x$ et $x^{-1}\rho(b)x$ appartiennent tous deux \`a $K_1 K'_2 K_1^{-1} \cap \Gamma$, qui est fini car $\Gamma$ est discret dans~$G$.
\end{proof}

Le lemme~\ref{lem:realise-proximal} implique que pour $\varepsilon>0$ assez petit et $R$ assez grand, l'image de toute repr\'esentation $(\varepsilon,\pm R)$-presque parfaite $\rho : \pi_1(\Pi)\to G$ est un groupe \emph{$\varepsilon$-Schottky} au sens de \textcite[Def.\,4.1]{ben97}.
En particulier, $\rho$ est injective et discr\`ete, c'est un plongement quasi-isom\'etrique (\cf \cite{ben97}) et plus pr\'ecis\'ement une repr\'esentation $P_{\tau}$-anosovienne au sens du paragraphe~\ref{subsec:intro-Anosov} (\cf \cite{clss17,klp-morse}).

\subsection{Application \og pied\fg\ et bons recollements selon Kahn, Labourie, Mozes} \label{subsec:recolle-KLM}

Afin de d\'efinir les bons recollements de pantalons presque parfaits, Kahn, Labourie et Mozes introduisent une notion d'application \og pied\fg, d\'efinie de la mani\`ere suivante.

Soit $\alpha\in G$ un \'el\'ement proximal dans $G/P_{\tau}$.
Soit $L_{\alpha}$ l'ensemble des \'el\'ements $g\in G$ dont le $\tau$-triangle $g\cdot\underline{\tau}(0,\infty,-1)$ de $G/P_{\tau}$ associ\'e est de la forme $(\alpha^{\repuls},\alpha^{\attract},\cdot)$.
Choisissons $r>0$ assez petit de sorte que le $r$-voisinage uniforme $\mathcal{U}_{\alpha}$ de $L_{\alpha}$ dans $(G,d_G)$ soit hom\'eomorphe au produit direct de $L_{\alpha}$ avec une boule.

\begin{definition} \label{def:pied}
L'\emph{application \og pied\fg}\ associ\'ee \`a l'\'el\'ement proximal $\alpha\in G$ est la projection orthogonale $\Psi_{\alpha} : \mathcal{U}_{\alpha} \to L_{\alpha}$ sur $L_{\alpha}$.
\end{definition}

On peut choisir le m\^eme $r>0$ pour tous les \'el\'ements~$\alpha$ par la remarque suivante.

\begin{remarque} \label{rem:pied-alpha-0-alpha}
Pour $\alpha_0=\exp(\mathsf{h})$, l'ensemble $L_{\alpha_0}$ est le centralisateur $Z_G(\alpha_0) = A Z_K(\mathfrak{a})$ de $\alpha_0$ dans~$G$ (\cf paragraphe~\ref{subsec:prox}). 
Pour un \'el\'ement proximal quelconque $\alpha\in G$, on a $L_{\alpha} = g_{\alpha}L_{\alpha_0}$ et $\mathcal{U}_{\alpha} = g_{\alpha}\mathcal{U}_{\alpha_0}$ et $\Psi_{\alpha} = g_{\alpha}\Psi_{\alpha_0}(g_{\alpha}^{-1}\cdot)$ o\`u $g_{\alpha}\in G$ envoie $(\alpha_0^{\repuls},\alpha_0^{\attract}) = (\underline{\tau}(0),\underline{\tau}(\infty))$ sur $(\alpha^{\repuls},\alpha^{\attract})$.
Le stabilisateur $\mathrm{stab}_G(\alpha^{\repuls},\alpha^{\attract})$ de $(\alpha^{\repuls},\alpha^{\attract})\in G/P_{\tau}\times G/P_{\tau}$ dans~$G$ est \'egal \`a $g_{\alpha}L_{\alpha_0}g_{\alpha}^{-1} = L_{\alpha}g_{\alpha}^{-1}$ ; il agit sur $\mathcal{U}_{\alpha}$ et $L_{\alpha}$ par multiplication \`a gauche et $\Psi_{\alpha}$ est \'equivariante pour ces actions.
\end{remarque}

\begin{lemme} \label{lem:presque-parf-U-alpha}
\begin{enumerate}
  \item\label{item:N-delta} Pour tout \'el\'ement proximal $\alpha\in G$, les ensembles
  $$\mathcal{N}_{\delta} := \big\{ g\in G ~\big|~ d_{\tau}\big(g^{-1}\cdot\alpha^{\repuls},\underline{\tau}(0)\big)\leq\delta\text{ et }d_{\tau}\big(g^{-1}\cdot\alpha^{\attract},\underline{\tau}(\infty)\big)\leq\delta\big\},$$
  pour $\delta>0$, forment une base de voisinages $\mathrm{stab}_G(\alpha^{\repuls},\alpha^{\attract})$-invariants de $L_{\alpha}$ dans~$G$.
  \item\label{item:presque-parf-U-alpha} En particulier, si $\varepsilon>0$ est assez petit et $R>0$ assez grand, alors pour toute repr\'esentation $(\varepsilon,\pm R)$-presque parfaite $\rho : \pi_1(\Pi)\to G$, de donnée géométrique $(\rho(a),\rho(b),\rho(c),g,g')$ comme à la définition~\ref{def:pant-presque-parfait-KLM}, on a $g\in\mathcal{U}_{\rho(a)^{\pm 1}}$.
\end{enumerate}
\end{lemme}

\begin{proof}
\eqref{item:N-delta} Soit $g_{\alpha}\in G$ tel que $g_{\alpha}^{-1}\cdot (\alpha^{\repuls},\alpha^{\attract}) = (\underline{\tau}(0),\underline{\tau}(\infty))$.
L'espace homog\`ene $\mathrm{stab}_G(\alpha^{\repuls},\alpha^{\attract})\backslash G$ s'identifie \`a l'ensemble des couples de points transverses de $G/P_{\tau}$ via $[g]\mapsto g^{-1}\cdot (\alpha^{\repuls},\alpha^{\attract})$.
Par cons\'equent, une base de voisinages de $[g_{\alpha}]$ dans $\mathrm{stab}_G(\alpha^{\repuls},\alpha^{\attract})\backslash G$ est donn\'ee par les ensembles
$$[\mathcal{N}_{\delta}] := \big\{ [g]\in \mathrm{stab}_G(\alpha^{\repuls},\alpha^{\attract})\backslash G ~\big|~ d_{\tau}\big(g^{-1}\cdot\alpha^{\repuls},\underline{\tau}(0)\big)\leq\delta\text{ et }d_{\tau}\big(g^{-1}\cdot\alpha^{\attract},\underline{\tau}(\infty)\big)\leq\delta\big\},$$
pour $\delta>0$.
Les images r\'eciproques $\mathcal{N}_{\delta}$ des $[\mathcal{N}_{\delta}]$ par la projection naturelle $G\to\mathrm{stab}_G(\alpha^{\repuls},\alpha^{\attract})\backslash G$ forment alors une base de voisinages $\mathrm{stab}_G(\alpha^{\repuls},\alpha^{\attract})$-invariants de $L_{\alpha}$ dans~$G$.

\eqref{item:presque-parf-U-alpha} Cons\'equence imm\'ediate de \eqref{item:N-delta}, de la d\'efinition de $\mathcal{U}_{\rho(a)^{\pm 1}}$ et du lemme~\ref{lem:realise-proximal}.
\end{proof}

Notons que le flot $(\varphi_t)_{t\in\R}$ du paragraphe~\ref{subsec:rot-inv-phi} préserve~$L_{\alpha}$, alors que l'involution $\inv$ échange $L_{\alpha}$ et~$L_{\alpha^{-1}}$.

Dans la définition suivante, Kahn, Labourie et Mozes ont en tête des couples $(\Pi^+,\Pi^-)$ de pantalons adjacents le long d'une courbe de bord~$a$ comme sur la figure~\ref{fig:decomp-pantalons},~\`a~droite.

\begin{definition} \label{def:bon-recoll-KLM}
Soient $\Pi^+$ et~$\Pi^-$ deux pantalons, de groupes fondamentaux $\pi_1(\Pi^{\pm}) = \langle a^{\pm},b^{\pm},c^{\pm} \,|\, c^{\pm} b^{\pm} a^{\pm} = 1\rangle$.
Soient $\varepsilon,R>0$ et $\rho^{\pm} : \pi_1(\Pi^{\pm})\to G$ des repr\'esentations $(\varepsilon,\pm R)$-presque parfaites de données géométriques $Q^{\pm} = (\rho^{\pm}(a^{\pm}), \rho^{\pm}(b^{\pm}), \rho^{\pm}(c^{\pm}),$ $g^{\pm}, {g'}^{\pm})$.
Pour $\varepsilon'>\nolinebreak 0$, on dit que $\rho^+$ et~$\rho^-$ (ou leurs images) sont \emph{$\varepsilon'$-bien recoll\'ees le long de $a^+$ et~$a^-$} via $Q^+$ et~$Q^-$ si $\rho^+(a^+) = \rho^-(a^-) =: \alpha \in G$, si $g^{\pm}\in\mathcal{U}_{\alpha^{\pm 1}}$ et si
\begin{equation} \label{eqn:presque-flip-decalage}
d_G\big(\Psi_{\alpha}(g^+), \varphi_1\circ\inv\circ\Psi_{\alpha^{-1}}(g^-)\big) < \varepsilon'.
\end{equation}
\end{definition}

Lorsque $R$ est grand, l'\'el\'ement $g^{\pm}$ est tr\`es proche de son image par $\Psi_{\alpha^{\pm 1}}$ (lemmes \ref{lem:realise-proximal} et~\ref{lem:presque-parf-U-alpha}.\eqref{item:N-delta}), et l'in\'egalit\'e \eqref{eqn:presque-flip-decalage} signifie donc que $g^-$ est \og presque\fg\ obtenu \`a partir de~$g^+$ par inversion et d\'ecalage de~$1$ (\cf figure~\ref{fig:recollement}).
En pratique, on prendra des représentations $(\varepsilon/R,\pm R)$-presque parfaites qui sont $\varepsilon'$-bien recollées pour $\varepsilon'=C\varepsilon/R$ o\`u $C>0$ est une constante ne d\'ependant que de~$G$.

\begin{figure}[h!]
\centering
\labellist
\small\hair 2pt
\pinlabel {$\rho^+(b^+)$} [u] at 155 232
\pinlabel {$\rho^+(c^+)$} [u] at 155 132
\pinlabel {$\rho^+(a) = \rho^-(a)$} [u] at 227 220
\pinlabel {$\rho^-(b^-)$} [u] at 293 207
\pinlabel {$\rho^-(c^-)$} [u] at 290 128
\pinlabel {\textcolor{red}{$g^+$}} [u] at 200 188
\pinlabel {\textcolor{orange}{${g'}^+$}} [u] at 208 253
\pinlabel {\textcolor{red}{$g^-$}} [u] at 245 183
\pinlabel {\textcolor{orange}{${g'}^-$}} [u] at 243 247
\endlabellist
\includegraphics[width=8cm]{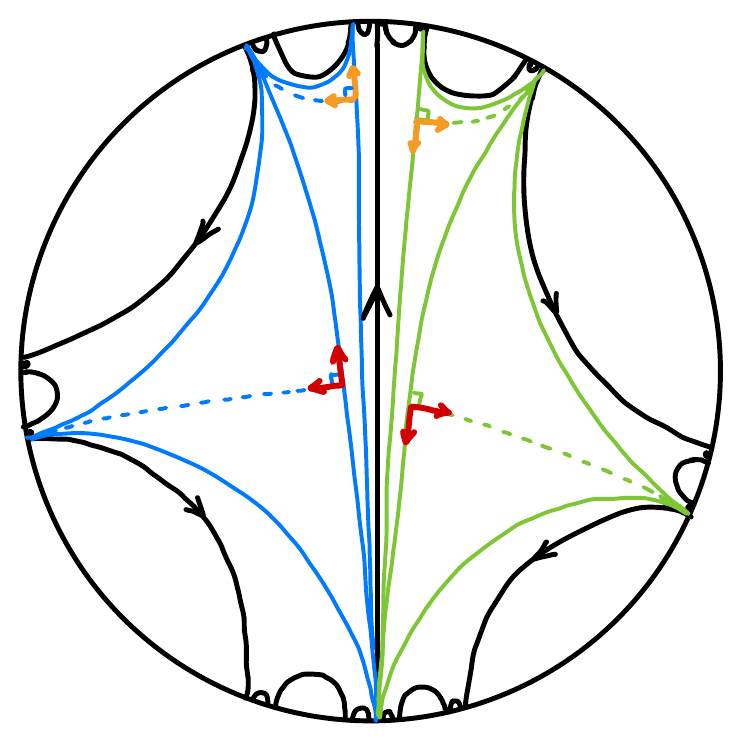}
\vspace{-0.2cm}
\caption{Représentations $(\varepsilon,\pm R)$-presque parfaites $\rho^{\pm}$ qui sont bien recollées au sens de Kahn, Labourie et Mozes}
\label{fig:recollement}
\end{figure}

\begin{remarque} \label{rem:recolle-seulement-une-courbe}
En utilisant le lemme~\ref{lem:realise-proximal}, on vérifie que pour tout $\delta>0$, si $\varepsilon,\varepsilon'>0$ sont assez petits et $R>0$ assez grand, alors les images par $(g^-)^{-1}$ des points $\rho^{\pm}(a^{\pm})^{\repuls}$, $\rho^{\pm}(a^{\pm})^{\attract}$, $\rho^+(b^+)^{\repuls}$, $\rho^+(b^+)^{\attract}$, $\rho^+(c^+)^{\repuls}$, $\rho^+(c^+)^{\attract}$, $\rho^-(b^-)^{\repuls}$, $\rho^-(b^-)^{\attract}$, $\rho^-(c^-)^{\repuls}$ et $\rho^-(c^-)^{\attract}$ sont à distance $\leq\delta$ pour~$d_{\tau}$, respectivement, de $\underline{\tau}(0)$, $\underline{\tau}(\infty)$, $\underline{\tau}(\infty)$, $\underline{\tau}(-e)$, $\underline{\tau}(-e)$, $\underline{\tau}(0)$, $\underline{\tau}(\infty)$, $\underline{\tau}(1)$, $\underline{\tau}(1)$ et $\underline{\tau}(0)$ (\cf figure~\ref{fig:recollement}).
\end{remarque}

\subsection{Injectivit\'e des représentations de $\pi_1(S)$ dont les restrictions aux pantalons sont presque parfaites et bien recollées} \label{subsec:dem-pi-1-inj-KLM}

Kahn, Labourie et Mozes établissent la version plus précise suivante de la proposition~\ref{prop:pi-1-inj}.
On note $\sigma_{\Pi} : \{ a_{\Pi},b_{\Pi},c_{\Pi}\}\to\{ a_{\Pi},b_{\Pi},c_{\Pi}\}$ la permutation cyclique envoyant $a_{\Pi}$ sur~$b_{\Pi}$, et l'on renvoie à la remarque~\ref{rem:presque-parf-tri} pour la définition de $\sym$.

\begin{proposition} \label{prop:pi-1-inj-KLM}
Dans le cadre~\ref{cadre}, soit $\Gamma$ un r\'eseau cocompact irr\'eductible de~$G$, et soit $S$ une surface compacte de genre au moins deux avec une d\'ecomposition en pantalons bipartie $\mathcal{P}$ et un graphe~$\mathcal{G}$ comme au paragraphe~\ref{subsec:struct-R-parf}, donnant une présentation $\pi_1(\Pi) = \langle a_{\Pi}, b_{\Pi}, c_{\Pi} \,|\, c_{\Pi} b_{\Pi} a_{\Pi} = 1\rangle$ pour tout $\Pi\in\mathcal{P}$.
Pour tout $\delta>0$, tout $\varepsilon>0$ assez petit par rapport à~$\delta$ et tout $R>0$ assez grand par rapport à~$\varepsilon$, si une repr\'esentation $\rho : \pi_1(S)\to\Gamma$ vérifie que
\begin{enumerate}
  \item\label{item:pi-1-inj-1-KLM} pour tout $\Pi^{\pm}\in\mathcal{P}^{\pm}$, la restriction $\rho_{\Pi^{\pm}}$ de $\rho$ à $\pi_1(\Pi^{\pm})$ est $(\varepsilon/R,\pm R)$-presque parfaite comme à la définition~\ref{def:pant-presque-parfait-KLM}, et
  \item\label{item:pi-1-inj-2-KLM} on peut choisir les données géométriques $Q_{\Pi}$ des~$\rho_{\Pi}$ (définition~\ref{def:pant-presque-parfait-KLM}) de telle sorte que pour tous $\Pi^+,\Pi^-\in\mathcal{P}$ adjacents le long d'une courbe de bord correspondant à $\sigma_{\Pi^+}^{i^+}(a_{\Pi^+})\in\pi_1(\Pi^+)$ et $\sigma_{\Pi^-}^{i^-}(a_{\Pi^-})\in\pi_1(\Pi^-)$ o\`u $i^+,i^-\in\Z/3\Z$, les restrictions $\rho_{\Pi^+}$ et $\rho_{\Pi^-}$ sont $(\varepsilon/R)$-bien recoll\'ees le long de $\sigma_{\Pi^+}^{i^+}(a_{\Pi^+})$ et $\sigma_{\Pi^-}^{i^-}(a_{\Pi^-})$ via $\sym^{i^+}(Q_{\Pi^+})$ et $\sym^{i^-}(Q_{\Pi^-})$, comme à la définition~\ref{def:bon-recoll-KLM},
 \end{enumerate}
alors $\rho$ est injective et admet une application de bord $\xi : \PP^1(\R)\to G/P_{\tau}$ qui est $(\varrho_R,\rho)$-\'equivariante (pour une repr\'esentation $R$-parfaite~$\varrho_R$ comme au paragraphe~\ref{subsec:struct-R-parf}) et $(\delta,\tau)$-sullivannienne.
\end{proposition}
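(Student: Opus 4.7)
Ma strat\'egie est de construire une application de bord $(\varrho_R,\rho)$-\'equivariante continue $\xi : \PP^1(\R) \to G/P_{\tau}$ qui soit $(\delta,\tau)$-sullivannienne, puis d'invoquer la proposition~\ref{prop:Sullivan}.\eqref{item:Sullivan-Anosov} pour en d\'eduire que $\rho$ est $P_{\tau}$-anosovienne, et donc en particulier injective.

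Je commencerais par d\'efinir $\xi$ sur un sous-ensemble dense de $\PP^1(\R)$. Fixons une repr\'esentation $R$-parfaite $\varrho_R : \pi_1(S) \to \PSL(2,\R)$ associ\'ee \`a la d\'ecomposition en pantalons : les points fixes attractifs $\varrho_R(\gamma)^{\attract}$, pour $\gamma\in\pi_1(S)$ conjugu\'e \`a une courbe de bord d'un pantalon, forment un sous-ensemble d\'enombrable dense de $\PP^1(\R)$. Par le corollaire~\ref{cor:presque-parf->generique}.\eqref{item:presque-parf->generique}, l'hypoth\`ese~\eqref{item:pi-1-inj-1-KLM} garantit que chaque $\rho(\gamma)$ correspondant est proximal dans $G/P_{\tau}$ ; on pose alors $\xi(\varrho_R(\gamma)^{\attract}) := \rho(\gamma)^{\attract}$, et l'\'equivariance sur ce sous-ensemble r\'esulte directement des relations de conjugaison. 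Pour \'etendre $\xi$ continu\^ument, je m'appuierais sur le lemme~\ref{lem:realise-proximal} et la remarque~\ref{rem:recolle-seulement-une-courbe} : pour tout pantalon $\Pi\in\mathcal{P}$ de donn\'ee g\'eom\'etrique $(g_{\Pi},g'_{\Pi})$, les six points fixes dans $G/P_{\tau}$ des \'el\'ements $\rho(a),\rho(b),\rho(c)$ associ\'es sont proches, \`a erreur d'ordre $\varepsilon/R$ pr\`es pour $d_{\tau}^{g_{\Pi}}$, d'une configuration de r\'ef\'erence explicite. L'hypoth\`ese~\eqref{item:pi-1-inj-2-KLM} assure la compatibilit\'e des donn\'ees des pantalons adjacents \`a erreur $\varepsilon/R$ pr\`es. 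Le fait cl\'e est que le diam\`etre de la surface $R$-parfaite est born\'e ind\'ependamment de~$R$ (fait~\ref{fait:diam-borne}), ce qui garantit qu'un nombre uniform\'ement born\'e de pantalons s\'eparent deux points fixes attractifs cons\'ecutifs ; on obtient ainsi un contr\^ole uniforme de l'oscillation et donc une extension continue $\xi : \PP^1(\R) \to G/P_{\tau}$.

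Le c\oe ur de la d\'emonstration, et l'obstacle principal, est la v\'erification de la condition $(\delta,\tau)$-sullivannienne. \'Etant donn\'e un triplet $T = h\cdot(0,\infty,-1)\in\mathcal{T}$, il faut exhiber $g\in G$ tel que $d_{\tau}^g(\xi(x),g\circ\underline{\tau}\circ h^{-1}(x)) \leq \delta$ pour tout $x\in\PP^1(\R)$. Le triplet $T$ correspond \`a un triangle id\'eal de~$\HH^2$ dont la projection sur la surface $R$-parfaite est de diam\`etre born\'e (fait~\ref{fait:diam-borne}) et ne rencontre donc qu'un nombre born\'e de pantalons ; je choisirais un pantalon $\Pi_0$ intersect\'e et prendrais $g := g_{\Pi_0}$. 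L'image $g^{-1}\circ\xi$ devient alors une juxtaposition d'arcs, chacun approchant un $\tau$-cercle via la presque-perfection d'un pantalon, les transitions entre arcs \'etant r\'egies par le bon recollement. Le m\'ecanisme quantitatif qui contr\^ole l'accumulation des erreurs est la propri\'et\'e de contraction uniforme de $\tau(h_{\pm R})$ sur le compl\'ementaire des positions non transverses de $G/P_{\tau}$ (d\'ej\`a utilis\'ee dans la preuve du lemme~\ref{lem:realise-proximal}) : chaque nouveau pantalon contribue pour au plus $O(\varepsilon/R)$, mais la composition avec un facteur exponentiellement contractant emp\^eche la somme de diverger en s'\'eloignant de~$\Pi_0$. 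On obtient finalement une majoration globale par $C\varepsilon$ o\`u $C>0$ ne d\'epend que de $G$ et de la combinatoire des pantalons ; pour $\varepsilon$ assez petit par rapport \`a~$\delta$, ceci fournit une application de bord $(\delta,\tau)$-sullivannienne, et la proposition~\ref{prop:Sullivan}.\eqref{item:Sullivan-Anosov} conclut.
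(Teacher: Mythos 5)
Votre squelette g\'en\'eral --- d\'efinir $\xi$ sur les points fixes attractifs, prolonger par continuit\'e, v\'erifier la condition sullivannienne, puis conclure par la proposition~\ref{prop:Sullivan}.\eqref{item:Sullivan-Anosov} --- co\"incide avec celui du texte. Mais le c\oe ur de votre argument, la v\'erification directe de la condition $(\delta,\tau)$-sullivannienne par accumulation d'erreurs contract\'ees, comporte un saut que le texte contourne pr\'ecis\'ement par un autre m\'ecanisme. Ce que votre argument de contraction peut raisonnablement fournir, c'est l'estim\'ee $d_{\tau}^g(\xi(x), g\circ\underline{\tau}\circ h^{-1}(x))\leq\delta/2$ pour $x$ parcourant le sous-ensemble \emph{dense} des points $\varrho_R$-accessibles (c'est le contenu de la proposition~\ref{prop:appl-bord-partielle}, combin\'ee au lemme~\ref{lem:acc-theta-dense}). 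Passer de ce sous-ensemble dense \`a \emph{tout} $\PP^1(\R)$ exige un contr\^ole du module de continuit\'e de~$\xi$ (typiquement la r\'egularit\'e h\"old\'erienne de la proposition~\ref{prop:Sullivan}.\eqref{item:Sullivan-Holder}), lequel n'est disponible que si l'on sait \emph{d\'ej\`a} que $\xi$ est $(\delta_0,\tau)$-sullivannienne pour un certain seuil~$\delta_0$. Votre \og contr\^ole uniforme de l'oscillation\fg\ issu du fait~\ref{fait:diam-borne} borne le nombre de pantalons s\'eparant deux points accessibles cons\'ecutifs, mais ne dit rien du comportement de $\xi$ vu depuis des triplets de points microscopiquement proches, ce que la d\'efinition~\ref{def:sullivan} exige (elle quantifie sur \emph{tout} $h\in\PSL(2,\R)$).

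Le texte brise cette circularit\'e par un argument de d\'eformation (proposition~\ref{prop:Sullivan-deform}) : on relie $\rho$ \`a une repr\'esentation $\tau$-fuchsienne par un chemin continu $(\rho_t)_{t\in[0,1]}$ de repr\'esentations v\'erifiant les m\^emes hypoth\`eses, et l'on montre que l'ensemble des $t$ pour lesquels $\xi_t$ est $(\delta,\tau)$-sullivannienne est \`a la fois ouvert (stabilit\'e des repr\'esentations anosoviennes via la proposition~\ref{prop:Sullivan}.\eqref{item:Sullivan-Anosov-deform}, puis am\'elioration de $\delta_0$ en $\delta$ gr\^ace \`a l'estim\'ee sur le sous-ensemble dense et \`a la r\'egularit\'e h\"old\'erienne d\'esormais acquise) et ferm\'e (Arzel\`a--Ascoli et coh\'erence d'attraction). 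Comme un tel chemin n'existe pas toujours, il faut de surcro\^it approcher $S$ par des surfaces doubl\'ees $S_n$ dont les moiti\'es ont un groupe fondamental libre. Pour compl\'eter votre preuve, il vous faudrait soit reproduire ce sch\'ema, soit d\'emontrer directement une version quantitative du lemme de Morse dans $G/P_{\tau}$ assez forte pour donner \`a la fois l'estim\'ee ponctuelle sur tout $\PP^1(\R)$ et la r\'egularit\'e h\"old\'erienne de~$\xi$ sans hypoth\`ese sullivannienne pr\'ealable --- ce qui est pr\'ecis\'ement la partie la plus technique de \textcite{klm18}.
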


Nous esquissons à présent les idées de la démonstration.
Ces considérations de Kahn, Labourie et Mozes sur les applications de bord sullivanniennes, en lien avec les représentations anosoviennes en rang supérieur, constituent l'une des nouveautés importantes par rapport à l'approche originale de Kahn et Markovi\'c.

\subsubsection{Pavages de $\HH^2$ par des hexagones} \label{subsubsec:pavage-hex}

\`A toute repr\'esentation injective et discr\`ete $\varrho : \pi_1(S)\to\PSL(2,\R)$ est associ\'e un pavage $\varrho(\pi_1(S))$-invariant $\mathrm{Hex}_{\varrho}$ de~$\HH^2$ par des hexagones \`a angles droits, obtenu en consid\'erant les axes de translation dans~$\HH^2$ des images par~$\varrho$ des \'el\'ements de $\pi_1(S)$ correspondant aux courbes de bord des pantalons de~$\mathcal{P}$, et en rajoutant la perpendiculaire commune \`a toute paire d'axes \emph{adjacents} (c'est-\`a-dire non s\'epar\'es par un autre axe).
Ainsi, chaque hexagone est bord\'e par trois segments d'axe de translation (\og c\^ot\'es principaux\fg) et trois perpendiculaires communes.

Si $\varrho_R : \pi_1(S)\to\PSL(2,\R)$ est $R$-parfaite, o\`u $R>1$, alors les c\^ot\'es principaux des hexagones de $\mathrm{Hex}_{\varrho_R}$ sont tous de longueur~$R$, et deux hexagones adjacents le long d'un c\^ot\'e principal y sont toujours d\'ecal\'es de~$1$ (\cf figure~\ref{fig:pavage-hex}).

Tout hexagone $H$ de $\mathrm{Hex}_{\varrho_R}$ d\'efinit un triplet $(a,b,c)$ d'\'el\'ements de $\pi_1(S)$, unique \`a permutation cyclique pr\`es, tel que les c\^ot\'es principaux de~$H$ sont contenus dans les axes de translation $A_{\varrho_R(a)},A_{\varrho_R(b)},A_{\varrho_R(c)}$ de $\varrho_R(a),\varrho_R(b),\varrho_R(c)$, et les points fixes $\varrho_R(a)^{\repuls}$, $\varrho_R(a)^{\attract}$, $\varrho_R(b)^{\repuls}$, $\varrho_R(b)^{\attract}$, $\varrho_R(c)^{\repuls}$ et $\varrho_R(c)^{\attract}$ dans $\PP^1(\R)$ sont dans un ordre cyclique positif.
On dira que $(H,(a,b,c))$ et $(H,(b,c,a))$ et $(H,(c,a,b))$ sont des \emph{hexagones marqu\'es} de $\mathrm{Hex}_{\varrho_R}$.
Comme au paragraphe~\ref{subsubsec:intuition-geom-KLM}, les points fixes de $\varrho_R(a),\varrho_R(b),\varrho_R(c),\varrho_R(b^{-1}a^{-1})$ d\'efinissent une application $(\varrho_R,\varrho_R)$-\'equivariante $\underline{H} \mapsto (h_{\varrho_R,\underline{H}},h'_{\varrho_R,\underline{H}})$ de l'ensemble des hexagones marqu\'es de $\mathrm{Hex}_{\varrho_R}$ vers $\PSL(2,\R)^2$, telle que si $\underline{H} = (H,(a,b,c))$, alors $a,b,c$ et $(T,T') := (h_{\varrho_R,\underline{H}},h'_{\varrho_R,\underline{H}})$ v\'erifient \eqref{eqn:condRparf-realise} pour $\varrho = \varrho_R|_{\langle a,b,c\rangle}$.

\begin{remarque} \label{rem:hH-proche-de-hT}
Le fait~\ref{fait:diam-borne} assure que tout point de~$\HH^2$ est \`a distance uniform\'ement born\'ee (ind\'ependante de $R$ et~$\varrho_R$) du centre d'un hexagone du pavage $\mathrm{Hex}_{\varrho_R}$.
On en d\'eduit l'existence d'une constante $C'>0$ (ind\'ependante de $R$ et~$\varrho_R$) telle que pour tout $h\in\PSL(2,\R)$ et tout $R>1$ on puisse trouver un hexagone marqu\'e $\underline{H}$ de $\mathrm{Hex}_{\varrho_R}$ tel que $d_{\tau}(\tau(h^{-1})\cdot z,\tau(h^{-1})\cdot z') \leq C'\,d_{\tau}(\tau(h_{\varrho_R,\underline{H}}^{-1})\cdot z,\tau(h_{\varrho_R,\underline{H}}^{-1})\cdot z')$ pour tous $z,z'\in G/P_{\tau}$.
\end{remarque}

Suivant Kahn, Labourie et Mozes, on dira qu'une suite $(a_n)_{n\in\N}$ d'éléments de $\pi_1(S)$ est \emph{acceptable} si chaque $a_n$ correspond \`a une courbe de bord d'un pantalon de~$\mathcal{P}$ et pour tout $n\geq 1$, les axes de translation $A_{\varrho_R(a_{n-1})}$ et $A_{\varrho_R(a_n)}$ d'une part, et $A_{\varrho_R(a_n)}$ et $A_{\varrho_R(a_{n+1})}$ d'autre part, bordent des hexagones du pavage $\mathrm{Hex}_{\varrho_R}$ avec des c\^ot\'es qui co\"incident sur une longueur de $R-1$ dans~$\HH^2$ (\cf figure~\ref{fig:pavage-hex}).
\begin{figure}[h!]
\centering
\labellist
\small\hair 2pt
\pinlabel {\textcolor{orange}{$1$}} [u] at 141 453
\pinlabel {\textcolor{orange}{$1$}} [u] at 185 518
\pinlabel {\textcolor{orange}{$1$}} [u] at 219 510
\pinlabel {\textcolor{orange}{$1$}} [u] at 219 401
\pinlabel {\textcolor{red}{$R$}} [u] at 162 500
\pinlabel {\textcolor{red}{$R$}} [u] at 200 480
\pinlabel {$H_0^-$} [u] at 163 385
\pinlabel {$H_0^+$} [u] at 189 385
\pinlabel {$H_1^-$} [u] at 189 450
\pinlabel {$H_1^+=H_2^-$} [u] at 242 440
\pinlabel {$H_2^+$} [u] at 270 470
\pinlabel {$H_3^-$} [u] at 262 510
\pinlabel {{\tiny $H_3^+$}} [u] at 281 505
\pinlabel {$\varrho_R(a_1)^{\attract}$} [u] at 215 563
\pinlabel {$\varrho_R(b_1)^{\repuls}$} [u] at 163 555
\pinlabel {$\varrho_R(b_1)^{\attract}$} [u] at 87 462
\pinlabel {$\varrho_R(c_1)^{\repuls}$} [u] at 88 435
\pinlabel {$\varrho_R(c_1)^{\attract}$} [u] at 153 350
\pinlabel {$\varrho_R(a_1)^{\repuls}$} [u] at 215 345
\endlabellist
\includegraphics[width=8.5cm]{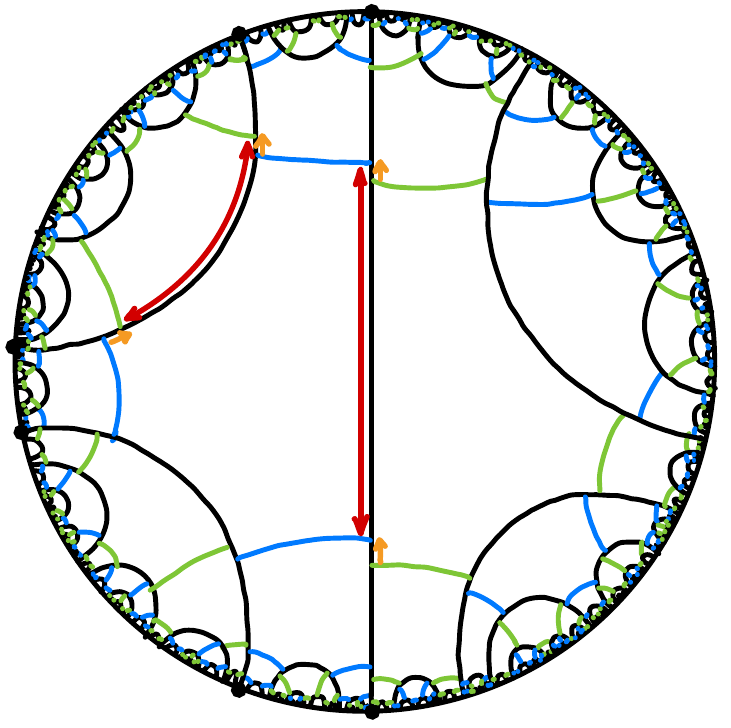}
\vspace{0.2cm}
\caption{Pavage $\mathrm{Hex}_{\varrho_R}$ de $\HH^2$ par des hexagones \`a angles droits, invariant par une repr\'esentation $R$-parfaite $\varrho_R : \pi_1(S)\to\PSL(2,\R)$. On a repr\'esent\'e, pour plusieurs valeurs de~$n$, des hexagones $H_n^-$ et $H_n^+$ ayant des c\^ot\'es qui co\"incident sur une longueur de $R-1$ le long d'un axe de translation $A_{\varrho_R(a_n)}$ o\`u $a_n\in\pi_1(S)$ : la suite $(a_n)$ est \emph{$\varrho_R$-acceptable}. On a \'egalement repr\'esent\'e les points fixes attractifs et r\'epulsifs de $\varrho_R(a_1),\varrho_R(b_1),\varrho_R(c_1)$ pour trois \'el\'ements $a_1,b_1,c_1\in\pi_1(S)$ tels que $(H_1^-,(a_1,b_1,c_1))$ est un \emph{hexagone marqu\'e} de $\mathrm{Hex}_{\varrho_R}$.}
\label{fig:pavage-hex}
\end{figure}
On dira qu'un point $x\in\PP^1(\R)$ est \emph{$\varrho_R$-accessible} si c'est la limite, dans la compactification $\overline{\HH^2} = \HH^2 \sqcup \PP^1(\R)$ de~$\HH^2$, d'une suite $(A_{\varrho_R(a_n)})_{n\in\N}$ où $(a_n)_{n\in\N}$ est acceptable ; dans ce cas, si $A_{\varrho_R(a_0)}$ et $A_{\varrho_R(a_1)}$ bordent un hexagone~$H$ du pavage $\mathrm{Hex}_{\varrho_R}$, on dira que $x$ est \emph{$\varrho_R$-accessible \`a partir de~$H$}.
Kahn, Labourie et Mozes font l'observation suivante.

\begin{lemme} \label{lem:acc-theta-dense}
Il existe une fonction $\vartheta : \R_+^*\to\R_+^*$, tendant vers~$0$ en $+\infty$, telle que pour tout $R>1$, toute repr\'esentation $R$-parfaite $\varrho_R : \pi_1(S)\to\PSL(2,\R)$ et tout hexagone marqu\'e $\underline{H} = (H,(a,b,c))$ de $\mathrm{Hex}_{\varrho_R}$, l'image par $h_{\varrho_R,\underline{H}}^{-1}$ de l'ensemble des points $\varrho_R$-accessibles \`a partir de~$H$ est $\vartheta(R)$-dense dans $\PP^1(\R)$ pour $d_{\PP^1(\R)}$.
\end{lemme}

De mani\`ere analogue \`a l'application $\underline{H}\mapsto (h_{\varrho_R,\underline{H}},h'_{\varrho_R,\underline{H}})$, \`a toute repr\'esentation $\rho : \pi_1(S)\to G$ v\'erifiant la condition \eqref{item:pi-1-inj-1-KLM} de la proposition~\ref{prop:pi-1-inj-KLM} pour un certain $\varepsilon>0$, on peut associer une application $(\varrho_R,\rho)$-\'equivariante $\underline{H}\mapsto (g_{\rho,\underline{H}},g'_{\rho,\underline{H}})$ de l'ensemble des hexagones marqu\'es de $\mathrm{Hex}_{\varrho_R}$ vers $G^2$, telle que si $\underline{H} = (H,(a,b,c))$, alors $(\rho(a),\rho(b),\rho(c),g_{\rho,\underline{H}},g'_{\rho,\underline{H}})$ est une donnée géométrique associée à la repr\'esentation $(\varepsilon/R,R)$-presque parfaite $\rho|_{\langle a,b,c\rangle}$ comme à la définition~\ref{def:pant-presque-parfait-KLM}.

\subsubsection{Applications de bord partielles et d\'eformations}

Le c\oe ur de la d\'emonstration de la proposition~\ref{prop:pi-1-inj-KLM} est form\'e des deux r\'esultats ci-dessous.
Le premier, comme la proposition~\ref{prop:Sullivan} sur laquelle repose le second, fait intervenir une version pr\'ecise, dans $G/P_{\tau}$, du lemme de Morse de \textcite{klp14,klp-morse}, d\'ej\`a mentionn\'ee au paragraphe~\ref{subsec:prop-sullivan}.

Rappelons (corollaire~\ref{cor:presque-parf->generique}.\eqref{item:presque-parf->generique}) que pour tout $\varepsilon>0$ et tout $R>0$ assez grand par rapport \`a~$\varepsilon$, si une repr\'esentation $\rho : \pi_1(S)\to G$ v\'erifie la condition~\eqref{item:pi-1-inj-1-KLM} de la proposition~\ref{prop:pi-1-inj-KLM}, alors pour tout $a\in\pi_1(S)$ correspondant à une courbe de bord d'un pantalon de~$\mathcal{P}$, l'élément $\rho(a)$ est proximal dans $G/P_{\tau}$ : il admet un unique point fixe attractif $\rho(a)^{\attract}\in\nolinebreak G/P_{\tau}$.
Soit $C'>0$ la constante de la remarque~\ref{rem:hH-proche-de-hT}.

\begin{proposition} \label{prop:appl-bord-partielle}
Pour tout $\delta>0$, tout $\varepsilon>0$ assez petit par rapport \`a~$\delta$, tout $R>1$ assez grand par rapport \`a~$\varepsilon$, toute repr\'esentation $R$-parfaite $\varrho_R : \pi_1(S)\to\PSL(2,\R)$ et toute repr\'esentation $\rho : \pi_1(S)\to G$ v\'erifiant les conditions \eqref{item:pi-1-inj-1-KLM} et~\eqref{item:pi-1-inj-2-KLM} de la proposition~\ref{prop:pi-1-inj-KLM}, il existe une unique application $(\varrho_R,\rho)$-\'equivariante $\xi$ de l'ensemble des points $\varrho_R$-accessibles de $\PP^1(\R)$ vers $G/P_{\tau}$ v\'erifiant la propri\'et\'e suivante : pour toute suite acceptable $(a_n)_{n\in\N}$ d'éléments de $\pi_1(S)$ et tout hexagone marqu\'e $\underline{H} = (H,(a,b,c))$ de $\mathrm{Hex}_{\varrho_R}$ o\`u $H$ est bord\'e par $A_{\varrho_R(a_0)}$ et~$A_{\varrho_R(a_1)}$, si $(A_{\varrho_R(a_n)})$ converge dans~$\overline{\HH^2}$ vers un point $x\in\PP^1(\R)$, alors $(\rho(a_n)^{\attract})_{n\in\N}$ converge dans $G/P_{\tau}$ vers $\xi(x)$ et
$$d_{\tau}\big(g_{\rho,\underline{H}}^{-1}\circ\xi(x),\underline{\tau}\circ h_{\varrho_R,\underline{H}}^{-1}(x)\big) \leq \frac{\delta}{2C'}.$$
\end{proposition}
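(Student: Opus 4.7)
La strat\'egie est de d\'efinir $\xi(x)$ comme la limite $\lim_{n\to\infty}\rho(a_n)^{\attract}$ pour toute suite acceptable $(a_n)_{n\in\N}$ dont les axes $A_{\varrho_R(a_n)}$ convergent vers~$x$, puis d'\'etablir la convergence de cette suite via un argument de contraction dans $G/P_{\tau}$. Tout d'abord, pour un hexagone marqu\'e $\underline{H}=(H,(a,b,c))$ de donn\'ee g\'eom\'etrique $(\rho(a),\rho(b),\rho(c),g_{\rho,\underline{H}},g'_{\rho,\underline{H}})$, le lemme~\ref{lem:realise-proximal} appliqu\'e avec la constante $\varepsilon/R$ assure que dans le \og chart\fg\ $g_{\rho,\underline{H}}^{-1}$, les points fixes attractifs $\rho(a)^{\attract},\rho(b)^{\attract},\rho(c)^{\attract}$ sont \`a distance $O(\varepsilon/R+e^{-R/2})$ de $\underline{\tau}(\infty),\underline{\tau}(-1),\underline{\tau}(0)$ respectivement pour $d_{\tau}$ (\cf remarque~\ref{rem:recolle-seulement-une-courbe}).

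La deuxi\`eme \'etape consiste \`a comprendre la transition entre deux hexagones adjacents $\underline{H},\underline{H}'$ du pavage $\mathrm{Hex}_{\varrho_R}$ partageant un c\^ot\'e principal. La condition~\eqref{item:pi-1-inj-2-KLM} de bon recollement (d\'efinition~\ref{def:bon-recoll-KLM}) fournit une r\'ecurrence de la forme $g_{\rho,\underline{H}}^{-1}\,g_{\rho,\underline{H}'} \in \varphi_1\circ\inv\circ\rot^{\pm 1}\cdot\mathcal{B}_{\varepsilon/R}$, o\`u la rotation $\rot^{\pm 1}$ adapte le marquage et $\mathcal{B}_{\varepsilon/R}$ absorbe les erreurs. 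Le long d'une suite acceptable, on obtient ainsi une suite d'\'el\'ements $g_n=g_{\rho,\underline{H}_n}$ telle que les $g_0^{-1}\cdot\rho(a_n)^{\attract}$ appartiennent \`a une famille d\'ecroissante de \og lunules\fg\ de $G/P_{\tau}$, construites en poussant par l'action it\'er\'ee de $\varphi_1\circ\inv\circ\rot^{\pm 1}$ (perturb\'ee par les erreurs de bon recollement) des petits voisinages de $\underline{\tau}(\infty)$. Le fait cl\'e est que ces lunules sont imbriqu\'ees et de diam\`etre uniform\'ement d\'ecroissant pour~$d_{\tau}$, ce qui d\'efinit $\xi(x)$ comme leur intersection et donne l'estimation $d_{\tau}(g_{\rho,\underline{H}}^{-1}\circ\xi(x),\underline{\tau}\circ h_{\varrho_R,\underline{H}}^{-1}(x)) \leq \delta/(2C')$ pour $\varepsilon$ assez petit par rapport \`a $\delta$ et $R$ assez grand par rapport \`a $\varepsilon$. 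L'\'equivariance $(\varrho_R,\rho)$ de $\xi$ d\'ecoule de la fonctorialit\'e de la construction vis-\`a-vis des actions de $\pi_1(S)$ sur les hexagones et sur $G/P_{\tau}$ via~$\rho$~; l'unicit\'e d\'ecoule de ce que la condition d'approximation impose $\xi(x)=\lim_n\rho(a_n)^{\attract}$.

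L'obstacle principal sera d'\'etablir rigoureusement la d\'ecroissance uniforme du diam\`etre des lunules dans $(G/P_{\tau},d_{\tau})$ sous l'action it\'er\'ee de $\varphi_1\circ\inv\circ\rot^{\pm 1}$ perturb\'ee par des erreurs de taille $\varepsilon/R$. Ceci n\'ecessite une version quantitative pr\'ecise, dans la vari\'et\'e de drapeaux $G/P_{\tau}$, du lemme de Morse de \textcite{klp14,klp-morse}, contr\^olant la convergence de parties imbriqu\'ees et reliant la g\'eom\'etrie du pavage $R$-parfait de $\HH^2$ \`a la dynamique de contraction sur $G/P_{\tau}$. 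C'est cette propri\'et\'e fine (\cf parties 4 \`a 7 de \textcite{klm18}) qui permet d'absorber les erreurs de presque perfection et de bon recollement pour garantir la convergence $\delta$-contr\^ol\'ee de $\xi$ annonc\'ee dans l'\'enonc\'e.
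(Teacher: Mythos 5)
Votre esquisse suit essentiellement la même voie que celle indiquée dans le texte, qui ne démontre d'ailleurs pas cette proposition mais la présente comme reposant sur la version précise, dans $G/P_{\tau}$, du lemme de Morse de Kapovich--Leeb--Porti développée dans les parties 4 à 7 de \textcite{klm18} : définition de $\xi(x)$ comme limite des points fixes attractifs $\rho(a_n)^{\attract}$ le long des suites acceptables, contrôle des transitions entre hexagones adjacents via la condition de bon recollement, et convergence par imbrication de \og lunules\fg\ de diamètre décroissant. Vous identifiez correctement que toute la difficulté réside dans l'estimation quantitative de contraction de ces lunules sous l'itération perturbée, qui est précisément le point que l'exposé renvoie lui aussi à l'article original.
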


\begin{proposition} \label{prop:Sullivan-deform}
Soit $\varrho : \pi_1(S)\to\PSL(2,\R)$ une repr\'esentation injective et discr\`ete.
Pour tout $\delta>0$ assez petit, il existe $\theta>0$ tel que pour toute famille continue $(\rho_t)_{t\in [0,1]}\subset\Hom(\pi_1(S),G)$ de repr\'esentations et toutes applications $(\varrho,\rho_t)$-\'equivariantes $\xi_t : \PP^1(\R)\to G/P_{\tau}$, si $\xi_0$ est $(\delta,\tau)$-sullivannienne et si pour tout $t\in ]0,1]$, les deux hypothèses suivantes sont satisfaites :
\begin{enumerate}
  \item\label{item:Sullivan-deform-1} (\og coh\'erence d'attraction\fg) pour tout $x\in\PP^1(\R)$, il existe une suite $(a_n)\in\pi_1(S)^{\N}$ telle que $\rho_t(a_n)$ soit proximal dans $G/P_{\tau}$ pour tout~$n$ et telle que $(\varrho(a_n)^{\attract})_{n\in\N}$ converge dans $\PP^1(\R)$ vers~$x$ et $(\rho_t(a_n)^{\attract})_{n\in\N}$ converge dans $G/P_{\tau}$ vers $\xi_t(x)$,
  \item\label{item:Sullivan-deform-2} (\og condition sullivannienne sur un sous-ensemble suffisamment dense\fg) pour tout $h\in\PSL(2,\R)$, il existe $g\in G$ et un sous-ensemble $D$ de $\PP^1(\R)$ tels que $h^{-1}(D)$ soit $\theta$-dense dans $\PP^1(\R)$ et que $d_{\tau}^g(\xi_t(x), g\circ\underline{\tau}\circ h^{-1}(x)) \leq \delta/2$ pour tout $x\in D$,
\end{enumerate}
alors $\xi_t$ est $(\delta,\tau)$-sullivannienne pour tout $t\in [0,1]$.
\end{proposition}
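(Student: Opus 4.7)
La preuve proc\`edera par un argument de connexit\'e appliqu\'e \`a
$$T := \{\,t\in [0,1] \mid \xi_t \text{ est } (\delta,\tau)\text{-sullivannienne}\,\},$$
qui contient~$0$ par hypoth\`ese ; je montrerai que $T$ est \`a la fois ouvert et ferm\'e dans $[0,1]$, de sorte que $T = [0,1]$. On supposera $\delta < \delta_0$, seuil de la proposition~\ref{prop:Sullivan}, et le param\`etre $\theta$ sera d\'etermin\'e au cours de l'argument d'ouverture.

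La continuit\'e de $t\mapsto\xi_t$ au voisinage d'un point $t_0\in T$ sera cruciale \`a la fois pour la fermeture et pour l'ouverture. Elle se d\'eduira de deux ingr\'edients : d'une part, $\xi_{t_0}$ est l'application de bord (unique) de la repr\'esentation $P_{\tau}$-anosovienne $\rho_{t_0}$ par la proposition~\ref{prop:Sullivan}.\eqref{item:Sullivan-Anosov}, et elle est $\alpha$-h\"old\'erienne par la proposition~\ref{prop:Sullivan}.\eqref{item:Sullivan-Holder} ; d'autre part, l'hypoth\`ese~\eqref{item:Sullivan-deform-1} de coh\'erence d'attraction et la continuit\'e des points fixes attractifs en l'\'el\'ement de~$G$ permettent d'\'etablir la convergence ponctuelle $\xi_t(x)\to\xi_{t_0}(x)$ pour tout~$x$ lorsque $t\to t_0$. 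L'\'equicontinuit\'e fournie par la h\"old\'erianit\'e de $\xi_{t_0}$, jointe \`a cette convergence ponctuelle et \`a la compacit\'e de $\PP^1(\R)$, donnera la convergence uniforme $\xi_s\to\xi_{t_0}$. La fermeture de $T$ en d\'ecoulera en passant \`a la limite dans les in\'egalit\'es (larges) de la d\'efinition~\ref{def:sullivan}.

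Pour l'ouverture en $t_0\in T$, \'etant donn\'es $h\in\PSL(2,\R)$ et $t$ proche de~$t_0$, l'hypoth\`ese~\eqref{item:Sullivan-deform-2} fournit $g=g(h,t)\in G$ et $D\subset\PP^1(\R)$ avec $h^{-1}(D)$ $\theta$-dense, tels que $d_{\tau}^g(\xi_t(x), g\circ\underline{\tau}\circ h^{-1}(x))\leq\delta/2$ pour tout $x\in D$. Pour $y\in\PP^1(\R)$ quelconque, je choisirai $x\in D$ avec $d_{\PP^1(\R)}(h^{-1}(x),h^{-1}(y))\leq\theta$ et j'appliquerai l'in\'egalit\'e triangulaire
$$d_{\tau}^g\big(\xi_t(y), g\circ\underline{\tau}\circ h^{-1}(y)\big) \leq d_{\tau}^g(\xi_t(y),\xi_t(x)) + \tfrac{\delta}{2} + d_{\tau}^g\big(g\circ\underline{\tau}\circ h^{-1}(x), g\circ\underline{\tau}\circ h^{-1}(y)\big).$$
Le troisi\`eme terme vaut $d_{\tau}\big(\underline{\tau}(h^{-1}(x)), \underline{\tau}(h^{-1}(y))\big)\leq\theta$ par l'isom\'etrie de $\underline{\tau}$. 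Le premier sera major\'e en combinant la h\"old\'erianit\'e de $\xi_{t_0}$ (relativement \`a un $g_0=g_0(h)\in G$ adapt\'e fourni par la proposition~\ref{prop:Sullivan}.\eqref{item:Sullivan-Holder}) avec la proximit\'e uniforme $\xi_t\sim\xi_{t_0}$ \'etablie ci-dessus ; on le bornera par une quantit\'e de la forme $\eta(t) + C'\theta^{\alpha}$, avec $\eta(t)\to 0$ lorsque $t\to t_0$ et $C'$ ind\'ependant de~$t$. En fixant d'abord $\theta$ assez petit pour que $\theta + C'\theta^{\alpha}\leq\delta/4$, puis en restreignant $t$ de sorte que $\eta(t)\leq\delta/4$, on obtiendra un total $\leq\delta$, d'o\`u $t\in T$.

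Le point principal d\'elicat sera le contr\^ole du premier terme, c'est-\`a-dire le transfert de la h\"old\'erianit\'e de $\xi_{t_0}$ (exprim\'ee relativement \`a $d_{\tau}^{g_0}$) en une estimation relativement \`a $d_{\tau}^g$. Il faudra v\'erifier que $g(h,t)$ et $g_0(h)$ restent uniform\'ement proches dans~$G$, modulo le centralisateur compact $Z^{\tau}$, lorsque $\xi_t$ est uniform\'ement proche de~$\xi_{t_0}$ : c'est ici que la compacit\'e de~$Z^{\tau}$ (garantie par le cadre~\ref{cadre}) joue un r\^ole crucial, ces deux \'el\'ements \'etant essentiellement d\'etermin\'es, modulo $Z^{\tau}$, par les images par $\xi_t$ et $\xi_{t_0}$ du triplet de r\'ef\'erence $h\cdot (0,\infty,-1)$.
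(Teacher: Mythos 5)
Votre strat\'egie globale --- montrer que l'ensemble $T$ des $t$ tels que $\xi_t$ soit $(\delta,\tau)$-sullivannienne est ouvert et ferm\'e dans $[0,1]$ --- est exactement celle du texte, avec les m\^emes ingr\'edients : la proposition~\ref{prop:Sullivan} pour l'\'equicontinuit\'e et l'anosovianit\'e, l'hypoth\`ese~\eqref{item:Sullivan-deform-1} pour identifier les limites avec $\xi_t$, l'hypoth\`ese~\eqref{item:Sullivan-deform-2} plus une in\'egalit\'e triangulaire pour l'ouverture. Pour la fermeture, une petite remise en ordre s'impose : le point limite $t_0$ n'est pas encore dans~$T$, donc on ne peut pas parler de continuit\'e de $t\mapsto\xi_t$ \og au voisinage d'un point $t_0\in T$\fg ; il faut extraire par Arzel\`a--Ascoli une limite uniforme $\xi'_{t_0}$ des $\xi_{t_m}$ (\'equicontinues par la proposition~\ref{prop:Sullivan}.\eqref{item:Sullivan-Holder}), constater qu'elle est $(\delta,\tau)$-sullivannienne et $(\varrho,\rho_{t_0})$-\'equivariante, puis utiliser la proposition~\ref{prop:Sullivan}.\eqref{item:Sullivan-Anosov} et l'hypoth\`ese~\eqref{item:Sullivan-deform-1} pour conclure $\xi'_{t_0}=\xi_{t_0}$. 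Vous avez tous ces ingr\'edients, mais dans le mauvais ordre logique.

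Le vrai point faible est le contr\^ole du premier terme dans l'argument d'ouverture. Vous le majorez par $\eta(t)+C'\theta^{\alpha}$ o\`u $\eta(t)$ mesure la proximit\'e uniforme de $\xi_t$ et $\xi_{t_0}$, puis vous restreignez $t$ pour que $\eta(t)\leq\delta/4$. Mais cette proximit\'e doit \^etre \'evalu\'ee dans la m\'etrique $d_{\tau}^{g}$ o\`u $g=g(h,t)$ d\'epend de~$h$ : lorsque $h$ parcourt $\PSL(2,\R)$, les \'el\'ements $g(h,t)$ et $g_0(h)$ parcourent, m\^eme modulo le compact $Z^{\tau}$, une partie \emph{non born\'ee} de~$G$ (ils sont d\'etermin\'es par l'image d'un triplet qui varie dans tout l'espace des $\tau$-triangles), de sorte que les constantes de Lipschitz de $g^{-1}$ sur $(G/P_{\tau},d_{\tau})$ ne sont pas uniformes en~$h$. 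Une estim\'ee $\sup_y d_{\tau}(\xi_t(y),\xi_{t_0}(y))\leq\eta(t)$ dans la m\'etrique de r\'ef\'erence ne se transf\`ere donc pas en une estim\'ee en $d_{\tau}^{g}$ uniforme en~$h$, et le seuil sur $|t-t_0|$ d\'ependrait de~$h$, ce qui ruine l'ouverture ; votre comparaison de $g$ et $g_0$ modulo $Z^{\tau}$ ne r\`egle pas ce probl\`eme, puisqu'aucune des deux m\'etriques n'est uniform\'ement comparable \`a $d_{\tau}$. Le rem\`ede --- c'est la route du texte --- est de ne pas passer par $\xi_{t_0}$ : la proposition~\ref{prop:Sullivan}.\eqref{item:Sullivan-Anosov-deform}, combin\'ee \`a l'hypoth\`ese~\eqref{item:Sullivan-deform-1} qui identifie $\xi_t$ \`a l'application de bord anosovienne de~$\rho_t$, montre que $\xi_t$ est elle-m\^eme $(\delta_0,\tau)$-sullivannienne pour $t$ proche de~$t_0$, et l'on applique alors l'estim\'ee h\"old\'erienne de la proposition~\ref{prop:Sullivan}.\eqref{item:Sullivan-Holder} \emph{directement \`a $\xi_t$}, ce qui donne $d_{\tau}^g(\xi_t(y),\xi_t(x))\leq C\,\theta^{\alpha}$ sans terme $\eta(t)$ ; le choix $\theta+C\,\theta^{\alpha}\leq\delta/2$ conclut.
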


\begin{proof}[D\'emonstration de la proposition~\ref{prop:Sullivan-deform}]
Soient $\alpha,\delta_0,C>0$ donn\'es par la proposition~\ref{prop:Sullivan}.\eqref{item:Sullivan-Holder}.
Prenons $\delta<\delta_0$ et $\theta$ tel que $\theta + C\,\theta^{\alpha} \leq \delta/2$, et montrons que l'ensemble $E$ des $t\in [0,1]$ tels que $\xi_t$ soit $(\delta,\tau)$-sullivannienne est ouvert et ferm\'e dans~$[0,1]$.

Pour montrer que $E$ est ferm\'e, on observe que les applications $(\delta,\tau)$-sullivanniennes forment une famille \'equicontinue, par la proposition~\ref{prop:Sullivan}.\eqref{item:Sullivan-Holder} : ainsi, d'après le théorème d'Arzelà--Ascoli, si $(t_m)\in E^{\N}$ converge vers~$t$, on peut supposer apr\`es extraction que $(\xi_{t_m})_{m\in\N}$ converge vers une application $\xi'_t : \PP^1(\R)\to G/P_{\tau}$.
Cette application $\xi'_t$ est encore $(\delta,\tau)$-sullivannienne, et $(\varrho,\rho_t)$-\'equivariante.
D'après la proposition~\ref{prop:Sullivan}.\eqref{item:Sullivan-Anosov}, la représentation $\rho_t$ est $P_{\tau}$-anosovienne d'application de bord~$\xi'_t$.
En particulier (propri\'et\'e g\'en\'erale des repr\'esentations anosoviennes), pour toute suite $(a_n)_{n\in\N}$ d'\'el\'ements non triviaux de $\pi_1(S)$ telle que $(\varrho(a_n)^{\attract})_{n\in\N}$ converge dans $\PP^1(\R)$ vers $x\in\PP^1(\R)$, l'\'el\'ement $\rho_t(a_n)$ est proximal dans $G/P_{\tau}$ pour tout~$n$ et $(\rho_t(a_n)^{\attract})_{n\in\N}$ converge dans $G/P_{\tau}$ vers $\xi'_t(x)$.
L'hypothèse \eqref{item:Sullivan-deform-1} de la proposition implique alors $\xi'_t = \xi_t$.
Ainsi, $t\in E$.

V\'erifions que $E$ est ouvert dans $[0,1]$.
D'apr\`es la proposition~\ref{prop:Sullivan}.\eqref{item:Sullivan-Anosov-deform}, tout $t\in E$ admet un voisinage dans $[0,1]$ form\'e d'\'el\'ements $s$ tels que $\xi_s$ soit $(\delta_0,\tau)$-sullivannienne.
Gr\^ace \`a l'hypothèse \eqref{item:Sullivan-deform-2} ci-dessus, on en d\'eduit que $\xi_s$ est en fait $(\delta,\tau)$-sullivannienne, \ie $s\in E$.
En effet, soit $h\in\PSL(2,\R)$.
D'apr\`es l'hypothèse \eqref{item:Sullivan-deform-2} il existe $g\in G$ et, pour tout $x\in\nolinebreak\PP^1(\R)$, un point $y\in\PP^1(\R)$ tels que $d_{\tau}(\underline{\tau}\circ h^{-1}(x),\underline{\tau}\circ\nolinebreak h^{-1}(y)) = d_{\PP^1(\R)}(h^{-1}(x),h^{-1}(y)) \leq \theta$ et $d_{\tau}(g^{-1}\circ\xi_s(y), \underline{\tau}\circ h^{-1}(y)) \leq \delta/2$.
D'apr\`es la proposition~\ref{prop:Sullivan}.\eqref{item:Sullivan-Holder}, on a $d_{\tau}(g^{-1}\circ\xi_s(x),g^{-1}\circ\xi_s(y)) \leq C\,d_{\PP^1(\R)}(h^{-1}(x),h^{-1}(y))^{\alpha} \leq C\,\theta^{\alpha}$, et l'on conclut par in\'egalit\'e triangulaire.
\end{proof}

\subsubsection{D\'emonstration de la proposition~\ref{prop:pi-1-inj-KLM}}

Fixons $\delta>0$ (que l'on peut supposer tr\`es petit) et prenons $\varepsilon>0$ assez petit par rapport à~$\delta$ et $R>1$ assez grand par rapport \`a~$\varepsilon$ comme dans la proposition~\ref{prop:appl-bord-partielle}.
Prenons de plus $R$ assez grand de sorte que le r\'eel $\vartheta(R)$ du lemme~\ref{lem:acc-theta-dense} soit inf\'erieur \`a $\theta/C'$ o\`u $\theta$ et~$C'$ sont donn\'es respectivement par la proposition~\ref{prop:Sullivan-deform} et la remarque~\ref{rem:hH-proche-de-hT}.

Supposons dans un premier temps qu'il existe une famille continue $(\rho_t)_{t\in [0,1]}\subset\Hom(\pi_1(S),G)$ telle que $\rho_0$ soit $\tau$-fuchsienne, $\rho_1=\rho$ et pour tout $t\in [0,1]$, la repr\'esentation $\rho_t$ v\'erifie les conditions \eqref{item:pi-1-inj-1-KLM} et~\eqref{item:pi-1-inj-2-KLM} de la proposition~\ref{prop:pi-1-inj-KLM}.
Soit $\mathrm{Hex}_{\varrho_R}$ le pavage $\varrho_R(\pi_1(S))$-invariant de~$\HH^2$ du paragraphe~\ref{subsubsec:pavage-hex} et, pour tout $t\in [0,1]$, soit $\underline{H}\mapsto (h_{\varrho_R,\underline{H}},h'_{\varrho_R,\underline{H}})$ (\resp $\underline{H}\mapsto (g_{\rho_t,\underline{H}},g'_{\rho_t,\underline{H}})$) une application $(\varrho_R,\varrho_R)$-\'equivariante (\resp $(\varrho_R,\rho_t)$-\'equivariante) de l'ensemble des hexagones marqu\'es de $\mathrm{Hex}_{\varrho_R}$ vers $\PSL(2,\R)^2$ (\resp $G^2$) comme au paragraphe~\ref{subsubsec:pavage-hex}.
Soit $\xi_t$ l'application $(\varrho_R,\rho_t)$-\'equivariante de la proposition~\ref{prop:appl-bord-partielle}, allant de l'ensemble des points $\varrho_R$-accessibles de $\PP^1(\R)$ vers $G/P_{\tau}$ ; on la prolonge en une application $(\varrho_R,\rho_t)$-équivariante $\xi_t : \PP^1(\R)\to G/P_{\tau}$ v\'erifiant la condition~\eqref{item:Sullivan-deform-1} de la proposition~\ref{prop:Sullivan-deform}.
La remarque~\ref{rem:hH-proche-de-hT}, le lemme~\ref{lem:acc-theta-dense} et la proposition~\ref{prop:appl-bord-partielle} assurent que la condition~\eqref{item:Sullivan-deform-2} de la proposition~\ref{prop:Sullivan-deform} est v\'erifi\'ee.
En effet, pour $t\in [0,1]$ et $h\in\PSL(2,\R)$, soit $\underline{H} = (H,(a,b,c))$ un hexagone marqu\'e de $\mathrm{Hex}_{\varrho_R}$ donn\'e par la remarque~\ref{rem:hH-proche-de-hT}.
Soit $g := g_{\rho_t,\underline{H}} \, \tau(h_{\varrho_R,\underline{H}}^{-1}\,h) \in G$ et soit $D$ l'ensemble des points $\varrho_R$-accessibles \`a partir de~$H$.
Pour tout $x\in D$, on a
\begin{align*}
d_{\tau}^g\big(\xi_t(x), g\circ\underline{\tau}\circ h^{-1}(x)\big) & = d_{\tau}\big(g^{-1}\circ\xi_t(x), \underline{\tau}\circ h^{-1}(x)\big)\\
& = d_{\tau}\big(\tau(h^{-1} h_{\varrho_R,\underline{H}}) \, g_{\rho_t,\underline{H}}^{-1}\circ\xi_t(x), \tau(h^{-1} h_{\varrho_R,\underline{H}})\circ\underline{\tau}\circ h_{\varrho_R,\underline{H}}^{-1}(x)\big)\\
& \leq C' \, d_{\tau}\big(g_{\rho_t,\underline{H}}^{-1}\circ\xi_t(x), \underline{\tau}\circ h_{\varrho_R,\underline{H}}^{-1}(x)\big) \,\leq\, \delta/2,
\end{align*}
o\`u les deux in\'egalit\'es utilisent respectivement la remarque~\ref{rem:hH-proche-de-hT} et la proposition~\ref{prop:appl-bord-partielle}.
Or, l'ensemble $h_{\varrho_R,\underline{H}}^{-1}(D)$ est $(\theta/C')$-dense dans $\PP^1(\R)$ d'apr\`es le lemme~\ref{lem:acc-theta-dense} et notre choix de~$R$, donc l'ensemble $h^{-1}(D)$ est $\theta$-dense dans $\PP^1(\R)$ d'apr\`es la remarque~\ref{rem:hH-proche-de-hT}.
On peut alors appliquer la proposition~\ref{prop:Sullivan-deform} : on obtient que pour tout $t\in [0,1]$, l'application $(\varrho_R,\rho_t)$-\'equivariante $\xi_t$ est $(\delta,\tau)$-sullivannienne.

En g\'en\'eral, Kahn, Labourie et Mozes ne construisent pas de famille continue $(\rho_t)_{t\in [0,1]}\subset\Hom(\pi_1(S),G)$ telle que $\rho_0$ soit $\tau$-fuchsienne et $\rho_1=\rho$.
Ils proc\`edent plut\^ot par approximation, en consid\'erant une suite $(S_n)_{n\in\N}$ de surfaces hyperboliques compactes obtenues en doublant des surfaces hyperboliques $S'_n$ compactes \`a bord qui sont des unions de pantalons isom\'etriques \`a des pantalons de~$\mathcal{P}$, et telles que le rev\^etement universel de~$S$ co\"incide avec celui de~$S'_n$ sur une boule de rayon~$n$.
La repr\'esentation $\rho : \pi_1(S)\to G$ d\'efinit pour tout~$n$ une repr\'esentation $\rho^{(n)} : \pi_1(S_n)\to G$ dont les restrictions aux pantalons de~$S_n$ sont encore $(\varepsilon/R,\pm R)$-presque parfaites et $(\varepsilon/R)$-bien recoll\'ees.
Pour tout~$n$, en utilisant le fait que $\pi_1(S'_n)$ est un groupe libre, ils construisent une famille continue $(\rho^{(n)}_t)_{t\in [0,1]}\subset\Hom(\pi_1(S_n),G)$ telle que $\rho^{(n)}_0$ soit $\tau$-fuchsienne, $\rho^{(n)}_1=\rho^{(n)}$ et pour tout $t\in [0,1]$, la repr\'esentation $\rho^{(n)}_t$ v\'erifie les conditions \eqref{item:pi-1-inj-1-KLM} et~\eqref{item:pi-1-inj-2-KLM} de la proposition~\ref{prop:pi-1-inj-KLM}.
D'apr\`es le paragraphe pr\'ec\'edent, $\rho^{(n)}$ admet donc une application de bord $(\delta,\tau)$-sullivannienne $\xi^{(n)} : \PP^1(\R)\to G/P_{\tau}$.
La famille $(\xi^{(n)})_{n\in\N}$ est \'equicontinue d'apr\`es la proposition~\ref{prop:Sullivan}.\eqref{item:Sullivan-Holder}.
En proc\'edant par convergence, on obtient alors une application de bord $(\delta,\tau)$-sullivannienne $\xi : \PP^1(\R)\to G/P_{\tau}$ pour~$\rho$.

\section{\'Etape dynamique : utilisation du mélange} \label{sec:dyn}

Dans cette partie, nous donnons les grandes lignes de la d\'emonstration des propositions \ref{prop:pant-dans-reseau-general} et~\ref{prop:pant-dans-reseau-mesures} selon le point de vue de \textcite{klm18}.
L'espace $\mathrm{Geom}_{\varepsilon,\pm R}$ de la proposition~\ref{prop:pant-dans-reseau-mesures} est ici un espace $\Triconn_{\varepsilon,\pm R}$ de \og couples triconnect\'es\fg\ que nous introduisons au paragraphe~\ref{subsec:Triconn}.

\smallskip

\emph{Dans toute la partie, on travaille dans le cadre~\ref{cadre}.
On suppose à présent la condition~(R) du paragraphe~\ref{subsubsec:cond-R} vérifiée.
On fixe un réseau cocompact irréductible $\Gamma$ de~$G$.
Par le lemme de Selberg, quitte à remplacer $\Gamma$ par un sous-groupe d'indice fini, on peut le supposer sans torsion ; c'est ce que nous ferons ici.}

\subsection{Fonctions poids sur $G\times G$} \label{subsec:poids}

Rappelons la notion de \emph{repr\'esentation $(\varepsilon,R)$-presque parfaite} (définition~\ref{def:pant-presque-parfait-KLM}), qui fait intervenir la fonction $\rot\circ\inv\circ\varphi_R$.
Pour d\'emontrer la proposition~\ref{prop:pant-dans-reseau-general}.\eqref{item:pant-dans-reseau-1}, l'id\'ee est de montrer, en utilisant le mélange (fait~\ref{fait:melange}), que pour tous \'el\'ements $[x],[y]\in\Gamma\backslash G$, il existe \og beaucoup\fg\ d'\'el\'ements $[g]\in\Gamma\backslash G$ proches de~$[x]$ tels que $\rot\circ\inv\circ\varphi_R([g])$ soit proche de~$[y]$.

Plus pr\'ecis\'ement, fixons une fonction cloche $\chi\in C^{\infty}(\R,\R^+)$, de support $[-1,1]$.
Pour tout $\varepsilon>0$, on obtient une fonction cloche $\chi_{\varepsilon}\in C^{\infty}(G,\R^+)$ de support la boule de rayon~$\varepsilon$ autour de l'\'el\'ement neutre $\id\in G$, d'int\'egrale~$1$, en posant
$$\chi_{\varepsilon}(g) := \frac{1}{\int_G \chi(d_G(\cdot,\id)^2/\varepsilon^2)} \, \chi(d_G(g,\id)^2/\varepsilon^2).$$

Pour $R>0$, on d\'efinit des fonctions \og poids\fg\ $W_{\varepsilon,R},W_{\varepsilon,-R} : G\times G\to\R^+$, \`a support compact dans $G\times G$, par
\begin{equation} \label{eqn:W-epsilon-R}
W_{\varepsilon,\pm R}(x,y) = \int_{g\in G} \chi_{\varepsilon/R}(x^{-1}g) \, \chi_{\varepsilon/R}\big(y^{-1}\,(\rot^{\pm 1}\circ\inv\circ\varphi_{\pm R})(g)\big) \, \dd g.
\end{equation}
La fonction $W_{\varepsilon,\pm R}$ mesure la proportion d'\'el\'ements $g\in G$ tels que $g$ soit $(\varepsilon/R)$-proche de~$x$ et $\rot^{\pm 1}\circ\inv\circ\varphi_{\pm R}(g)$ soit $(\varepsilon/R)$-proche de~$y$.
Elle est invariante sous l'action diagonale de~$G$ : on a $W_{\varepsilon,\pm R}(gx,gy)=W_{\varepsilon,\pm R}(x,y)$ pour tous $g,x,y\in G$, car $\rot$, $\inv$ et $\varphi_R$ correspondent \`a des multiplications \`a droite.
Elle induit une fonction $(\Gamma\times\Gamma)$-invariante $w_{\varepsilon,\pm R} : G\times G\to\R^+$ donn\'ee par
\begin{equation} \label{eqn:w-eps-R}
w_{\varepsilon,\pm R}(x,y) = \sum_{\gamma\in\Gamma} W_{\varepsilon,\pm R}(x,\gamma y).
\end{equation}
(Cette somme est finie car à $x$ fixé, la fonction $W_{\varepsilon,\pm R}(x,\cdot) : G\to\R^+$ est \`a support compact et $\Gamma$ est discret dans~$G$ ; ainsi, $w_{\varepsilon,\pm R}$ est bien d\'efinie et lisse.)
La fonction $w_{\varepsilon,\pm R}$ passe au quotient en une fonction $\Gamma\backslash G \times \Gamma\backslash G \to \R^+$, que l'on notera encore $w_{\varepsilon,\pm R}$.
Celle-ci mesure la proportion d'\'el\'ements $[g]\in\Gamma\backslash G$ proches de~$[x]$ tels que $\rot^{\pm 1}\circ\inv\circ\varphi_{\pm R}([g])$ soit proche de~$[y]$, ou encore la proportion d'\'el\'ements $g\in G$ proches de~$x$ pour lesquels il existe $\gamma\in\Gamma$ tel que $\gamma\cdot (\rot^{\pm 1}\circ\inv\circ\varphi_{\pm R})(g)$ soit proche de~$y$.

Le lemme suivant reprend l'approche de Margulis dans sa th\`ese.
On n'utilise ici qu'une vitesse de mélange polynomiale (\cf paragraphe~\ref{subsec:etape-dyn}), qui donne une estim\'ee en $\varepsilon/R^2$ et suffit pour d\'emontrer les propositions \ref{prop:pant-dans-reseau-general} et~\ref{prop:pant-dans-reseau-mesures} ; en utilisant pleinement le mélange exponentiel (fait~\ref{fait:melange}), on obtiendrait une estim\'ee en $e^{-qR/2}$.


\begin{lemme} \label{lem:w-presque-1}
Pour tout $\varepsilon>0$, tout $R>0$ assez grand par rapport \`a~$\varepsilon$ et tous $[x],[y]\in\Gamma\backslash G$, on a
\begin{equation} \label{eqn:w-proche-de-1}
\big|w_{\varepsilon,\pm R}([x],[y]) - 1\big| \leq \frac{\varepsilon}{R^2}.
\end{equation}
\end{lemme}

\begin{proof}
Pour tout $\varepsilon>0$, la fonction $X_{\varepsilon} : G\times G\to\R^+$ donn\'ee par $X_{\varepsilon}(x,g) = \sum_{\gamma\in\Gamma} \chi_{\varepsilon}(x^{-1}\gamma g)$ est de classe $C^{\infty}$ et passe au quotient en une fonction $X_{\varepsilon} : \Gamma\backslash G\times\Gamma\backslash G\to\R^+$.
Soit $k\in\N$ donn\'e par le fait~\ref{fait:melange}.
Il n'est pas difficile de v\'erifier qu'il existe $D>0$ tel que $\Vert\chi_{\varepsilon}\Vert_{C^k} \leq D\,\varepsilon^{-k-D}$ pour tout $\varepsilon>0$ ; on en d\'eduit qu'il existe $D'>0$ tel que $\Vert X_{\varepsilon}(x,\cdot)\Vert_{C^k}, \Vert X_{\varepsilon}(x,(\rot\circ\inv)(\cdot))\Vert_{C^k} \leq D'\,\varepsilon^{-k-D'}$ pour tout $\varepsilon>0$ et tout $[x]\in\Gamma\backslash G$.

Soit $\mathcal{D}$ un domaine fondamental mesurable relativement compact de $G$ pour l'action de~$\Gamma$ (rappelons que l'on a suppos\'e $\Gamma$ sans torsion).
Soient $\varepsilon,R>0$.
Pour tous $x,y\in G$, on a
\begin{eqnarray*}
w_{\varepsilon,R}([x],[y]) & = & \sum_{\gamma\in\Gamma} \int_{g\in G} \chi_{\varepsilon/R}(x^{-1}g) \, \chi_{\varepsilon/R}\big(y^{-1}\gamma^{-1}\,(\rot\circ\inv\circ\varphi_R)(g)\big) \, \dd g\\
& = & \sum_{\gamma,\gamma'\in\Gamma} \int_{g\in\mathcal{D}} \chi_{\varepsilon/R}(x^{-1}\gamma'g) \, \chi_{\varepsilon/R}\big(y^{-1}\gamma^{-1}\gamma'\,(\rot\circ\inv\circ\varphi_R)(g)\big) \, \dd g\\
& = & \int_{[g]\in\Gamma\backslash G} X_{\varepsilon/R}(x,[g]) \, X_{\varepsilon/R}(y,(\rot\circ\inv\circ\varphi_R)([g])) \, \dd [g].
\end{eqnarray*}
Si l'on pose $\psi=X_{\varepsilon/R}(x,\cdot)$ et $\theta=X_{\varepsilon/R}(y,(\rot\circ\inv)(\cdot))$, alors $\int_{\Gamma\backslash G} \psi = \int_{\Gamma\backslash G} \theta = \int_G \chi_{\varepsilon/R} = 1$, et le fait~\ref{fait:melange} donne
$$\big|w_{\varepsilon,R}([x],[y]) - 1\big| \leq C\,e^{-qR} \, \Vert\psi\Vert_{C^k} \, \Vert\theta\Vert_{C^k} \leq C\,{D'}^2\,e^{-qR} \Big(\frac{\varepsilon}{R}\Big)^{-2k-2D'}.$$
En particulier, on obtient \eqref{eqn:w-proche-de-1} d\`es que $R$ est assez grand par rapport \`a~$\varepsilon$.
Un raisonnement analogue vaut pour $w_{\varepsilon,-R}$.
\end{proof}

\begin{corollaire} \label{cor:W>0}
Pour tout $\varepsilon>0$, tout $R>0$ assez grand par rapport \`a~$\varepsilon$ et tous $x,y\in G$, il existe $\gamma\in\Gamma$ tel que $W_{\varepsilon,R}(x,\gamma y)>0$.
\end{corollaire}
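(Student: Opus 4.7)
The plan is to derive this corollary essentially as an immediate consequence of Lemma~\ref{lem:w-presque-1}. The point is that the quantity $w_{\varepsilon,R}([x],[y])$ is, by its very definition \eqref{eqn:w-eps-R}, a sum of nonnegative terms indexed by $\gamma\in\Gamma$, so its positivity forces the positivity of at least one term.

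More precisely, I would first fix $\varepsilon>0$ and choose $R>0$ large enough, relative to $\varepsilon$, so that the conclusion of Lemma~\ref{lem:w-presque-1} applies; this forces in particular $\varepsilon/R^2<1$, so that
\[
w_{\varepsilon,R}([x],[y]) \,\geq\, 1 - \frac{\varepsilon}{R^2} \,>\,0
\]
for all $[x],[y]\in\Gamma\backslash G$. Next, I would lift $[x]$ and $[y]$ to $x,y\in G$ and rewrite, using \eqref{eqn:w-eps-R},
\[
0 \,<\, w_{\varepsilon,R}([x],[y]) \,=\, \sum_{\gamma\in\Gamma} W_{\varepsilon,R}(x,\gamma y).
\]
Since $W_{\varepsilon,R}$ is nonnegative (it is defined in \eqref{eqn:W-epsilon-R} as an integral of products of the nonnegative bump functions $\chi_{\varepsilon/R}$), the strict positivity of the sum implies the existence of at least one $\gamma\in\Gamma$ with $W_{\varepsilon,R}(x,\gamma y)>0$, which is the desired conclusion.

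There is essentially no obstacle here; the work has already been done in proving Lemma~\ref{lem:w-presque-1} via the mixing estimate (Fait~\ref{fait:melange}). The only minor point to check is that the inequality $\varepsilon/R^2<1$ is compatible with the range of $R$ for which the lemma holds, but this is automatic, since the lemma itself already requires $R$ large with respect to $\varepsilon$. The same argument applies verbatim with $-R$ in place of $R$, yielding the analogous statement for $W_{\varepsilon,-R}$.
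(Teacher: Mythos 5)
Votre argument est correct et co\"incide avec celui, laiss\'e implicite, du texte : le corollaire d\'ecoule imm\'ediatement du lemme~\ref{lem:w-presque-1} puisque $w_{\varepsilon,R}([x],[y])\geq 1-\varepsilon/R^2>0$ et que cette quantit\'e est, par \eqref{eqn:w-eps-R}, une somme de termes positifs ou nuls $W_{\varepsilon,R}(x,\gamma y)$, dont l'un au moins doit donc \^etre strictement positif.
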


\subsection{Fonctions poids sur $G^4$ et repr\'esentations presque parfaites : d\'emonstration de la proposition~\ref{prop:pant-dans-reseau-general}.\eqref{item:pant-dans-reseau-1}} \label{subsec:lemme-fermeture}

Pour tous $\varepsilon,R>0$, on d\'efinit des fonctions $W^{\mathrm{tri}}_{\varepsilon,\pm R} : G^4\to\R^+$ par
\begin{equation} \label{eqn:W-epsilon-R-tri}
\left\{\hspace{-0.2cm}\begin{array}{ccl}
W^{\mathrm{tri}}_{\varepsilon,R}(x,y_0,y_1,y_2) & \hspace{-0.3cm} = \hspace{-0.3cm} & W_{\varepsilon,R}(x,y_0) \, W_{\varepsilon,R}(\rot^2(x),\rot(y_1)) \, W_{\varepsilon,R}(\rot(x),\rot^2(y_2)),\\
W^{\mathrm{tri}}_{\varepsilon,-R}(x,y_0,y_1,y_2) & \hspace{-0.3cm} = \hspace{-0.3cm} & W_{\varepsilon,-R}(x,\rot^2(y_1)) \, W_{\varepsilon,-R}(\rot(x),\rot(y_0)) \, W_{\varepsilon,-R}(\rot^2(x),y_2)
\end{array}\right. \hspace{-0.5cm}
\end{equation}
pour tous $x,y_0,y_1,y_2\in G$, où $W_{\varepsilon,\pm R}$ est donnée par \eqref{eqn:W-epsilon-R}.
Ces fonctions sont invariantes par l'action diagonale de $G$ sur~$G^4$ par multiplication \`a gauche.
Le lemme suivant fait le lien avec les données géométriques de repr\'esentations $(\varepsilon,\pm R)$-presque parfaites (d\'efinition~\ref{def:pant-presque-parfait-KLM}).

\begin{lemme} \label{lem:triconn->pant}
Soit $\Pi$ un pantalon de groupe fondamental $\pi_1(\Pi) = \langle a,b,c \,|\, cba = 1\rangle$, soit $\rho : \pi_1(\Pi)\to G$ une repr\'esentation, soient $\varepsilon,R>0$ et soient $g,g'\in G$.
Alors
\begin{enumerate}
  \item\label{item:triconn->pant+} la représentation $\rho$ est $(\varepsilon,R)$-presque parfaite de donnée géométrique\\ $(\rho(a),\rho(b),\rho(c),g,g')$ si et seulement si $W^{\mathrm{tri}}_{\varepsilon R,R}(g,g',\rho(a)^{-1}g',\rho(b)g')>0$ ;
  \item\label{item:triconn->pant-} la représentation $\rho$ est $(\varepsilon,-R)$-presque parfaite de donnée géométrique\\ $(\rho(a),\rho(b),\rho(c),g,g')$ si et seulement si $W^{\mathrm{tri}}_{\varepsilon R,-R}(g,g'',\rho(a)g'',\rho(ba)g'')>0$,\\ où l'on pose $g'' := \rho(a)^{-1}\circ\rot(g')$.
\end{enumerate}
\end{lemme}

\begin{proof}
Traitons le point~\eqref{item:triconn->pant+} ; le point~\eqref{item:triconn->pant-} est analogue.
Par d\'efinition \eqref{eqn:W-epsilon-R} de $W_{\varepsilon R,R}$, on a $W_{\varepsilon R,R}(g,g')>0$ si et seulement s'il existe $h\in G$ tel que $d_G(h,g)<\varepsilon$ et $d_G(\rot\circ\inv\circ\varphi_R(h),g')<\varepsilon$.
De m\^eme, on a $W_{\varepsilon R,R}(\rot^2(g),\rho(a)^{-1}\cdot\rot(g'))>0$ si et seulement s'il existe $h''\in G$ tel que
$$d_G(h'',\rot^2(g))<\varepsilon \quad\mathrm{et}\quad d_G(\rot\circ\inv\circ\varphi_R(h''),\rho(a)^{-1}\cdot\rot(g'))<\varepsilon\ ;$$
en posant $h':=\rho(a)\cdot\inv\circ\varphi_R(h'')$, ceci est \'equivalent \`a
$$d_G(g',h')<\varepsilon \quad\mathrm{et}\quad d_G(\rot\circ\inv\circ\varphi_R(h'),\rho(a)\cdot g)<\varepsilon$$
(car $d_G$ est invariante par $G$ et $\rot$, et $(\inv\circ\varphi_R)^2=\mathrm{id}_G$).
Ainsi, on a\linebreak $W_{\varepsilon R,R}(g,g')\,W_{\varepsilon R,R}(\rot^2(g),\rho(a)^{-1}\cdot\rot(g'))>\nolinebreak 0$ si et seulement si $\rho(a)$ est $(\varepsilon,R)$-presque réalisé par $(g,g')$ (d\'efinition~\ref{def:realise-presque-alpha}).
De m\^eme, $W_{\varepsilon R,R}(g,g')\,W_{\varepsilon R,R}(\rot(g),\rho(b)\cdot\rot^2(g'))>0$ (\resp $W_{\varepsilon R,R}(\rot^2(g),\rho(a)^{-1}\cdot\rot(g'))\,W_{\varepsilon R,R}(\rot(g),\rho(b)\cdot\rot^2(g'))>0$) si et seulement si $\rho(b)$ (\resp $\rho(c)$) est $(\varepsilon,R)$-presque réalisé par $(\rot(g),\rho(b)\cdot\rot^2(g'))$ (\resp $(\rot^2(g),\rho(a)^{-1}\cdot\rot(g'))$).
On en d\'eduit que $(\rho(a),\rho(b),\rho(c),g,g')$ satisfait \eqref{eqn:condRpresqueparf-realise} avec les signes $+$ si et seulement si $W^{\mathrm{tri}}_{\varepsilon R,R}(g,g',\rho(a)^{-1}g',\rho(b)g')>0$.
\end{proof}

\begin{proof}[D\'emonstration de la proposition~\ref{prop:pant-dans-reseau-general}.\eqref{item:pant-dans-reseau-1}]
D'apr\`es le corollaire~\ref{cor:W>0}, si $R$ est assez grand par rapport \`a~$\varepsilon$, alors pour tous $g,g''\in G$ on peut trouver $\gamma_0,\gamma_1,\gamma_2\in\Gamma$ tels que $W^{\mathrm{tri}}_{\varepsilon,R}(g,\gamma_0g'',\gamma_1g'',\gamma_2g'')>0$.
D'apr\`es le lemme~\ref{lem:triconn->pant}.\eqref{item:triconn->pant+}, la repr\'esentation de $\pi_1(\Pi^+) = \langle a,b^+,c^+ \,|\, c^+ b^+ a = 1\rangle$ dans~$\Gamma$ envoyant $(a,b^+)$ sur $(\gamma_0\gamma_1^{-1},\gamma_2\gamma_0^{-1})$ est $(\varepsilon/R,R)$-presque parfaite.
De m\^eme, les lemmes \ref{lem:w-presque-1} et~\ref{lem:triconn->pant}.\eqref{item:triconn->pant-} permettent de trouver des repr\'esentations $(\varepsilon/R,-R)$-presque parfaites de $\pi_1(\Pi^-)$ dans~$\Gamma$.
\end{proof}

\subsection{Couples triconnect\'es et mesures $\mu_{\varepsilon,\pm R}$ de la proposition~\ref{prop:pant-dans-reseau-mesures}} \label{subsec:Triconn}

Soit $\Pi$ un pantalon de groupe fondamental $\pi_1(\Pi) = \langle a,b,c \,|\, cba = 1\rangle$.
Dans ce paragraphe, nous introduisons l'espace $\Triconn_{\varepsilon,\pm R}$ qui paramètre les données géométriques associées aux représentations $(\varepsilon/R,\pm R)$-presque parfaites de $\pi_1(\Pi)$ dans~$\Gamma$ modulo l'action de~$\Gamma$.
Nous introduisons des mesures $\mu_{\varepsilon,\pm R}$ dont nous montrerons plus loin (proposition~\ref{prop:mu-convient}) qu'elles satisfont la conclusion de la proposition~\ref{prop:pant-dans-reseau-mesures}.

\begin{definition} \label{def:Triconn}
Un \emph{couple triconnect\'e\footnote{Cette terminologie est justifi\'ee par la figure~\ref{fig:Triconn} (paragraphe~\ref{subsubsec:pant-presque-parf-KLM-def}) associ\'ee au lemme~\ref{lem:triconn->geom}.} dans $\Gamma\backslash G$} est un \'el\'ement de
\begin{equation} \label{eqn:Triconn}
\Triconn := \{[(x,y_0,y_1,y_2)]\in\Gamma\backslash G^4 ~|~ y_1,y_2\in\Gamma y_0\},
\end{equation}
o\`u $\Gamma$ agit diagonalement sur~$G^4$ par multiplication \`a gauche.
Pour $\varepsilon,R>0$, on note $\Triconn_{\varepsilon,\pm R}$ l'ensemble des couples triconnect\'es $[(x,y_0,y_1,y_2)]$ tels que $W^{\mathrm{tri}}_{\varepsilon,\pm R}(x,y_0,y_1,y_2)>0$ (\cf \eqref{eqn:W-epsilon-R-tri}).
\end{definition}

Rappelons (remarque~\ref{rem:presque-parf-tri}) que $\Gamma$ agit sur l'ensemble des représentations $(\varepsilon/R,\pm R)$-presque parfaites de $\pi_1(\Pi)$ dans~$\Gamma$ par conjugaison au but, ce qui se traduit par une action de $\Gamma$ sur les données géométriques correspondantes donn\'ee par $x\cdot (\alpha,\beta,\gamma,g,g') = (x\alpha x^{-1}, x\beta x^{-1}, x\gamma x^{-1}, xg, xg')$ pour tout $x\in\Gamma$.
Le lemme~\ref{lem:triconn->pant} se reformule ainsi.

\begin{lemme} \label{lem:triconn->geom}
Soit $\Pi$ un pantalon de groupe fondamental $\pi_1(\Pi) = \langle a,b,c \,|\, cba = 1\rangle$ et soient $\varepsilon,R>0$.
Alors
\begin{enumerate}
  \item l'ensemble des donn\'ees g\'eom\'etriques associ\'ees \`a des repr\'esentations $(\varepsilon/R,R)$-presque parfaites de $\pi_1(\Pi)$ dans~$\Gamma$, modulo l'action de~$\Gamma$, est param\'etr\'e par $\Triconn_{\varepsilon,R}$ via $[(\rho(a),\rho(b),\rho(c),g,g')] \mapsto [(g,g',\rho(a)^{-1}g',\rho(b)g')]$ ;
  \item l'ensemble des donn\'ees g\'eom\'etriques associ\'ees \`a des repr\'esentations $(\varepsilon/R,-R)$-presque parfaites de $\pi_1(\Pi)$ dans~$\Gamma$, modulo l'action de~$\Gamma$, est param\'etr\'e~par $\Triconn_{\varepsilon,-R}$ via $[(\rho(a),\rho(b),\rho(c),g,g')] \mapsto [(g,g'',\rho(a)g'',\rho(ba)g'')]$ o\`u $g'' := \rho(a)^{-1}\circ\rot(g')$.
\end{enumerate}
On peut prendre $\mathrm{Geom}_{\varepsilon,R} := \Triconn_{\varepsilon,R}$ dans la proposition~\ref{prop:pant-dans-reseau-mesures}.
\end{lemme}

Notons $\mathcal{A}_{\varepsilon,\pm R}$ l'ensemble des classes de conjugaison dans~$\Gamma$ d'éléments $\rho(a)$ où $\rho : \pi_1(\Pi)\to\Gamma$ est $(\varepsilon/R,\pm R)$-presque parfaite ; c'est un ensemble fini d'après le corollaire~\ref{cor:presque-parf->generique}.\eqref{item:nb-fini-presque-parf}.
Pour tout $\alpha\in\Gamma$, l'ensemble
\begin{equation} \label{eqn:Triconn-alpha}
\Triconn_{\varepsilon,\pm R}^{\alpha} := \{[(x,y_0,y_1,y_2)]\in\Triconn_{\varepsilon,\pm R} ~|~ y_0=\alpha^{\pm 1}y_1\}
\end{equation}
ne dépend que de la classe de conjugaison $[\alpha]$ de $\alpha$ dans~$\Gamma$.
L'ensemble $\Triconn_{\varepsilon,\pm R}$ est l'union disjointe de ses sous-ensembles $\Triconn_{\varepsilon,\pm R}^{\alpha}$ où $[\alpha]$ parcourt $\mathcal{A}_{\varepsilon,\pm R}$.
Tout \'el\'ement de $\Triconn_{\varepsilon,R}^{\alpha}$ (\resp $\Triconn_{\varepsilon,-R}^{\alpha}$) est par d\'efinition de la forme $[(g,g',\alpha^{-1} g',\beta g')]$ (\resp $[(g,g',\alpha g',\beta\alpha g')]$) o\`u $(g,g')\in G^2$ et $\beta\in\Gamma$, et définit une représentation $(\varepsilon/R,R)$-presque parfaite (\resp $(\varepsilon/R,-R)$-presque parfaite) de $\pi_1(\Pi)$ par $(a,b)\mapsto (\alpha,\beta)$ modulo conjugaison au but par~$\Gamma$.

D'après la remarque~\ref{rem:presque-parf-tri}, l'ensemble des représentations $(\varepsilon/R,\pm R)$-parfaites est invariant par la symétrie naturelle d'ordre trois $\sym$ de $\Hom(\pi_1(\Pi),G)$ donnée par $\sym(\rho)(a,b,c) = (\rho(b),\rho(c),\rho(a))$.
Cette symétrie induit une symétrie d'ordre trois de l'ensemble des classes de conjugaison modulo~$\Gamma$ de représentations $(\varepsilon/R,\pm R)$-parfaites, qui elle-même se relève en une symétrie d'ordre trois de $\Triconn_{\varepsilon,\pm R}$, que nous noterons encore $\sym$ et qui est donnée par
\begin{equation} \label{eqn:tri-Triconn}
\left\{ \begin{array}{cccl}
\sym([(g, g', \alpha^{-1}g', \beta g')]) & \!\!=\!\! & [(x, x', \beta^{-1}x', \alpha^{-1}\beta^{-1}x')] & \text{sur }\Triconn_{\varepsilon,R},\\
\sym([(g, g', \alpha g', \beta\alpha g')]) & \!\!=\!\! & [(y, y', \beta y', \alpha^{-1}y')] & \text{sur }\Triconn_{\varepsilon,-R},
\end{array}\right.\hspace{-0.5cm}
\end{equation}
où $(x,x') := (\rot(g), \beta\circ\rot^2(g'))$ et $(y,y') := (\rot^2(g), \alpha\circ\rot(g'))$.

L'observation suivante est une cons\'equence directe des lemmes \ref{lem:realise-proximal} et~\ref{lem:triconn->geom} et de la d\'efinition~\ref{def:pant-presque-parfait-KLM}.

\begin{remarque} \label{rem:Triconn-rel-compact}
Pour tout $R>0$ assez grand par rapport \`a~$\varepsilon$ et tout $[\alpha]\in\mathcal{A}_{\varepsilon,\pm R}$, l'adhérence de $\Triconn_{\varepsilon,\pm R}^{\alpha}$ dans $\Gamma\backslash G^4$ est compacte ; ainsi, l'adhérence de $\Triconn_{\varepsilon,\pm R}$ dans $\Gamma\backslash G^4$, c'est-à-dire l'image dans $\Gamma\backslash G^4$ du support de $W^{\mathrm{tri}}_{\varepsilon,\pm R}$, est compacte.
\end{remarque}

On définit une mesure $\mu_{\varepsilon,\pm R}$ sur $\Triconn_{\varepsilon,\pm R}$ de la manière suivante.
La mesure de Haar de~$G$ induit une mesure naturelle sur $\Gamma\backslash G^2$.
L'espace $\Triconn$ de \eqref{eqn:Triconn} se projette, en considérant les deux premières coordonnées, sur $\Gamma\backslash G^2$, avec des fibres dénombrables paramétrées par~$\Gamma$.
Soit $\lambda_{\Triconn}$ la mesure localement finie sur $\Triconn$ dont le poussé en avant est la mesure naturelle de $\Gamma\backslash G^2$ et dont la restriction à chaque fibre est la mesure de comptage.
On définit, sur $\Triconn_{\varepsilon,\pm R}$, la mesure
\begin{equation} \label{def:mu-epsilon-R}
\mu_{\varepsilon,\pm R} := W^{\mathrm{tri}}_{\varepsilon,\pm R}\,\lambda_{\Triconn}
\end{equation}
(\cf \eqref{eqn:W-epsilon-R-tri}).
Elle est finie d'apr\`es la remarque~\ref{rem:Triconn-rel-compact}.
On vérifie également qu'elle est invariante par la symétrie d'ordre trois $\sym$ de $\Triconn_{\varepsilon,\pm R}$ de \eqref{eqn:tri-Triconn}.

Soient $\Pi^+$ et~$\Pi^-$ deux pantalons de groupes fondamentaux
$$\pi_1(\Pi^{\pm}) = \langle a^{\pm}, b^{\pm}, c^{\pm} \,|\, c^{\pm} b^{\pm} a^{\pm} = 1\rangle.$$
Pour $\varepsilon'>0$, on dit que deux éléments $\mathtt{T}^+\in\Triconn_{\varepsilon,R}$ et $\mathtt{T}^-\in\Triconn_{\varepsilon,-R}$ sont \emph{$\varepsilon'$-bien recoll\'es} s'ils définissent des représentations $(\varepsilon/R,\pm R)$-presque parfaites de $\pi_1(\Pi^{\pm})$ dans~$G$ qui, après conjugaison par~$\Gamma$, sont $\varepsilon'$-bien recollées le long de $a^+$ et~$a^-$ via des données géométriques associées à $\mathtt{T}^+$ et~$\mathtt{T}^-$, au sens de la définition~\ref{def:bon-recoll-KLM}.

Notre but est d\'esormais d'expliquer le r\'esultat suivant, qui implique les propositions \ref{prop:pant-dans-reseau-mesures} (avec $\mathrm{Geom}_{\varepsilon,R} = \Triconn_{\varepsilon,R}$) et~\ref{prop:pant-dans-reseau-general}.\eqref{item:pant-dans-reseau-2}.

\begin{proposition} \label{prop:mu-convient}
En supposant la condition~(R) v\'erifi\'ee, il existe $C>0$ tel que pour tout $\varepsilon>0$ et tout $R>0$ assez grand par rapport à~$\varepsilon$, les mesures $\sym$-invariantes $\mu_{\varepsilon,\pm R}$ sur $\Triconn_{\varepsilon,\pm R}$ de \eqref{def:mu-epsilon-R} satisfont la conclusion de la proposition~\ref{prop:pant-dans-reseau-mesures} : on a $\mu_{\varepsilon,R}(\Triconn_{\varepsilon,R}) = \mu_{\varepsilon,-R}(\Triconn_{\varepsilon,-R})$ et pour tout sous-ensemble mesurable $A$ de $\Triconn_{\varepsilon,R}$, l'ensemble des \'el\'ements de $\Triconn_{\varepsilon,-R}$ qui sont $(C\varepsilon/R)$-bien recoll\'es \`a au moins un \'el\'ement de~$A$ est de $(\mu_{\varepsilon,-R})$-mesure sup\'erieure ou \'egale \`a $\mu_{\varepsilon,R}(A)$.
\end{proposition}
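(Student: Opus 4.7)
The plan is to prove the proposition by constructing an explicit measure-preserving map $\Phi : \Triconn_{\varepsilon,R} \to \Triconn_{\varepsilon,-R}$ with the property that every pair $(\mathtt{T}, \Phi(\mathtt{T}))$ is $(C\varepsilon/R)$-well-glued. Once such a $\Phi$ is available, for any measurable $A \subset \Triconn_{\varepsilon,R}$ the set of elements of $\Triconn_{\varepsilon,-R}$ that are $(C\varepsilon/R)$-well-glued to at least one element of $A$ contains $\Phi(A)$, so its $\mu_{\varepsilon,-R}$-measure is at least $\mu_{\varepsilon,R}(A)$; taking $A = \Triconn_{\varepsilon,R}$ also yields the equality of total masses.

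The map $\Phi$ will be built from the flip element $j \in G$ provided by condition (R). Recall that $j$ is of order two, lies in the centre of the neutral component of $Z_G(\mathsf{h})$, and satisfies $j \cdot \underline{\tau}(0,\infty,-1) = \underline{\tau}(0,\infty,1)$. In particular $j$ commutes with the flow $(\varphi_t)_{t \in \R}$ (since both come from right multiplication by elements of $Z_G(\mathsf{h})$). First I would verify that the right action $g \mapsto g\cdot (j \varphi_{-1})$, followed by a cyclic permutation of the last three coordinates of the quadruple, converts the defining condition of an $(\varepsilon,R)$-almost realisation (Definition 3.10) into that of an $(\varepsilon,-R)$-almost realisation with the \emph{same} value of $\alpha$. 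This is where condition (R) is essentially used: the flip $j$ is what turns a positively oriented $\tau$-triangle into a negatively oriented one while fixing the axis of $\alpha_0 = \exp(\mathsf{h})$, and the shift $\varphi_{-1}$ encodes the hyperbolic shift by $1$ prescribed in Definition 3.15. By conjugating by an element sending $(\alpha_0^{\repuls}, \alpha_0^{\attract})$ to $(\alpha^{\repuls}, \alpha^{\attract})$ (as in Remark 3.14), I would obtain the analogous transformation adapted to each $\Triconn_{\varepsilon,\pm R}^{\alpha}$, showing $\Phi$ sends $\Triconn_{\varepsilon,R}^{\alpha}$ into $\Triconn_{\varepsilon,-R}^{\alpha}$.

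Next I would check that $\Phi$ is well defined modulo the diagonal $\Gamma$-action (immediate, since the construction only uses \emph{right} multiplication by elements of $G$) and that $\Phi_* \mu_{\varepsilon,R} = \mu_{\varepsilon,-R}$. The measure $\mu_{\varepsilon,\pm R} = W^{\mathrm{tri}}_{\varepsilon,\pm R}\,\lambda_{\Triconn}$ is built from Haar measure and the bump-convolution weights $W_{\varepsilon,\pm R}$ of \eqref{eqn:W-epsilon-R}; right multiplication by $j$ and by $\varphi_{-1}$ preserves Haar measure, and a direct substitution in the defining integral \eqref{eqn:W-epsilon-R-tri} transforms $W^{\mathrm{tri}}_{\varepsilon,R}$ into $W^{\mathrm{tri}}_{\varepsilon,-R}$ after the permutation of coordinates; this uses the commutation $j\varphi_t = \varphi_t j$ and $j^2 = \id$ to rewrite $\rot \circ \inv \circ \varphi_R$ in terms of $\rot^{-1} \circ \inv \circ \varphi_{-R}$ up to harmless factors absorbed in the bumps $\chi_{\varepsilon/R}$.

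Finally I would verify the $(C\varepsilon/R)$-good-gluing estimate. Given the data $(g^+, {g'}^+)$ of $\mathtt{T}^+$ and its image $(g^-, {g'}^-)$ of $\Phi(\mathtt{T}^+)$, Lemma 3.11 and Lemma 3.17.(2) place $g^{\pm}$ in neighbourhoods $\mathcal{U}_{\alpha^{\pm 1}}$ of controlled size. By construction $g^-$ equals $g^+ \cdot j\varphi_{-1}$ up to a factor in $\mathcal{B}_{\varepsilon/R}$, so the foot-projection inequality of Definition 3.15 amounts to bounding $d_G(\Psi_{\alpha}(g^+), \varphi_1 \circ \inv \circ \Psi_{\alpha^{-1}}(g^+ \cdot j\varphi_{-1}))$; using the equivariance of $\Psi_\alpha$ (Remark 3.14) and the identity $\inv \circ (\cdot j) = (\cdot \, j') \circ \inv$ for an appropriate $j'$ given by condition (R), this reduces to an estimate linear in the perturbation size $\varepsilon/R$, with Lipschitz constants that are uniform because $\alpha$ stays in a fixed compact region of the loxodromic locus by Lemma 3.11. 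The main obstacle will be this last bookkeeping step: tracking how the non-commutation between $j$ and the involution $\inv$ interacts with the foot map $\Psi_\alpha$, and making sure the errors do not blow up with $R$. Condition (R) is precisely what guarantees that $j$ commutes with the one-parameter subgroup $(\varphi_t)$ and centralises the stabiliser of $(\alpha^{\repuls}, \alpha^{\attract})$ up to a harmless element, so that the resulting loss in the estimate is uniformly bounded by $C \varepsilon/R$ for some $C$ depending only on $G$ and $\tau$.
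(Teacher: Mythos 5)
There is a genuine gap, and it sits exactly where the real difficulty of the proposition lies. Your map $\Phi$ is right multiplication by a fixed element of $G$ (plus a permutation of coordinates), so it leaves the underlying representation $\rho$ of the pants group unchanged: it only changes the geometric datum attached to $\rho$. The requirement that $\Phi$ land in $\Triconn_{\varepsilon,-R}$ then forces $\Phi$ to be (a small perturbation of) the involution $I^{\mathrm{tri}}$ of the paper's proposition~\ref{prop:mesures-I}, i.e.\ $g^- \approx I(g^+) = g^+\,j\,w$ with $w = \tau\big(\big(\begin{smallmatrix}0&1\\-1&0\end{smallmatrix}\big)\big)$ --- any other right factor, such as your $j\varphi_{-1}$, takes $g^-$ a \emph{definite} distance away from the set of admissible $(\varepsilon,-R)$-data for $\rho$, so $W^{\mathrm{tri}}_{\varepsilon,-R}$ vanishes there and $\Phi(\mathtt{T})\notin\Triconn_{\varepsilon,-R}$. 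But with $g^- = I(g^+)$ the gluing inequality \eqref{eqn:presque-flip-decalage} becomes, using $\inv\circ I = \refl$ and $\Psi_{\alpha^{-1}}\circ I = I\circ\Psi_{\alpha}$, the statement that $\varphi_1\circ\refl$ moves the point $\Psi_{\alpha}(g^+)\in L_{\alpha}$ by at most $C\varepsilon/R$. This is false: $\varphi_1$ alone translates along $L_\alpha$ by a fixed amount of order $1$, independent of $\varepsilon$ and $R$. So no fixed right translation can be both measure-compatible and well-gluing; the two requirements are off by the definite isometry $\varphi_1\circ\refl$ of $L_\alpha$.

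The content of the proposition is precisely to absorb that discrepancy, and it cannot be done by an algebraic identity: one must show that the \emph{foot measure} $m_{\varepsilon,R}^{\alpha}$ on $Z_\Gamma(\alpha)\backslash L_\alpha$ is almost invariant (in the L\'evy--Prokhorov sense, not pointwise) under $\varphi_1\circ\refl$, so that for $\mu_{\varepsilon,R}$-most $\mathtt{T}^+$ there exists some \emph{other} $(\varepsilon,-R)$-almost-perfect representation sharing the boundary element $\alpha$ whose foot lands at the shifted position. This is where mixing enters (fait~\ref{fait:melange} via the lemme~\ref{lem:w-presque-1}): it gives the near-invariance of $m_{\varepsilon,R}^{\alpha}$ under $Z_G(Z_\Gamma(\alpha))$ up to a Radon--Nikodym error $\varepsilon/R^2$ (proposition~\ref{prop:mesures-proches-I}), which the compact-torus averaging (lemme~\ref{lem:refl-C-alpha}, lemme~\ref{lem:LP-tore}, proposition~\ref{prop:alpha-t-presque-conj}) converts into the $C\varepsilon/R$ bound in L\'evy--Prokhorov distance; condition~(R) is used to put $\refl$ inside the averaging torus, not to make the pairing pointwise. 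The fact that your argument never invokes mixing is the symptom: the equality of total masses does follow formally from $W^{\mathrm{tri}}_{\varepsilon,R}\circ I^{\mathrm{tri}} = W^{\mathrm{tri}}_{\varepsilon,-R}$, but the well-gluing half of the statement is an equidistribution result and cannot.
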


\subsection{Reformulation en termes de la distance de Lévy--Prokhorov} \label{subsec:reformulation-LP}

Soient $\varepsilon,R>0$.
Comme au paragraphe~\ref{subsec:Triconn} précédent, notons $\mathcal{A}_{\varepsilon,\pm R}$ l'ensemble fini des classes de conjugaison dans~$\Gamma$ d'éléments $\rho(a)$ où $\rho : \pi_1(\Pi^{\pm})\to\Gamma$ est $(\varepsilon/R,\pm R)$-presque parfaite.
Supposons $R$ assez grand par rapport \`a~$\varepsilon$ de sorte que pour tout $[\alpha]\in\mathcal{A}_{\varepsilon,\pm R}$, l'\'el\'ement $\alpha\in\Gamma\subset G$ soit proximal dans $G/P_{\tau}$ (corollaire~\ref{cor:presque-parf->generique}.\eqref{item:presque-parf->generique}), de points fixes attractif $\alpha^{\attract}$ et répulsif~$\alpha^{\repuls}$.
Comme au paragraphe~\ref{subsec:recolle-KLM}, notons $L_{\alpha}$ l'ensemble des \'el\'ements $g\in G$ tels que $g\cdot\underline{\tau}(0,\infty) = (\alpha^{\repuls},\alpha^{\attract})$ ; c'est une classe à gauche du centralisateur $Z_G(\alpha_0)$ de $\alpha_0 = \exp(\mathsf{h})$ dans~$G$ (remarque~\ref{rem:pied-alpha-0-alpha}).
Le centralisateur $Z_{\Gamma}(\alpha)$ de $\alpha$ dans~$\Gamma$ est contenu dans le stabilisateur $\mathrm{stab}_G(\alpha^{\repuls},\alpha^{\attract})$ qui agit sur $L_{\alpha}$ par multiplication à gauche.
Dans ce paragraphe, nous reformulons la proposition~\ref{prop:mu-convient} en termes de mesures sur $Z_{\Gamma}(\alpha)\backslash L_{\alpha}$.

Notons que, comme $\Gamma$ est un r\'eseau cocompact de~$G$ et $\mathsf{h}$ est r\'egulier (cadre~\ref{cadre}), le quotient $Z_{\Gamma}(\alpha)\backslash L_{\alpha}$ est compact (\cf \cite[Th.\,4.2]{wol62}).

\begin{exemple} \label{ex:PSLnC-pied}
Soient $G=\PSL(n,\C)$ et $\tau : \PSL(2,\R)\hookrightarrow G$ le plongement irr\'eductible (\cf exemples \ref{ex:PSL} et~\ref{ex:PSLnC-centralisateur}).
Pour $\Gamma$ sans torsion et $\alpha\in\Gamma$ proximal dans $G/P_{\tau}$, les groupes $Z_G(\alpha)$ et $Z_{\Gamma}(\alpha)$ sont isomorphes respectivement \`a $(\C^*)^{n-1}$ et $\Z^{n-1}$, et $Z_{\Gamma}(\alpha)\backslash L_{\alpha} \simeq Z_{\Gamma}(\alpha)\backslash Z_G(\alpha) \simeq \mathbb{T}^{2(n-1)}$ est un tore compact.
\end{exemple}

On peut voir l'ensemble $\Triconn_{\varepsilon,\pm R}^{\alpha}$ de \eqref{eqn:Triconn-alpha} comme un sous-ensemble de\linebreak $Z_{\Gamma}(\alpha)\backslash G^4$ plut\^ot que de $\Gamma\backslash G^4$ : en effet, si deux \'el\'ements de l'ensemble\linebreak $\{ (x,y_0,y_1,y_2)\in G^4 \,|\, y_1=\alpha^{\pm}y_0,\ y_2\in\Gamma y_0\}$ ont m\^eme image dans $\Gamma\backslash G^4$, alors ils ont d\'ej\`a m\^eme image dans $Z_{\Gamma}(\alpha)\backslash G^4$.
D'apr\`es les lemmes \ref{lem:presque-parf-U-alpha}.\eqref{item:presque-parf-U-alpha} et~\ref{lem:triconn->geom}, si $R$ est assez grand par rapport \`a~$\varepsilon$, alors pour tout $[(g,g',\alpha^{-1}g',\beta g')] \in \Triconn_{\varepsilon,R}^{\alpha}$ (\resp pour tout $[(g,g',\alpha g',\beta\alpha g')] \in \Triconn_{\varepsilon,-R}^{\alpha}$), l'\'el\'ement $g$ appartient au domaine de d\'efinition $\mathcal{U}_{\alpha}$ (\resp $\mathcal{U}_{\alpha^{-1}}$) de l'application \og pied\fg\ $\Psi_{\alpha} : \mathcal{U}_{\alpha}\to L_{\alpha}$ (\resp $\Psi_{\alpha^{-1}} : \mathcal{U}_{\alpha^{-1}}\to L_{\alpha^{-1}}$), \cf d\'efinition~\ref{def:pied}.
L'application $\Psi_{\alpha^{\pm 1}}$ est équivariante par rapport aux actions de $Z_{\Gamma}(\alpha) = Z_{\Gamma}(\alpha^{-1})$ par multiplication à gauche, donc induit une application
\begin{equation} \label{eqn:pied-tri}
\Psi_{\alpha^{\pm 1}}^{\mathrm{tri}} : \Triconn_{\varepsilon,\pm R}^{\alpha} \longrightarrow Z_{\Gamma}(\alpha)\backslash L_{\alpha^{\pm 1}}
\end{equation}
donnée par $\Psi_{\alpha}^{\mathrm{tri}}[(g,g',\alpha^{-1}g',\beta g')] = [\Psi_{\alpha}(g)]$ (\resp $\Psi_{\alpha^{-1}}^{\mathrm{tri}}[(g,g',\alpha g',\beta\alpha g')] = [\Psi_{\alpha^{-1}}(g)]$).
Soit $\mu_{\varepsilon,\pm R}$ la mesure finie sur $\Triconn_{\varepsilon,\pm R}$ de \eqref{def:mu-epsilon-R}.
Elle se restreint en une mesure finie $\mu_{\varepsilon,\pm R}^{\alpha}$ sur $\Triconn_{\varepsilon,\pm R}^{\alpha}$, qui induit sur $Z_{\Gamma}(\alpha)\backslash L_{\alpha^{\pm 1}}$ une mesure finie
\begin{equation} \label{eqn:mesures-m}
m_{\varepsilon,\pm R}^{\alpha} := (\Psi_{\alpha^{\pm 1}}^{\mathrm{tri}})_* \, \mu_{\varepsilon,\pm R}^{\alpha}.
\end{equation}

Rappelons (\cf paragraphe~\ref{subsec:recolle-KLM}) que le flot $(\varphi_t)_{t\in\R}$ du paragraphe~\ref{subsec:rot-inv-phi} préserve~$L_{\alpha}$, alors que l'involution $\inv$ échange $L_{\alpha}$ et~$L_{\alpha^{-1}}$.
Ainsi, $Z_{\Gamma}(\alpha)\backslash L_{\alpha^{-1}} = (\varphi_1\circ\inv)(Z_{\Gamma}(\alpha)\backslash L_{\alpha})$.

La proposition~\ref{prop:mu-convient} se reformule en termes de proximit\'e des mesures $m_{\varepsilon,R}^{\alpha}$ et $(\varphi_1\circ\nolinebreak\inv)_*\,m_{\varepsilon,-R}^{\alpha}$ sur $Z_{\Gamma}(\alpha)\backslash L_{\alpha}$ pour la distance suivante, dite de \emph{L\'evy--Prokhorov}.

\begin{definition} \label{def:LP}
Soient $\mu_1,\mu_2$ deux mesures finies sur un espace m\'etrique~$X$, telles que $\mu_1(X)=\mu_2(X)$.
On pose
$$d_{LP}(\mu_1,\mu_2) := \inf \big\{\delta\geq 0 ~|~ \mu_1(A)\leq\mu_2(\mathcal{V}_{\delta}(A))\ \,\forall A\subset X\big\},$$
o\`u $\mathcal{V}_{\delta}(A)$ d\'esigne le $\delta$-voisinage uniforme de $A$ dans~$X$.
\end{definition}

Ceci d\'efinit une distance $d_{LP}$ sur les mesures sur~$X$.
(Pour v\'erifier la sym\'etrie, notons que si $d_{LP}(\mu_1,\mu_2)\leq\delta$, alors pour tout $A\subset X$ on a
$$\mu_2(A) \leq \mu_2(X) - \mu_2(\mathcal{V}_{\delta}(X\smallsetminus\mathcal{V}_{\delta}(A))) \leq \mu_1(X) - \mu_1(X\smallsetminus\mathcal{V}_{\delta}(A)) = \mu_1(\mathcal{V}_{\delta}(A)),$$
d'o\`u $d_{LP}(\mu_2,\mu_1)\leq\delta$.)
Dans notre cas $X$ sera $Z_{\Gamma}(\alpha)\backslash L_{\alpha}$ muni de la distance induite par~$d_G$ par restriction et passage au quotient, o\`u $\alpha\in\Gamma$ est proximal dans $G/P_{\tau}$.

La proposition~\ref{prop:mu-convient} se reformule ainsi.

\begin{proposition} \label{prop:mesures-LP-proches}
En supposant la condition~(R) v\'erifi\'ee, il existe $C>0$ avec la propri\'et\'e suivante : pour tout $\varepsilon>0$, tout $R>0$ assez grand par rapport \`a~$\varepsilon$ et tout $[\alpha]\in\mathcal{A}_{\varepsilon,R}$, les mesures finies $m_{\varepsilon,R}^{\alpha}$ et $(\varphi_1\circ\inv)_*\,m_{\varepsilon,-R}^{\alpha}$ sur $Z_{\Gamma}(\alpha)\backslash L_{\alpha}$ (\cf \eqref{eqn:mesures-m}) ont m\^eme masse totale et l'on a
\begin{equation} \label{eqn:mes-LP-proches}
d_{LP}\big(m_{\varepsilon,R}^{\alpha}, (\varphi_1\circ\inv)_*\,m_{\varepsilon,-R}^{\alpha}\big) \leq \frac{C\varepsilon}{R}.
\end{equation}
\end{proposition}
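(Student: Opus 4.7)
La stratégie sera de montrer que chacune des mesures finies $m_{\varepsilon,\pm R}^{\alpha}$ est, à une erreur d'ordre $\varepsilon/R$ près, la restriction à un certain compact de la mesure de Haar sur $Z_{\Gamma}(\alpha)\backslash L_{\alpha^{\pm 1}}$ (multipliée par un poids indépendant du point), et que la condition~(R) identifie précisément ces deux densités via la transformation $\varphi_1\circ\inv$. D'abord, je paramètrerais $\Triconn_{\varepsilon,R}^{\alpha}$ par les triplets $(g,g',\beta)\in G\times G\times\Gamma$ tels que $W_{\varepsilon,R}(g,g')\,W_{\varepsilon,R}(\rot^2(g),\rot\alpha^{-1}g')\,W_{\varepsilon,R}(\rot(g),\rot^2\beta g')>0$, modulo l'action de $Z_{\Gamma}(\alpha)$ sur $(g,g')$ et de conjugaison sur~$\beta$. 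Le lemme~\ref{lem:presque-parf-U-alpha}.\eqref{item:N-delta} garantit que, pour $R$ grand par rapport à~$\varepsilon$, la coordonnée $g$ de tels triplets reste dans un $O(\varepsilon/R)$-voisinage de $L_{\alpha}$, ce qui permet de pousser proprement par l'application pied $\Psi_{\alpha}^{\mathrm{tri}}$.

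Ensuite, pour un borélien $A\subset Z_{\Gamma}(\alpha)\backslash L_{\alpha}$, j'écrirais la masse $m_{\varepsilon,R}^{\alpha}(\Psi_{\alpha}^{\mathrm{tri}\,-1}(A))$ sous la forme
\begin{equation*}
\int_{[g]\in Z_{\Gamma}(\alpha)\backslash G}\mathbf{1}_{[\Psi_{\alpha}(g)]\in A}\int_{g'\in G}W_{\varepsilon,R}(g,g')\,W_{\varepsilon,R}(\rot^2(g),\rot\alpha^{-1}g')\,w_{\varepsilon,R}\bigl([\rot(g)],[\rot^2 g']\bigr)\,\dd g'\,\dd[g],
\end{equation*}
où la somme sur $\beta\in\Gamma$ produit précisément le facteur $w_{\varepsilon,R}$ de~\eqref{eqn:w-eps-R}. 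Le lemme~\ref{lem:w-presque-1} donne $w_{\varepsilon,R}=1+O(\varepsilon/R^{2})$, ce qui permet de factoriser ce terme modulo une erreur négligeable. L'intégrale restante ne dépend, à $O(\varepsilon/R)$ près, que du pied $[\Psi_{\alpha}(g)]\in Z_{\Gamma}(\alpha)\backslash L_{\alpha}$ : en effet, les facteurs $W_{\varepsilon,R}$ sont invariants par l'action à gauche de $G$ et la fibre du pied est paramétrée par la direction transverse à $L_{\alpha}$ dans $\mathcal{U}_{\alpha}$, sur laquelle ces facteurs varient peu d'après le lemme~\ref{lem:realise-proximal}. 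Ceci donne $m_{\varepsilon,R}^{\alpha}=F_{R}^{\alpha}\cdot\mathrm{Haar}_{Z_{\Gamma}(\alpha)\backslash L_{\alpha}}+O(\varepsilon/R)$, où $F_{R}^{\alpha}$ est une constante explicite ne dépendant que de $\alpha$ et de~$R$.

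En procédant de manière analogue pour $m_{\varepsilon,-R}^{\alpha}$ sur $Z_{\Gamma}(\alpha)\backslash L_{\alpha^{-1}}$, je montrerais que cette mesure est également proche d'un multiple $F_{-R}^{\alpha}$ de la mesure de Haar, puis je transporterais par $\varphi_1\circ\inv$ sur $Z_{\Gamma}(\alpha)\backslash L_{\alpha}$. L'étape décisive, qui utilise la condition~(R), consiste à vérifier l'égalité $F_{R}^{\alpha}=F_{-R}^{\alpha}$. L'élément $j\in G$ fourni par la condition~(R), d'ordre deux, central dans $Z_G(\mathsf{h})$, et envoyant $\underline{\tau}(0,\infty,-1)$ sur $\underline{\tau}(0,\infty,1)$, induit par conjugaison une bijection entre $\ssl(2)$-triplets associés à $\tau$ et ceux correspondant à $-\tau$; il commute avec $\varphi_t$ (centralité dans $Z_G(\mathsf{h})$) et préserve $L_{\alpha}$. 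Grâce à cette symétrie et à l'invariance de la mesure de Haar par multiplication, on obtient $F_{R}^{\alpha}=F_{-R}^{\alpha}$ et donc $m_{\varepsilon,R}^{\alpha}(Z_{\Gamma}(\alpha)\backslash L_{\alpha})=m_{\varepsilon,-R}^{\alpha}(Z_{\Gamma}(\alpha)\backslash L_{\alpha^{-1}})$, ce qui prouve l'égalité des masses totales et permet de quantifier \eqref{eqn:mes-LP-proches}.

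Pour finir, étant donné un borélien $A\subset Z_{\Gamma}(\alpha)\backslash L_{\alpha}$, l'estimée pointwise de densité entraîne $m_{\varepsilon,R}^{\alpha}(A)\leq (\varphi_1\circ\inv)_*m_{\varepsilon,-R}^{\alpha}(A)+O(\varepsilon/R)$, et l'injection de ce terme d'erreur dans un $(C\varepsilon/R)$-voisinage par la continuité lipschitzienne du pied (remarque~\ref{rem:pied-alpha-0-alpha}) donne directement $d_{LP}\bigl(m_{\varepsilon,R}^{\alpha},(\varphi_1\circ\inv)_*m_{\varepsilon,-R}^{\alpha}\bigr)\leq C\varepsilon/R$. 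L'obstacle principal sera l'étape d'identification des densités via la condition~(R) : il faudra soigneusement suivre le transport des triplets sous $j$, en tenant compte du fait que $j$ n'agit pas exactement par $\varphi_1\circ\inv$ mais en diffère par un élément de $A\cdot Z_K(\mathfrak{a})$ qu'il faut absorber dans l'action à gauche de $Z_G(\alpha_0)$ sur $L_{\alpha}$ (voir la remarque~\ref{rem:pied-alpha-0-alpha}).
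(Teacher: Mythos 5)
La premi\`ere moiti\'e de votre argument --- \'ecrire la masse de $\mu^{\alpha}_{\varepsilon,R}$ en sommant sur $\beta\in\Gamma$ pour faire appara\^itre le facteur $w_{\varepsilon,R}([\rot(g)],[\rot^2(g')])$, puis factoriser celui-ci \`a $O(\varepsilon/R^2)$ pr\`es gr\^ace au lemme~\ref{lem:w-presque-1} --- co\"incide exactement avec la proposition~\ref{prop:mesures-proches-I} du texte. Mais la suite comporte une lacune r\'eelle. La densit\'e restante n'est \emph{pas} une constante $F_R^{\alpha}$ sur $Z_{\Gamma}(\alpha)\backslash L_{\alpha}$ : comme $W_{\varepsilon,R}(x,y)$ ne d\'epend que de $x^{-1}y$, l'int\'egrale $\int_{g'} W_{\varepsilon,R}(g,g')\,W_{\varepsilon,R}(\rot^2(g),\rot(\alpha^{-1}g'))\,\dd g'$ est une fonction de $g^{-1}\alpha g$ seulement ; pour $g=g_{\alpha}ak$ parcourant $L_{\alpha}=g_{\alpha}AZ_K(\mathfrak{a})$, on a $g^{-1}\alpha g=\exp(\lambda(\alpha))\,k^{-1}k_{\alpha}k$, qui varie avec~$k$ d\`es que la composante $k_{\alpha}\in Z_K(\mathfrak{a})$ de $\alpha$ n'est pas centrale dans $Z_K(\mathfrak{a})$ (ce qui se produit par exemple pour $\SO(2p-1,1)$ ou $\Sp(p,q)$, \cf exemple~\ref{ex:SO(n,1)-centralisateur}). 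On n'obtient donc qu'une mesure de r\'ef\'erence invariante par $Z_G(Z_{\Gamma}(\alpha))$, groupe qui n'agit pas transitivement sur $L_{\alpha}$ : c'est pr\'ecis\'ement la mesure $n^{\alpha}_{\varepsilon,R}$ du texte, et la raison d'\^etre de la moyenne sur le tore compact $\mathbb{T}_{\alpha}$ (lemme~\ref{lem:refl-C-alpha} et lemme~\ref{lem:LP-tore}).

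Le second point non justifi\'e est le passage final \`a la distance de L\'evy--Prokhorov. Une in\'egalit\'e du type $m_{\varepsilon,R}^{\alpha}(A)\leq (\varphi_1\circ\inv)_*m_{\varepsilon,-R}^{\alpha}(A)+O(\varepsilon/R)$ ne donne \emph{pas} $d_{LP}\leq C\varepsilon/R$ : la d\'efinition~\ref{def:LP} n'autorise aucun terme additif, et l'exc\`es de masse peut tr\`es bien \^etre localis\'e loin du support de l'autre mesure. La conversion correcte part d'une proximit\'e \emph{multiplicative} des densit\'es et co\^ute un facteur \'egal au diam\`etre des orbites sur lesquelles on moyenne (lemme~\ref{lem:LP-tore}) ; or ce diam\`etre est ici d'ordre~$R$, puisque la direction $\langle\alpha\rangle\backslash\exp(\R\mathsf{h})$ de $Z_{\Gamma}(\alpha)\backslash L_{\alpha}$ est un cercle de longueur $\sim 2R$. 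Avec votre erreur de densit\'e en $O(\varepsilon/R)$ on n'obtiendrait donc que $d_{LP}=O(\varepsilon)$ ; il faut l'estim\'ee en $\varepsilon/R^2$ issue du m\'elange pour conclure en $C\varepsilon/R$. Il manque enfin l'ingr\'edient dynamique : $\varphi_1\circ\refl$ ne pr\'eserve pas a priori la mesure de r\'ef\'erence, et doit \^etre approch\'e par un \'el\'ement $\alpha_1\circ\refl$ du tore $\mathbb{T}_{\alpha}$ (proposition~\ref{prop:alpha-t-presque-conj}), l'erreur \'etant contr\^ol\'ee par la remarque~\ref{rem:LP-bouge-peu}. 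Votre usage de la condition~(R) est en revanche dans le bon esprit ; notez que l'\'egalit\'e des masses totales d\'ecoule directement de $m^{\alpha}_{\varepsilon,-R}=I_*\,m^{\alpha}_{\varepsilon,R}$ (proposition~\ref{prop:mesures-I}), sans qu'il soit besoin d'identifier des constantes $F^{\alpha}_{\pm R}$.
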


\begin{proof}[D\'emonstration du fait que la proposition~\ref{prop:mesures-LP-proches} implique la proposition~\ref{prop:mu-convient}. --- \emph{Rappelons}]
\noindent
(\cf  paragraphe~\ref{subsec:Triconn}) que $\Triconn_{\varepsilon,\pm R}$ est l'union disjointe de ses sous-ensembles $\Triconn_{\varepsilon,\pm R}^{\alpha}$ o\`u $[\alpha]$ parcourt l'ensemble fini $\mathcal{A}_{\varepsilon,\pm R}$.
Comme toute mesure a m\^eme masse totale que ses pouss\'es en avant, on a
$$\left\{ \begin{array}{l}
\mu_{\varepsilon,R}(\Triconn_{\varepsilon,R}) = \sum_{[\alpha]} \mu_{\varepsilon,R}^{\alpha}(\Triconn_{\varepsilon,R}^{\alpha}) = \sum_{[\alpha]} m_{\varepsilon,R}^{\alpha}(Z_{\Gamma}(\alpha)\backslash L_{\alpha}),\\
\mu_{\varepsilon,-R}(\Triconn_{\varepsilon,-R}) = \sum_{[\alpha]} m_{\varepsilon,-R}^{\alpha}(Z_{\Gamma}(\alpha)\backslash L_{\alpha^{-1}}) = \sum_{[\alpha]} ((\varphi_1\circ\inv)_*\,m_{\varepsilon,-R}^{\alpha})(Z_{\Gamma}(\alpha)\backslash L_{\alpha}).
\end{array}\right.$$
Ainsi, le fait que les mesures $m_{\varepsilon,R}^{\alpha}$ et $(\varphi_1\circ\inv)_*\,m_{\varepsilon,-R}^{\alpha}$ sur $Z_{\Gamma}(\alpha)\backslash L_{\alpha}$ aient m\^eme masse totale comme dans la proposition~\ref{prop:mesures-LP-proches} implique l'\'egalit\'e $\mu_{\varepsilon,R}(\Triconn_{\varepsilon,R}) = \mu_{\varepsilon,-R}(\Triconn_{\varepsilon,-R})$ de la proposition~\ref{prop:mu-convient}.

Soit $C>0$ la constante de la proposition~\ref{prop:mesures-LP-proches}.
Pour tout sous-ensemble mesurable $A$ de $\Triconn_{\varepsilon,R}$, l'ensemble des \'el\'ements de $\Triconn_{\varepsilon,-R}$ qui sont $(C\varepsilon/R)$-bien recoll\'es \`a au moins un \'el\'ement de~$A$ est l'union disjointe sur $[\alpha]\in\mathcal{A}_{\varepsilon,R}$ des ensembles
$$B_{\alpha} := (\varphi_1\circ\inv\circ\Psi_{\alpha^{-1}}^{\mathrm{tri}})^{-1}\big(\mathcal{V}_{C\varepsilon/R}\big(\Psi_{\alpha}^{\mathrm{tri}}\big(A\cap\Triconn_{\varepsilon,R}^{\alpha}\big)\big)\big) \subset \Triconn_{\varepsilon,-R}^{\alpha},$$
o\`u $\mathcal{V}_{C\varepsilon/R}(\cdot)$ d\'esigne le $(C\varepsilon/R)$-voisinage uniforme dans $Z_{\Gamma}(\alpha)\backslash L_{\alpha}$.
Par d\'efinition \eqref{eqn:mesures-m} de $m_{\varepsilon,-R}^{\alpha}$, on a
$$\mu_{\varepsilon,-R}^{\alpha}(B_{\alpha}) = \big((\varphi_1\circ\inv)_*\,m_{\varepsilon,-R}^{\alpha}\big)\big(\mathcal{V}_{C\varepsilon/R}\big(\Psi_{\alpha}^{\mathrm{tri}}\big(A\cap\Triconn_{\varepsilon,R}^{\alpha}\big)\big)\big)$$
qui, par la proposition~\ref{prop:mesures-LP-proches}, est sup\'erieur ou \'egal \`a $m_{\varepsilon,R}^{\alpha}( \Psi_{\alpha}^{\mathrm{tri}}(A\cap\nolinebreak\Triconn_{\varepsilon,R}^{\alpha}))$ lorsque $R$ est assez grand par rapport \`a~$\varepsilon$.
Par d\'efinition \eqref{eqn:mesures-m} de $m_{\varepsilon,R}^{\alpha}$, on a donc\linebreak $\mu_{\varepsilon,-R}^{\alpha}(B_{\alpha}) \geq \mu_{\varepsilon,R}^{\alpha}(A\cap\Triconn_{\varepsilon,R}^{\alpha})$.
Ainsi, la $(\mu_{\varepsilon,-R})$-mesure de l'ensemble des \'el\'ements de $\Triconn_{\varepsilon,-R}$ qui sont $(C\varepsilon/R)$-bien recoll\'es \`a au moins un \'el\'ement de~$A$ est égale à $\sum_{[\alpha]} \mu_{\varepsilon,-R}^{\alpha}(B_{\alpha}) \geq \sum_{[\alpha]} \mu_{\varepsilon,R}^{\alpha}(A\cap\Triconn_{\varepsilon,R}^{\alpha}) = \mu_{\varepsilon,R}(A)$.
\end{proof}

\subsection{Utilisation de la condition de retournement~(R)} \label{subsec:utiliser-hyp-retournement}

Nous avons vu au paragraphe \ref{subsec:reformulation-LP} précédent que pour démontrer la proposition~\ref{prop:pant-dans-reseau-mesures}, il suffit de démontrer la proposition~\ref{prop:mesures-LP-proches}.
Expliquons à présent comment la condition~(R) du paragraphe~\ref{subsubsec:cond-R} intervient dans la d\'emonstration de la proposition~\ref{prop:mesures-LP-proches}.

La condition~(R) affirme que $Z_G(\mathsf{h})$ contient, dans sa composante neutre, un \'el\'ement central $j$ d'ordre deux tel qui échange $\underline{\tau}(0,\infty,-1)$ et $\underline{\tau}(0,\infty,1)$.
Notons $\refl : G\to G$ la multiplication \`a droite par~$j$ ; elle commute \`a $\inv$.
Posons
\begin{equation} \label{eqn:def-I}
I := \inv\circ\refl : G \longrightarrow G.
\end{equation}
Les involutions $\refl$ et~$I$ de~$G$ passent au quotient en des involutions de l'espace $G/Z^{\tau}$ des $\tau$-triangles de $G/P_{\tau}$ du paragraphe~\ref{subsec:tau-P1-G/P} :
\begin{itemize}
  \item $\refl$ (\og r\'eflexion\fg) envoie $g\cdot\underline{\tau}(0,\infty,-1)$ sur $g\cdot\underline{\tau}(0,\infty,1)$,
  \item $I$ envoie $g\cdot\underline{\tau}(0,\infty,-1)$ sur $g\cdot\underline{\tau}(\infty,0,-1)$
\end{itemize}
(\cf figure~\ref{fig:refl-I}).
(Par exemple, si $\tau$ est la restriction \`a $\PSL(2,\R)$ d'un plongement $\tau_{\C} : \PSL(2,\C)\hookrightarrow G$ comme \`a la remarque~\ref{rem:cond-R-satisf-ou-pas}.\eqref{item:cond-R-oui}, alors $\refl$ et~$I$ correspondent \`a la multiplication \`a droite respectivement par $\tau_{\C}((\begin{smallmatrix} i & 0\\ 0 & -i\end{smallmatrix}))$ et $\tau_{\C}((\begin{smallmatrix} 0 & i\\ i & 0\end{smallmatrix}))$.)

\begin{figure}[h!]
\centering
\labellist
\small\hair 2pt
\pinlabel {$\underline{\tau}(-1)$} [u] at -30 190
\pinlabel {$\underline{\tau}(0)$} [u] at 165 115
\pinlabel {$\underline{\tau}(\infty)$} [u] at 205 255
\pinlabel {$\underline{\tau}(1)$} [u] at 395 190
\pinlabel {\textcolor{orange}{$\left(\!\tau_{\C}\!\begin{pmatrix} e^{i\theta} & 0\\ 0 & e^{-i\theta}\end{pmatrix}\!\right)_{\theta\in\R/\pi\Z}$}} [u] at 210 318
\endlabellist
\includegraphics[width=6cm]{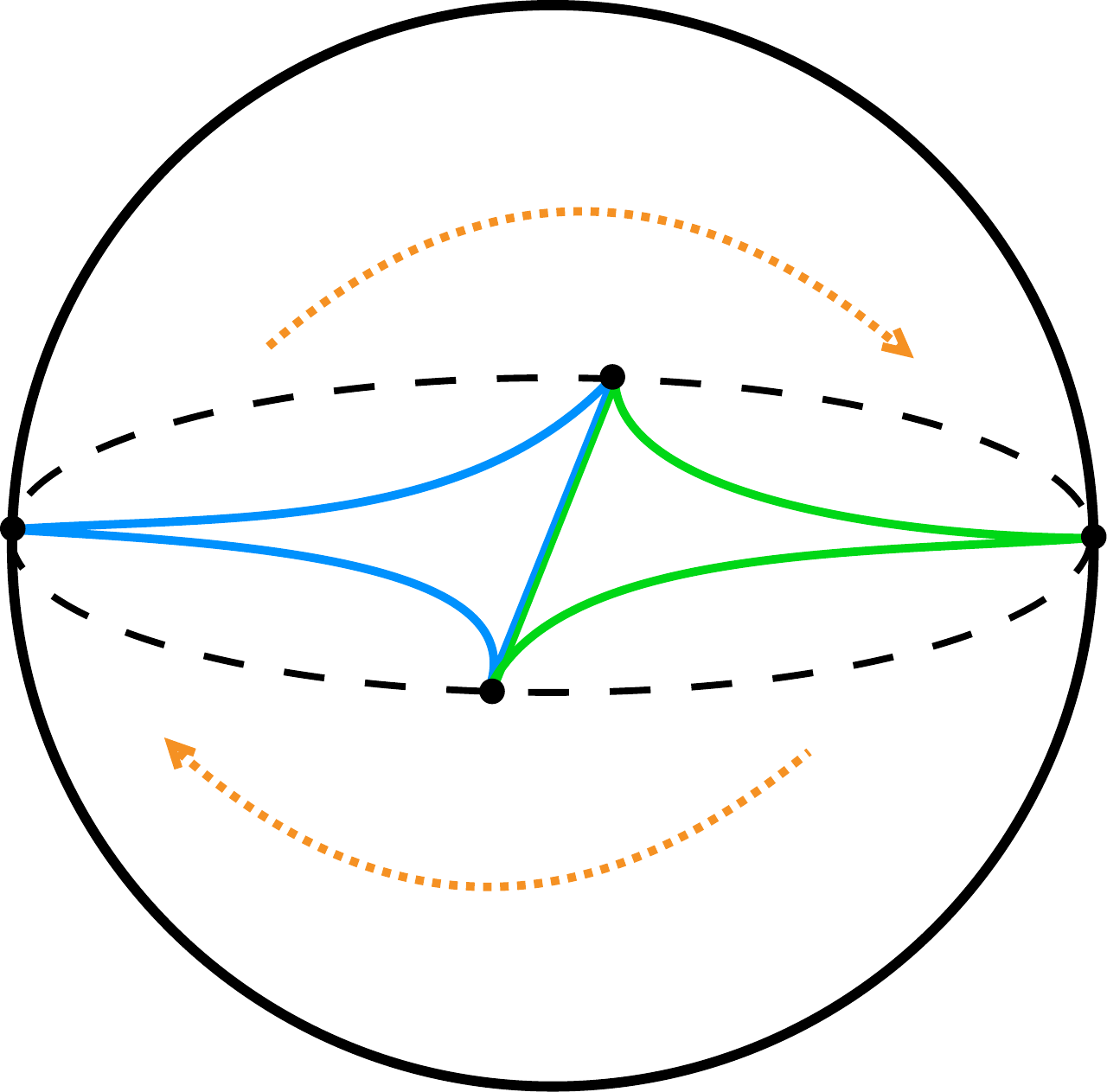}
\vspace{-0.2cm}
\caption{La transformation $\refl$ \'echange les $\tau$-triangles $\underline{\tau}(0,\infty,-1)$ et $\underline{\tau}(0,\infty,1)$ de $G/P_{\tau}$ ; la transformation~$I$ \'echange les $\tau$-triangles $\underline{\tau}(0,\infty,-1)$ et $\underline{\tau}(\infty,0,-1)$ de $G/P_{\tau}$. Ici $G=\PSL(2,\C)$ et $G/P_{\tau} = \partial\HH^3$. La condition de retournement~(R) est satisfaite : le tore compact $\big(\begin{smallmatrix} e^{i\theta} & 0\\ 0 & e^{-i\theta}\end{smallmatrix}\big)_{\theta\in\R/\pi\Z}$ permet de \og retourner contin\^ument\fg\ $\underline{\tau}(0,\infty,-1)$ en $\underline{\tau}(0,\infty,1)$ en fixant $\underline{\tau}(0)$ et $\underline{\tau}(\infty)$.}
\label{fig:refl-I}
\end{figure}

Notons que $\rot$ et $I$ engendrent un groupe de transformations de~$G$ qui est relativement compact, car il préserve l'ensemble $\{\underline{\tau}(0),\underline{\tau}(\infty),\underline{\tau}(-1)\}$ et le sous-groupe $Z^{\tau}$ de~$G$ qui fixe ces trois points est compact.
Quitte \`a remplacer la distance riemannienne $G$-invariante \`a gauche $d_G$ par sa moyenne par le groupe engendré par $\rot$ et~$I$, \emph{on supposera d\'esormais que $d_G$ est invariante par $\rot$ et par~$I$}.
Pour tout élément proximal $\alpha\in G$, l'involution $\refl$ de~$G$ (\resp le flot $(\varphi_t)_{t\in\R}$ du paragraphe~\ref{subsec:rot-inv-phi}) préserve l'ensemble~$L_{\alpha}$, et induit une involution (\resp un flot) sur $\Lambda\backslash L_{\alpha}$ pour tout sous-groupe $\Lambda$ de $Z_G(\alpha)$.

\subsubsection{Existence de l'\'el\'ement~$j$} \label{subsubsec:R-utile-1}

L'existence d'un \'el\'ement $j\in Z_G(\mathsf{h})$ d'ordre deux qui échange $\underline{\tau}(0,\infty,-1)$ et $\underline{\tau}(0,\infty,1)$, d\'efinissant des involutions $\refl : G\to G$ (multiplication \`a droite par~$j$) et $I := \inv\circ\refl : G\to G$ comme ci-dessus, permet d'oublier les mesures $m_{\varepsilon,-R}^{\alpha}$ pour ne plus travailler qu'avec les mesures $m_{\varepsilon,R}^{\alpha}$ de \eqref{eqn:mesures-m}.

\begin{proposition} \label{prop:mesures-I}
En supposant la condition~(R) v\'erifi\'ee, soient $\varepsilon,R>0$.
\begin{enumerate}
  \item\label{item:repr-pmR-parf} Une repr\'esentation $\rho : \pi_1(\Pi)\to G$ est $(\varepsilon,R)$-presque parfaite de donnée géométrique $(\rho(a), \rho(b), \rho(c), g, g')$ si et seulement si $\rho$ est $(\varepsilon,-R)$-presque parfaite de donnée géométrique $(\rho(a), \rho(b), \rho(c), I(g), I(g'))$.
  En particulier, $\mathcal{A}_{\varepsilon,R} = \mathcal{A}_{\varepsilon,-R}$.
  \item\label{item:mesures-I} Pour $R$ assez grand par rapport \`a~$\varepsilon$ comme au paragraphe~\ref{subsec:reformulation-LP} et pour $[\alpha]\in\mathcal{A}_{\varepsilon,R}$, on a $m_{\varepsilon,-R}^{\alpha} = I_* \, m_{\varepsilon,R}^{\alpha}$.
  En particulier, $m_{\varepsilon,R}^{\alpha}$ et $m_{\varepsilon,-R}^{\alpha}$ ont m\^eme masse totale.
\end{enumerate}
\end{proposition}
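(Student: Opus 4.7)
The plan is to derive part~(1) from a direct algebraic verification, and then to deduce part~(2) formally by transporting the resulting bijection through the foot maps $\Psi_{\alpha^{\pm 1}}^{\mathrm{tri}}$. First, I would establish two intertwining identities in $G$ (all relevant transformations being right translations by fixed elements):
\begin{equation*}
I \circ \rot = \rot^{-1} \circ I, \qquad I \circ (\rot \circ \inv \circ \varphi_R) \circ I = \rot^{-1} \circ \inv \circ \varphi_{-R}.
\end{equation*}
In terms of the explicit generators of $\tau(\PSL(2,\R))$ computed in the proof of Lemma~\ref{lem:realise-proximal}, both identities reduce to showing that conjugation by $j' = j \cdot \tau\big(\big(\begin{smallmatrix} 0 & 1 \\ -1 & 0 \end{smallmatrix}\big)\big)$ sends $\tau\big(\big(\begin{smallmatrix} 1 & 1 \\ -1 & 0 \end{smallmatrix}\big)\big)$ to its inverse and $\tau\big(\big(\begin{smallmatrix} e^{R/2} & 0 \\ e^{-R/2} & e^{-R/2}\end{smallmatrix}\big)\big)$ to $\tau\big(\big(\begin{smallmatrix} e^{-R/2} & e^{-R/2} \\ 0 & e^{R/2}\end{smallmatrix}\big)\big)$. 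From condition~(R), $j$ centralizes $\exp(R\mathsf{h})$ and commutes with $\tau\big(\big(\begin{smallmatrix} 0 & 1 \\ -1 & 0 \end{smallmatrix}\big)\big)$ (so $\inv$ and $\refl$ commute, as noted in the text); using the $\ssl_2$-triplet reformulation of~(R) recorded at the end of the excerpt, $\Ad(j)(\mathsf{e}+\mathsf{f}) = -(\mathsf{e}+\mathsf{f})$, and since $\Ad(j)$ preserves the $\pm 2$-eigenspaces of $\ad(\mathsf{h})$ separately, we obtain $\Ad(j)(\mathsf{e}) = -\mathsf{e}$ and $\Ad(j)(\mathsf{f}) = -\mathsf{f}$. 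Equivalently, $j$-conjugation acts on $\tau(\PSL(2,\R))$ as the orientation-reversing outer automorphism induced by $\big(\begin{smallmatrix} 1 & 0 \\ 0 & -1 \end{smallmatrix}\big) \in \PGL(2,\R)$; the two identities then follow by direct $2\times 2$ matrix computation.

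With these identities in hand, and with $d_G$ averaged also by $I$ so that it becomes $\rot$- and $I$-invariant (using that $\rot$ and $I$ together generate a finite group of transformations of $G$), each of the four inequalities in the definition of ``$\alpha$ est $(\varepsilon, R)$-presque r\'ealis\'e par $(g, g')$'' transforms, via the substitution $h \leftrightarrow I(h)$ and $h' \leftrightarrow I(h')$, into the four inequalities defining ``$\alpha$ est $(\varepsilon, -R)$-presque r\'ealis\'e par $(I(g), I(g'))$''. Applying this to each of the three lines of~\eqref{eqn:condRpresqueparf-realise}, and using $I \circ \rot = \rot^{-1} \circ I$ to handle the $\rot$- and $\rot^2$-twists of $g$ and $g'$ appearing there, gives the equivalence in~(1); the equality $\mathcal{A}_{\varepsilon, R} = \mathcal{A}_{\varepsilon, -R}$ is then immediate, the representation $\rho$ being unchanged.

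For part~(2), the equivalence in~(1) yields a bijection $\Phi : \Triconn_{\varepsilon, R}^\alpha \to \Triconn_{\varepsilon, -R}^\alpha$ sending the triconnected pair associated by Lemma~\ref{lem:triconn->geom} to a representation with $(+R)$-data $(g, g')$ to the pair associated by the same lemma to the same representation viewed as $(-R)$-almost perfect with data $(I(g), I(g'))$. An expansion of the weight functions $W^{\mathrm{tri}}_{\varepsilon, \pm R}$ of~\eqref{eqn:W-epsilon-R-tri} in terms of their factors~\eqref{eqn:W-epsilon-R}, using the intertwining identities together with the bi-invariance of Haar measure under right translation, gives $\Phi_* \mu_{\varepsilon, R}^\alpha = \mu_{\varepsilon, -R}^\alpha$. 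Next, $I$ sends $L_\alpha$ onto $L_{\alpha^{-1}}$ because $\inv$ swaps $\alpha^{\attract}$ and $\alpha^{\repuls}$ while $\refl$ fixes the pair $(\alpha^{\repuls}, \alpha^{\attract})$ (by $G$-equivariance this reduces to the case $(\underline{\tau}(0), \underline{\tau}(\infty))$, which is pointwise fixed by $j \in Z_G(\mathsf{h})$). Since $d_G$ is now $I$-invariant, the orthogonal-projection foot map satisfies $\Psi_{\alpha^{-1}} \circ I = I \circ \Psi_\alpha$ on $\mathcal{U}_\alpha$, hence $\Psi_{\alpha^{-1}}^{\mathrm{tri}} \circ \Phi = I \circ \Psi_\alpha^{\mathrm{tri}}$. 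Pushing forward via~\eqref{eqn:mesures-m} then produces $m_{\varepsilon, -R}^\alpha = I_* m_{\varepsilon, R}^\alpha$, and the total mass equality is automatic as $I$ is a bijection.

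The main obstacle is the first step: extracting the intertwining identities from condition~(R). This is exactly the point where one uses, in full strength, that $j$ lies in both the center and the neutral component of $Z_G(\mathsf{h})$ together with the orientation-flipping condition on $\underline{\tau}(0,\infty,\pm 1)$, and everything else in the argument is formal once these identities are in place.
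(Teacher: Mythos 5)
Votre proposition est correcte et suit pour l'essentiel la m\^eme route que la d\'emonstration du texte : les ingr\'edients cl\'es sont les identit\'es d'entrelacement $I\circ\rot=\rot^{-1}\circ I$ et $I\circ(\rot\circ\inv\circ\varphi_{R})=(\rot^{-1}\circ\inv\circ\varphi_{-R})\circ I$, l'invariance de $d_G$ par~$I$, et l'\'equivariance $\Psi_{\alpha^{-1}}\circ I=I\circ\Psi_{\alpha}$, exactement comme chez \textcite{klm18} (qui formule le transport des mesures via $W^{\mathrm{tri}}_{\varepsilon,R}\circ I^{\mathrm{tri}}=W^{\mathrm{tri}}_{\varepsilon,-R}$ plut\^ot que via les donn\'ees g\'eom\'etriques, ce qui n'est qu'un reconditionnement). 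Votre d\'erivation de $\Ad(j)\mathsf{e}=-\mathsf{e}$ et $\Ad(j)\mathsf{f}=-\mathsf{f}$ \`a partir de la condition~(R) fournit correctement la v\'erification que le texte laisse implicite ; seule petite inexactitude, le groupe engendr\'e par $\rot$ et~$I$ n'est en g\'en\'eral que relativement compact et non fini, mais la moyenne de $d_G$ sur son adh\'erence convient tout aussi bien.
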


\begin{proof}
\eqref{item:repr-pmR-parf} Comme $d_G$ est invariante par $I = \inv\circ\refl$, l'application cloche $\chi_{\varepsilon/R} : G\to\R^+$ du paragraphe~\ref{subsec:poids} v\'erifie $\chi_{\varepsilon/R}(I(x)^{-1} I(g)) = \chi_{\varepsilon/R}(x^{-1} g)$ pour tous $x,g\in G$.
De plus, on a $\varphi_R\circ\inv = \inv\circ\varphi_{-R}$ pour tout $R\in\R$ et $I\circ\rot = \rot^2\circ I$, et $\varphi_R$ commute avec $\refl$.
On en d\'eduit aisément, par un changement de variable, que $W_{\varepsilon,R}(I(x),I(y)) = W_{\varepsilon,-R}(x,y)$ pour tous $x,y\in G$, puis que
\begin{equation} \label{eqn:W-I}
W^{\mathrm{tri}}_{\varepsilon,R}\circ I^{\mathrm{tri}} = W^{\mathrm{tri}}_{\varepsilon,-R},
\end{equation}
o\`u $I^{\mathrm{tri}} : \Triconn\to\Triconn$ est l'involution donn\'ee par
\begin{equation} \label{eqn:def-I-tri}
I^{\mathrm{tri}}(x,y_0,y_1,y_2) = \big(I(x),\rot\circ I(y_1),\rot\circ I(y_0),\rot\circ I(y_2)\big).
\end{equation}
On conclut en appliquant le lemme~\ref{lem:triconn->pant}.

\eqref{item:mesures-I} D'après \eqref{def:mu-epsilon-R} et \eqref{eqn:W-I}, on a $I^{\mathrm{tri}}_*\mu_{\varepsilon,R}^{\alpha} = \mu_{\varepsilon,-R}^{\alpha}$.
D'autre part, on a $I(L_{\alpha}) = L_{\alpha^{-1}}$ et, comme $\Psi_{\alpha}$ est une projection orthogonale (d\'efinition~\ref{def:pied}) pour la m\'etrique riemannienne associ\'ee \`a~$d_G$, qui est par hypoth\`ese invariante par~$I$, on a $I(\mathcal{U}_{\alpha}) = \mathcal{U}_{\alpha^{-1}}$ et $I\circ\Psi_{\alpha} = \Psi_{\alpha^{-1}}\circ I : \mathcal{U}_{\alpha}\to L_{\alpha^{-1}}$.
On en d\'eduit $I\circ\Psi_{\alpha}^{\mathrm{tri}} = \Psi_{\alpha^{-1}}^{\mathrm{tri}}\circ I^{\mathrm{tri}}$, puis
$$m_{\varepsilon,-R}^{\alpha} = (\Psi_{\alpha^{-1}}^{\mathrm{tri}})_* \mu_{\varepsilon,-R}^{\alpha} = (\Psi_{\alpha^{-1}}^{\mathrm{tri}})_* I^{\mathrm{tri}}_* \mu_{\varepsilon,R}^{\alpha} = I_* (\Psi_{\alpha}^{\mathrm{tri}})_* \mu_{\varepsilon,R}^{\alpha} = I_* \, m_{\varepsilon,R}^{\alpha}. \qedhere$$
\end{proof}

Pour d\'emontrer la proposition~\ref{prop:mesures-LP-proches}, il suffit donc de montrer que la mesure $m_{\varepsilon,R}^{\alpha}$ sur $Z_{\Gamma}(\alpha)\backslash L_{\alpha}$ est proche, pour la distance de L\'evy--Prokhorov, de son pouss\'e en avant par $\varphi_1\circ\inv\circ I = \varphi_1\circ\refl$.

\subsubsection{Tores compacts} \label{subsubsec:R-utile-2}

La condition~(R) requiert que l'\'el\'ement $j$ soit central dans $Z_G(\mathsf{h})$ et appartienne \`a la composante neutre de $Z_G(\mathsf{h})$.
Ceci permet d'obtenir une action d'un certain tore compact $\mathbb{S}_{\alpha}$, ce qui est utile de manière générale pour contrôler la distance de Lévy--Prokhorov (\cf lemme~\ref{lem:LP-tore} ci-dessous).

\begin{lemme} \label{lem:refl-C-alpha}
Il existe un entier $N\geq 1$, ne dépendant que de~$G$, avec la propriété suivante : en supposant la condition~(R) v\'erifi\'ee, pour tout réseau cocompact irréductible sans torsion $\Gamma$ de~$G$ et tout élément proximal $\alpha\in\Gamma$, il existe un sous-groupe $\Lambda_{\alpha}$ d'indice $\leq N$ de $Z_{\Gamma}(\alpha)$ et un tore compact $\mathbb{S}_{\alpha}$ de $Z_G(\Lambda_{\alpha})_0$ tels que l'involution $\refl : L_{\alpha}\to L_{\alpha}$ corresponde \`a la multiplication \`a gauche par un \'el\'ement de~$\mathbb{S}_{\alpha}$.
\end{lemme}

Notons que $Z_G(\Lambda_{\alpha})_0$ est bien contenu dans le groupe $\mathrm{stab}_G(\alpha^{\repuls},\alpha^{\attract})$ qui agit \`a gauche sur~$L_{\alpha}$.
En effet, le sous-groupe d'indice fini $\Lambda_{\alpha}$ de $Z_{\Gamma}(\alpha)$ contient une puissance non nulle de~$\alpha$.

\begin{proof}
Soient $\Gamma$ un réseau cocompact irréductible sans torsion de~$G$ et $\alpha\in\Gamma$ un élément proximal.
Comme \`a la remarque~\ref{rem:pied-alpha-0-alpha}, soit $g_{\alpha}\in G$ envoyant $(\underline{\tau}(0),\underline{\tau}(\infty))$ sur $(\alpha^{\repuls},\alpha^{\attract})$.
On a $L_{\alpha} = g_{\alpha} Z_G(\mathsf{h})$ et $Z_G(\alpha) \subset g_{\alpha} Z_G(\mathsf{h}) g_{\alpha}^{-1}$, où $Z_G(\mathsf{h}) = A Z_K(\mathsf{h})$.
Comme $\Gamma$ est sans torsion, la projection de $Z_{\Gamma}(\alpha)$ sur $g_{\alpha} A g_{\alpha}^{-1}$ est injective, donc $Z_{\Gamma}(\alpha)$ est abélien.
La projection de $Z_{\Gamma}(\alpha)$ sur $g_{\alpha} Z_K(\mathsf{h}) g_{\alpha}^{-1}$ est contenue dans un sous-groupe abélien maximal $\mathbb{S}'_{\alpha}$ de $g_{\alpha} Z_K(\mathsf{h}) g_{\alpha}^{-1}$.
Or, il existe $N\geq 1$, ne dépendant que de~$G$, tel que $\mathbb{S}'_{\alpha}$ admette un sous-groupe d'indice $\leq N$ qui est un tore maximal $\mathbb{S}_{\alpha}$ de $g_{\alpha} Z_K(\mathsf{h})_0 g_{\alpha}^{-1}$ (\cf \cite{mil64}).
On note $\Lambda_{\alpha}$ l'intersection de $Z_{\Gamma}(\alpha)$ avec l'image réciproque de $\mathbb{S}_{\alpha}$ par la projection sur $g_{\alpha} Z_K(\mathsf{h}) g_{\alpha}^{-1}$ : c'est un sous-groupe d'indice $\leq N$ de $Z_{\Gamma}(\alpha)$.
Par construction, $Z_G(\Lambda_{\alpha})_0$ contient le tore maximal $\mathbb{S}_{\alpha}$ de $g_{\alpha} Z_K(\mathsf{h}) g_{\alpha}^{-1}$.

L'involution $\refl : L_{\alpha} \to L_{\alpha}$ correspond \`a la multiplication \`a droite par $j\in G$, qui appartient par d\'efinition (\cf condition~(R), paragraphe~\ref{subsubsec:cond-R}) au centre de $Z_G(\mathsf{h})$, et même (\cf paragraphe~\ref{subsubsec:cond-R-ex}) au centre de $Z_K(\mathsf{h})_0$.
Elle correspond donc à l'action de $j_{\alpha} := g_{\alpha}jg_{\alpha}^{-1}$ sur $L_{\alpha}$ par multiplication à gauche.
L'élément $j_{\alpha}$ appartient au centre de $g_{\alpha} Z_K(\mathsf{h})_0 g_{\alpha}^{-1}$, donc au tore maximal $\mathbb{S}_{\alpha}$ (\cf \cite[Cor.\,IV.4.47]{kna02}).
\end{proof}

La distance de Lévy--Prokhorov se comporte bien par revêtement fini de degré borné (\cf \cite[\S\,18]{klm18}), ce qui implique le fait suivant.

\begin{remarque} \label{rem:mesures-LP-proches-Lambda-alpha}
Pour que les mesures finies $m_{\varepsilon,R}^{\alpha}$ et $(\varphi_1\circ\inv)_*\,m_{\varepsilon,-R}^{\alpha}$ sur $Z_{\Gamma}(\alpha)\backslash L_{\alpha}$ satisfassent la conclusion de la proposition~\ref{prop:mesures-LP-proches}, il suffit que leurs relevés à $\Lambda_{\alpha}\backslash L_{\alpha}$ la satisfassent, où $\Lambda_{\alpha}$ est donné par le lemme~\ref{lem:refl-C-alpha}.
(Par définition, le relevé de $m_{\varepsilon,R}^{\alpha}$ est la mesure sur $\Lambda_{\alpha}\backslash L_{\alpha}$ dont le poussé en avant par le revêtement fini $\Lambda_{\alpha}\backslash L_{\alpha} \to Z_{\Gamma}(\alpha)\backslash L_{\alpha}$ est $m_{\varepsilon,R}^{\alpha}$ et dont la restriction à chaque fibre est la mesure de probabilité uniforme.)
\end{remarque}

Dans la suite, on travaille avec $\Lambda_{\alpha}$ plutôt que $Z_{\Gamma}(\alpha)$ afin de pouvoir utiliser le tore $\mathbb{S}_{\alpha}$ du lemme~\ref{lem:refl-C-alpha}.
Le lecteur pourra penser en première approximation $\Lambda_{\alpha} = Z_{\Gamma}(\alpha)$, motivé notamment par l'exemple suivant.

\begin{exemple} \label{ex:PSLnC-C-alpha}
Soient $G=\PSL(n,\C)$ et $\tau : \PSL(2,\R)\hookrightarrow G$ le plongement irr\'eductible (\cf exemples \ref{ex:PSL} et~\ref{ex:PSLnC-centralisateur}).
Pour $\Gamma$ sans torsion et $\alpha\in\Gamma$ proximal, on peut prendre $\Lambda_{\alpha} = Z_{\Gamma}(\alpha)$ et $\mathbb{S}_{\alpha} = g_{\alpha} Z_K(\mathsf{h}) g_{\alpha}$ où $g_{\alpha}\in G$ envoie $(\underline{\tau}(0),\underline{\tau}(\infty))$ sur $(\alpha^{\repuls},\alpha^{\attract})$.
On a $C_{\alpha}^{\Gamma} := Z_G(\Lambda_{\alpha})_0 = Z_G(\alpha)$ et $C_{\alpha}^{\Gamma}/(C_{\alpha}^{\Gamma} \cap \Lambda_{\alpha}) \simeq \mathbb{T}^{2(n-1)}$ (\cf exemple~\ref{ex:PSLnC-pied}).
\end{exemple}

\subsubsection{Stratégie de démonstration de la proposition~\ref{prop:mesures-LP-proches}}

Prenons $R$ assez grand par rapport \`a~$\varepsilon$ comme au paragraphe~\ref{subsec:reformulation-LP}.
Pour $[\alpha]\in\mathcal{A}_{\varepsilon,R}$, soit $\Lambda_{\alpha}$ le sous-groupe d'indice fini de $Z_{\Gamma}(\alpha)$ donn\'e par le lemme~\ref{lem:refl-C-alpha}.
Le groupe $C_{\alpha}^{\Gamma} := Z_G(\Lambda_{\alpha})_0$ agit sur $X := \Lambda_{\alpha}\backslash L_{\alpha}$ par multiplication \`a gauche.
Cette action se factorise en une action libre du groupe quotient $C_{\alpha}^{\Gamma}/(C_{\alpha}^{\Gamma}\cap \Lambda_{\alpha})$.
Pour démontrer la proposition~\ref{prop:mesures-LP-proches}, l'idée est de~:
\begin{enumerate}
  \item\label{item:mesures-LP-proches-etape-1} trouver une mesure finie $n_{\varepsilon,R}^{\alpha}$ sur $Z_{\Gamma}(\alpha)\backslash L_{\alpha}$, invariante par $Z_G(Z_{\Gamma}(\alpha))$, telle que $m_{\varepsilon,R}^{\alpha}$ soit absolument continue par rapport à~$n_{\varepsilon,R}^{\alpha}$, de d\'eriv\'ee de Radon--Nikodym proche de~$1$ : c'est l'objet de la proposition~\ref{prop:mesures-proches-I}, qui utilise le mélange (fait~\ref{fait:melange}) via le lemme~\ref{lem:w-presque-1} ; en relevant $n_{\varepsilon,R}^{\alpha}$, on obtient une mesure finie sur $X = \Lambda_{\alpha}\backslash L_{\alpha}$, invariante par $C_{\alpha}^{\Gamma} = Z_G(\Lambda_{\alpha})_0$, par rapport à laquelle le relevé de $m_{\varepsilon,R}^{\alpha}$ est absolument continu, de d\'eriv\'ee de Radon--Nikodym proche de~$1$ ;
  \item\label{item:mesures-LP-proches-etape-2} observer que pour un bon sous-groupe à un paramètre $(\alpha_t)_{t\in\R}\subset C_{\alpha}^{\Gamma}$ contenant~$\alpha$, l'élément $\varphi_1$ agit sur~$X$ \og presque\fg\ comme~$\alpha_1$ (proposition~\ref{prop:alpha-t-presque-conj}) ; considérer alors le tore compact $\mathbb{T}_{\alpha}$ de $C_{\alpha}^{\Gamma}/(C_{\alpha}^{\Gamma}\cap\Lambda_{\alpha})$ engendré par les images de $(\alpha_t)_{t\in\R}$ et du tore $\mathbb{S}_{\alpha}$ du lemme~\ref{lem:refl-C-alpha}, et montrer en utilisant \eqref{item:mesures-LP-proches-etape-1} et un argument général (lemme~\ref{lem:LP-tore}) que la mesure $m_{\varepsilon,R}$ relevée à $X = \Lambda_{\alpha}\backslash L_{\alpha}$ est proche pour la distance de L\'evy--Prokhorov de sa moyenne par~$\mathbb{T}_{\alpha}$ ; en déduire, en utilisant la remarque~\ref{rem:mesures-LP-proches-Lambda-alpha}, que $m_{\varepsilon,R}$ est proche pour la distance de L\'evy--Prokhorov de la mesure $(\varphi_1\circ\refl)_* m_{\varepsilon,R}^{\alpha}$, qui est égale à $(\varphi_1\circ\inv)_*\,m_{\varepsilon,-R}^{\alpha}$ d'après la proposition~\ref{prop:mesures-I}.
\end{enumerate}

\subsection{Mesures proches au sens de la d\'eriv\'ee de Radon--Nikodym} \label{subsec:biconn}

La première étape de la démonstration de la proposition~\ref{prop:mesures-LP-proches} consiste à établir le résultat suivant, qui utilise le mélange (fait~\ref{fait:melange}) via le lemme~\ref{lem:w-presque-1}.

\begin{proposition} \label{prop:mesures-proches-I}
En supposant la condition~(R) v\'erifi\'ee, pour tout $\varepsilon>0$, tout $R>0$ assez grand par rapport \`a~$\varepsilon$ et tout $[\alpha]\in\mathcal{A}_{\varepsilon,R}$, il existe une mesure finie $n_{\varepsilon,R}^{\alpha}$ sur $Z_{\Gamma}(\alpha)\backslash L_{\alpha}$, invariante par $Z_G(Z_{\Gamma}(\alpha))$, telle que la mesure $m_{\varepsilon,R}^{\alpha}$ de \eqref{eqn:mesures-m} soit absolument continue par rapport \`a $n_{\varepsilon,R}^{\alpha}$ et que sa d\'eriv\'ee de Radon--Nikodym v\'erifie
  $$\bigg\Vert\frac{\dd m_{\varepsilon,R}^{\alpha}}{\dd n_{\varepsilon,R}^{\alpha}} - 1\bigg\Vert_{\infty} \leq \frac{\varepsilon}{R^2}.$$
\end{proposition}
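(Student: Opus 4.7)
L'idée est de construire $n_{\varepsilon,R}^{\alpha}$ en « remplaçant par~$1$ », dans l'expression intégrale de $m_{\varepsilon,R}^{\alpha}$, la somme sur $\beta\in\Gamma$ de l'un des trois facteurs de $W^{\mathrm{tri}}_{\varepsilon,R}$, somme que le lemme~\ref{lem:w-presque-1} contrôle via le mélange.

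Vu dans $Z_{\Gamma}(\alpha)\backslash G^4$, tout élément de $\Triconn_{\varepsilon,R}^{\alpha}$ s'écrit $[(g,g',\alpha^{-1}g',\beta g')]$ pour $(g,g')\in G^2$ et $\beta\in\Gamma$ (lemme~\ref{lem:triconn->geom}), et l'application pied~$\Psi_{\alpha}^{\mathrm{tri}}$ envoie un tel élément sur $[\Psi_{\alpha}(g)]$. Pour une fonction-test positive $\varphi$ sur $Z_{\Gamma}(\alpha)\backslash L_{\alpha}$, en déroulant les définitions de $\mu_{\varepsilon,R}^{\alpha}$, $\lambda_{\Triconn}$ et $\Psi_{\alpha}^{\mathrm{tri}}$, et en utilisant la factorisation~\eqref{eqn:W-epsilon-R-tri} de $W^{\mathrm{tri}}_{\varepsilon,R}$, on obtient une identité de la forme
$$m_{\varepsilon,R}^{\alpha}(\varphi) = \int_{Z_{\Gamma}(\alpha)\backslash G^2} \varphi([\Psi_{\alpha}(g)]) \, W_{\varepsilon,R}(g,g') \, W_{\varepsilon,R}(\rot^2(g),\rot(\alpha^{-1}g')) \, \Sigma(g,g') \, \dd g \, \dd g',$$
où $\Sigma(g,g') := \sum_{\beta\in\Gamma} W_{\varepsilon,R}(\rot(g),\rot^2(\beta g'))$ est le seul facteur faisant intervenir~$\beta$.

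Comme $\rot^2$ est la multiplication à droite par un élément de $\tau(\PSL(2,\R))$, elle commute avec la multiplication à gauche par $\beta$, d'où $\rot^2(\beta g')=\beta\rot^2(g')$~; par définition~\eqref{eqn:w-eps-R} on a donc $\Sigma(g,g')=w_{\varepsilon,R}([\rot(g)],[\rot^2(g')])$, puis $|\Sigma(g,g')-1|\leq\varepsilon/R^2$ par le lemme~\ref{lem:w-presque-1}, dès que $R$ est assez grand par rapport à~$\varepsilon$. On définit alors
$$n_{\varepsilon,R}^{\alpha}(\varphi) := \int_{Z_{\Gamma}(\alpha)\backslash G^2} \varphi([\Psi_{\alpha}(g)]) \, W_{\varepsilon,R}(g,g') \, W_{\varepsilon,R}(\rot^2(g),\rot(\alpha^{-1}g')) \, \dd g \, \dd g'.$$
Par construction, $m_{\varepsilon,R}^{\alpha}$ et $n_{\varepsilon,R}^{\alpha}$ sont finies (remarque~\ref{rem:Triconn-rel-compact}) et $m_{\varepsilon,R}^{\alpha}$ est absolument continue par rapport à $n_{\varepsilon,R}^{\alpha}$~; comme $\Sigma$ est uniformément à $\varepsilon/R^2$ près de~$1$, la dérivée de Radon--Nikodym satisfait la majoration voulue.

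Il restera à vérifier l'invariance de $n_{\varepsilon,R}^{\alpha}$ sous $Z_G(Z_{\Gamma}(\alpha))$. Pour $c\in Z_G(Z_{\Gamma}(\alpha))$, la substitution $(g,g')\to(cg,cg')$ préserve la mesure de Haar, commute à l'action de $Z_{\Gamma}(\alpha)$, et descend donc à $Z_{\Gamma}(\alpha)\backslash G^2$~; l'intégrande se transforme comme voulu grâce à trois ingrédients~: l'équivariance $\Psi_{\alpha}(cg)=c\Psi_{\alpha}(g)$ (remarque~\ref{rem:pied-alpha-0-alpha}), valide car $c\in Z_G(\alpha)\subset\mathrm{stab}_G(\alpha^{\repuls},\alpha^{\attract})$~; l'invariance à gauche de $W_{\varepsilon,R}$ sous~$G$~; et la commutation de~$c$ avec $\rot$, $\rot^2$ (multiplications à droite) et avec~$\alpha^{-1}$. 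Le point délicat sera d'écrire rigoureusement l'expression intégrale initiale de $m_{\varepsilon,R}^{\alpha}$, car il faut gérer avec soin l'action diagonale de $Z_{\Gamma}(\alpha)$ sur $(g,g',\beta)$ (qui conjugue l'indice~$\beta$) intervenant dans la définition de $\lambda_{\Triconn}$.
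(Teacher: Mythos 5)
Votre démonstration est correcte et suit essentiellement la même voie que celle du texte : le papier formalise simplement l'étape « oublier le facteur sommé sur $\beta$ » en introduisant l'espace $\Biconn_{\varepsilon,R}^{\alpha}$ des couples biconnectés, la mesure $\nu_{\varepsilon,R}^{\alpha}=W^{\mathrm{bi}}_{\varepsilon,R}\,\lambda_{\Biconn}$ et $n_{\varepsilon,R}^{\alpha}:=(\Psi_{\alpha}^{\mathrm{bi}})_*\nu_{\varepsilon,R}^{\alpha}$, ce qui est exactement votre intégrale sans le facteur $\Sigma$, et votre fonction $\Sigma(g,g')=w_{\varepsilon,R}([\rot(g)],[\rot^2(g')])$ coïncide avec la densité $f_{\varepsilon,R}^{\alpha}$ du texte. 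Le point que vous signalez comme délicat (l'action de $Z_{\Gamma}(\alpha)$ sur les triplets $(g,g',\beta)$) est précisément ce que ce formalisme règle proprement ; l'argument d'invariance par $Z_G(Z_{\Gamma}(\alpha))$ est le même dans les deux cas.
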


La proposition~\ref{prop:mesures-proches-I} implique que, sous la condition~(R), les mesures $m_{\varepsilon,R}^{\alpha}$ et $(\varphi_1\circ\nolinebreak\inv)_*\,m_{\varepsilon,-R}^{\alpha}$ sont absolument continues l'une par rapport \`a l'autre, avec une d\'eriv\'ee de Radon--Nikodym proche de~$1$.
En effet, d'apr\`es la proposition~\ref{prop:mesures-I} on a $(\varphi_1\circ\inv)_*\,m_{\varepsilon,-R}^{\alpha} = (\varphi_1\circ\refl)_*\,m_{\varepsilon,R}^{\alpha}$.
En raisonnant comme à la fin de la démonstration du lemme~\ref{lem:refl-C-alpha}, on voit que l'action de $\varphi_1\circ\refl$ sur $Z_{\Gamma}(\alpha)\backslash L_{\alpha}$ correspond à la multiplication à gauche par un élément de $Z_G(Z_{\Gamma}(\alpha))$.
La mesure $(\varphi_1\circ\inv)_*\,m_{\varepsilon,-R}^{\alpha}$ est donc elle aussi absolument continue par rapport \`a $n_{\varepsilon,R}^{\alpha}$ et sa d\'eriv\'ee de Radon--Nikodym v\'erifie $\Vert\dd((\varphi_1\circ\inv)_*\,m_{\varepsilon,-R}^{\alpha})/\dd n_{\varepsilon,R}^{\alpha} - 1\Vert_{\infty} \leq \varepsilon/R^2$.
Pour $\varepsilon/R^2\leq 1/2$, on en d\'eduit que $(\varphi_1\circ\refl)_*\,m_{\varepsilon,R}^{\alpha}$ est absolument continue par rapport \`a $m_{\varepsilon,R}^{\alpha}$ et que
$$\bigg\Vert\frac{\dd((\varphi_1\circ\refl)_*\,m_{\varepsilon,R}^{\alpha})}{\dd m_{\varepsilon,R}^{\alpha}} - 1\bigg\Vert_{\infty} \leq \frac{4\varepsilon}{R^2}.$$

Afin de construire une mesure $n_{\varepsilon,R}^{\alpha}$ vérifiant la conclusion de la proposition~\ref{prop:mesures-proches-I}, pour tous $\varepsilon,R>0$, on d\'efinit une fonction $W^{\mathrm{bi}}_{\varepsilon,R} : G^3\to\R$ par
$$W^{\mathrm{bi}}_{\varepsilon,R}(x,y_0,y_1) = W_{\varepsilon,R}(x,y_0) \, W_{\varepsilon,R}(\rot^2(x),\rot(y_1)),$$
o\`u $x,y_0,y_1\in G$.
Elle est invariante par l'action diagonale de $G$ par multiplication \`a gauche.
On appelle \emph{couple biconnect\'e dans $\Gamma\backslash G$} tout \'el\'ement de
$$\Biconn := \{[(x,y_0,y_1)]\in\Gamma\backslash G^3 ~|~ y_1\in\Gamma y_0\},$$
o\`u $\Gamma$ agit diagonalement sur~$G^3$ par multiplication \`a gauche.
%
%
Pour $\varepsilon,R>0$ et $\alpha\in\Gamma$, on pose
$$\Biconn_{\varepsilon,R}^{\alpha} := \{[(x,y_0,y_1)]\in\Biconn ~|~ W^{\mathrm{bi}}_{\varepsilon,R}(x,y_0,y_1) > 0 \ \mathrm{et}\ y_1=\alpha^{-1}y_0\}.$$
Comme pour les couples triconnect\'es, on peut voir $\Biconn_{\varepsilon,R}^{\alpha}$ comme un sous-ensemble de $Z_{\Gamma}(\alpha)\backslash G^3$.
En particulier, $Z_G(Z_{\Gamma}(\alpha))$ agit sur $\Biconn_{\varepsilon,R}^{\alpha}$ par multiplication \`a gauche.
On a une projection naturelle $\pi : \Triconn\to\Biconn$, qui envoie $\Triconn_{\varepsilon,R}^{\alpha}$ sur $\Biconn_{\varepsilon,R}^{\alpha}$ de mani\`ere $Z_G(Z_{\Gamma}(\alpha))$-\'equivariante.
Pour $R$ assez grand par rapport \`a~$\varepsilon$, l'application \og pied\fg\ $\Psi_{\alpha} : \mathcal{U}_{\alpha}\to L_{\alpha}$ de la d\'efinition~\ref{def:pied} induit (par analogie avec $\Psi_{\alpha}^{\mathrm{tri}}$, \cf \eqref{eqn:pied-tri}) une application $\Psi_{\alpha}^{\mathrm{bi}} : \Biconn_{\varepsilon,R}^{\alpha}\to Z_{\Gamma}(\alpha)\backslash L_{\alpha}$ qui \`a $[(g,g',\alpha^{-1}g')]$ associe $[\Psi_{\alpha}(g)]$.
Cette application $\Psi_{\alpha}^{\mathrm{bi}}$ est $Z_G(Z_{\Gamma}(\alpha))$-\'equivariante car $\Psi_{\alpha}$ est $Z_G(\alpha)$-\'equivariante (remarque~\ref{rem:pied-alpha-0-alpha}).
Le diagramme suivant commute :
$$\xymatrix{\Triconn_{\varepsilon,R}^{\alpha} \ar[r]^{\pi} \ar@/_2pc/[rr]_{\Psi_{\alpha}^{\mathrm{tri}}} & \Biconn_{\varepsilon,R}^{\alpha} \ar[r]^{\Psi_{\alpha}^{\mathrm{bi}}} & Z_{\Gamma}(\alpha)\backslash L_{\alpha}}.$$

\begin{proof}[D\'emonstration de la proposition~\ref{prop:mesures-proches-I}]
Comme $\Triconn$, l'espace $\Biconn$ se projette, en considérant les deux premières coordonnées, sur $\Gamma\backslash G^2$, avec des fibres dénombrables paramétrées par~$\Gamma$.
Soit $\lambda_{\Biconn}$ la mesure localement finie sur $\Biconn$ dont le poussé en avant est la mesure naturelle de $\Gamma\backslash G^2$ (induite par la mesure de Haar de~$G$) et dont la restriction à chaque fibre est la mesure de comptage.
On définit, sur $\Biconn_{\varepsilon,R}$, la mesure $\nu_{\varepsilon,R} := W^{\mathrm{bi}}_{\varepsilon,R}\,\lambda_{\Biconn}$.
Soit $\nu_{\varepsilon,R}^{\alpha}$ sa restriction \`a $\Biconn_{\varepsilon,R}^{\alpha}$ pour $\alpha\in\Gamma$.
Montrons que la mesure $n_{\varepsilon,R}^{\alpha} := (\Psi_{\alpha}^{\mathrm{bi}})_* \nu_{\varepsilon,R}^{\alpha}$ convient.
Comme la fonction $W^{\mathrm{bi}}_{\varepsilon,R} : G^3\to\R$ est invariante par l'action diagonale de~$G$, la mesure $\nu_{\varepsilon,R}^{\alpha}$ est invariante par $Z_G(Z_{\Gamma}(\alpha))$, donc $n_{\varepsilon,R}^{\alpha} = (\Psi_{\alpha}^{\mathrm{bi}})_* \nu_{\varepsilon,R}^{\alpha}$ aussi par $Z_G(Z_{\Gamma}(\alpha))$-\'equivariance de $\Psi_{\alpha}^{\mathrm{bi}}$.
Soit $f_{\varepsilon,R}^{\alpha} : \Biconn_{\varepsilon,R}^{\alpha}\to\R$ la fonction lisse donn\'ee par
$$f_{\varepsilon,R}^{\alpha}([g,g',\alpha^{-1}g']) = w_{\varepsilon,R}([\rot(g)],[\rot^2(g')]) = \sum_{\beta\in\Gamma} W_{\varepsilon,R}(\rot(g),\rot^2\circ\beta(g'))$$
(\cf \eqref{eqn:w-eps-R}).
On a $\pi_* \mu_{\varepsilon,R}^{\alpha} = f_{\varepsilon,R}^{\alpha}\,\nu_{\varepsilon,R}^{\alpha}$, o\`u $\mu_{\varepsilon,R}^{\alpha}$ est la restriction \`a $\Triconn_{\varepsilon,R}^{\alpha}$ de la mesure $\mu_{\varepsilon,R}$ de \eqref{def:mu-epsilon-R}, et pour $R$ assez grand par rapport \`a~$\varepsilon$ on a $\Vert f_{\varepsilon,R}^{\alpha} -\nolinebreak 1\Vert_{\infty} \leq \varepsilon/R^2$ par le lemme~\ref{lem:w-presque-1}.
La mesure $m_{\varepsilon,R}^{\alpha} = (\Psi_{\alpha}^{\mathrm{tri}})_*\,\mu_{\varepsilon,R}^{\alpha} = (\Psi_{\alpha}^{\mathrm{bi}})_*\,(f_{\varepsilon,R}^{\alpha}\,\nu_{\varepsilon,R}^{\alpha})$ est absolument continue par rapport \`a $n_{\varepsilon,R}^{\alpha} = (\Psi_{\alpha}^{\mathrm{bi}})_*\,\nu_{\varepsilon,R}^{\alpha}$, et sa d\'eriv\'ee de Radon--Nikodym $h_{\varepsilon,R}^{\alpha} = \frac{\dd m_{\varepsilon,R}^{\alpha}}{\dd n_{\varepsilon,R}^{\alpha}}$ v\'erifie $\Vert h_{\varepsilon,R}^{\alpha} -\nolinebreak 1\Vert_{\infty} \leq \Vert f_{\varepsilon,R}^{\alpha} - 1\Vert_{\infty}$.
En effet, pour toute fonction $\psi\in L^1(Z_{\Gamma}(\alpha)\backslash L_{\alpha},\R)$, on a
\begin{align*}
& \left| \int_{Z_{\Gamma}(\alpha)\backslash L_{\alpha}} \psi \, (h_{\varepsilon,R}^{\alpha} - 1) \, \dd n_{\varepsilon,R}^{\alpha} \right| = \left| \int_{\Biconn_{\varepsilon,R}^{\alpha}} (\psi\circ\Psi_{\alpha}^{\mathrm{bi}}) \, (f_{\varepsilon,R}^{\alpha} - 1) \, \dd\nu_{\varepsilon,R}^{\alpha} \right|\\
& \leq \Vert f_{\varepsilon,R}^{\alpha} - 1\Vert_{\infty} \, \left| \int_{\Biconn_{\varepsilon,R}^{\alpha}} (\psi\circ\Psi_{\alpha}^{\mathrm{bi}}) \, \dd\nu_{\varepsilon,R}^{\alpha} \right| \ =\ \Vert f_{\varepsilon,R}^{\alpha} - 1\Vert_{\infty} \, \left| \int_{Z_{\Gamma}(\alpha)\backslash L_{\alpha}} \psi \, \dd n_{\varepsilon,R}^{\alpha} \right|. \qedhere
\end{align*}
\end{proof}

\subsection{Mesures proches au sens de la distance de L\'evy--Prokhorov} \label{subsec:dem-prop-mesures-LP-proches}

On souhaite \`a pr\'esent d\'eduire la proposition~\ref{prop:mesures-LP-proches} de la proposition~\ref{prop:mesures-proches-I}.

\subsubsection{Premier ingrédient}

On utilise les observations générales suivantes sur la distance de Lévy--Prokhorov (d\'efinition~\ref{def:LP}).
La démonstration du lemme~\ref{lem:LP-tore} est esquissée au paragraphe~\ref{subsec:dem-LP-tore} ci-dessous.

\begin{remarques} \label{rem:LP-bouge-peu}
Soient $(X,d)$ un espace m\'etrique, $\mu,\nu$ deux mesures finies sur~$X$ et $f,g : X\to X$ deux applications mesurables.
\begin{enumerate}
  \item\label{item:LP-isom} Si $f$ est une isom\'etrie bijective de $(X,d)$, alors $d_{LP}(f_*\mu,f_*\nu) = d_{LP}(\mu,\nu)$.
  \item\label{item:LP-f-g} En g\'en\'eral, $d_{LP}(f_*\mu,g_*\mu) \leq \sup_{x\in X} d(f(x),g(x))$.
  En effet, supposons $\delta :=\linebreak \sup_{x\in X} d(f(x),g(x)) < +\infty$.
  Pour tout $A\subset X$ et tout $x\in X$, si $f(x)\in A$, alors $g(x)\in\mathcal{V}_{\delta}(A)$, d'o\`u $(f_*\mu)(A) = \mu(f^{-1}(A)) \leq \mu(g^{-1}(\mathcal{V}_{\delta}(A))) = (g_*\mu)(\mathcal{V}_{\delta}(A))$.
\end{enumerate}
\end{remarques}

\begin{lemme} \label{lem:LP-tore}
Pour tout $k\geq 1$, il existe une constante $C_k>0$ avec la propriété suivante : pour toute variété riemannienne $X$ munie d'une action par isom\'etries d'un tore compact $\mathbb{T}$ de dimension~$k$, pr\'eservant une mesure $n$ sur~$X$, pour tout $\delta>0$ et toute fonction $h : X\to\R_+^*$, si la moyenne $\overline{h} := \int_{g\in\mathbb{T}} h\circ g \, \dd g$ de~$h$ pour la mesure de probabilité de Haar de~$\mathbb{T}$ vérifie $e^{-\delta}\,\overline{h} \leq h \leq e^{\delta}\,\overline{h}$, alors
$$d_{LP}(hn,\overline{h}n) \leq \delta \cdot C_k \sup_{x\in X} \mathrm{diam}(\mathbb{T}\cdot x).$$
\end{lemme}

\subsubsection{Second ingrédient}

Rappelons que l'on a suppos\'e $\Gamma$ sans torsion.
Pour $[\alpha]\in\nolinebreak\mathcal{A}_{\varepsilon,R}$, soit $\Lambda_{\alpha}$ le sous-groupe d'indice fini de $Z_{\Gamma}(\alpha)$ du lemme~\ref{lem:refl-C-alpha}, et soit $C_{\alpha}^{\Gamma} = Z_G(\Lambda_{\alpha})_0$.
Soit $(\varphi_t)_{t\in\R}$ le flot sur~$G$ du paragraphe~\ref{subsec:rot-inv-phi}.
On utilise le résultat suivant, qui provient d'un raffinement du lemme~\ref{lem:realise-proximal}, et dont la démonstration est esquissée au paragraphe~\ref{subsec:dem-alpha-presque-conj-exp-2Rh} ci-dessous.

\begin{proposition} \label{prop:alpha-t-presque-conj}
Il existe $C'>0$ tel que pour tout $\varepsilon>0$, tout $R>0$ assez grand par rapport \`a~$\varepsilon$ et tout $[\alpha]\in\mathcal{A}_{\varepsilon,R}$, il existe un sous-groupe à un paramètre $(\alpha_t)_{t\in\R}$ du centre de $C_{\alpha}^{\Gamma}$ tel que $\alpha_{2R} = \alpha$ et $d_G(\varphi_t(x),\alpha_tx) \leq C' (\varepsilon/R+e^{-R})$ pour tous $x\in L_{\alpha}$ et $t\in [0,2R]$.
\end{proposition}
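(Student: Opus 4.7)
L'observation structurelle de d\'epart est que, sous le cadre~\ref{cadre} et la condition~(R), le groupe $C_\alpha^\Gamma = Z_G(\Lambda_\alpha)_0$ est ab\'elien, \'egal \`a $g_\alpha A \mathbb{S}_\alpha' g_\alpha^{-1}$, o\`u $g_\alpha\in L_\alpha$ envoie $(\underline{\tau}(0),\underline{\tau}(\infty))$ sur $(\alpha^{\repuls},\alpha^{\attract})$, $A=\exp(\mathfrak{a})$, et $\mathbb{S}_\alpha' = g_\alpha^{-1}\mathbb{S}_\alpha g_\alpha$ est un tore maximal de $Z_K(\mathfrak{a})_0$. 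En effet, la r\'egularit\'e de $\mathsf{h}$ impose $C_\alpha^\Gamma\subset Z_G(\alpha)_0\subset g_\alpha AZ_K(\mathfrak{a})_0 g_\alpha^{-1}$, et la construction de $\Lambda_\alpha$ au lemme~\ref{lem:refl-C-alpha} force $\Lambda_\alpha\subset g_\alpha A\mathbb{S}_\alpha' g_\alpha^{-1}$, d'o\`u $g_\alpha A\mathbb{S}_\alpha' g_\alpha^{-1}\subset C_\alpha^\Gamma$ ; l'\'egalit\'e r\'esulte de ce que le centralisateur dans $Z_K(\mathfrak{a})_0$ d'un tore maximal est le tore lui-m\^eme. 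Le \og centre de $C_\alpha^\Gamma$\fg\ de l'\'enonc\'e co\"incide donc avec $C_\alpha^\Gamma$, et il reste \`a produire $(\alpha_t)\subset g_\alpha A\mathbb{S}_\alpha' g_\alpha^{-1}$ v\'erifiant $\alpha_{2R}=\alpha$ et l'estim\'ee annonc\'ee.

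Le c\oe ur technique r\'eside dans un raffinement quantitatif du lemme~\ref{lem:realise-proximal}. Si $\alpha$ est $(\varepsilon/R,R)$-presque r\'ealis\'e par $(g,g')$, alors $g^{-1}\alpha g = g_1\tau(h_R)g_2g_3\tau(h_R)g_4$ avec $g_i\in\mathcal{B}_{\varepsilon/R}$. L'identit\'e $h_R^2 = \exp(R\mathsf{h}_0)\exp\!\bigl((e^{-R}+e^{-2R})\mathsf{f}_0\bigr)$ dans $\PSL(2,\R)$ donne imm\'ediatement $d_G(\tau(h_R)^2,\exp(R\mathsf{h}))=O(e^{-R})$. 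Combin\'ee aux perturbations $g_i$ de taille $\varepsilon/R$ et \`a la contraction de $\tau(h_R)^2$ sur $G/P_\tau$ au taux $e^{-2R}$ (qui absorbe largement ces perturbations), un argument de point fixe contractant dans l'esprit de \cite{ben97} fournit $c_\alpha\in G$ avec $d_G(c_\alpha,\id)\leq C(\varepsilon/R+e^{-R})$ tel que, pour $g_\alpha := gc_\alpha\in L_\alpha$, on ait $g_\alpha^{-1}\alpha g_\alpha\in A\mathbb{S}_\alpha'$ et $d_G(g_\alpha^{-1}\alpha g_\alpha,\exp(R\mathsf{h}))\leq C(\varepsilon/R+e^{-R})$. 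L'injectivit\'e locale de $\exp$ au voisinage de $R\mathsf{h}$ produit un unique $X_\alpha\in\mathfrak{a}+\mathrm{Lie}(\mathbb{S}_\alpha')$ avec $g_\alpha^{-1}\alpha g_\alpha = \exp(X_\alpha)$ et $\|X_\alpha-R\mathsf{h}\|\leq C'(\varepsilon/R+e^{-R})$. On pose alors
$$\alpha_t := g_\alpha\exp\!\bigl(\tfrac{t}{2R}X_\alpha\bigr)g_\alpha^{-1},\qquad t\in\R,$$
sous-groupe \`a un param\`etre \`a valeurs dans $g_\alpha A\mathbb{S}_\alpha' g_\alpha^{-1} = C_\alpha^\Gamma$, v\'erifiant $\alpha_{2R}=\alpha$.

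L'estim\'ee se v\'erifie par un calcul direct. \'Ecrivant $x = g_\alpha z$ avec $z\in Z_G(\mathsf{h}) = AZ_K(\mathfrak{a})$, la commutativit\'e $[\mathsf{h},X_\alpha]=0$ (toutes les exponentielles vivant dans l'alg\`ebre ab\'elienne $\mathfrak{a}+\mathrm{Lie}(Z_K(\mathfrak{a})_0)$) donne
$$\varphi_t(x)^{-1}\alpha_t x \,=\, z^{-1}\exp\!\bigl(\tfrac{t}{2R}(X_\alpha-R\mathsf{h})\bigr)z \,=\, \exp\!\bigl(\Ad(z^{-1})\cdot\tfrac{t}{2R}(X_\alpha-R\mathsf{h})\bigr).$$
Comme $\Lambda_\alpha\subset C_\alpha^\Gamma$ commute aux $\alpha_s$ et au flot $\varphi_s$ sur~$L_\alpha$, la quantit\'e $d_G(\varphi_t(x),\alpha_t x)$ est invariante sous l'action de $\Lambda_\alpha$ sur~$x$ ; on peut donc supposer $z$ dans un domaine fondamental compact de $\Lambda_\alpha$ dans $L_\alpha$, d'o\`u $\|\Ad(z^{-1})\|$ uniform\'ement born\'ee. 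L'invariance \`a gauche de $d_G$ et la r\'egularit\'e de $\exp$ au voisinage de~$0$ donnent alors, pour $t\in[0,2R]$,
$$d_G(\varphi_t(x),\alpha_t x) \,\leq\, C''\cdot\tfrac{|t|}{2R}\,\|X_\alpha-R\mathsf{h}\| \,\leq\, C'(\varepsilon/R+e^{-R}).$$
La principale difficult\'e de la preuve est donc l'am\'elioration annonc\'ee du lemme~\ref{lem:realise-proximal} de $O(e^{-R/2})$ \`a $O(e^{-R})$ : l'estim\'ee plus grossi\`ere provient de ce que les points fixes de $\tau(h_R)$ sont \`a distance $O(e^{-R/2})$ des points $\underline{\tau}(0),\underline{\tau}(\infty)$, alors que ceux de $\tau(h_R)^2 = \tau(h_R^2)$ en sont \`a distance $O(e^{-R})$ ; il convient d'exploiter syst\'ematiquement l'\'el\'ement \og doublement contractant\fg\ $\tau(h_R)^2$ plut\^ot que chaque facteur $\tau(h_R)$ s\'epar\'ement.
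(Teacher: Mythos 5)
Votre squelette de d\'emonstration co\"incide avec celui de l'article : trouver un bon point base $y\in L_{\alpha}$ o\`u $y^{-1}\alpha y$ est \`a distance $O(\varepsilon/R+e^{-R})$ de $\exp(R\,\mathsf{h})$ (chez Kahn--Labourie--Mozes c'est $y=\Psi_{\alpha}(g)$ et l'estim\'ee est le lemme~\ref{lem:alpha-presque-conj-exp-2Rh-Psi}, obtenu via les feuilletages stable/instable du flot $(\varphi_t)$ ; chez vous, via une contraction sur $G/P_{\tau}$ exploitant $\tau(h_R)^2$ --- deux habillages du m\^eme ph\'enom\`ene d'hyperbolicit\'e locale, tous deux laiss\'es \`a l'\'etat d'esquisse), prendre un logarithme dans $\mathfrak{a}+Z_{\mathfrak{k}}(\mathfrak{a})$, interpoler, puis propager l'estim\'ee \`a tout~$L_{\alpha}$. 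Deux points de votre r\'edaction sont cependant erron\'es tels quels.

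Premi\`erement, l'affirmation que $C_{\alpha}^{\Gamma}=Z_G(\Lambda_{\alpha})_0$ est ab\'elien et \'egal \`a $g_{\alpha}A\mathbb{S}'_{\alpha}g_{\alpha}^{-1}$ est fausse en g\'en\'eral : $\Lambda_{\alpha}$ est un sous-groupe \emph{discret} de $g_{\alpha}AZ_K(\mathfrak{a})g_{\alpha}^{-1}$, et centraliser $\Lambda_{\alpha}$ n'oblige nullement \`a centraliser le tore $\mathbb{S}_{\alpha}$ (il faudrait que la projection de $\Lambda_{\alpha}$ sur le facteur compact engendre un sous-groupe dense de $\mathbb{S}_{\alpha}$, ce qui n'a aucune raison d'\^etre). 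Par exemple pour $G=\SO(2p-1,1)$ avec $p\geq 3$ et $\alpha$ purement hyperbolique, $Z_{\Gamma}(\alpha)$ est cyclique et $C_{\alpha}^{\Gamma}=AZ_K(\mathfrak{a})_0\simeq\R\times\SO(2p-2)$ est non ab\'elien. La centralit\'e de $(\alpha_t)$ dans $C_{\alpha}^{\Gamma}$ --- exig\'ee par l'\'enonc\'e et indispensable ensuite pour que $\mathbb{T}_{\alpha}$ soit un tore --- n'est donc pas \'etablie par votre argument ; l'article la garantit en pla\c{c}ant $(\alpha_t)$ dans $Z(Z_G(\alpha))_0$, le logarithme retenu \'etant celui de norme minimale dans $\mathfrak{a}+Z_{\mathfrak{k}}(\mathfrak{a})$, donc canonique. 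Deuxi\`emement, votre majoration de $\Vert\Ad(z^{-1})\Vert$ en restreignant $z$ \`a un domaine fondamental de $\Lambda_{\alpha}$ dans $L_{\alpha}$ n'est pas uniforme : ce domaine a un diam\`etre au moins de l'ordre de~$R$ dans la direction de~$A$ (puisque $\Lambda_{\alpha}$ contient une puissance de~$\alpha$ et que $\lambda(\alpha)\approx R\,\mathsf{h}$), de sorte que $\sup\Vert\Ad(z^{-1})\Vert$ y cro\^it exponentiellement en~$R$. Votre in\'egalit\'e finale est n\'eanmoins correcte, pour la raison qu'utilise l'article : $X_{\alpha}-R\,\mathsf{h}$ appartient \`a $\mathfrak{a}+Z_{\mathfrak{k}}(\mathfrak{a})$, sous-espace sur lequel $\Ad$ de la composante dans~$A$ de $z=a_zk_z$ agit trivialement ; seule intervient donc la composante $k_z$ dans le groupe \emph{compact} $Z_K(\mathfrak{a})$, d'o\`u une borne uniforme ind\'ependante de $\varepsilon$, $R$ et~$\alpha$. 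Corrig\'es ainsi, vos deux derniers paragraphes redonnent exactement la d\'emonstration du paragraphe~\ref{subsec:dem-alpha-presque-conj-exp-2Rh}.
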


\subsubsection{Esquisse de démonstration de la proposition~\ref{prop:mesures-LP-proches}}

Soit $\varepsilon>0$, soit $R>0$ assez grand par rapport \`a~$\varepsilon$ comme au paragraphe~\ref{subsec:reformulation-LP}, et soit $[\alpha]\in\mathcal{A}_{\varepsilon,R}$.
On considère le quotient $X = \Lambda_{\alpha}\backslash L_{\alpha}$ de $L_{\alpha}$ par $\Lambda_{\alpha}$, muni de la m\'etrique riemannienne induite par celle de~$G$ par restriction \`a~$L_{\alpha}$ et passage au quotient.
Le groupe $C_{\alpha}^{\Gamma} = Z_G(\Lambda_{\alpha})_0$ agit sur~$X$ par multiplication à gauche, et cette action se factorise en une action du groupe quotient $C_{\alpha}^{\Gamma}/(C_{\alpha}^{\Gamma}\cap\Lambda_{\alpha})$.

Soit $\mathbb{T}_{\alpha} \subset C_{\alpha}^{\Gamma}/(C_{\alpha}^{\Gamma}\cap\Lambda_{\alpha})$ le tore compact engendré par les images dans $C_{\alpha}^{\Gamma}/(C_{\alpha}^{\Gamma}\cap\Lambda_{\alpha})$ du tore compact $\mathbb{S}_{\alpha} \subset C_{\alpha}^{\Gamma}$ du lemme~\ref{lem:refl-C-alpha} et du sous-groupe \`a un param\`etre $(\alpha_t)_{t\in\R}$ de la proposition~\ref{prop:alpha-t-presque-conj}.
On vérifie qu'il existe $C''>0$, indépendant de $\varepsilon$ et~$R$, tel que le diamètre des orbites de $\mathbb{T}_{\alpha}$ dans~$X$ est borné par $C''R$ d\`es que $R$ est assez grand par rapport \`a~$\varepsilon$.

Soit $\hat{m}_{\varepsilon,R}^{\alpha}$ le relevé à $X = \Lambda_{\alpha}\backslash L_{\alpha}$ de la mesure $m_{\varepsilon,R}^{\alpha}$ sur $Z_{\Gamma}(\alpha)\backslash L_{\alpha}$.
La proposition~\ref{prop:mesures-proches-I} implique, pour $R$ assez grand par rapport \`a~$\varepsilon$, l'existence d'une mesure finie $\hat{n}_{\varepsilon,R}^{\alpha}$ sur~$X$, invariante par $C_{\alpha}^{\Gamma}$, telle que $\hat{m}_{\varepsilon,R}^{\alpha} = h \hat{n}_{\varepsilon,R}^{\alpha}$ où $h : X\to\R_+^*$ v\'erifie $\Vert h - 1\Vert_{\infty} \leq \varepsilon/R^2$.
La moyenne $\overline{h} = \int_{g\in\mathbb{T}_{\alpha}} h\circ g\,\dd g$ de $h$ par~$\mathbb{T}_{\alpha}$ v\'erifie alors $e^{-3\varepsilon/R^2}\,\overline{h} \leq h \leq e^{3\varepsilon/R^2}\,\overline{h}$ d\`es que $R$ est assez grand par rapport \`a~$\varepsilon$.
D'apr\`es le lemme~\ref{lem:LP-tore} il existe $C_G>0$, ne d\'ependant que de la dimension maximale d'un tore compact de~$G$, tel que
\begin{equation} \label{eqn:d-LP-m-moyenne-n}
d_{LP}\big(\hat{m}_{\varepsilon,R}^{\alpha},\overline{h} \hat{n}_{\varepsilon,R}^{\alpha}\big) = d_{LP}\big(h \hat{n}_{\varepsilon,R}^{\alpha},\overline{h}n_{\varepsilon,R}^{\alpha}\big) \leq \frac{3\varepsilon}{R^2} \, C_G \, \sup_{x\in X} \mathrm{diam}(\mathbb{T}_{\alpha}\cdot x) \leq 3 C_G C'' \frac{\varepsilon}{R}.
\end{equation}
Or, d'apr\`es la proposition~\ref{prop:mesures-I} on a $(\varphi_1\circ\inv)_*\,\hat{m}_{\varepsilon,-R}^{\alpha} = (\varphi_1\circ\refl)_*\,\hat{m}_{\varepsilon,R}^{\alpha}$.
D'apr\`es le lemme~\ref{lem:refl-C-alpha} l'involution $\refl : X\to X$ correspond \`a l'action d'un \'el\'ement de $\mathbb{S}_{\alpha}$ donc de $\mathbb{T}_{\alpha}$, et d'apr\`es la proposition~\ref{prop:alpha-t-presque-conj}, pour $R$ assez grand par rapport \`a~$\varepsilon$, l'élément $\alpha_1\in\mathbb{T}_{\alpha}$ envoie tout $x\in X$ \`a distance $\leq 2C'\varepsilon/R$ de $\varphi_1(x)$, o\`u $C'>0$ est ind\'ependant de $\varepsilon$ et~$R$.
Par in\'egalit\'e triangulaire, on a $d_{LP}(\hat{m}_{\varepsilon,R}^{\alpha}, (\varphi_1\circ\refl)_*\,\hat{m}_{\varepsilon,R}^{\alpha}) \leq d_{LP}(\hat{m}_{\varepsilon,R}^{\alpha}, \overline{\hat{h}}\hat{n}_{\varepsilon,R}^{\alpha}) + d_{LP}(\overline{\hat{h}}\hat{n}_{\varepsilon,R}^{\alpha}, (\alpha_1\circ\refl)_*\,\hat{m}_{\varepsilon,R}^{\alpha}) + d_{LP}((\alpha_1\circ\refl)_*\,\hat{m}_{\varepsilon,R}^{\alpha}, (\varphi_1\circ\refl)_*\,\hat{m}_{\varepsilon,R}^{\alpha})$.
Le premier terme de droite est born\'e par $3 C_G C'' \varepsilon/R$ d'apr\`es \eqref{eqn:d-LP-m-moyenne-n}.
Le deuxi\`eme terme est \'egal au premier par la remarque~\ref{rem:LP-bouge-peu}.\eqref{item:LP-isom}, en utilisant le fait que $\alpha_1\circ\refl\in\mathbb{T}_{\alpha}$ agit sur~$X$ comme une isom\'etrie et pr\'eserve $\overline{\hat{h}}\hat{n}_{\varepsilon,R}^{\alpha}$.
Le troisi\`eme terme est born\'e par $2C'\varepsilon/R$, d'apr\`es la remarque~\ref{rem:LP-bouge-peu}.\eqref{item:LP-f-g} et le fait que $\alpha_1$ envoie tout $x\in X$ \`a distance $\leq 2C'\varepsilon/R$ de $\varphi_1(x)$.
On obtient ainsi $d_{LP}(\hat{m}_{\varepsilon,R}^{\alpha}, (\varphi_1\circ\refl)_*\,\hat{m}_{\varepsilon,R}^{\alpha}) \leq (6C_GC'' + 2C') \varepsilon/R$.
On conclut en utilisant la remarque~\ref{rem:mesures-LP-proches-Lambda-alpha}.

\subsection{D\'emonstration du lemme~\ref{lem:LP-tore}} \label{subsec:dem-LP-tore}

Nous traitons ici le cas o\`u $X$ est le tore euclidien $\mathbb{T}^{\ell} = (\R/\Z)^{\ell}$ muni de sa mesure de Haar~$n$ et $\mathbb{T}^k$ est un sous-tore, pour $1\leq k\leq\ell$.
Le cas g\'en\'eral utilise les m\^emes id\'ees : \cf \textcite[\S\,18]{klm18}.

Comme la distance de L\'evy--Prokhorov entre deux mesures est invariante par multiplication des deux mesures par une m\^eme constante, on peut supposer que $n$ est une mesure de probabilit\'e.
On raisonne en trois \'etapes :
\begin{enumerate}
  \item\label{item:LP-tore-1} pour $k=\ell=1$, si $\overline{h} \leq e^{2\delta}\,h$, alors $d_{LP}(hn,\overline{h}n) \leq \delta$ ;
  \item\label{item:LP-tore-2} pour $k=1$ et $\ell\geq 1$ quelconque, si $\overline{h} \leq e^{2\delta}\,h$, alors $d_{LP}(hn,\overline{h}n) \leq \delta$ ;
  \item\label{item:LP-tore-3} pour $1\leq k\leq\ell$ quelconques, si $e^{-2\delta}\,\overline{h} \leq h \leq e^{2\delta}\,\overline{h}$, alors $d_{LP}(hn,\overline{h}n) \leq (2^k - 1)\,\delta$.
\end{enumerate}

\noindent
$\bullet$ D\'emonstration de~\eqref{item:LP-tore-1} : Supposons $\overline{h} \leq e^{2\delta}\,h$ et soit $A\subset X=\mathbb{T}^1$.
Si $\mathcal{V}_{\delta}(A)=\mathbb{T}^1$, on~a
$$(hn)(\mathcal{V}_{\delta}(A)) = \int_{\mathbb{T}^1} h \dd n = \overline{h} \geq \overline{h}\,n(A).$$
Si $\mathcal{V}_{\delta}(A)\subsetneq\mathbb{T}^1$, alors $n(A) \leq n(\mathcal{V}_{\delta}(A)) - 2\delta \leq (1 - 2\delta) \, n(\mathcal{V}_{\delta}(A))$ car $X=\mathbb{T}^1$, d'o\`u
$$(hn)(\mathcal{V}_{\delta}(A)) \geq e^{-2\delta} \, \overline{h}\,n(\mathcal{V}_{\delta}(A)) \geq \frac{e^{-2\delta}}{1-2\delta} \, \overline{h}\,n(A) \geq \overline{h}\,n(A).$$

\noindent
$\bullet$ D\'emonstration de~\eqref{item:LP-tore-2} : \'Ecrivons $X=\mathbb{T}^{\ell}$ comme le produit de $\mathbb{T}^k = \mathbb{T}^1$ et d'un facteur $\mathbb{T}^{\ell-1}$.
Toute mesure $m$ sur~$X$ se d\'esint\`egre en des mesures $m_x$ sur $\mathbb{T}^1\times\{x\}$ pour $x\in\mathbb{T}^{\ell-1}$.
D'apr\`es~\eqref{item:LP-tore-1} on a $d_{LP}((hn)_x,(\overline{h}n)_x) \leq \delta$ pour tout $x\in\mathbb{T}^{\ell-1}$.
Par int\'egration, pour tout $A\subset X$, en utilisant le fait que $(\{x\}\times\mathbb{T}^{\ell-1})\cap\mathcal{V}_{\delta}(A)$ contient le $\delta$-voisinage de $(\{x\}\times\mathbb{T}^{\ell-1})\cap A$ dans $\{x\}\times\mathbb{T}^{\ell-1}$, on voit que $(hn)(\mathcal{V}_{\delta}(A)) \geq (\overline{h}n)(A)$.

\smallskip
\noindent
$\bullet$ D\'emonstration de~\eqref{item:LP-tore-3} : On raisonne par r\'ecurrence sur~$k$.
Le cas $k=1$ est contenu dans~\eqref{item:LP-tore-2}.
Supposons le r\'esultat vrai pour $k-1$.
\'Ecrivons notre facteur $\mathbb{T}^k$ comme $\mathbb{T}^{k-1}\times\mathbb{T}^1$, et soit $\widetilde{h}$ la moyenne de~$h$ par $\mathbb{T}^{k-1}$.
Par hypoth\`ese on a $e^{-2\delta}\,\overline{h} \leq h \leq e^{2\delta}\,\overline{h}$, d'o\`u $e^{-2\delta}\,\overline{h} \leq \widetilde{h} \leq e^{2\delta}\,\overline{h}$ en prenant la moyenne par $\mathbb{T}^{k-1}$.
On en d\'eduit $e^{-4\delta}\,\widetilde{h} \leq h \leq e^{4\delta}\,\widetilde{h}$, et donc $d_{LP}(hn,\widetilde{h}n) \leq (2^{k-1} - 1)\,2\delta$ par hypoth\`ese de r\'ecurrence.
D'autre part, comme $\overline{h}$ est la moyenne de $\widetilde{h}$ par~$\mathbb{T}^1$, on a $d_{LP}(\widetilde{h}n,\overline{h}n)\leq\delta$ par~\eqref{item:LP-tore-2}.
Par in\'egalit\'e triangulaire, on obtient
$$d_{LP}(hn,\overline{h}n) \leq d_{LP}(hn,\widetilde{h}n) + d_{LP}(\widetilde{h}n,\overline{h}n) \leq (2^{k-1} - 1)\,2\delta + \delta = (2^k - 1)\,\delta.$$

\subsection{D\'emonstration de la proposition~\ref{prop:alpha-t-presque-conj}} \label{subsec:dem-alpha-presque-conj-exp-2Rh}

Le lemme~\ref{lem:realise-proximal} affirme que pour tout $\varepsilon>0$ assez petit, tout $R>0$ assez grand et tout $(g,g',\alpha)\in G^3$, si $\alpha$ est $(\varepsilon,R)$-presque réalisé par $(g,g')$ au sens de la d\'efinition~\ref{def:realise-presque-alpha}, alors $g^{-1}\alpha g$ est proche de $\exp(R\,\mathsf{h})$ au sens où les points fixes attractifs, les points fixes répulsifs et les images par~$\lambda$ (projections de Jordan/Lyapounov) de ces deux éléments sont proches.
Le lemme suivant affirme que dans ce contexte, l'élément $\Psi_{\alpha}(g)^{-1}\alpha\Psi_{\alpha}(g)$ est proche de $\exp(R\,\mathsf{h})$ au sens de la distance $G$-invariante à gauche $d_G$, où $\Psi_{\alpha} : \mathcal{U}_{\alpha}\to L_{\alpha}$ est l'application \og pied\fg\ de la définition~\ref{def:pied}.

\begin{lemme} \label{lem:alpha-presque-conj-exp-2Rh-Psi}
Il existe $C''>0$ tel que pour tout $\varepsilon>0$ assez petit, tout $R>0$ assez grand et tout $(g,g',\alpha)\in G^3$, si $\alpha$ est $(\varepsilon,R)$-presque réalisé par $(g,g')$ au sens de la d\'efinition~\ref{def:realise-presque-alpha}, alors
$$d_G(\varphi_{2R}(\Psi_{\alpha}(g)), \alpha\,\Psi_{\alpha}(g)) \leq C'' (\varepsilon+e^{-R}).$$
\end{lemme}

Pour démontrer le lemme~\ref{lem:alpha-presque-conj-exp-2Rh-Psi}, Kahn, Labourie et Mozes utilisent les propriétés suivantes de contraction et de dilatation du flot $(\varphi_t)_{t\in\R}$.
Soit $U_{\tau}^+$ le radical unipotent du sous-groupe parabolique $P_{\tau}$ de~$G$ du paragraphe~\ref{subsec:intro-Anosov}, de sorte que $P_{\tau} = Z_K(\mathfrak{a}) A U_{\tau}^+$.
Soit $U_{\tau}^-$ le conjugué de $U_{\tau}^+$ par $\tau((\begin{smallmatrix} 0 & 1\\ -1 & 0\end{smallmatrix}))$, de sorte que $P_{\tau}^- := Z_K(\mathfrak{a}) A U_{\tau}^-$ soit un sous-groupe parabolique de~$G$ opposé à~$P_{\tau}$.
Les classes \`a gauche de $A$ (\resp $U_{\tau}^+$, \resp $P_{\tau}$, \resp $U_{\tau}^-$, \resp $P_{\tau}^-$) forment un feuilletage de~$G$, dit central (\resp stable, \resp central stable, \resp instable, \resp central instable) ; pour $t>0$, la transformation $\varphi_t$ envoie feuille sur feuille de mani\`ere isom\'etrique (\resp uniform\'ement contractante, \resp contractante au sens large, \resp uniform\'ement dilatante, \resp dilatante au sens large).
Pour $x,y\in G$ suffisamment proches, la feuille stable de~$x$ intersecte la feuille centrale instable de~$y$ en un unique point, proche de $x$ et~$y$.

\begin{exemple}
Pour $G=\PSL(2,\R)$, vu comme $T^1\HH^2$ comme au paragraphe~\ref{subsec:tau-P1-G/P}, le flot $(\varphi_t)_{t\in\R}$ est le flot géodésique.
La feuille du feuilletage central stable (\resp central instable) contenant $x\in T^1\HH^2$ est l'ensemble des vecteurs unitaires tangents pointant vers le même point à l'infini $\eta_x^+\in\partial\HH^2$ (\resp $\eta_x^-\in\partial\HH^2$) dans le futur (\resp passé) que~$x$.
La feuille du feuilletage stable (\resp instable) contenant~$x$ est l'ensemble des vecteurs unitaires tangents de la feuille centrale stable (\resp centrale instable) de~$x$ qui sont basés sur le même horocycle centré en $\eta_x^+$ (\resp $\eta_x^-$) que~$x$.
\end{exemple}

Kahn, Labourie et Mozes utilisent ces propriétés dynamiques du flot $(\varphi_t)_{t\in\R}$ pour établir l'existence de constantes $C_1,C_2>0$ telles que~:
\begin{enumerate}
  \item\label{item:alpha-presque-conj-1} pour tout $\varepsilon>0$ assez petit, tout $R>0$ assez grand et tout $(g,g',\alpha)\in G^3$, si $\alpha$ est $(\varepsilon,R)$-presque r\'ealis\'e par $(g,g')$ (d\'efinition~\ref{def:realise-presque-alpha}), alors il existe $z\in G$ tel que $d_G(z,g)\leq C_1 (\varepsilon+e^{-R})$ et $d_G(\varphi_{2R}(z),\alpha g)\leq C_1 (\varepsilon+e^{-R})$~;
  \item\label{item:alpha-presque-conj-2} pour tout $\delta>0$ assez petit, tout $R>0$ et tous $\alpha,z\in G$, si $\alpha$ est proximal dans $G/P_{\tau}$, de points fixes attractif $\alpha^{\attract}$ et répulsif $\alpha^{\repuls}$, si $d_{\tau}({z'}^{-1}\cdot\alpha^{\repuls},\underline{\tau}(0))\leq\delta$ et si $d_{\tau}({z'}^{-1}\cdot\alpha^{\attract},\underline{\tau}(\infty))\leq\delta$ pour tout $z'\in\{z,\varphi_{2R}(z)\}$, alors
  $$d_G(\Psi_{\alpha}\circ\varphi_{2R}(z), \varphi_{2R}\circ\Psi_{\alpha}(z)) \leq C_2\delta.$$
\end{enumerate}
Ils en déduisent alors le lemme~\ref{lem:alpha-presque-conj-exp-2Rh-Psi} en utilisant le lemme~\ref{lem:realise-proximal} et le fait que l'application \og pied\fg\ $\Psi_{\alpha}$, définie comme la projection orthogonale d'un petit voisinage de $L_{\alpha}$ sur~$L_{\alpha}$, est lipschitzienne ; sa constante de Lipschitz ne dépend pas de~$\alpha$ d'après la remarque~\ref{rem:pied-alpha-0-alpha}.

\begin{proof}[D\'emonstration de la proposition~\ref{prop:alpha-t-presque-conj}]
Pour tout $[\alpha]\in\mathcal{A}_{\varepsilon,R}$, par définition, il existe $(g,g')\in G$ tel que $\alpha$ est $(\varepsilon/R,R)$-presque réalisé par $(g,g')$.
Le lemme~\ref{lem:alpha-presque-conj-exp-2Rh-Psi} affirme que si $R$ est assez grand par rapport à~$\varepsilon$, alors il existe $y\in L_{\alpha}$ tel que $d_G(\varphi_{2R}(y), \alpha y) = d_G(y \exp(R\,\mathsf{h}), \alpha y) \leq C'' r_{\varepsilon,R}$, où $r_{\varepsilon,R} := (\varepsilon/R+\nolinebreak e^{-R})$ ; autrement dit, comme $d_G$ est $G$-invariante à gauche, on a $d_G(\exp(-R\,\mathsf{h}) y^{-1}\alpha y, \id) \leq C'' r_{\varepsilon,R}$.

Soit $v\in Z_{\mathfrak{k}}(\mathfrak{a})+\mathfrak{a}$ de norme minimale tel que $\exp(2Rv) = \exp(-R\,\mathsf{h})y^{-1}\alpha y$.
Pour tout $t\in [0,2R]$, on a $d_G(\exp(tv), \id) \leq d_G(\exp(2Rv), \id) \leq C'' r_{\varepsilon,R}$.
On considère le sous-groupe à un paramètre
$$(\alpha_t)_{t\in\R} := \Big(y\exp\!\Big(t\Big(\frac{\mathsf{h}}{2} + v\Big)\Big)y^{-1}\Big)_{t\in\R}$$
de $Z(Z_G(\alpha))_0 \subset Z(C_{\alpha}^{\Gamma})$.
On a $\alpha_{2R} = \alpha$ et $d_G(\varphi_t(y), \alpha_t y) = d_G(\exp(tv), \id)  \leq C'' r_{\varepsilon,R}$ pour tout $t\in [0,2R]$.

En écrivant $L_{\alpha}$ comme une classe à gauche de $A Z_K(\mathfrak{a})$ comme à la remarque~\ref{rem:pied-alpha-0-alpha}, on voit que pour tout $x\in L_{\alpha}$ et tout $t\in [0,2R]$, on a $x^{-1}y\in A Z_K(\mathfrak{a})$ et
$$d_G(\varphi_t(x), \alpha_tx) = d_G((x^{-1}y) \exp(tv) (x^{-1}y)^{-1}, \id).$$
Les éléments $(x^{-1}y) \exp(tv) (x^{-1}y)^{-1}$ et $\exp(tv)$ de $A Z_K(\mathfrak{a})$ ne diffèrent que selon leurs composantes dans $Z_K(\mathfrak{a})$, qui sont conjuguées par un élément de $Z_K(\mathfrak{a})$ indépendant de~$t$ (à savoir la composante de $x^{-1}y$ dans $Z_K(\mathfrak{a})$).
On en déduit l'existence d'une constante $C'>0$ telle que $d_G(\varphi_t(x), \alpha_t x) \leq (C'/C'') \, d_G(\varphi_t(y), \alpha_t y) \leq C' r_{\varepsilon,R}$ pour tous $x\in L_{\alpha}$ et $t\in [0,2R]$.
\end{proof}

\section{Conclusion : \'Etape combinatoire} \label{sec:conclusion}

Dans cette partie nous concluons la d\'emonstration des th\'eor\`emes \ref{thm:quantitatif-PSL(n,C)} et~\ref{thm:quantitatif} \`a partir des propositions \ref{prop:pi-1-inj} et~\ref{prop:pant-dans-reseau-mesures}.
Nous présentons l'approche de \textcite{klm18}, qui utilise (comme \cite{km12}) le lemme des mariages de Hall (fait~\ref{fait:mariages}).

\begin{remarque} \label{rem:lem-mariages-H}
L'approche de Hamenst\"adt, plus pr\'ecise, ne requiert pas le lemme des mariages.
Le point clé est une version plus forte de la proposition~\ref{prop:mesures-LP-proches}, qui revient essentiellement \`a l'existence, pour tout $\alpha\in\Gamma$, d'un homéomorphisme $\psi_{\alpha}$ de $Z_{\Gamma}(\alpha)\backslash L_{\alpha}$ tel que $(\psi_{\alpha})_*m_{\varepsilon,R}^{\alpha} = (\varphi_1\circ\inv)_*\,m_{\varepsilon,-R}^{\alpha}$ et tel que la distance de tout point de $Z_{\Gamma}(\alpha)\backslash L_{\alpha}$ \`a son image par~$\psi_{\alpha}$ soit uniform\'ement (exponentiellement) petite par rapport à~$R$.
\end{remarque}

\smallskip

\emph{Dans toute la partie, on travaille dans le cadre~\ref{cadre}.
On suppose la condition~(R) du paragraphe~\ref{subsubsec:cond-R} vérifiée.
On fixe un réseau cocompact irréductible $\Gamma$ de~$G$, que l'on suppose comme précédemment sans torsion.}

\subsection{Id\'ee de la d\'emonstration : graphes enrubann\'es} \label{subsec:graphes-enrubannes}

Comme expliqué au paragraphe~\ref{subsec:etape-comb}, pour conclure la d\'emonstration des th\'eor\`emes \ref{thm:quantitatif-PSL(n,C)} et~\ref{thm:quantitatif}, il s'agit de prendre les repr\'esentations de groupes de pantalons $(\varepsilon/R,\pm R)$-presque parfaites $(C\varepsilon/R)$-bien recollées des propositions \ref{prop:pant-dans-reseau-general} et~\ref{prop:pant-dans-reseau-mesures}, avec les données géométriques appropriées correspondantes, et de montrer qu'on peut les agencer de mani\`ere ad\'equate pour obtenir une repr\'esentation d'une surface compacte~$S$, avec une d\'ecomposition en pantalons bipartie et un graphe $\mathcal{G}$ comme au paragraphe~\ref{subsec:struct-R-parf}, v\'erifiant les hypothèses de la proposition~\ref{prop:pi-1-inj} ou de son raffinement~\ref{prop:pi-1-inj-KLM}.

Pour trouver la surface~$S$ et la décomposition en pantalons, l'idée est de commencer par construire le graphe $\mathcal{G}$.
Plus précisément, $\mathcal{G}$ sera un \emph{graphe enrubanné}, c'est-à-dire un graphe muni d'un ordre cyclique sur l'ensemble des ar\^etes en chaque sommet.
Le point est que tout graphe enrubanné fini trivalent $\mathcal{G}$ peut être \'epaissi (en rempla\c{c}ant chaque ar\^ete par un cylindre) pour obtenir une surface compacte $S$ de genre au moins deux.
Les milieux des ar\^etes de~$\mathcal{G}$ s'\'epaississent en des courbes ferm\'ees simples de~$S$ qui d\'efinissent une d\'ecomposition en pantalons de~$S$.
On peut voir $\mathcal{G}$ comme un graphe sur~$S$, contenant exactement un sommet à l'intérieur de chaque pantalon, avec une arête pour chaque courbe de bord entre deux pantalons, comme au paragraphe~\ref{subsec:struct-R-parf}.
Le graphe $\mathcal{G}$ est biparti si et seulement si la d\'ecomposition en pantalons l'est.

\subsection{Le point de vue de Kahn, Labourie et Mozes} \label{subsec:conclusion-KLM}

Fixons deux pantalons $\Pi^+$ et~$\Pi^-$ de groupes fondamentaux
$$\pi_1(\Pi^{\pm}) = \langle a^{\pm}, b^{\pm}, c^{\pm} \,|\, c^{\pm} b^{\pm} a^{\pm} = 1\rangle.$$
Pour $\varepsilon,R>0$, soit $\bs{\mathcal{C}}_{\varepsilon,\pm R}$ l'ensemble des classes de conjugaison modulo~$\Gamma$ de repr\'esentations $(\varepsilon/R,\pm R)$-presque parfaites de $\pi_1(\Pi^{\pm})$ dans~$\Gamma$.
Rappelons qu'il est fini d'après le corollaire~\ref{cor:presque-parf->generique}.\eqref{item:nb-fini-presque-parf}.
Il admet une symétrie naturelle $\sym$ d'ordre trois induite par la symétrie $\sym$ de $\mathrm{Hom}(\pi_1(\Pi^{\pm}),G)$ donnée par
\begin{equation} \label{eqn:tri}
\sym(\rho)(a^{\pm},b^{\pm},c^{\pm}) = (\rho(b^{\pm}),\rho(c^{\pm}),\rho(a^{\pm}))
\end{equation}
pour tout $\rho\in\mathrm{Hom}(\pi_1(\Pi^{\pm}),G)$ (\cf remarque~\ref{rem:presque-parf-tri}).
Cette sym\'etrie est sans point fixe pour $R$ assez grand par rapport \`a~$\varepsilon$ d'apr\`es le corollaire~\ref{cor:presque-parf->generique}.\eqref{item:presque-parf->generique}.

D'autre part, rappelons que l'ensemble $\Triconn_{\varepsilon,\pm R}$ du paragraphe~\ref{subsec:Triconn} paramètre les données géométriques associées aux représentations $(\varepsilon/R,\pm R)$-presque parfaites de $\pi_1(\Pi^{\pm})$ dans~$\Gamma$ modulo l'action de~$\Gamma$.
Il est muni lui aussi d'une symétrie $\sym$ d'ordre trois, donnée par \eqref{eqn:tri-Triconn}.
On a une projection naturelle $\varpi : \Triconn_{\varepsilon,\pm R}\to\bs{\mathcal{C}}_{\varepsilon,\pm R}$, qui induit une projection naturelle de $\Triconn_{\varepsilon,\pm R}/\langle\sym\rangle$ vers $\bs{\mathcal{C}}_{\varepsilon,\pm R}/\langle\sym\rangle$, encore notée~$\varpi$.
Comme au paragraphe~\ref{subsec:Triconn}, pour $\varepsilon'>0$, la définition~\ref{def:bon-recoll-KLM} induit une notion d'éléments de $\Triconn_{\varepsilon,R}$ et $\Triconn_{\varepsilon,-R}$ qui sont \emph{$\varepsilon'$-bien recollés}.

En utilisant les mesures $\mu_{\varepsilon,\pm R}$ de la proposition~\ref{prop:mu-convient} et le lemme des mariages (fait~\ref{fait:mariages}), Kahn, Labourie et Mozes établissent le résultat suivant.

\begin{proposition} \label{prop:existe-graphe-enrubanne}
Supposons la condition~(R) vérifiée.
Soit $C>0$ la constante de la proposition~\ref{prop:mu-convient}.
Pour tout $\varepsilon>0$ assez petit et tout $R>0$ assez grand par rapport \`a~$\varepsilon$, il existe
\begin{itemize}
  \item un graphe enrubann\'e fini $\mathcal{G}$, trivalent, biparti, de sommets $\mathcal{P} = \mathcal{P}^+\sqcup\mathcal{P}^-$,
  \item une application $E : \mathcal{P}^{\pm}\to\Triconn_{\varepsilon,\pm R}/\langle\sym\rangle$ (\og étiquetage\fg)
\end{itemize}
tels que pour tout sommet $v\in\mathcal{P}^{\pm}$, de sommets adjacents $v_1,v_2,v_3\in\mathcal{P}^{\mp}$, on puisse trouver $\mathtt{T}\in\Triconn_{\varepsilon,\pm R}$ et $\mathtt{T}_1,\mathtt{T}_2,\mathtt{T}_3\in\Triconn_{\varepsilon,\mp R}$ vérifiant $E(v)=[\mathtt{T}]$ et $E(v_i)=[\mathtt{T}_i]$, tels que $\sym^i(\mathtt{T})$ et $\mathtt{T}_i$ soient $(C\varepsilon/R)$-bien recollés pour tout $i\in\Z/3\Z$.
\end{proposition}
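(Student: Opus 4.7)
L'id\'ee est d'appliquer le lemme des mariages de Hall (fait~\ref{fait:mariages}) aux mesures $\mu_{\varepsilon,R}$ et $\mu_{\varepsilon,-R}$ de la proposition~\ref{prop:mu-convient}, apr\`es discr\'etisation convenable, pour construire la bijection dont d\'ecoulera le graphe~$\mathcal{G}$. On commence par remarquer que pour $R$ assez grand par rapport \`a~$\varepsilon$, l'ensemble $\Triconn_{\varepsilon,\pm R}$ est r\'eunion d'un nombre fini de classes $\Triconn_{\varepsilon,\pm R}^{\alpha}$ d'adh\'erences compactes dans $\Gamma\backslash G^4$ (remarque~\ref{rem:Triconn-rel-compact}), et que la sym\'etrie $\sym$ d'ordre trois de \eqref{eqn:tri-Triconn} agit sans point fixe sur $\Triconn_{\varepsilon,\pm R}$, par le corollaire~\ref{cor:presque-parf->generique}.\eqref{item:presque-parf->generique} appliqu\'e aux points fixes distincts $\rho(a)^{\attract},\rho(b)^{\attract},\rho(c)^{\attract}\in G/P_{\tau}$.

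L'\'etape principale est la discr\'etisation. On fixe un petit param\`etre $\eta\in (0,\varepsilon/R)$ (\`a choisir en fonction de $G$ et~$C$) et on partitionne $\Triconn_{\varepsilon,\pm R}$ en un nombre fini de bor\'eliens $\sym$-invariants $U_1^{\pm},\dots,U_{N^{\pm}}^{\pm}$ dont chaque composante (modulo $\sym$) est de diam\`etre $\leq\eta$ dans $\Gamma\backslash G^4$ ; on peut faire en sorte que pour deux \'el\'ements $\mathtt{T}^+\in U_i^+$ et $\mathtt{T}^-\in U_j^-$, le fait qu'ils soient $(C\varepsilon/R)$-bien recoll\'es soit \og stable\fg\ \`a $\eta$-pr\`es, c'est-\`a-dire implique que tous les \'el\'ements de $U_i^+\times U_j^-$ sont $(C\varepsilon/R+C'\eta)$-bien recoll\'es, o\`u $C'>0$ ne d\'epend que de~$G$. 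On approxime ensuite les mesures $\mu_{\varepsilon,\pm R}$ par des mesures $\sym$-invariantes atomiques, support\'ees sur des points choisis dans les $U_i^{\pm}$, avec des masses rationnelles de m\^eme d\'enominateur~$N$ ; quitte \`a ajuster les poids, on peut faire co\"incider les masses totales et ne perdre qu'une quantit\'e arbitrairement petite dans l'in\'egalit\'e de la proposition~\ref{prop:mu-convient}. En multipliant par~$N$, on obtient deux multi-ensembles finis $\widetilde{\mathcal{E}}^{\pm}\subset\Triconn_{\varepsilon,\pm R}$, munis d'une action libre de $\langle\sym\rangle$, de m\^eme cardinal $|\widetilde{\mathcal{E}}^+|=|\widetilde{\mathcal{E}}^-|$.

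On consid\`ere alors le sous-ensemble $\mathcal{M}\subset\widetilde{\mathcal{E}}^+\times\widetilde{\mathcal{E}}^-$ des paires $(\mathtt{T}^+,\mathtt{T}^-)$ qui sont $(C\varepsilon/R)$-bien recoll\'ees. La conclusion de la proposition~\ref{prop:mu-convient}, transport\'ee aux multi-ensembles via la discr\'etisation, garantit que pour tout $A\subset\widetilde{\mathcal{E}}^+$, le cardinal du voisinage $N(A):=\bigcup_{x\in A}\{y\in\widetilde{\mathcal{E}}^-\mid (x,y)\in\mathcal{M}\}$ est sup\'erieur ou \'egal \`a $|A|$ ; c'est pr\'ecis\'ement la condition \eqref{eqn:cond-mariage} de Hall. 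Le fait~\ref{fait:mariages} fournit alors une bijection $\psi:\widetilde{\mathcal{E}}^+\to\widetilde{\mathcal{E}}^-$ telle que $(x,\psi(x))\in\mathcal{M}$ pour tout~$x$. On en tire le graphe $\mathcal{G}$ : on pose $\mathcal{P}^{\pm}:=\widetilde{\mathcal{E}}^{\pm}/\langle\sym\rangle$ (de sorte que chaque sommet a exactement trois demi-ar\^etes, index\'ees cycliquement par $\sym$, ce qui munit $\mathcal{G}$ d'une structure enrubann\'ee), et on joint la $\sym$-classe de tout $x\in\widetilde{\mathcal{E}}^+$ \`a celle de $\psi(x)\in\widetilde{\mathcal{E}}^-$ par une ar\^ete. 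L'\'etiquetage $E:\mathcal{P}^{\pm}\to\Triconn_{\varepsilon,\pm R}/\langle\sym\rangle$ est la projection naturelle. Par construction, pour un sommet $v\in\mathcal{P}^+$ d'ar\^etes $v_1,v_2,v_3$ correspondant aux trois repr\'esentants $\mathtt{T},\sym(\mathtt{T}),\sym^2(\mathtt{T})$ d'un rel\`evement $\mathtt{T}\in\widetilde{\mathcal{E}}^+$ de~$v$, les \'el\'ements $\mathtt{T}_i:=\psi(\sym^i(\mathtt{T}))\in\widetilde{\mathcal{E}}^-$ rel\`event $v_i$ et sont $(C\varepsilon/R)$-bien recoll\'es \`a $\sym^i(\mathtt{T})$, comme requis.

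Le principal obstacle est l'\'etape de discr\'etisation : il s'agit de r\'ealiser simultan\'ement l'\'egalit\'e exacte des masses totales $|\widetilde{\mathcal{E}}^+|=|\widetilde{\mathcal{E}}^-|$, l'\'equivariance sous~$\sym$, et la pr\'eservation de l'in\'egalit\'e mesur\'ee du type Hall au niveau des multi-ensembles. On g\`ere cela en choisissant $\eta\ll\varepsilon/R$ et en absorbant le d\'efaut dans la constante~$C$ (quitte \`a remplacer $C$ par $2C$ dans l'\'enonc\'e) ; il faut aussi v\'erifier que le graphe enrubann\'e trivalent biparti obtenu a au moins deux sommets dans chaque partie, ce qui est assur\'e d\`es que les mesures $\mu_{\varepsilon,\pm R}$ sont non nulles (cons\'equence de la proposition~\ref{prop:pant-dans-reseau-general}.\eqref{item:pant-dans-reseau-1} et du fait que l'espace $\Triconn_{\varepsilon,\pm R}$ contient plusieurs orbites de $\sym$ pour $R$ grand), quitte \`a r\'epliquer la construction.
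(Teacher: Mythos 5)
Votre esquisse suit essentiellement la m\^eme strat\'egie que le texte : discr\'etiser les mesures $\sym$-invariantes de la proposition~\ref{prop:mu-convient} en multi-ensembles finis de m\^eme cardinal, appliquer le lemme des mariages de Hall, puis quotienter par $\sym$ (sans point fixe pour $R$ grand) pour obtenir le graphe enrubann\'e trivalent biparti et son \'etiquetage. La seule divergence r\'eelle est le traitement de la discr\'etisation : l\`a o\`u vous approchez les masses par des rationnels \og quitte \`a ajuster les poids\fg\ et \`a remplacer $C$ par $2C$, le texte \'ecrit les contraintes (invariance par $\sym$, \'egalit\'e des masses totales, in\'egalit\'e de Hall pour tout $A$) comme un syst\`eme fini d'in\'equations lin\'eaires \`a coefficients entiers \eqref{eqn:ineq-mu}, admettant une solution r\'eelle non nulle donc une solution enti\`ere non nulle ; cela donne directement des multiplicit\'es enti\`eres exactes sans perturber les in\'egalit\'es ni d\'egrader la constante~$C$. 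Votre \'etape d'\og ajustement des poids\fg\ m\'eriterait d'\^etre remplac\'ee par cet argument de rationalit\'e (ou explicit\'ee avec le m\^eme soin), car la condition de Hall doit \^etre v\'erifi\'ee exactement et simultan\'ement pour tous les sous-ensembles $A$, ce qu'une approximation \og arbitrairement petite\fg\ ne garantit pas telle quelle.
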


Adoptons la terminologie suivante : pour une classe de conjugaison $\bs{c}\in\bs{\mathcal{C}}_{\varepsilon,\pm R}$ de représentations de $\pi_1(\Pi^{\pm})$, et pour un pantalon $\Pi$ quelconque, disons qu'une représentation $\rho_{\Pi} : \pi_1(\Pi)\to\Gamma$ est \emph{de type $\bs{c}$} si l'on peut identifier les courbes de bord de~$\Pi$ à $a^{\pm},b^{\pm},c^{\pm}$ de sorte que $\rho_{\Pi}$ définisse une représentation de $\pi_1(\Pi^{\pm})$ dans la classe de conjugaison~$\bs{c}$.
Ceci ne dépend que de la classe de $\bs{c}$ dans $\bs{\mathcal{C}}_{\varepsilon,\pm R}/\langle\sym\rangle$.

Le graphe enrubann\'e $\mathcal{G}$ de la proposition~\ref{prop:existe-graphe-enrubanne} d\'efinit une surface compacte $S$ avec une décomposition en pantalons bipartie comme au paragraphe~\ref{subsec:graphes-enrubannes}, et l'on en d\'eduit une repr\'esentation de $\pi_1(S)$ dans~$\Gamma$ par le lemme suivant.

\begin{lemme} \label{lem:recoller-repr}
Soit $S$ une surface compacte de genre au moins deux avec une d\'ecomposition en pantalons bipartie $\mathcal{P}=\mathcal{P}^+\sqcup\mathcal{P}^-$.
Soit $\mathcal{G}$ un graphe fini sur~$S$, avec un sommet à l'intérieur de chaque pantalon et une arête pour chaque courbe de bord entre deux pantalons, comme au paragraphe~\ref{subsec:struct-R-parf}.
Pour $\varepsilon,R>0$, soit $E : \mathcal{P}^{\pm}\to\Triconn_{\varepsilon,\pm R}/\langle\sym\rangle$ un étiquetage comme à la proposition~\ref{prop:existe-graphe-enrubanne}.
Alors il existe une représentation $\rho : \pi_1(S)\to\Gamma$, unique à conjugaison par~$\Gamma$ près, telle que pour tout $\Pi\in\mathcal{P}^{\pm}$, la restriction de $\rho$ à $\pi_1(\Pi)$ soit de type $\varpi\circ E(\Pi) \in \bs{\mathcal{C}}_{\varepsilon,\pm R}/\langle\sym\rangle$.
\end{lemme}

Notons que $\Triconn_{\varepsilon,\pm R} \subset \Triconn_{C\varepsilon,\pm R}$ pour $C\geq 1$.
La proposition~\ref{prop:pi-1-inj} implique alors que pour $\varepsilon>0$ assez petit et $R>0$ assez grand par rapport \`a~$\varepsilon$, la repr\'esentation $\rho : \pi_1(S)\to\Gamma$ du lemme~\ref{lem:recoller-repr} est injective.
De plus, à $\delta>0$ fixé, la proposition~\ref{prop:pi-1-inj-KLM} assure que si $\varepsilon>0$ est assez petit par rapport à~$\delta$ et $R>0$ assez grand par rapport \`a~$\varepsilon$, alors $\rho$ admet une application de bord $(\delta,\tau)$-sullivannienne de $\PP^1(\R)$ dans $G/P_{\tau}$, ce qui démontre les th\'eor\`emes \ref{thm:quantitatif-PSL(n,C)} et~\ref{thm:quantitatif}.

\subsection{D\'emonstration de la proposition~\ref{prop:existe-graphe-enrubanne}} \label{subsec:dem-existe-graphe}

D'apr\`es la proposition~\ref{prop:mu-convient}, pour tout $\varepsilon>0$ et tout $R>0$ assez grand par rapport \`a~$\varepsilon$, il existe des mesures $\mu_{\varepsilon,\pm R}$ sur $\Triconn_{\varepsilon,\pm R}$, invariantes par la transformation $\sym$ de \eqref{eqn:tri-Triconn}, v\'erifiant $\mu_{\varepsilon,R}(\Triconn_{\varepsilon,R}) = \mu_{\varepsilon,-R}(\Triconn_{\varepsilon,-R})$ et telles que pour tout sous-ensemble mesurable $A$ de $\Triconn_{\varepsilon,R}$, l'ensemble des \'el\'ements de $\Triconn_{\varepsilon,-R}$ qui sont $(C\varepsilon/R)$-bien recoll\'es \`a au moins un \'el\'ement de~$A$ est de $(\mu_{\varepsilon,-R})$-mesure sup\'erieure ou \'egale \`a $\mu_{\varepsilon,R}(A)$.

Les espaces $\Triconn_{\varepsilon,R}$ et $\Triconn_{\varepsilon,-R}$ étant compacts, on peut remplacer les mesures $\mu_{\varepsilon,R}$ et~$\mu_{\varepsilon,-R}$ par des mesures $\mu^f_{\varepsilon,R}$ et~$\mu^f_{\varepsilon,-R}$ de même masse totale, de \emph{supports finis}, qui sont encore invariantes par $\sym$ et vérifient encore que pour tout $A\subset\Triconn_{\varepsilon,R}$, l'ensemble des \'el\'ements de $\Triconn_{\varepsilon,-R}$ qui sont $(C\varepsilon/R)$-bien recoll\'es \`a au moins un \'el\'ement de~$A$ est de $\mu^f_{\varepsilon,-R}$-mesure sup\'erieure ou \'egale \`a $\mu^f_{\varepsilon,R}(A)$.
Autrement dit, en notant $\mathcal{F}^+$ (\resp $\mathcal{F}^-$) le support (fini) de $\mu^f_{\varepsilon,R}$ (\resp $\mu^f_{\varepsilon,-R}$) et $\mathcal{F}$ l'union disjointe de $\mathcal{F}^+$ et~$\mathcal{F}^-$, le syst\`eme d'in\'equations
\begin{equation} \label{eqn:ineq-mu}
\displaystyle
\left\{ \begin{array}{ccll}
\mu(\mathtt{T}) & = & \mu(\sym(\mathtt{T})) & \quad\quad\forall \mathtt{T} \in \mathcal{F}, \vspace{0.2cm}\\
\sum_{\mathtt{T}^+\in\mathcal{F}^+} \mu(\mathtt{T}^+) & = & \sum_{\mathtt{T}^-\in\mathcal{F}^-} \mu(\mathtt{T}^-), & \vspace{0.2cm}\\
\sum_{\mathtt{T}^-\in\mathcal{F}^-(A)} \mu(\mathtt{T}^-) & \geq & \sum_{\mathtt{T}^+\in A} \mu(\mathtt{T}^+) & \quad\quad \forall A\subset\mathcal{F}^+
\end{array} \right.
\end{equation}
admet une solution $\mu : \mathcal{F}\to\R^+$ non nulle, o\`u $\mathcal{F}^-(A)$ désigne l'ensemble des \'el\'ements de $\mathcal{F}^- \subset \Triconn_{\varepsilon,-R}$ qui sont $(C\varepsilon/R)$-bien recoll\'es \`a au moins un \'el\'ement de $A \subset \mathcal{F}^+ \subset \Triconn_{\varepsilon,R}$.
Le syst\`eme \eqref{eqn:ineq-mu} \'etant \`a coefficients entiers, ceci implique l'existence d'une solution \emph{rationnelle}, et donc (en multipliant par un entier assez grand) l'existence d'une solution \emph{enti\`ere} non nulle $\mu : \mathcal{F}\to\N$.

Soit $\mathcal{E}^{\pm}$ l'ensemble obtenu en prenant $\mu(\mathtt{T})\in\N$ copies de chaque \'el\'ement $\mathtt{T}\in\mathcal{F}^{\pm}$.
La premi\`ere ligne de \eqref{eqn:ineq-mu} assure que la transformation d'ordre trois (sans point fixe) $\sym$ de $\Triconn_{\varepsilon,\pm R}$ induit une transformation d'ordre trois (sans point fixe) $\sym$ de $\mathcal{E}^{\pm}$.
La deuxi\`eme ligne assure que $\#\mathcal{E}^+ = \#\mathcal{E}^-$.
Soit $\mathcal{M} \subset \mathcal{E}^+\times\mathcal{E}^-$ le sous-ensemble correspondant aux couples $(\mathtt{T}^+,\mathtt{T}^-) \in \Triconn_{\varepsilon,R}\times\Triconn_{\varepsilon,-R}$ qui sont $(C\varepsilon/R)$-bien recollés.
La troisi\`eme ligne de \eqref{eqn:ineq-mu} assure que la condition \eqref{eqn:cond-mariage} est v\'erifi\'ee.
D'apr\`es le lemme des mariages de Hall (fait~\ref{fait:mariages}), il existe une bijection $\psi : \mathcal{E}^+\to\mathcal{E}^-$ telle que tout couple de la forme $(x,\psi(x))$ o\`u $x\in\mathcal{E}^+$ corresponde \`a un couple $(C\varepsilon/R)$-bien recollé.

Soit $\mathcal{G}$ le graphe fini biparti de sommets $\mathcal{P} = \mathcal{P}^+\sqcup\mathcal{P}^-$ o\`u $\mathcal{P}^{\pm} := \mathcal{E}^+/\langle\sym\rangle$, pour lequel on met une ar\^ete entre les images de $x$ et $\psi(x)$ pour tout $x\in\mathcal{E}^+$.
Ce graphe est trivalent car $\sym$ est d'ordre trois sans point fixe.
C'est un graphe enrubann\'e : $\sym$ d\'efinit un ordre cyclique sur les ar\^etes en chaque sommet.
La projection naturelle de $\mathcal{E}^{\pm}$ vers~$\Triconn_{\varepsilon,\pm R}$ induit une application $E : \mathcal{P}^{\pm}\to\Triconn_{\varepsilon,\pm R}/\langle\sym\rangle$ v\'erifiant les conclusions de la proposition~\ref{prop:existe-graphe-enrubanne}.

\subsection{D\'emonstration du lemme~\ref{lem:recoller-repr}} \label{subsec:constr-rho}

On utilise l'observation suivante.

\begin{remarque} \label{rem:action-libre-Gamma}
Pour $\varepsilon/R$ assez petit, si $\rho^+ : \pi_1(\Pi^+)\to\Gamma$ est $(\varepsilon/R,R)$-presque parfaite, si $\rho^- : \pi_1(\Pi^-)\to\Gamma$ est $(\varepsilon/R,-R)$-presque parfaite et si les représentations $\rho^+$ et~$\rho^-$ sont $(C\varepsilon/R)$-bien recollées le long de $a^+$ et~$a^-$, alors pour $\gamma\in\Gamma\smallsetminus\{\id\}$ les représentations $\rho^+$ et $\gamma\rho^-(\cdot)\gamma^{-1}$ ne sont pas $(C\varepsilon/R)$-bien recollées le long de $a^+$ et~$a^-$.
\end{remarque}

En effet, soit $\gamma\in\Gamma$.
Si $\rho^+$ et $\gamma\rho^-(\cdot)\gamma^{-1}$ sont $(C\varepsilon/R)$-bien recollées le long de $a^+$ et~$a^-$, alors $\gamma\rho^-(a^-)\gamma^{-1} = \rho^+(a^+) = \rho^-(a^-) =: \alpha$, donc $\gamma$ appartient au centralisateur $Z_{\Gamma}(\alpha)$ de $\alpha$ dans~$\Gamma$.
Or, l'application \og pied\fg\ $\Psi_{\alpha} : \mathcal{U}_{\alpha}\to L_{\alpha}$ (d\'efinition~\ref{def:pied}) est $Z_{\Gamma}(\alpha)$-équivariante et tout élément non trivial du groupe discret sans torsion $Z_{\Gamma}(\alpha)$ déplace les points de~$L_{\alpha}$ d'au moins une certaine distance, indépendante de $\varepsilon$ et~$R$.
La condition \eqref{eqn:presque-flip-decalage} sur $(g^+,g^-)$ pour $\varepsilon' = C\varepsilon/R$ ne peut donc pas être satisfaite par $(g^+,\gamma g^-)$ pour $\gamma\in Z_{\Gamma}(\alpha)\smallsetminus\{\id\}$ si $\varepsilon/R$ est assez petit.

Dans la suite, on note $\tildeTriconn_{\varepsilon,\pm R}\subset G^4$ la pré-image de $\Triconn_{\varepsilon,\pm R}$ par la projection naturelle $G^4\to\Gamma\backslash G^4$.
La symétrie d'ordre trois $\sym$ de $\Triconn_{\varepsilon,\pm R}$ se relève en une symétrie d'ordre trois de $\tildeTriconn_{\varepsilon,\pm R}$, encore notée $\sym$.
La projection $\varpi : \Triconn_{\varepsilon,\pm R}\to\bs{\mathcal{C}}_{\varepsilon,\pm R}$ du paragraphe~\ref{subsec:conclusion-KLM} se relève en une projection $\widetilde{\varpi}$ de $\tildeTriconn_{\varepsilon,\pm R}$ vers l'ensemble des représentations $(\varepsilon/R,\pm R)$-presque parfaites de $\pi_1(\Pi^{\pm})$ dans~$\Gamma$, telle que $\widetilde{\varpi}\circ\sym = \sym\circ\widetilde{\varpi}$.
On dit que des éléments de $\tildeTriconn_{\varepsilon,R}$ et $\tildeTriconn_{\varepsilon,-R}$ sont \emph{$(C\varepsilon/R)$-bien recollés} si leurs images par~$\widetilde{\varpi}$ le sont.

\smallskip

Dans le cadre du lemme~\ref{lem:recoller-repr}, le groupe fondamental $\pi_1(\mathcal{G})$ est un groupe libre non ab\'elien.
On peut voir $S$ comme un \'epaississement de~$\calG$, ce qui donne une injection $\pi_1(\calG)\hookrightarrow\pi_1(S)$.
On peut voir $\calG$ comme l'image d'une r\'etraction $r : S\to\calG$, ce qui donne une surjection $r_* : \pi_1(S)\twoheadrightarrow\pi_1(\calG)$.
La composition de l'injection $\pi_1(\calG)\hookrightarrow\pi_1(S)$ avec la surjection~$r_*$ est l'identit\'e de $\pi_1(\calG)$.
On en d\'eduit
$$\pi_1(S) = (\mathrm{Ker}\,r_*) \rtimes \pi_1(\calG),$$
et il existe un rev\^etement infini $\widehat{S}$ de~$S$ tel que $\mathrm{Ker}\,r_*$ s'identifie \`a $\pi_1(\widehat{S})$ et $\pi_1(\calG)$ au groupe de Galois du rev\^etement $\widehat{S}\to S$.
Le graphe trivalent $\calG$ sur~$S$ se rel\`eve en un arbre trivalent $\widetilde{\calG}$ (rev\^etement universel de~$\calG$) sur~$\widehat{S}$ (\cf figure~\ref{fig:revetement-S}) ; notons $\widetilde{\mathcal{P}} = \widetilde{\mathcal{P}}^+ \sqcup \widetilde{\mathcal{P}}^-$ l'ensemble de ses sommets.
\begin{figure}[h!]
\centering
\labellist
\small\hair 2pt
\pinlabel {$\widehat{S}$} [u] at 320 520
\pinlabel {$S$} [u] at 370 200
\pinlabel {\textcolor{red}{$\widetilde{\calG}$}} [u] at 160 420
\pinlabel {\textcolor{red}{$\calG$}} [u] at 153 195
\endlabellist
\includegraphics[width=6cm]{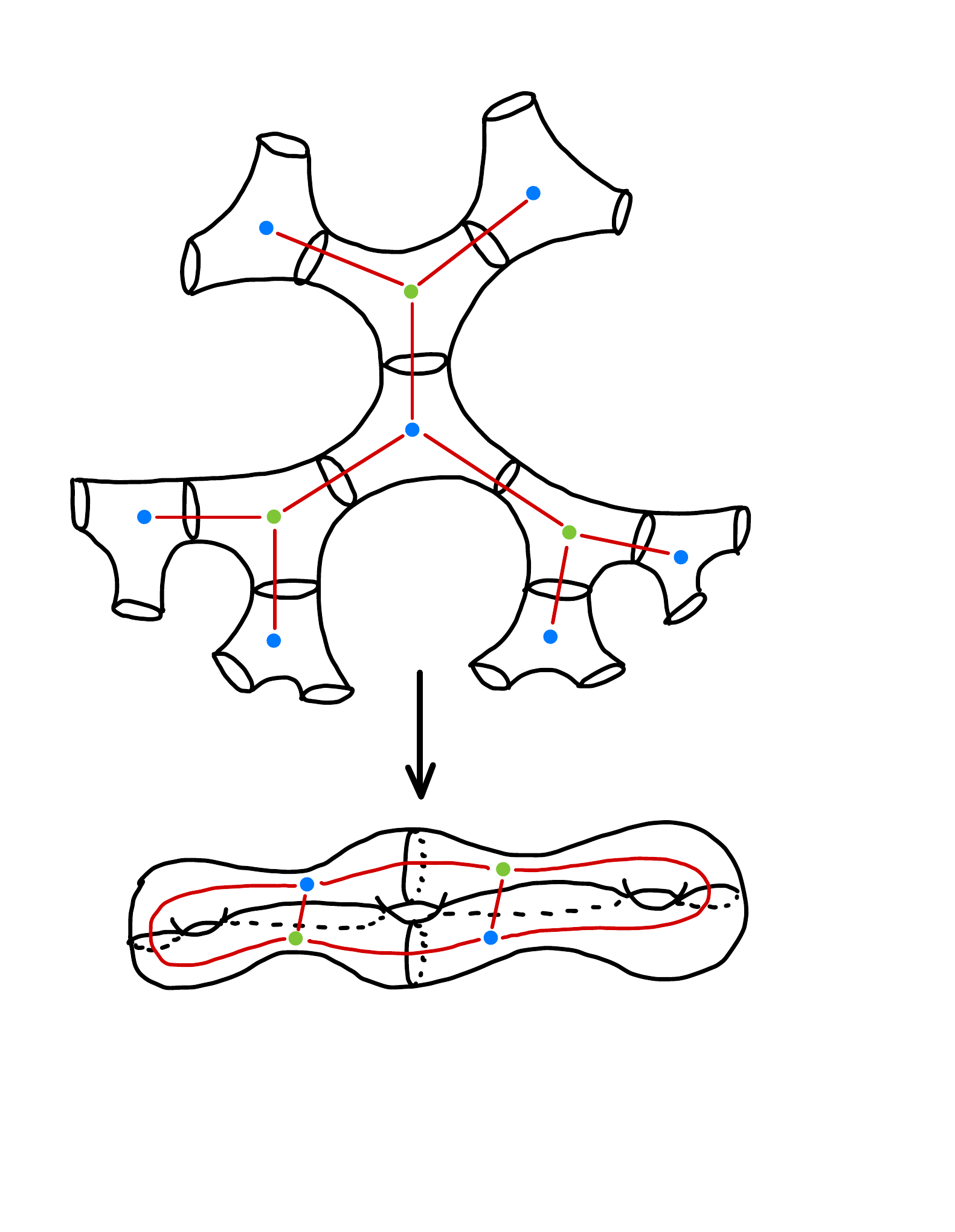}
\vspace{-0.3cm}
\caption{Le rev\^etement $\widehat{S}$ de~$S$}
\label{fig:revetement-S}
\end{figure}
Pour toute ar\^ete $\widetilde{A}$ de~$\widetilde{\calG}$, notons $a_{\widetilde{A}}$ un \'el\'ement de $\pi_1(\widehat{S})$ correspondant \`a la courbe de bord entre les deux pantalons de~$\widehat{S}$ d\'efinis par~$\widetilde{A}$, orient\'ee de sorte que si $\widetilde{A}_1,\widetilde{A}_2,\widetilde{A}_3$ sont incidentes dans cet ordre en un m\^eme sommet, alors $a_{\widetilde{A}_3} a_{\widetilde{A}_2} a_{\widetilde{A}_1} = 1$.
Le groupe $\pi_1(\widehat{S})$ admet alors la pr\'esentation par le syst\`eme g\'en\'erateur $\{ a_{\widetilde{A}} \,|\,\widetilde{A}\text{ ar\^ete de }\widetilde{\calG}\}$ et les relations $a_{\widetilde{A}_3} a_{\widetilde{A}_2} a_{\widetilde{A}_1} = 1$ pour toutes ar\^etes $\widetilde{A}_1,\widetilde{A}_2,\widetilde{A}_3$ incidentes dans cet ordre en un m\^eme sommet.

Fixons un sommet initial de~$\calG$, appartenant à un pantalon $\Pi_0\in\mathcal{P}^+$, et un relev\'e $\widetilde{\Pi}_0\in\widetilde{\mathcal{P}}^+$ dans $\widetilde{\calG}$.
Choisissons un représentant $[\widetilde{\mathtt{T}}_{\widetilde{\Pi}_0}]\in\tildeTriconn_{\varepsilon,R}/\langle\sym\rangle$ de $E(\Pi_0)\in\Triconn_{\varepsilon,R}/\langle\sym\rangle$.
Par hypothèse, pour tout sommet $\widetilde{\Pi}_1\in\widetilde{\mathcal{P}}^-$ adjacent à~$\widetilde{\Pi}_0$, se projetant en $\Pi_1\in\mathcal{P}^-$, il existe un représentant $[\widetilde{\mathtt{T}}_{\widetilde{\Pi}_1}]\in\tildeTriconn_{\varepsilon,-R}/\langle\sym\rangle$ de $E(\Pi_1)\in\Triconn_{\varepsilon,-R}/\langle\sym\rangle$ qui est \emph{compatible} avec~$[\widetilde{\mathtt{T}}_{\widetilde{\Pi}_0}]$ au sens où il existe $i_0,i_1\in\Z/3\Z$ tels que $\sym^{i_0}(\widetilde{\mathtt{T}}_{\widetilde{\Pi}_0})$ et $\sym^{i_1}(\widetilde{\mathtt{T}}_{\widetilde{\Pi}_1})$ soient $(C\varepsilon/R)$-bien recollés~; ce représentant est unique d'après la remarque~\ref{rem:action-libre-Gamma}.
De même, pour tout sommet $\widetilde{\Pi}_2\in\widetilde{\mathcal{P}}^+$ adjacent à~$\widetilde{\Pi}_1$, se projetant en $\Pi_2\in\mathcal{P}^+$, il existe un unique représentant $[\widetilde{\mathtt{T}}_{\widetilde{\Pi}_2}]\in\tildeTriconn_{\varepsilon,R}/\langle\sym\rangle$ de $E(\Pi_2)\in\Triconn_{\varepsilon,R}/\langle\sym\rangle$ qui soit compatible avec~$[\widetilde{\mathtt{T}}_{\widetilde{\Pi}_1}]$.
On continue.
Comme $\widetilde{\calG}$ est un arbre, il n'y a pas d'autre condition de compatibilit\'e \`a v\'erifier.
En raisonnant de proche en proche, on obtient ainsi une famille $([\widetilde{\mathtt{T}}_{\widetilde{\Pi}^+}])_{\widetilde{\Pi}^+\in\widetilde{\mathcal{P}}^+}$ d'éléments de $\tildeTriconn_{\varepsilon,R}/\langle\sym\rangle$ paramétrés par les sommets $\widetilde{\mathcal{P}}^+$, et une famille $([\widetilde{\mathtt{T}}_{\widetilde{\Pi}^-}])_{\widetilde{\Pi}^-\in\widetilde{\mathcal{P}}^-}$ d'éléments de $\tildeTriconn_{\varepsilon,-R}/\langle\sym\rangle$ paramétrés par les sommets $\widetilde{\mathcal{P}}^-$, de sorte que les éléments au-dessus de deux sommets adjacents soient compatibles.
Pour toute arête $\widetilde{A}$ de~$\widetilde{\calG}$ entre $\widetilde{\Pi}^+$ et~$\widetilde{\Pi}^-$, si $\sym^{i^+}(\widetilde{\mathtt{T}}_{\widetilde{\Pi}^+})$ et $\sym^{i^-}(\widetilde{\mathtt{T}}_{\widetilde{\Pi}^+})$ sont bien recollés où $i^+,i^-\in\Z/3\Z$, on note $\rho_{\widehat{S}}(a_{\widetilde{A}})\in\Gamma$ l'image de $a_{\widetilde{A}}$ par $\widetilde{\varpi}(\sym^{i^+}(\widetilde{\mathtt{T}}_{\widetilde{\Pi}^+}))$.
\'Etant donn\'ee la pr\'esentation par g\'en\'erateurs et relations de $\pi_1(\widehat{S})$ d\'ecrite ci-dessus, on obtient ainsi une repr\'esentation $\rho_{\widehat{S}} : \pi_1(\widehat{S})\to\Gamma$.

Ceci nous donne \'egalement une \emph{repr\'esentation d'holonomie} $\rho_{\calG} : \pi_1(\calG)\to\Gamma$, d\'efinie de mani\`ere unique.
En effet, pour tout $f\in\pi_1(\calG)$, il existe un unique \'el\'ement $\rho_{\calG}(f)\in\Gamma$ tel que $[\widetilde{\mathtt{T}}_{f\cdot\widetilde{\Pi}_0}] = \rho_{\calG}(f)\cdot [\widetilde{\mathtt{T}}_{\widetilde{\Pi}_0}]$.
Par unicit\'e de la construction (remarque~\ref{rem:action-libre-Gamma}), on a $[\widetilde{\mathtt{T}}_{f\cdot\widetilde{\Pi}}] = \rho_{\calG}(f)\cdot [\widetilde{\mathtt{T}}_{\widetilde{\Pi}}]$ pour tout sommet $\tilde{\Pi}$ de~$\widetilde{\calG}$, et $\rho_{\calG}(ff') = \rho_{\calG}(f) \, \rho_{\calG}(f')$ pour tous $f,f'\in\pi_1(\calG)$.

Par construction, pour tout $f\in\pi_1(\calG)$ et toute ar\^ete $\widetilde{A}$ de~$\calG$, on a $\rho_{\widehat{S}}(f a_{\widetilde{A}} f^{-1}) = \rho_{\calG}(f) \, \rho_{\widehat{S}}(a_{\widetilde{A}}) \, \rho_{\calG}(f)^{-1}$.
Comme les $a_{\widetilde{A}}$ engendrent $\pi_1(\widehat{S})$, on en d\'eduit $\rho_{\widehat{S}}(f\gamma f^{-1}) = \rho_{\calG}(f) \, \rho_{\widehat{S}}(\gamma) \, \rho_{\calG}(f)^{-1}$ pour tout $\gamma\in\pi_1(\widehat{S})$.
Ainsi, les repr\'esentations $\rho_{\widehat{S}} : \pi_1(\widehat{S})\to\Gamma$ et $\rho_{\calG} : \pi_1(\calG)\to\Gamma$ se combinent en une repr\'esentation
$$\rho : \pi_1(S) = \pi_1(\widehat{S}) \rtimes \pi_1(\calG) \longrightarrow \Gamma.$$
Cette repr\'esentation est unique \'etant donn\'e notre choix initial de représentant $[\widetilde{\mathtt{T}}_{\widetilde{\Pi}_0}]$ de $E(\Pi_0)$.
Changer le choix initial pour $[\widetilde{\mathtt{T}}_{\widetilde{\Pi}_0}]$ revient \`a conjuguer $\rho : \pi_1(S)\to\Gamma$ par un \'el\'ement de~$\Gamma$.

Ceci conclut la d\'emonstration du lemme~\ref{lem:recoller-repr}, et donc des th\'eor\`emes \ref{thm:quantitatif-PSL(n,C)} et~\ref{thm:quantitatif}.

\printbibliography


\end{document}
